\newcommand{\smallspacing}{\hspace{.05em}}
\definecolor{ForestGreen}{rgb}{0.1,0.6,0.05}
\definecolor{EgyptBlue}{rgb}{0.063,0.1,0.6}
\definecolor{RipeOlive}{HTML}{556B2F}
\newcounter{dummy}
\newcommand\myitem[1][]{\item[#1]\refstepcounter{dummy}\def\@currentlabel{#1}}
\newtheorem{theorem}{Theorem}
\newtheorem{proposition}[theorem]{Proposition}
\newtheorem{lemma}[theorem]{Lemma}
\newtheorem{corollary}[theorem]{Corollary}
\theoremstyle{definition}
\newtheorem{remark}[theorem]{Remark}
\numberwithin{equation}{section}
\numberwithin{theorem}{section}
\numberwithin{equation}{section}
\numberwithin{theorem}{section}
\newenvironment{proof*}[1]{\begin{trivlist}\item[\hskip%
		\labelsep{{\bf Proof of \/{\rm\bf #1.}}~}]\rm}%
	{\hfill\qed\rm\end{trivlist}}
\newcommand{\W}{W_0^{1,p}(\Omega)}
\newcommand{\intO}{\int_\Omega}
\title{
	\vspace*{-2cm}
	On Rayleigh quotients connected to $p$-Laplace equations with polynomial nonlinearities
} 
\author{Vladimir Bobkov, Mieko Tanaka \\}
\date{}
\begin{document}
	
	\maketitle
	
	\begin{abstract}
		Let $\Omega$ be a bounded open set and $p,q,r>1$.
		The main observation of the present work is the following: $\W$-solutions of the equation $-\Delta_p u = \mu |u|^{q-2}u + |u|^{r-2}u$ parameterized by $\mu$ are in bijection with properly normalized critical points of the $0$-homogeneous Rayleigh type quotient $R_\alpha(u)=\|\nabla u\|_p^p/ (\|u\|_q^{\alpha p} \|u\|_r^{p-\alpha p})$ parameterized by $\alpha$. 
		We study this bijection and properties of $R_\alpha$ for various relations between $p,q,r$.
		In particular, for the generalized convex-concave problem (the case $q<p<r$) the bijection allows to provide the existence and characterization of all degenerate solutions corresponding to the inflection point of the fibred energy functional: they are critical points of $R_\alpha$ exclusively with $\alpha = (r-p)/(r-q)$. 
		In the subhomogeneous case $q<r \leq p$
		and under additional assumptions on $\Omega$, 
		the ground state level of $R_\alpha$ is simple and isolated, and minimizers of $R_\alpha$ exhaust the whole set of sign-constant solutions of the corresponding equation.  
		In the superhomogeneous case $p < q<r$, there are no sign-changing critical points in a vicinity of the ground state level of $R_\alpha$. 
		
		\par
		\smallskip
		\noindent {\bf  Keywords}: 
		$p$-Laplacian; generalized eigenvalues; generalized Rayleigh quotient; convex-concave; subhomogeneous; superhomogeneous.
		
		\smallskip
		\noindent {\bf MSC2010}: 
		35J92,	
		35B30,	
		35A01,  
		35B38,	
		35P30,	
		47J10,	
		49J35,	
		49R05.	
	\end{abstract}
	
	\begin{quote}	
		\tableofcontents	
		\addtocontents{toc}{\vspace*{-2ex}}
	\end{quote}

	\section{Introduction}\label{sec:intro}
	
	Let $\Omega$ be a bounded open set in $\mathbb{R}^N$, $N \geq 1$. 
	We always assume that $p > 1$ and $1 \leq q < r < p^*$, where $p^* = \frac{Np}{N-p}$ when $p < N$, and $p^* = +\infty$ when $p \geq N$. 
	We also use the standard notation $\|u\|_\sigma = (\intO |u|^\sigma\,dx)^{1/\sigma}$ for the Lebesgue norm, $\sigma \in[1,\infty)$.
	
	\smallskip
	Consider, for a given $\alpha \in \mathbb{R}$, the problem of finding nonzero critical points of the $p$-Dirichlet energy $\|\nabla u\|_p^p$ over a subset of $\W$ characterized by the constraint $\|u\|_q^{\alpha p} \|u\|_r^{(1-\alpha) p} = 1$, i.e.,
	\begin{equation}\label{eq:crit}
		\text{crit}\left\{ \|\nabla u\|_p^p\,:\, \|u\|_q^{\alpha p} \|u\|_r^{(1-\alpha) p} = 1\right\}.
	\end{equation}
	This problem is equivalent to finding critical points of the generalized Rayleigh quotient $R_\alpha$ defined as 
	\begin{equation}\label{eq:rayleigh}
		R_\alpha(u) 
		= 
		\frac{\|\nabla u\|_p^p}{\|u\|_q^{\alpha p} \|u\|_r^{(1-\alpha) p}} 
		\equiv
		\frac{\intO |\nabla u|^p \,dx}{\left(\intO |u|^q \,dx\right)^{\frac{\alpha p}{q}} \left(\intO |u|^r \,dx\right)^{\frac{(1-\alpha)p}{r}}},
		\quad u \in \W \setminus \{0\},
	\end{equation}
	which is a $0$-homogeneous functional. 
	It is not hard to see that critical points of $R_\alpha$ are in one-to-one correspondence with nonzero (weak) solutions of the nonlocal problem
	\begin{equation}\label{eq:Px}
		\left\{
		\begin{aligned}
			-\Delta_p u &= \alpha \, \frac{\|\nabla u\|_p^p}{\|u\|_q^{q}} |u|^{q-2}u + \left(1-\alpha\right) \frac{\|\nabla u\|_p^p}{\|u\|_r^{r}} |u|^{r-2}u && \text{in } \Omega, \\
			u &= 0 && \text{on } \partial \Omega.
		\end{aligned}
		\right.
	\end{equation}
	Notice that in the case $\alpha \neq 0,1$ neither of normalizations of $u$ help to get rid of this nonlocality.
	
	The problem \eqref{eq:Px} is $(p-1)$-homogeneous, so that any multiple of a solution is also a solution. 
	Let us describe several possible normalizations of an arbitrary critical point $u$ of $R_\alpha$ which put this problem in more familiar forms. 
	
	I) Let $r \neq p$ and $\alpha \neq 1$.
	Normalizing $u$ through the equality 
	\begin{equation}\label{eq:constr1}
		|1-\alpha| \frac{\|\nabla u\|_p^p}{\|u\|_r^{r}} = 1
	\end{equation}
	and denoting
	\begin{equation}\label{eq:mua}
		\mu_\alpha 
		=
		\alpha \frac{\|\nabla u\|_p^p}{\|u\|_q^{q}},
	\end{equation}
	we see that $u$ becomes a solution of the problem 
	\begin{equation}\label{eq:Pconcon-intro}
		\left\{
		\begin{aligned}
			-\Delta_p u &= \mu_\alpha |u|^{q-2}u + \text{sgn}(1-\alpha) \, |u|^{r-2}u  && \text{in } \Omega, \\
			u &= 0 && \text{on } \partial \Omega.
		\end{aligned}
		\right.
	\end{equation}
	Hereinafter, we call $\mu_\alpha$ a \textit{translation level}.
	
	When $q<p<r$ and $\alpha<1$, \eqref{eq:Pconcon-intro} is the well-known problem with the convex-concave nonlinearity introduced in the seminal work \cite{ABC}. 
	The distinguishing feature of this problem is that it possesses at least two nonnegative nonzero solutions when $\mu_\alpha>0$ is sufficiently small, and no such solutions exist when  $\mu_\alpha>0$ is large enough, see Figure~\ref{fig3:a} below. 
	The investigation of this problem in the subsequent research motivated the development of various method and tools of the nonlinear analysis. We refer, e.g., to \cite{AAP,BW,DGU,HMV,Il1,Tang} for some results in this direction.  
	On the other hand, if $q<p<r$ and $\alpha>1$, then \eqref{eq:Pconcon-intro} has exactly one nonnegative nonzero solution, thanks to a convex type geometry of the associated energy functional, see \cite{diazsaa}.
	
	In the subhomogeneous case $q < p$ with $\alpha < 0$, nonnegative solutions of \eqref{eq:Pconcon-intro} do not obey, in general, the strong maximum principle.
	As a result, an emergence of the so-called dead core solutions, that is, nonzero solutions vanishing in a subdomain of $\Omega$, can be observed. 
	This fact makes the solutions set of the problem quite complicated. 
	We refer to \cite{An2,An1,DH,HMV,FLS} for the consideration of these and related issues.
	Let us also note that if $q < r < p$ and $\alpha \in [0,1)$, then, as above, the problem \eqref{eq:Pconcon-intro} has exactly one nonnegative nonzero solution, as it follows from \cite{diazsaa}.
	
	A nontrivial multiplicity result for \eqref{eq:Pconcon-intro}, which is somewhat similar to that for the convex-concave problem, can be observed in the superhomogeneous case $p<q<r$ when $\alpha>1$, see \cite{AT,BHD,OS,SS} for closely related results.
	Namely, if $\mu_\alpha>0$ is sufficiently small, then \eqref{eq:Pconcon-intro} has no nonnegative nonzero solution, while if $\mu_\alpha>0$ is large enough, then at least two such solutions exist, see Figure~\ref{fig3:b} below. 
	
	In the case $q=p <r$ and $\alpha>1$, the right-hand side of \eqref{eq:Pconcon-intro} corresponds to the generalized logistic nonlinearity, and it possesses a unique nonnegative nonzero solution for any $\mu_\alpha>\lambda_1(1)$, where $\lambda_1(1)$ is the first eigenvalue of the $p$-Laplacian, see \cite{diazsaa}. 
	We refer to \cite{alama-del,berest,CLSS,drabek-poh,IS}, where the problem of the type \eqref{eq:Pconcon-intro} with an additional weight function (generally, sign-changing) near the superhomogeneous term has been studied.

	II) Let $q< r = p$ and $\alpha \neq 0$. 
	In this case, the multiplier $\|\nabla u\|_p^p/\|u\|_r^{r}$ of the second term in \eqref{eq:Px} is $0$-homogeneous, and hence we cannot impose the normalization \eqref{eq:constr1}. 
	But we can convert the normalization-parameterization order in \eqref{eq:Px} as follows. 
	Normalizing $u$ through the equality  
	\begin{equation}\label{eq:constr-1}
		|\alpha| \frac{\|\nabla u\|_p^p}{\|u\|_q^{q}} = 1
	\end{equation}
	and denoting 
	\begin{equation}\label{eq:nua}
		\nu_\alpha 
		=
		(1-\alpha) \frac{\|\nabla u\|_p^p}{\|u\|_p^{p}},
	\end{equation}
	we observe that $u$ turns to be a solution of the problem 
	\begin{equation}
		\label{eq:Pconcon:r=p}
		\left\{
		\begin{aligned}
			-\Delta_p u &= \nu_\alpha |u|^{p-2}u + \text{sgn}(\alpha)\,|u|^{q-2}u  && \text{in } \Omega, \\
			u &= 0 && \text{on } \partial \Omega.
		\end{aligned}
		\right.
	\end{equation}
	This problem has the same structure as \eqref{eq:Pconcon-intro} with $q=p$, except that the second term in the nonlinearity of \eqref{eq:Pconcon:r=p} is subhomogeneous. 
	In particular, in the case $\alpha < 0$,  solutions of \eqref{eq:Pconcon:r=p} can have dead cores, and we refer to \cite{BT-sub,DHI1,kaji1,KQU2,KQU1} for the consideration of these and related issues. 
	
	III) Let $\alpha = 0$ or $\alpha=1$.
	In either of these cases, \eqref{eq:crit} reduces to the well-known variational problem 
	\begin{equation}\label{eq:crit:b=0}
		\text{crit}\left\{ \|\nabla u\|_p^p\,:\, \|u\|_\sigma^p = 1\right\}, \quad \sigma \in [1,p^*),
	\end{equation}
	whose critical points correspond to 
	solutions of the $(p-1)$-homogeneous Lane-Emden problem 
	\begin{equation}\label{eq:Pconcon:hom}
		\left\{
		\begin{aligned}
			-\Delta_p u &= \lambda \|u\|_\sigma^{p-\sigma} |u|^{\sigma-2}u && \text{in } \Omega, \\
			u &= 0 && \text{on } \partial \Omega.
		\end{aligned}
		\right.
	\end{equation}
	For $\sigma = p$, we end up with the standard eigenvalue problem for the $p$-Laplacian. 
	We refer to \cite{AFI,BDF,BF2,ercole1,FL,KL,OT,Tanaka-BVP} for an overview and some qualitative results on \eqref{eq:crit:b=0} and \eqref{eq:Pconcon:hom}.

	\bigskip
	The reduction of critical points of the Rayleigh quotient $R_\alpha$ to solutions of the problem \eqref{eq:Pconcon-intro} is complemented by the following converse statement. 
	\begin{lemma}\label{lem:reduction}
		Let $r \neq p$ and
		$\mu \in \mathbb{R}$. 
		Let $u \in \W \setminus \{0\}$ be a solution of either
		\begin{equation}
			\label{eq:Pconconx}
			\left\{
			\begin{aligned}
				-\Delta_p u &= \mu |u|^{q-2}u + |u|^{r-2}u  && \text{in } \Omega, \\
				u &= 0 && \text{on } \partial \Omega,
			\end{aligned}
			\right.
		\end{equation}
		or 
		\begin{equation}
			\label{eq:Pconconx2}
			\left\{
			\begin{aligned}
				-\Delta_p u &= \mu |u|^{q-2}u - |u|^{r-2}u  && \text{in } \Omega, \\
				u &= 0 && \text{on } \partial \Omega.
			\end{aligned}
			\right.
		\end{equation}
		Then $u$ is a critical point of $R_\alpha$ with $\alpha = \mu \|u\|_q^q/\|\nabla u\|_p^p$, where $\alpha < 1$ in the case of \eqref{eq:Pconconx}, and $\alpha > 1$ in the case of \eqref{eq:Pconconx2}.
	\end{lemma}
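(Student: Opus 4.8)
The plan is to verify directly that $u$ satisfies the Euler--Lagrange equation \eqref{eq:Px} of $R_\alpha$ for the prescribed value $\alpha = \mu\|u\|_q^q/\|\nabla u\|_p^p$; since, as recorded above, critical points of $R_\alpha$ are in one-to-one correspondence with nonzero weak solutions of \eqref{eq:Px}, this yields the assertion. The whole argument reduces to a single testing identity together with a matching of the two nonlinear coefficients.

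First I would test the given equation against $\varphi = u$. For \eqref{eq:Pconconx} this produces $\|\nabla u\|_p^p = \mu\|u\|_q^q + \|u\|_r^r$, and for \eqref{eq:Pconconx2} the same identity with $-\|u\|_r^r$ in place of $+\|u\|_r^r$. Writing the defining relation $\alpha = \mu\|u\|_q^q/\|\nabla u\|_p^p$ in the form $\mu\|u\|_q^q = \alpha\|\nabla u\|_p^p$, the identity rearranges to
\[ \|u\|_r^r = (1-\alpha)\,\|\nabla u\|_p^p \quad\text{resp.}\quad \|u\|_r^r = (\alpha-1)\,\|\nabla u\|_p^p. \]
Because $u \neq 0$ forces $\|u\|_r^r > 0$ and $\|\nabla u\|_p^p > 0$, I read off $\alpha < 1$ in the case of \eqref{eq:Pconconx} and $\alpha > 1$ in the case of \eqref{eq:Pconconx2}; in particular $\alpha \neq 1$, so that $R_\alpha$ and its correspondence with \eqref{eq:Px} are meaningful.

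It then remains to check that the two weak formulations agree term by term. The $|u|^{q-2}u$-coefficient of \eqref{eq:Px} is $\alpha\,\|\nabla u\|_p^p/\|u\|_q^q$, which equals $\mu$ by the very definition of $\alpha$, matching both \eqref{eq:Pconconx} and \eqref{eq:Pconconx2}. For the $|u|^{r-2}u$-coefficient $(1-\alpha)\,\|\nabla u\|_p^p/\|u\|_r^r$ of \eqref{eq:Px}, I insert the rearranged testing identity: it collapses to $+1$ in the case of \eqref{eq:Pconconx} and to $-1$ in the case of \eqref{eq:Pconconx2}, exactly reproducing the respective right-hand sides. Hence $u$ is a weak solution of \eqref{eq:Px} with the stated $\alpha$, i.e. a critical point of $R_\alpha$.

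Since the argument is a verification, there is no deep analytic obstacle; the only point requiring care is conceptual. The parameter $\alpha$ is not prescribed in advance but is reconstructed from $u$ through the testing identity, so one must confirm that this self-referential choice genuinely forces the $r$-coefficient to be exactly $\pm 1$ and that $\alpha \neq 1$ before the correspondence with \eqref{eq:Px} may be invoked. The standing hypothesis $r \neq p$ keeps us in case I, where \eqref{eq:Px} is the pertinent Euler--Lagrange equation and the forward reduction relies on the normalization \eqref{eq:constr1} (achievable by scaling precisely when $r \neq p$); for $r = p$ the companion reduction takes the different form \eqref{eq:Pconcon:r=p} with the normalization \eqref{eq:constr-1}.
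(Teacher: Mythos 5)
Your proposal is correct and follows essentially the same route as the paper's proof: test the equation with $u$, use the definition of $\alpha$ to rewrite the testing identity as $(1-\alpha)\|\nabla u\|_p^p = \pm\|u\|_r^r$ (which gives the sign of $1-\alpha$), and then observe that the two nonlinear coefficients of \eqref{eq:Px} reduce to $\mu$ and $\pm 1$ respectively. The only difference is that you spell out the case of \eqref{eq:Pconconx2} explicitly, which the paper leaves as ``analogous.''
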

	\begin{proof}
		Assume first that $u$ is a nonzero solution of \eqref{eq:Pconconx}. 
		Testing \eqref{eq:Pconconx} with $u$, we get
		\begin{equation}\label{eq:weak0proof}
			\|\nabla u\|_p^p = \mu \|u\|_q^q + \|u\|_r^r.
		\end{equation}
		Taking $\alpha = \mu \|u\|_q^q/\|\nabla u\|_p^p$
		and substituting $\mu \|u\|_q^q = \alpha \|\nabla u\|_p^p$ into \eqref{eq:weak0proof}, we obtain $(1-\alpha)\|\nabla u\|_p^p = \|u\|_r^r$. This yields $\alpha < 1$. 
		With our choice of $\alpha$, we thus have 
		\begin{equation}\label{eq:lem:reduction:a1x}
			\mu = \alpha \frac{\|\nabla u\|_p^p}{\|u\|_q^q},
			\quad 
			1 = \left(1-\alpha\right) \frac{\|\nabla u\|_p^p}{\|u\|_r^r},
		\end{equation}
		and hence the problem \eqref{eq:Pconconx} can be written in the form \eqref{eq:Px}. 
		This implies that $u$ is a critical point of $R_\alpha$.
		The case of the problem \eqref{eq:Pconconx2} can be justified analogously. 
	\end{proof} 
	
	\begin{remark}\label{rem:r=p}
		In the case $r=p$, the one-to-one correspondence between solutions of \eqref{eq:Pconcon:r=p} (with $\nu_\alpha \in \mathbb{R}$ considered as a parameter) and normalized critical points of $R_\alpha$ (with $\alpha \neq 0$) also takes place and can be proved in much the same way as in Lemma~\ref{lem:reduction}. 
		We refer to Section~\ref{sec:sub:r=p} for further details. 
	\end{remark}
	
	\begin{remark}
		In the case $q \neq p$ and $\alpha \neq 0$, instead of the problem \eqref{eq:Pconcon-intro}, one could equivalently consider the normalization-parameterization of the type \eqref{eq:constr-1}-\eqref{eq:nua} and relate critical points of $R_\alpha$ with solutions of the problem 
		\begin{equation}
			\label{eq:Pconcon:rneqp}
			\left\{
			\begin{aligned}
				-\Delta_p u &= \text{sgn}(\alpha)\,|u|^{q-2}u + \nu_\alpha |u|^{r-2}u  && \text{in } \Omega, \\
				u &= 0 && \text{on } \partial \Omega.
			\end{aligned}
			\right.
		\end{equation}
		We subjectively prefer to deal mainly with \eqref{eq:Pconcon-intro}, since the same parameterization was considered in \cite{ABC} and many other subsequent works. 
	\end{remark}

	\subsection{Main results}
	
	The aim of the present work is to investigate in detail the above-described bijection between normalized critical points of the  Rayleigh quotient $R_\alpha$ and solutions of the problems \eqref{eq:Pconconx}, \eqref{eq:Pconconx2} for different relations between $p,q,r$. 
	Instead of distinguishing a kernel statement of the work, we sum up and overview our results obtained in the subsequent sections:
	\begin{itemize}[label={--}]
		\item In Section~\ref{sec:eigenvalues}, we show the existence of a countable set of critical levels $\lambda_k(\alpha)$ of $R_\alpha$ (a.k.a.\ variational eigenvalues) for $\alpha > \alpha_0$ using the Lusternik-Schnirelmann procedure; see \eqref{eq:alpha-0} for the definition of $\alpha_0$. 
		Some fundamental properties of the ground state level $\lambda_1(\alpha)$ are additionally analyzed. 
		
		\item In Section~\ref{sec:analytic-properties}, we provide upper and lower bounds on $\lambda_k(\alpha)$, investigate the monotonicity and continuity of the mapping $\alpha \mapsto \lambda_k(\alpha)$, and prove that critical points of $R_{\alpha_1}$ and $R_{\alpha_2}$ with $\alpha_1 \neq \alpha_2$ are linearly independent.
		
		\item Section~\ref{sec:level} is devoted to the analysis of the translation level $\mu_\alpha$  and related quantities that connect critical points of $R_\alpha$ with solutions of \eqref{eq:Pconconx}, \eqref{eq:Pconconx2}. We provide several equivalent formulations of $\mu_\alpha$ and study the asymptotic behavior of $\alpha \mapsto \mu_\alpha$ and the corresponding solutions of \eqref{eq:Pconconx}, \eqref{eq:Pconconx2}.  
		Moreover, we show that the value of $\alpha$ uniquely characterizes the sign of the energy functionals 
		and their second derivatives (in certain directions) at normalized critical points of $R_\alpha$. 
		
		\item In Section~\ref{sec:subhom}, we investigate in more detail the subhomogeneous case $q<r \leq p$. 
		We prove the simplicity and isolation of $\lambda_1(\alpha)$ with $\alpha \in [0,1]$ under additional assumptions on $\Omega$. Moreover, we show that any critical point on the level $\lambda>\lambda_1(\alpha)$ must be sign-changing. 
		These facts are used to provide additional facts on the mapping $\alpha \mapsto \mu_\alpha$. 
		
		\item In Section~\ref{sec:superhom}, we consider the superhomogeneous case $p<r$ and prove that any critical point on the sufficiently small level $\lambda>\lambda_1(\alpha)$ must be sign-constant. 
		Additional facts on the mapping $\alpha \mapsto \mu_\alpha$ are also obtained. 
	\end{itemize}
	
	\noindent
	Let us discuss a few practical outcomes of the described bijection: 
	\begin{enumerate}
		\item As we mentioned above, the problems \eqref{eq:Pconconx}, \eqref{eq:Pconconx2} might have a complex behavior of the energy functionals for some cases of parameters, which leads to nontrivial multiplicity results. 
		Such results are proved by exploiting various tools of the nonlinear analysis and critical point theory: optimization with constraints (e.g., over subsets of the Nehari manifold), fibering method, sub- and supersolutions method, fixed-point theorems, etc. 
		Instead of employing these methods, it is significantly easier to prove the existence of solutions to \eqref{eq:Pconconx}, \eqref{eq:Pconconx2} by studying critical points of $R_\alpha$. 
		The results of Section~\ref{sec:level} indicate that some properties of correspondingly characterized solutions (e.g., the behavior of $\mu$ and the value of the energy functional) can be extracted directly from the values of $\alpha$.
		
		\item 
		In addendum to the previous remark, we note that if one considers all nonnegative critical points of $R_\alpha$, then the mapping $\alpha \mapsto \mu_\alpha$ describes the \textit{whole} set of nonnegative solutions of the problems \eqref{eq:Pconconx}, \eqref{eq:Pconconx2} with respect to the parameter $\mu$. 
		
		\item 
		The bijection leads to a classification of solutions of \eqref{eq:Pconconx}, \eqref{eq:Pconconx2} according to the index $k \in \mathbb{N}$ of the critical level $\lambda_k(\alpha)$. 
		
		\item 
		In Section~\ref{sec:energy}, we explicitly characterize the so-called \textit{degenerate} solutions of the convex-concave problem \eqref{eq:Pconconx} as critical points of $R_\alpha$ with $\alpha=\frac{r-p}{r-q}$, see Remark~\ref{rem:concon:inflection}.
		Up to our knowledge, this observation was not known in the literature.
	\end{enumerate}
	
	\noindent In Section~\ref{sec:final-remarks}, we provide a few more remarks on the described bijection.

	\subsection{Relation to the existing literature}\label{sec:remarks}
	
	\begin{enumerate}
		\item Denoting $\alpha^* = \frac{q(r-p)}{p(r-q)}$, we infer from \eqref{eq:rayleigh} the expression 
		\begin{equation}\label{eq:il1}
			(R_{\alpha^*}(u))^{r-q} = 
			\frac{\left(\int_\Omega |\nabla u|^p \, dx \right)^{r-q}}{\left(\int_\Omega |u|^q \, dx \right)^{r-p}
				\left(\int_\Omega |u|^r \, dx \right)^{p-q}}.
		\end{equation}
		The beauty of the right-hand side of this formula is that the exponents $p,q,r$ are cyclically permuted:
		$$
		(p,r,-q), 
		\quad 
		(q,p,-r),
		\quad
		(r,q,-p).
		$$
		To the best of our knowledge, this expression was first found in \cite{Il1} by developing the fibering method towards the study of the convex-concave problem \eqref{eq:Pconconx}. 
		More precisely, denoting
		\begin{equation}\label{eq:il2}
			\Lambda^* 
			= 
			\frac{r-p}{p-q}
			\left(\frac{p-q}{r-q}\right)^\frac{r-q}{r-p}
			\inf_{u \in \W \setminus \{0\}}
			(R_{\alpha^*}(u))^\frac{r-q}{r-p},
		\end{equation}
		it was shown that for any  $\mu< \Lambda^*$ the problem \eqref{eq:Pconconx} with $q<p<r$ admits at least two positive solutions. 
		Moreover, it was proved in \cite[Theorem~1.3]{Il1} that any minimizer $u$ of \eqref{eq:il2} is a zero-energy solution of \eqref{eq:Pconconx} with $\tilde{\mu} = \frac{q}{r} \big(\frac{r}{p}\big)^\frac{r-q}{r-p} \Lambda^*$, that is, 
		\begin{equation}
			E_{\tilde{\mu}}(u) = \frac{1}{p} \|\nabla u\|_p^p - \frac{\tilde{\mu}}{q} \|u\|_q^{q} -
			\frac{1}{r} \|u\|_r^{r} = 0,
		\end{equation}
		where $E_{\mu}$ is the energy functional associated with \eqref{eq:Pconconx}. 
		Further, it was proved in \cite[Theorem~1.1]{IlM} and \cite[Theorem~4.1]{QSS} that the $k$-th Lusternik-Schnirelmann critical point of $R_{\alpha^*}$ is a zero-energy solution of \eqref{eq:Pconconx} for $\mu=\mu_k$, where $\mu_k \to +\infty$ as $k \to +\infty$, compare with Remarks~\ref{rem:concon:zeroenergy} and \ref{rem:concon:inflection} below. 
		An analog of \eqref{eq:il2} in the subhomogeneous case $q<r<p$ and $\mu<0$ was considered in \cite{An1}.
		We also refer to \cite{IlG} for another systematic way of deriving $\Lambda^*$ and related critical values. 
		
		The special value $\alpha^*$ plays an important role in our analysis, as it also appears in the expression of the translation level $\mu_\alpha$, see \eqref{eq:mu:homogen} in Section~\ref{sec:level}.

		\item In the work \cite{NTV}, the authors investigated the following family of maximization problems with two constraints:
		\begin{equation}\label{eq:NTV}
			\max \left\{
			\int_\Omega |u|^r \,dx\,:\, u \in W_0^{1,2}(\Omega),~ \int_\Omega u^2 \,dx = 1,~ \intO |\nabla u|^2 \,dx = \beta
			\right\},
		\end{equation}
		where $2<r<2^*$ and $\beta>0$ is a parameter. 
		Temporarily denoting by $\lambda_1(\Omega)$ the first eigenvalue of the Dirichlet Laplacian in $\Omega$, it was shown that, for any $\beta > \lambda_1(\Omega)$, any maximizer $u$ of \eqref{eq:NTV} satisfies
		\begin{equation}\label{eq:NTV:eq}
			-\Delta u + \lambda_\beta u = \mu_\beta |u|^{r-2}u \quad \text{in}~\Omega,
			\quad 
			\int_\Omega u^2 \,dx = 1,
			\quad 
			\intO |\nabla u|^2 \,dx = \beta,
		\end{equation}
		for some $\lambda_\beta > -\lambda_1(\Omega)$ and $\mu_\beta>0$. 
		Asymptotics of the involved quantities were studied in detail as $\beta \to \lambda_1(\Omega)+$ and $\beta \to +\infty$. 
		These results were used in \cite{NTV} to prove the existence of positive least energy solutions of the equation in \eqref{eq:NTV:eq} with prescribed $L^2(\Omega)$-mass, and to analyze their stability when $\Omega$ is a ball. 
		Moreover, a minimization problem analogous to \eqref{eq:NTV} was also described. 
		
		Generalizing \eqref{eq:NTV} in a straightforward manner, one could consider the problem
		\begin{equation}\label{eq:NTV2}
			\text{crit} \left\{
			\int_\Omega |u|^r \,dx\,:\, u \in \W,~ \int_\Omega |u|^q \,dx = 1,~ \intO |\nabla u|^p \,dx = \beta
			\right\}. 
		\end{equation}
		Critical points of \eqref{eq:NTV2} satisfy
		\begin{equation}\label{eq:NTV:eq2}
			-\Delta_p u + \lambda_\beta |u|^{q-2} u = \mu_\beta |u|^{r-2}u \quad \text{in}~\Omega,
			\quad 
			\int_\Omega |u|^q \,dx = 1,
			\quad 
			\intO |\nabla u|^p \,dx = \beta,
		\end{equation}
		for certain Lagrange multipliers $\lambda_\beta$, $\mu_\beta$, which is the problem of the type \eqref{eq:Pconconx}, \eqref{eq:Pconconx2} with additional constraints. 
		
		Similarly to \eqref{eq:NTV} and \eqref{eq:NTV2}, our optimization problem \eqref{eq:crit} also involves a complex constraint depending on a parameter. 
		In the case $\alpha \neq 1$, one can rewrite \eqref{eq:crit} as
		\begin{equation}\label{eq:crit2x}
			\text{crit} \left\{
			\int_\Omega |u|^r \,dx\,\,:\, u \in \W,~ \intO |\nabla u|^p \,dx = \left(\int_\Omega |u|^q \,dx\right)^\frac{\alpha p}{q}
			\right\}.
		\end{equation}
		It is seen from the comparison of \eqref{eq:NTV2} and \eqref{eq:crit2x} that these problems are not equivalent.

		\item A natural relation of the problem \eqref{eq:crit} to the Gagliardo-Nirenberg interpolation inequality arises in the case $\alpha<0$. 
		This inequality states that if $\theta \in [0,1]$ satisfies 
		\begin{equation}\label{eq:GN0:x}
			\frac{1}{r} 
			= 
			\theta \left(\frac{1}{p}-\frac{1}{N}\right)
			+ 
			\frac{1-\theta}{q},
		\end{equation}
		then there exists $C>0$ such that 
		\begin{equation}\label{eq:GN1:x}
			\|u\|_{r} \leq C \|\nabla u\|_p^\theta \|u\|_q^{1-\theta}
			\quad \text{for any}~ u \in W^{1,p}(\mathbb{R}^N).
		\end{equation}
		In fact, since we assume $q<r<p^*$, \eqref{eq:GN0:x} always holds with 
		\begin{equation}\label{eq:GN0:x2}
			\theta = \frac{\frac{1}{q}-\frac{1}{r}}{\frac{1}{q}+\frac{1}{N}-\frac{1}{p}} \in (0,1).
		\end{equation}
		For $\alpha \in (\alpha_0,0)$, the Rayleigh quotient $R_\alpha$ can be derived from a combination of \eqref{eq:GN1:x} with the Friedrichs inequality. 
		Here $\alpha_0<0$ is defined in \eqref{eq:alpha-0}.
		This fact is intrinsically used in Lemma~\ref{lem:wsc} and Proposition~\ref{prop:alpha<0} below to obtain the attainability and separation from zero of the ground state level $\lambda_1(\alpha)$ of $R_\alpha$ (see \eqref{eq:lambda}). 
		In the case $\alpha=\alpha_0$, the best constant in \eqref{eq:GN1:x} coincides with $\lambda_1(\alpha_0)^{\frac{1}{(\alpha_0-1) p}}$.
		
		The attainability of the best constant in \eqref{eq:GN1:x} was studied, e.g., in \cite{DPD,LW,Wein}. 
		Naturally, properties of the corresponding Euler-Lagrange equation (cf.\ \eqref{eq:Px}) were extensively used therein.

	\end{enumerate}
	
	\section{Critical levels}\label{sec:eigenvalues}
	In this section, we discuss the existence and main properties of certain critical levels of the Rayleigh quotient $R_\alpha$. 
	Sometimes, we employ the notation $R_{\alpha}(u;\Omega)$ to highlight the dependence on  $\Omega$.
	
	We start by introducing some notation and providing a few auxiliary results. 
	Throughout the work, we define the ``normalization'' functional $I_\alpha$ as
	$$
	I_\alpha(u) = \|u\|_q^{\alpha p} \|u\|_r^{(1-\alpha) p}, \quad u \in \W \setminus \{0\},
	$$
	and denote the accordingly constrained subset of $\W$ as 
	\begin{equation}\label{eq:M}
		\mathcal{M}_\alpha
		=
		\left\{u\in \W \setminus\{0\}\,:\,  I_\alpha(u) = 1\right\}.
	\end{equation}
	Moreover, we introduce a special negative value of the parameter $\alpha$:  
	\begin{equation}\label{eq:alpha-0}
		\alpha_0 
		= 
		\frac{qr(N-p) -Nqp}{Np(r-q)}
		\equiv
		1-\frac{1}{\theta},  
	\end{equation}
	where $\theta \in (0,1)$ is the parameter in the Gagliardo-Nirenberg inequality \eqref{eq:GN1:x} given by \eqref{eq:GN0:x2}.  
	This value plays an important role in the analysis of critical levels of $R_\alpha$.  
	In particular, for any fixed $u \in \W \setminus \{0\}$ and $t>0$ we have the following scaling property of $R_\alpha$:
	\begin{equation}\label{eq:scaling:R}	
		R_\alpha(u(t^{-1}\cdot); t \Omega) 
		= 
		t^{N-p-\left(\frac{\alpha p}{q} + \frac{(1-\alpha)p}{r}\right)N} 
		R_\alpha(u;\Omega)
		\equiv 
		t^{\frac{Np(r-q)}{qr}(\alpha_0-\alpha)} R_\alpha(u;\Omega). 
	\end{equation}
	
	In what follows, we will frequently use the the standard H\"older inequality 
	\begin{equation}\label{eq:holder}
		\|u\|_q \leq |\Omega|^{\frac{1}{q}-\frac{1}{r}} \|u\|_r,
		\quad u \in \W,
	\end{equation}
	which holds thanks to our default assumption $1 \leq q<r$ and the finiteness of $|\Omega|$. 
	Noting that the equality in \eqref{eq:holder} happens if and only if $u$ is a constant function, we see that \eqref{eq:holder} is strict whenever $u \not\equiv 0$.

	\medskip
	We provide the following general convergence result. 
	\begin{lemma}\label{lem:wsc}
		Let $\{\alpha_n\} \subset (\alpha_0,+\infty)$ converge to $\alpha > \alpha_0$, and let  
		$\{u_n\} \subset \W$ weakly converge to $u$ in $\W$.  
		If $\{I_{\alpha_n}(u_n)\}$ is separated from zero, 
		then $u \not\equiv 0$ and 
		$I_{\alpha_n}(u_n) \to I_\alpha(u)$ as $n \to +\infty$.  
		In particular, 
		$I_\alpha^{-1}([a,b])$ is weakly sequentially closed in $\W$ for any $\alpha > \alpha_0$ and $0<a \le b\le +\infty$.
	\end{lemma}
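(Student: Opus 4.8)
The plan is to reduce the whole statement to the single claim that $u \not\equiv 0$, and to isolate the only delicate point, which occurs when $\alpha<0$. Since $\Omega$ is bounded and $1\le q<r<p^*$, both embeddings $\W \hookrightarrow L^q(\Omega)$ and $\W \hookrightarrow L^r(\Omega)$ are compact (Rellich--Kondrachov). Hence the weak convergence $u_n \rightharpoonup u$ in $\W$ yields the strong convergences $u_n \to u$ in $L^q(\Omega)$ and in $L^r(\Omega)$, and in particular $\|u_n\|_q \to \|u\|_q$ and $\|u_n\|_r \to \|u\|_r$. If $u \not\equiv 0$, then $\|u\|_q,\|u\|_r>0$, and the desired convergence $I_{\alpha_n}(u_n)\to I_\alpha(u)$ follows immediately from the joint continuity of $(a,b,s)\mapsto a^{sp}b^{(1-s)p}$ at the point $(\|u\|_q,\|u\|_r,\alpha)$, using $\alpha_n\to\alpha$. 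Thus it suffices to prove $u\not\equiv 0$.

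I would establish $u\not\equiv 0$ by contradiction. Suppose $u\equiv 0$; then $\|u_n\|_q\to 0$ and $\|u_n\|_r\to 0$, and I aim to deduce $I_{\alpha_n}(u_n)\to 0$, contradicting the separation from zero $I_{\alpha_n}(u_n)\ge c>0$. This is the step where the hypothesis $\alpha>\alpha_0$ is essential: for $\alpha<0$ the factor $\|u_n\|_q^{\alpha_n p}$ blows up, so that $\|u_n\|_q^{\alpha_n p}\|u_n\|_r^{(1-\alpha_n)p}$ is an indeterminate $\infty\cdot 0$ form, and the rate must be controlled. The control comes from the Gagliardo--Nirenberg inequality \eqref{eq:GN1:x}: since $\{u_n\}$ is bounded in $\W$, say $\|\nabla u_n\|_p\le M$, it gives $\|u_n\|_r \le C'\|u_n\|_q^{1-\theta}$ with $C'=CM^\theta$. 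For indices with $\alpha_n<1$ (so that $(1-\alpha_n)p>0$), raising this to the power $(1-\alpha_n)p$ and multiplying by $\|u_n\|_q^{\alpha_n p}$ yields
\[
I_{\alpha_n}(u_n) \;\le\; (C')^{(1-\alpha_n)p}\,\|u_n\|_q^{\,p\theta(\alpha_n-\alpha_0)},
\]
where the exponent collapses to exactly $p\theta(\alpha_n-\alpha_0)$ upon using $\frac{1-\theta}{\theta}=-\alpha_0$, i.e.\ the identity $\alpha_0 = 1-\tfrac1\theta$ from \eqref{eq:alpha-0}. As $\alpha_n\to\alpha>\alpha_0$, this exponent stays bounded away from $0$ by a positive constant for large $n$, while the prefactor remains bounded; hence $I_{\alpha_n}(u_n)\to 0$ since $\|u_n\|_q\to 0$.

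For the remaining indices with $\alpha_n\ge 1$ (where $(1-\alpha_n)p\le 0$), I would instead invoke the H\"older inequality \eqref{eq:holder} in the form $\|u_n\|_r \ge |\Omega|^{\frac{1}{r}-\frac{1}{q}}\|u_n\|_q$; inserting it into $\|u_n\|_r^{(1-\alpha_n)p}$ and simplifying gives $I_{\alpha_n}(u_n)\le |\Omega|^{(\frac1r-\frac1q)(1-\alpha_n)p}\,\|u_n\|_q^{p}\to 0$. Since for every $n$ one of the two bounds applies and both tend to $0$, we get $I_{\alpha_n}(u_n)\to 0$, the desired contradiction, so $u\not\equiv 0$. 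Combined with the first paragraph, this proves the convergence $I_{\alpha_n}(u_n)\to I_\alpha(u)$. For the ``in particular'' assertion I would apply the proven statement to an arbitrary sequence $u_n\rightharpoonup u$ with $u_n\in I_\alpha^{-1}([a,b])$, taking the constant sequence $\alpha_n\equiv\alpha$: the inequality $I_\alpha(u_n)\ge a>0$ furnishes the separation from zero, so $u\not\equiv 0$ and $I_\alpha(u_n)\to I_\alpha(u)$, whence $a\le I_\alpha(u)\le b$ and $u\in I_\alpha^{-1}([a,b])$. The main obstacle is precisely the indeterminate-form estimate for $\alpha<0$ in the contradiction step, which is exactly what the Gagliardo--Nirenberg inequality and the threshold value $\alpha_0$ are designed to resolve; the rest is a routine combination of compactness and continuity.
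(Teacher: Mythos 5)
Your proposal is correct and follows essentially the same route as the paper's proof: the same Rellich--Kondrachov compactness step, the same H\"older bound for the indices with $\alpha_n\ge 1$, and the same Gagliardo--Nirenberg estimate combined with the identity $\alpha_0=1-\tfrac{1}{\theta}$ (equivalently $(1-\theta)(1-\alpha)p+\alpha p>0$ iff $\alpha>\alpha_0$) for the indices with $\alpha_n<1$. The only differences are cosmetic: you package the argument as a contradiction assuming $u\equiv 0$, and you merge the paper's cases $\alpha_n\in[0,1]$ and $\alpha_n\in(\alpha_0,0)$ into a single Gagliardo--Nirenberg estimate.
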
 
	\begin{proof} 
		By the Rellich-Kondrachov theorem, 
		$u_n \to u$ strongly in $L^r(\Omega)$ (and hence in $L^q(\Omega)$), up to a subsequence. 
		In the case $\{\alpha_n\} \subset [0,1]$, 
		it is easy to see that 
		$I_{\alpha_n}(u_n)\to I_\alpha(u)$ as $n\to +\infty$, so that $I_\alpha(u)>0$ and $u \not\equiv 0$. 
		
		Assume that $\alpha_n>1$ for infinitely many $n$. 
		Recalling that $q<r$ and applying the H\"older inequality \eqref{eq:holder}, 
		we get 
		$$
		I_{\alpha_n} (u_n)=\dfrac{\|u_n\|_q^{\alpha_n p}}{\|u_n\|_r^{(\alpha_n-1)p}}
		\le 
		|\Omega|^{(\frac{1}{q}-\frac{1}{r})\alpha_n p}
		\|u_n\|_r^{p}.
		$$
		Since $\{I_{\alpha_n}(u_n)\}$ is separated from zero, we obtain $\|u\|_r > 0$, and hence  
		$I_{\alpha_n}(u_n) \to I_{\alpha}(u)$ as $n \to +\infty$.  
		
		Finally, assume that $\alpha_n \in (\alpha_0,0)$ for infinitely many $n$, and $\alpha \in (\alpha_0,0]$. 
		Noting that $\{I_{\alpha_n}(u_n)\}$ is separated from zero and applying the Gagliardo-Nirenberg inequality \eqref{eq:GN1:x}, we get
		\begin{equation}\label{eq:conv:neg:alpha}
			0<\inf_{n \in \mathbb{N}} I_{\alpha_n}(u_n)
			\leq 
			\|u_n\|_q^{\alpha_n p} 
			\|u_n\|_r^{(1-\alpha_n)p} 
			\leq
			C^{(1-\alpha_n)p} \|u_n\|_q^{\alpha_n p} \|\nabla u_n\|_p^{(1-\alpha_n) \theta p} \|u_n\|_q^{(1-\theta)(1-\alpha_n) p}
		\end{equation}
		for any sufficiently large $n$, 
		where $C>0$ and $\theta \in (0,1)$ are given by \eqref{eq:GN1:x}, \eqref{eq:GN0:x2}. 
		Since 
		the weak convergence of $\{u_n\}$ in $\W$ implies that 
		$\{\|\nabla u_n\|_p\}$ is bounded, there exists $C_1>0$ such that
		$$
		\|u_n\|_q^{(1-\theta)(1-\alpha_n) p + \alpha_n p} \geq C_1
		\quad \text{for any sufficiently large}~ n \in \mathbb{N}.
		$$
		By the definition \eqref{eq:alpha-0} of $\alpha_0$, our assumption $\alpha > \alpha_0$ is equivalent to
		$$
		(1-\theta) (1-\alpha) p + \alpha p > 0.
		$$
		Therefore, we conclude that $\{\|u_n\|_q\}$ is separated from zero. 
		Consequently, we get $u \not\equiv 0$ and $I_{\alpha_n}(u_n) \to I_{\alpha}(u)$ as $n \to +\infty$.  
	\end{proof}
	
	\begin{lemma}\label{lem:M}
		Let $\alpha>\alpha_0$. 
		Then $\mathcal{M}_\alpha$ is a weakly sequentially closed $C^1$-manifold. 
	\end{lemma}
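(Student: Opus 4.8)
The plan is to treat the two assertions separately, observing that the weak sequential closedness is essentially already established. Since $\mathcal{M}_\alpha = I_\alpha^{-1}(\{1\}) = I_\alpha^{-1}([1,1])$, the weak sequential closedness of $\mathcal{M}_\alpha$ follows at once from the concluding statement of Lemma~\ref{lem:wsc} applied with $a = b = 1$. Hence the substantive part of the proof is to verify the $C^1$-manifold structure, which I would do through the regular value theorem.

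First I would record that $I_\alpha$ is continuously differentiable on the open set $\W \setminus \{0\}$. Writing $A(u) = \|u\|_q^q$ and $B(u) = \|u\|_r^r$, so that $I_\alpha(u) = A(u)^{\alpha p/q}\,B(u)^{(1-\alpha)p/r}$, the continuous embeddings $\W \hookrightarrow L^q(\Omega)$ and $\W \hookrightarrow L^r(\Omega)$ (valid since $q < r < p^*$) guarantee that $A$ and $B$ are $C^1$ on $\W$, with
\[
\langle A'(u),v\rangle = q \intO |u|^{q-2}uv\,dx, \qquad \langle B'(u),v\rangle = r \intO |u|^{r-2}uv\,dx.
\]
Since $A(u), B(u) > 0$ whenever $u \neq 0$, the functional $I_\alpha$ is then $C^1$ on $\W \setminus \{0\}$ as a composition of these $C^1$ maps with the smooth power functions.

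The crucial step is to show that $1$ is a regular value of $I_\alpha$, that is, $I_\alpha'(u) \neq 0$ for every $u \in \mathcal{M}_\alpha$. The cleanest route is to pair the derivative with $u$ itself: the chain rule together with $\langle A'(u),u\rangle = qA(u)$ and $\langle B'(u),u\rangle = rB(u)$ gives
\[
\langle I_\alpha'(u), u\rangle = I_\alpha(u)\left(\alpha p\,\frac{\intO |u|^q\,dx}{\|u\|_q^q} + (1-\alpha) p\,\frac{\intO |u|^r\,dx}{\|u\|_r^r}\right) = p\, I_\alpha(u),
\]
which merely reflects the $p$-homogeneity of $I_\alpha$ via Euler's identity. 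For $u \in \mathcal{M}_\alpha$ one has $I_\alpha(u) = 1$, so $\langle I_\alpha'(u), u\rangle = p > 0$ and therefore $I_\alpha'(u) \neq 0$. Consequently $1$ is a regular value, and the implicit function theorem (submersion theorem) yields that $\mathcal{M}_\alpha = I_\alpha^{-1}(\{1\})$ is a $C^1$-manifold of codimension one in $\W$.

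I do not expect a genuine obstacle here: the statement reduces to combining the already-proved Lemma~\ref{lem:wsc} with a routine regular-value computation. The only point requiring mild care is the differentiability of the $L^q$-component of $I_\alpha$ at functions vanishing on sets of positive measure, which is where the $C^1$-regularity of the underlying norm functionals should be invoked explicitly (and is unproblematic for $q > 1$).
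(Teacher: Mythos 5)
Your proof is correct and takes essentially the same approach as the paper: the Euler identity $\langle I_\alpha'(u),u\rangle = p\,I_\alpha(u) = p$ for $u\in\mathcal{M}_\alpha$ shows $1$ is a regular value and hence gives the $C^1$-manifold structure, while Lemma~\ref{lem:wsc} gives weak sequential closedness. The additional detail you provide on the regular value theorem and the differentiability of the norm functionals is just an expansion of what the paper dismisses as ``easy to verify.''
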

	\begin{proof}
		It is easy to verify that $I_\alpha$ is continuously differentiable on $\W \setminus \{0\}$, and  
		$$
		\langle I_\alpha'(u), u \rangle= p I_\alpha(u)=p \quad 
		{\rm for}\ u\in \mathcal{M}_\alpha.
		$$ 
		Thus, $\mathcal{M}_\alpha$ is a $C^1$-manifold.
		Taking any sequence $\{u_n\} \subset \mathcal{M}_\alpha$ weakly converging in $\W$ to some $u$, we conclude from Lemma~\ref{lem:wsc} that $u \not\equiv 0$ and $u \in \mathcal{M}_\alpha$. 
		That is, $\mathcal{M}_\alpha$ is weakly sequentially closed. 
	\end{proof}

	\begin{remark}\label{rem:reg}
		Any critical point $u$ of $R_\alpha$ belongs to $L^\infty(\Omega)$. 
		In the case $r \neq p$ and $\alpha \neq 1$, this fact follows from Lemma~\ref{lem:reduction} and the standard bootstrap arguments applied to solutions of the problems \eqref{eq:Pconconx}, \eqref{eq:Pconconx2}.  
		The same holds in the the remaining cases of parameters, see \eqref{eq:Pconcon:hom} and Remark~\ref{rem:r=p}.  
		Consequently, we also have $u \in C_{\text{loc}}^{1,\vartheta}(\Omega)$ for some $\vartheta \in (0,1)$, see \cite{DiBenedetto,tolksdorf}.
		If we further assume that $\Omega$ is of class $C^{1,\theta}$ for some $\theta \in (0,1)$, then $u \in C^{1,\vartheta}(\overline{\Omega})$, see \cite{Lieberman} and also \cite[Appendix~A]{BT-ant}.
	\end{remark}
	
	\begin{remark}
		If $u$ is a critical point of $R_\alpha$ with $\lambda = R_\alpha(u)$, then the problem \eqref{eq:Px} can be equivalently written as
		\begin{equation}\label{eq:P}
			\left\{
			\begin{aligned}
					-\Delta_p u &= \lambda \left(\alpha \|u\|_q^{\alpha p - q} \|u\|_r^{(1-\alpha) p} |u|^{q-2}u + (1-\alpha) \|u\|_q^{\alpha p} \|u\|_r^{(1-\alpha)p-r} |u|^{r-2}u \right) && \text{in } \Omega, \\
					u &= 0 && \text{on } \partial \Omega,
				\end{aligned}
			\right.
		\end{equation} 
		which generalizes the form of the problem \eqref{eq:Pconcon:hom}. 
	\end{remark}

	\subsection{First eigenvalue}
	Denote by $\lambda_1(\alpha)$ the ground state level of $R_\alpha$, that is, 
	\begin{equation}\label{eq:lambda}
		\lambda_1(\alpha) 
		= 
		\inf_{u \in \W \setminus \{0\}} 
		\frac{\|\nabla u\|_p^p}{\|u\|_q^{\alpha p} \|u\|_r^{(1-\alpha) p}}
		\equiv
		\inf_{u \in \W \setminus \{0\}} R_\alpha (u)
		\equiv
		\inf_{u \in \mathcal{M}_\alpha} \|\nabla u\|_p^p.
	\end{equation}
	Occasionally, we use the notation  $\lambda_1(\alpha;\Omega)$ to emphasize the dependence on $\Omega$.

	Let us justify that for $\alpha>\alpha_0$ the ground state level is a critical level of $R_\alpha$, so that $\lambda_1(\alpha)$ can also be called the first eigenvalue.
	\begin{proposition}\label{prop:alpha<0}
		We have 
		\begin{equation}\label{eq:alpha0}
			\lambda_1(\alpha) > 0 
			\quad \text{if and only if} \quad 
			\alpha \geq \alpha_0.
		\end{equation}
		Moreover, $\lambda_1(\alpha)$ is attained if $\alpha > \alpha_0$, and it is not attained if $\alpha < \alpha_0$.
	\end{proposition}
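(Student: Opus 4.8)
The plan is to establish four facts separately: positivity of $\lambda_1(\alpha)$ for $\alpha \geq \alpha_0$, vanishing of $\lambda_1(\alpha)$ for $\alpha < \alpha_0$, attainability for $\alpha > \alpha_0$, and non-attainability for $\alpha < \alpha_0$. The first two together yield the equivalence \eqref{eq:alpha0}, while non-attainability for $\alpha < \alpha_0$ is immediate once $\lambda_1(\alpha) = 0$ is known there, since $\|\nabla u\|_p > 0$ for every $u \in \W \setminus \{0\}$ by the Friedrichs inequality.

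For the positivity I would produce a uniform lower bound $R_\alpha(u) \geq c > 0$ by estimating $I_\alpha(u)$ from above by a multiple of $\|\nabla u\|_p^p$. When $\alpha \geq 1$ (so $(1-\alpha)p \leq 0$) I would use only the H\"older inequality \eqref{eq:holder} together with Friedrichs: from $\|u\|_q \leq |\Omega|^{1/q-1/r}\|u\|_r$ one gets $I_\alpha(u) \leq C \|u\|_r^p \leq C' \|\nabla u\|_p^p$. When $\alpha \in [\alpha_0, 1)$ I would instead invoke the Gagliardo-Nirenberg inequality \eqref{eq:GN1:x}, exactly as in the estimate \eqref{eq:conv:neg:alpha} of Lemma~\ref{lem:wsc}: raising \eqref{eq:GN1:x} to the power $(1-\alpha)p > 0$ gives
\[
I_\alpha(u) \leq C^{(1-\alpha)p}\,\|\nabla u\|_p^{(1-\alpha)\theta p}\,\|u\|_q^{\alpha p + (1-\theta)(1-\alpha)p}.
\]
The decisive bookkeeping is that the residual exponent on $\|u\|_q$ equals $\alpha p + (1-\theta)(1-\alpha)p$, which by the definition \eqref{eq:alpha-0} of $\alpha_0$ is nonnegative precisely when $\alpha \geq \alpha_0$; absorbing this factor via the Sobolev embedding $\|u\|_q \leq C_2 \|\nabla u\|_p$ and noting that the exponents on $\|\nabla u\|_p$ then add up exactly to $p$ (since $(1-\alpha)\theta + \alpha + (1-\theta)(1-\alpha) = 1$), I obtain $I_\alpha(u) \leq C'' \|\nabla u\|_p^p$ and hence $\lambda_1(\alpha) > 0$. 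At the endpoint $\alpha = \alpha_0$ the residual exponent vanishes and the estimate reduces to the Gagliardo-Nirenberg inequality itself, recovering the relation between $\lambda_1(\alpha_0)$ and the best Gagliardo-Nirenberg constant mentioned after \eqref{eq:GN0:x2}.

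For the vanishing of $\lambda_1(\alpha)$ when $\alpha < \alpha_0$, I would exploit the scaling identity \eqref{eq:scaling:R}. After a translation we may assume a ball $B$ centered at the origin satisfies $B \subseteq \Omega$. Fixing any $v \in W_0^{1,p}(B) \setminus \{0\}$ and setting $v_t(x) = v(x/t)$ for $t \in (0,1]$, the function $v_t$ is supported in $tB \subseteq \Omega$, so extending it by zero and applying \eqref{eq:scaling:R} with base domain $B$ yields
\[
\lambda_1(\alpha;\Omega) \leq R_\alpha(v_t;\Omega) = R_\alpha(v_t;tB) = t^{\frac{Np(r-q)}{qr}(\alpha_0-\alpha)}\,R_\alpha(v;B).
\]
Since $\alpha < \alpha_0$ makes the exponent of $t$ strictly positive, letting $t \to 0^+$ forces $\lambda_1(\alpha;\Omega) \leq 0$; as $R_\alpha \geq 0$ always, we conclude $\lambda_1(\alpha) = 0$. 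In particular $\lambda_1(\alpha)$ cannot be attained, since any minimizer $u \in \W \setminus \{0\}$ would satisfy $\|\nabla u\|_p^p = 0$.

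Finally, for attainability when $\alpha > \alpha_0$ I would run the direct method on $\mathcal{M}_\alpha$: take a minimizing sequence $\{u_n\} \subset \mathcal{M}_\alpha$ with $\|\nabla u_n\|_p^p \to \lambda_1(\alpha)$, which is bounded in $\W$ and hence, along a subsequence, converges weakly to some $u$. By Lemma~\ref{lem:M} the constraint set $\mathcal{M}_\alpha$ is weakly sequentially closed, so $u \in \mathcal{M}_\alpha$ (in particular $u \not\equiv 0$), and weak lower semicontinuity of $u \mapsto \|\nabla u\|_p^p$ gives $\|\nabla u\|_p^p \leq \lambda_1(\alpha)$; combined with $u \in \mathcal{M}_\alpha$ this forces equality and exhibits $u$ as a minimizer. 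The one genuinely delicate point is the weak sequential closedness of $\mathcal{M}_\alpha$, i.e.\ guaranteeing that the weak limit does not collapse to zero; this is precisely where the threshold $\alpha > \alpha_0$ enters, and it is already supplied by Lemma~\ref{lem:wsc}. Thus the main obstacle is concentrated in the negative-$\alpha$ regime, where both the positivity bound and the non-vanishing of weak limits hinge on the Gagliardo-Nirenberg inequality and on the sharp role of $\alpha_0$.
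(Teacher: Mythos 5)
Your proposal is correct, and all four pieces (positivity, vanishing, attainability, non-attainability) go through; the scaling argument for $\alpha<\alpha_0$, the use of Lemma~\ref{lem:wsc} (via Lemma~\ref{lem:M}) for attainability, and the Gagliardo--Nirenberg computation at $\alpha=\alpha_0$ match the paper's proof essentially verbatim. The one place you organize things differently is the positivity for $\alpha>\alpha_0$: the paper obtains it as a free consequence of attainability (a minimizing sequence on $\mathcal{M}_\alpha$ has a nonzero weak limit by Lemma~\ref{lem:wsc}, and the resulting minimizer has positive Dirichlet energy), whereas you prove a direct uniform bound $I_\alpha(u)\le C\|\nabla u\|_p^p$ --- via H\"older and Friedrichs for $\alpha\ge 1$, and via Gagliardo--Nirenberg plus the embedding $\|u\|_q\le C_2\|\nabla u\|_p$ for $\alpha\in[\alpha_0,1)$, with the exponent bookkeeping $(1-\alpha)\theta+\alpha+(1-\theta)(1-\alpha)=1$ checking out. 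Your route buys an explicit quantitative lower bound on $\lambda_1(\alpha)$ valid uniformly on $[\alpha_0,+\infty)$ and treats the endpoint $\alpha=\alpha_0$ as a degenerate case of the same estimate rather than a separate argument; the paper's route is slightly more economical since it reuses the compactness machinery it needs anyway for attainability. Both are complete proofs.
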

	\begin{proof}
		First, we prove that $\lambda_1(\alpha)=0$ whenever $\alpha<\alpha_0$. 
		We will use scaling arguments. 
		Assume, without loss of generality, that $0 \in \Omega$, and let $\delta>0$ be such that $B_\delta(0) \subset \Omega$. 
		Taking any $u \in C_0^\infty(B_\delta(0)) \setminus \{0\}$ and $t > 0$, we get from \eqref{eq:scaling:R} that 
		$$
		R_\alpha(u(t^{-1}\cdot); t B_\delta(0)) 
		= 
		t^{\frac{Np(r-q)}{qr}(\alpha_0-\alpha)} R_\alpha(u;B_\delta(0)). 
		$$
		Consequently, if $\alpha < \alpha_0$, then $R_\alpha(u(t^{-1}\cdot);tB_\delta(0)) \to 0$ as $t \to 0$. 
		Denoting $v_t(\cdot) = u(t^{-1}\cdot) \in C_0^\infty(tB_\delta(0))$ and noting that $tB_\delta(0) = B_{t \delta}(0)\subset \Omega$ for $t \in (0,1]$, we obtain
		$$
		0 \leq \lambda_1(\alpha;\Omega) \leq 
		R_\alpha(v_t;B_{t \delta}(0))
		\to 0
		\quad \text{as}~ t \to 0,
		$$
		which implies that $\lambda_1(\alpha;\Omega) = 0$ for $\alpha<\alpha_0$. 
		Clearly, $\lambda_1(\alpha;\Omega)$ is not attained for $\alpha<\alpha_0$.
		
		Second, we prove that $\lambda_1(\alpha)>0$ whenever $\alpha \geq \alpha_0$. 
		The case $\alpha > \alpha_0$, together with the attainability of $\lambda_1(\alpha)$, follows from Lemma~\ref{lem:wsc} applied to a minimizing sequence $\{u_n\} \subset \mathcal{M}_\alpha$ of $\lambda_1(\alpha)$.     
		The case $\alpha=\alpha_0$ corresponds to the pure Gagliardo-Nirenberg inequality \eqref{eq:GN1:x}. 
		Namely, raising \eqref{eq:GN1:x} to the power of $(1-\alpha_0) p$, we get
		\begin{equation}\label{eq:GN1x0}
			\|u\|_{r}^{(1-\alpha_0) p} \leq C^{(1-\alpha_0) p} \|\nabla u\|_p^{\theta (1-\alpha_0) p} \|u\|_q^{(1-\theta) (1-\alpha_0) p}
			\quad \text{for any}~ u \in \W.
		\end{equation}
		It is not hard to see that $(1-\theta) (1-\alpha_0) p = -\alpha_0 p$.
		Therefore, we arrive at
		$$
		\|u\|_q^{\alpha_0 p} \|u\|_{r}^{(1-\alpha_0) p} \leq C^{(1-\alpha_0) p} \|\nabla u\|_p^p
		\quad \text{for any}~ u \in \W.
		$$
		Since $C>0$ does not depend on $u$, we obtain $\lambda_1(\alpha_0) \geq C^{(\alpha_0-1) p} > 0$. 
	\end{proof}
	
	\begin{remark}
		The attainability of $\lambda_1(\alpha_0)$ or, equivalently, the best constant in the Gagliardo-Nirenberg inequality \eqref{eq:GN1:x} depends on the relation between $p,q,r$. 
		For instance, if the parameters satisfy the assumptions of \cite[Theorem~1.2]{DPD}, then \eqref{eq:GN1:x} does not admit optimizers in $\W$, and hence $\lambda_1(\alpha_0)$ is not attained. 
		On the other hand, under the assumptions of \cite[Theorem~3.1]{DPD}, $\lambda_1(\alpha_0)$ is attained on a family of radially symmetric bump-type functions with compact supports in $\Omega$.  
	\end{remark}

	\begin{remark}
		Let us observe two straightforward properties of $\lambda_1(\alpha)$ which hold for any $\alpha \in \mathbb{R}$ and do not depend on the attainability of $\lambda_1(\alpha)$: 
		
		\begin{enumerate}
			\item $\lambda_1(\alpha)$ satisfies the Faber-Krahn inequality
			\begin{equation}\label{eq:FK}
				\lambda_1(\alpha;\Omega) \geq \lambda_1(\alpha;B),
			\end{equation}
			where $B$ is any open ball in $\mathbb{R}^N$ such that $|B|=|\Omega|$. 
			This fact follows from the Polya-Szeg\"o principle applied to a minimizing sequence for $\lambda_1(\alpha;\Omega)$. 
			
			\item $\lambda_1(\alpha)$ obeys the domain monotonicity: 
			\begin{equation}\label{eq:domain-monoton}
				\text{if}~ \Omega_1 \subset \Omega_2, ~\text{then}~ \lambda_1(\alpha;\Omega_1) \geq \lambda_1(\alpha;\Omega_2).
			\end{equation}
			Indeed, any element of a minimizing sequence for $\lambda_1(\alpha;\Omega_1)$ is an admissible function for the definition of $\lambda_1(\alpha;\Omega_2)$, which yields \eqref{eq:domain-monoton}.
		\end{enumerate}
	\end{remark}

	The following result is a consequence of the strong maximum principle.
	\begin{lemma}\label{lem:smp}
		Let $\alpha \in \mathbb{R}$ and $\lambda_1(\alpha)$ be attained by $u \in \W \setminus \{0\}$. 
		Additionally assume either of the two cases:
		\begin{enumerate}[label={\rm(\roman*)}] 
			\item\label{lem:smp:1} $\alpha < 0$ and $p \leq q ~(< r)$.
			\item\label{lem:smp:2} $\alpha \geq 0$.
		\end{enumerate}
		If $u \not\equiv 0$ in a connected component $\Omega_0$ of $\Omega$, then either $u>0$ or $u<0$ in $\Omega_0$. 
	\end{lemma}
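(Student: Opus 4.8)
The plan is to replace $u$ by its modulus and then run the strong maximum principle on the resulting nonnegative minimizer. First I would observe that $R_\alpha$ depends on $u$ only through $|u|$: since $\|\nabla|u|\|_p = \|\nabla u\|_p$ and $\| |u| \|_\sigma = \|u\|_\sigma$ for every exponent $\sigma$, we have $R_\alpha(|u|) = R_\alpha(u) = \lambda_1(\alpha)$, so $v := |u| \ge 0$ is again a minimizer, hence a nonzero critical point of $R_\alpha$ solving \eqref{eq:P}. The attainability of $\lambda_1(\alpha)$ forces $\alpha \ge \alpha_0$ by Proposition~\ref{prop:alpha<0}, so $\lambda := \lambda_1(\alpha) > 0$, and by Remark~\ref{rem:reg} we have $v \in C^1_{\mathrm{loc}}(\Omega) \cap L^\infty(\Omega)$. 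For the nonnegative $v$, equation \eqref{eq:P} reads
\[
-\Delta_p v = A\,v^{q-1} + B\,v^{r-1} \quad \text{in } \Omega,
\]
where $A = \lambda\,\alpha\,\|v\|_q^{\alpha p - q}\|v\|_r^{(1-\alpha)p}$ has the sign of $\alpha$ and $B = \lambda(1-\alpha)\|v\|_q^{\alpha p}\|v\|_r^{(1-\alpha)p - r}$ has the sign of $1-\alpha$. Once $v > 0$ on $\Omega_0$ is established, the conclusion is immediate: $u$ cannot vanish on the connected set $\Omega_0$ and, being continuous, must keep a constant sign there.

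In case~\ref{lem:smp:2} ($\alpha \ge 0$) I would show that the zero set $Z = \{x \in \Omega_0 : v(x) = 0\}$, which is closed in $\Omega_0$, is also open. At a point $x_0 \in Z$, continuity provides a ball $B_\rho(x_0) \subset \Omega_0$ on which $v$ is small enough that $A + B\,v^{r-q} \ge 0$ --- automatic when $\alpha \le 1$ (then $B \ge 0$), and following from $q < r$ together with $v(x_0) = 0$ when $\alpha > 1$. On this ball $-\Delta_p v = v^{q-1}(A + B\,v^{r-q}) \ge 0$, so $v$ is a nonnegative $p$-superharmonic function vanishing at the interior point $x_0$, and the strong maximum principle gives $v \equiv 0$ on $B_\rho(x_0)$. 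Thus $Z$ is clopen in the connected set $\Omega_0$; since $v \not\equiv 0$ on $\Omega_0$, we must have $Z = \emptyset$, i.e.\ $v > 0$ on $\Omega_0$.

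In case~\ref{lem:smp:1} ($\alpha < 0$ and $p \le q$) we have $A < 0$ and $B > 0$, so $A\,v^{q-1}$ now plays the role of an absorption term and superharmonicity is lost. Here I would discard the favorable term $B\,v^{r-1} \ge 0$ and use
\[
\Delta_p v \le |A|\,v^{q-1} =: f(v) \quad \text{in } \Omega,
\]
with $f(s) = |A|\,s^{q-1}$ continuous, nondecreasing, $f(0) = 0$. The V\'azquez-type strong maximum principle then yields $v > 0$ on $\Omega_0$ provided the Osgood condition $\int_{0^+}(F(s))^{-1/p}\,ds = +\infty$ holds, where $F(s) = \int_0^s f = |A|\,s^q/q$; since $\int_{0^+} s^{-q/p}\,ds$ diverges exactly when $q \ge p$, the hypothesis $p \le q$ is precisely what makes the principle applicable. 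We then conclude as in the previous paragraph.

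I expect case~\ref{lem:smp:1} to be the \emph{main obstacle}: for $\alpha < 0$ the genuine absorption term allows, in principle, dead-core solutions, and only the sharp (Osgood) form of the strong maximum principle rules them out. The threshold $q \ge p$ in the divergence of $\int_{0^+} s^{-q/p}\,ds$ is exactly the borderline where dead cores are excluded, which explains why $p \le q$ cannot be dropped in this case; in case~\ref{lem:smp:2} with $\alpha > 1$ the only mild subtlety is the sign-indefinite right-hand side, handled by localizing near the putative zero set, where the subhomogeneous term dominates.
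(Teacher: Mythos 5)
Your proposal is correct and follows essentially the same route as the paper: pass to $|u|$, which is again a minimizer and hence a solution of \eqref{eq:Px}, and then invoke the strong maximum principle --- in case~\ref{lem:smp:1} precisely V\'azquez's version, whose Osgood condition is what the hypothesis $p \leq q$ guarantees. The paper simply cites \cite[Theorem~5]{Vaz} for both cases, so your explicit localization near the zero set when $\alpha > 1$ and your verification of the Osgood integral are worked-out details of the same argument rather than a different approach.
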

	\begin{proof}
		Since $u$ is a minimizer of $\lambda_1(\alpha)$, so is $|u|$. 
		In particular, $|u|$ is a solution of the problem \eqref{eq:Px}. 
		Thanks to the assumptions \ref{lem:smp:1}, \ref{lem:smp:2}, we can apply to \eqref{eq:Px} in $\Omega_0$ the strong maximum principle (see, e.g., \cite[Theorem~5]{Vaz}) and deduce that $|u|>0$ in $\Omega_0$. 
		Recalling from Remark~\ref{rem:reg} that $u \in C(\Omega)$, we conclude that either $u>0$ or $u<0$ in $\Omega_0$.
	\end{proof}

	In some cases, no minimizer of $R_\alpha$ can vanish in a connected component of $\Omega$.
	\begin{lemma}\label{lem:smp:nonvanishing}
		Let $q<p$ and $\alpha > 0$. 
		Let $\lambda_1(\alpha)$ be attained by $u \in \W \setminus \{0\}$. 
		Then either $u>0$ or $u<0$ in every connected component of $\Omega$. 
	\end{lemma}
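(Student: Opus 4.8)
The plan is to argue by contradiction, using the subhomogeneity $q<p$ together with the positivity $\alpha>0$ to show that a minimizer cannot afford to vanish identically on any component. First I observe that, since $\alpha\geq 0$, Lemma~\ref{lem:smp}\ref{lem:smp:2} already guarantees that on every connected component where $u\not\equiv 0$ the minimizer has a strict sign. Hence the entire statement reduces to proving that $u\not\equiv 0$ on each connected component of $\Omega$. Suppose, to the contrary, that $u\equiv 0$ on some connected component $\Omega_0$. Since $u\in\W\setminus\{0\}$, the three quantities $A:=\|\nabla u\|_p^p$, $B_q:=\|u\|_q^q$ and $B_r:=\|u\|_r^r$ are all strictly positive.

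Next I would introduce a competitor supported in the empty component. Fix any $w\in C_0^\infty(\Omega_0)\setminus\{0\}$ and extend it by zero to $\Omega$, so that $w\in\W$ and $\mathrm{supp}\,w\subset\Omega_0$. Because $u$ vanishes on $\Omega_0$ (whence $\nabla u=0$ a.e.\ there) while $w$ vanishes outside $\Omega_0$, the supports of $u$ and $w$ are essentially disjoint, and for every $t>0$ the relevant functionals split additively:
\begin{equation*}
\|\nabla(u+tw)\|_p^p = A + t^p\,\|\nabla w\|_p^p, \qquad
\|u+tw\|_q^q = B_q + t^q\,\|w\|_q^q, \qquad
\|u+tw\|_r^r = B_r + t^r\,\|w\|_r^r.
\end{equation*}

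The key step is an asymptotic expansion of $R_\alpha(u+tw)$ as $t\to 0^+$. Writing $a=\|\nabla w\|_p^p$, $b_q=\|w\|_q^q$, $b_r=\|w\|_r^r$ (all positive) and using $\|u+tw\|_q^{\alpha p}=(B_q+t^qb_q)^{\alpha p/q}$ and $\|u+tw\|_r^{(1-\alpha)p}=(B_r+t^rb_r)^{(1-\alpha)p/r}$, one computes
\begin{equation*}
\ln R_\alpha(u+tw) - \ln\lambda_1(\alpha)
= -\,\frac{\alpha p}{q}\,\frac{b_q}{B_q}\,t^q + o(t^q)
\qquad (t\to 0^+),
\end{equation*}
where $R_\alpha(u)=\lambda_1(\alpha)$ was used, and where the contributions of the numerator and of the $r$-norm are of strictly higher order since $q<p$ and $q<r$. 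As $\alpha>0$, the leading coefficient $-\frac{\alpha p}{q}\frac{b_q}{B_q}$ is strictly negative, so $R_\alpha(u+tw)<\lambda_1(\alpha)$ for all sufficiently small $t>0$. But $u+tw\in\W\setminus\{0\}$ is an admissible competitor, contradicting the definition of $\lambda_1(\alpha)$ as the infimum in \eqref{eq:lambda}. Thus $u$ vanishes on no connected component, which together with Lemma~\ref{lem:smp}\ref{lem:smp:2} gives the claim.

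The main obstacle is verifying the additive splitting of the three norms, which rests entirely on the fact that $u\equiv 0$ (and hence $\nabla u=0$ a.e.) throughout $\Omega_0$ while $\mathrm{supp}\,w\subset\Omega_0$; once this is established, the expansion is routine, and the comparison of exponents $q<\min\{p,r\}$ is precisely what makes the subhomogeneous $t^q$ term dominant and decisive.
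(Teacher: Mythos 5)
Your proof is correct and follows essentially the same route as the paper: assume $u\equiv 0$ on a component $\Omega_0$, perturb by a bump supported in $\Omega_0$, use the additive splitting of the three norms, and conclude that the subhomogeneous $t^q$ term (with negative coefficient proportional to $\alpha$) makes $R_\alpha(u+tw)$ drop below $\lambda_1(\alpha)$ for small $t>0$. The paper phrases this via the sign of $f'(t)$ for $f(t)=R_\alpha(u+tv)$ rather than a logarithmic expansion, but the mechanism is identical, and your explicit reduction to Lemma~\ref{lem:smp}\ref{lem:smp:2} at the start is the same implicit step the paper relies on.
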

	\begin{proof}
		For $\alpha=1$, the result follows from \cite[Corollary~2.6]{BF2} (see also \cite{BDF}). We provide an alternative argument. 
		Suppose, by contradiction, that there exists a minimizer $u$ of $\lambda_1(\alpha)$ such that $u \equiv 0$ in a connected component $\Omega_0$ of $\Omega$. 
		Let us take any $v \in C_0^\infty(\Omega)$ with $\text{supp}\,v \subset \Omega_0$ and consider the function $f(t) = R_\alpha(u+tv)$, $t \in \mathbb{R}$. 
		Noting that
		\begin{gather}
			\|\nabla (u+tv)\|_p^p
			=
			\|\nabla u\|_p^p
			+
			|t|^p
			\|\nabla v\|_p^p,\\
			I_\alpha(u+tv)
			=
			(\|u\|_q^q + |t|^q \|v\|_q^q)^\frac{\alpha p}{q}
			(\|u\|_r^r + |t|^r \|v\|_r^r)^\frac{(1-\alpha) p}{r},
		\end{gather}
		we calculate
		\begin{align}
			f'(t) 
			= 
			\frac{p |t|^{p-2} t \|\nabla v\|_p^p}{I_\alpha(u+tv)}
			-
			\frac{\alpha p|t|^{q-2} t\|\nabla (u+tv)\|_p^p \|v\|_q^q}{I_\alpha(u+tv) \|u+tv\|_q^q}
			-
			\frac{(1-\alpha) p|t|^{r-2} t\|\nabla (u+tv)\|_p^p \|v\|_r^r}{I_\alpha(u+tv) \|u+tv\|_r^r}.
		\end{align}
		The assumptions $q<p$ and $\alpha>0$ yield $f'(t) < 0$ for any sufficiently small $t>0$. 
		Hence, the Taylor theorem implies that $R_\alpha(u+tv)<R_\alpha(u)$ for such $t$, which contradicts the minimality of $u$. 
	\end{proof}

	\subsection{Variational eigenvalues}\label{sec:variational-eigenvalues}
	
	In this subsection, we prove the existence of a countable divergent sequence of critical levels of $R_\alpha$ with $\alpha > \alpha_0$ using the Lusternik-Schnirelmann theory. 
	For each $k \in \mathbb{N}$, we set
	$$
	\Sigma_k(\alpha)
	= 
	\left\{A \subset \mathcal{M}_\alpha\,:\, A \text{ is symmetric, compact, and } \gamma(A) \ge  k\right\},
	$$
	and
	$$
	\widetilde{\Sigma}_k
	= 
	\left\{A \subset W^{1,p}_0(\Omega) \setminus \{0\}\,:\, A \text{ is symmetric, compact, and } \gamma(A) \ge  k\right\},
	$$
	where $\gamma (A)$ denotes the Krasnoselskii genus of $A$, see, e.g., \cite[Section~7]{Rab}.
	
	\begin{lemma}\label{lem:genus}
		Let $\alpha \in \mathbb{R}$ and $k \in \mathbb{N}$. 
		Then $\Sigma_k(\alpha) \ne \emptyset$.
	\end{lemma}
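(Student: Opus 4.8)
The plan is to exhibit a single set $A \in \Sigma_k(\alpha)$ by radially projecting the unit sphere of a $k$-dimensional subspace of $\W$ onto the manifold $\mathcal{M}_\alpha$ defined in \eqref{eq:M}. The key preliminary observation is that $I_\alpha$ is strictly positive on $\W \setminus \{0\}$ and positively $p$-homogeneous, \emph{for every} $\alpha \in \mathbb{R}$: indeed, for $u \not\equiv 0$ we have $\|u\|_q, \|u\|_r > 0$, so $I_\alpha(u) = \|u\|_q^{\alpha p}\|u\|_r^{(1-\alpha)p} > 0$, and a direct computation using $\|tu\|_\sigma = t\|u\|_\sigma$ gives $I_\alpha(tu) = t^p I_\alpha(u)$ for all $t > 0$. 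Consequently the radial projection
$$
\pi(u) = \frac{u}{I_\alpha(u)^{1/p}}, \qquad u \in \W \setminus \{0\},
$$
is well-defined and continuous, maps $\W \setminus \{0\}$ onto $\mathcal{M}_\alpha$, and is odd, since $I_\alpha(-u) = I_\alpha(u)$ forces $\pi(-u) = -\pi(u)$.

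Next I would fix any $k$-dimensional linear subspace $V \subset \W$, which exists because $\W$ is infinite-dimensional, and let $S_V$ be its unit sphere with respect to the norm of $\W$. Then $S_V$ is compact (being the unit sphere of a finite-dimensional normed space), symmetric, and odd-homeomorphic to the standard sphere $S^{k-1}$ via the radial map, whence $\gamma(S_V) = k$. I set $A = \pi(S_V)$ and claim $A \in \Sigma_k(\alpha)$. By construction $A \subset \mathcal{M}_\alpha$; it is compact as the continuous image of a compact set, and symmetric because $\pi$ is odd and $S_V = -S_V$. Finally, $\pi|_{S_V}\colon S_V \to A$ is an odd continuous map, so the monotonicity of the Krasnoselskii genus under odd continuous maps (see \cite[Section~7]{Rab}) yields $\gamma(A) \geq \gamma(S_V) = k$. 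Hence $A \in \Sigma_k(\alpha)$, and therefore $\Sigma_k(\alpha) \neq \emptyset$.

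I do not expect a genuine obstacle here, as the construction is standard; the argument is entirely uniform in $\alpha \in \mathbb{R}$. The only two points that must be handled with care are precisely the ones isolated above: that $I_\alpha$ never vanishes on $\W \setminus \{0\}$ (so that the projection onto $\mathcal{M}_\alpha$ is defined for all real $\alpha$, including negative values) and that the genus is monotone under odd continuous maps (so that the genus of the projected set inherits the lower bound from the sphere $S_V$).
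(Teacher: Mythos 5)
Your proof is correct, and it reaches the conclusion by a slightly different (and somewhat more economical) route than the paper. The paper does not take an arbitrary $k$-dimensional subspace: it builds a specific one, spanned by functions $g_1,\dots,g_k$ with pairwise disjoint supports and equal $L^q$- and $L^r$-norms, precisely so that $I_\alpha(u_{\bm\eta})$ can be computed explicitly and the normalized map $\phi:\mathbb{S}^{k-1}\to\mathcal{M}_\alpha$ can be shown to be an odd \emph{homeomorphism} onto its image; the genus bound then comes from \cite[Proposition~7.7]{Rab}, which gives $\gamma(\phi(\mathbb{S}^{k-1}))=k$ exactly. You instead take any $k$-dimensional subspace $V$, project its unit sphere $S_V$ radially onto $\mathcal{M}_\alpha$ via $\pi(u)=u/I_\alpha(u)^{1/p}$, and invoke only the monotonicity of the genus under odd continuous maps to get $\gamma(\pi(S_V))\ge\gamma(S_V)=k$, bypassing injectivity of the projection altogether. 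Since the definition of $\Sigma_k(\alpha)$ only requires $\gamma(A)\ge k$, this suffices, and your two flagged points — positivity of $I_\alpha$ on $\W\setminus\{0\}$ for every real $\alpha$ (which holds because $\|u\|_q,\|u\|_r>0$ and is what makes the statement uniform in $\alpha$, with no need for $\alpha>\alpha_0$), and the correct direction of genus monotonicity ($\gamma(S_V)\le\gamma(A)$ from the existence of an odd continuous map $S_V\to A$) — are exactly the points that need care and are handled correctly. What the paper's more concrete construction buys is an explicit formula for $I_\alpha$ on the span and a genuine homeomorphism; what yours buys is brevity and the observation that neither disjoint supports nor injectivity is actually needed for this lemma.
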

	\begin{proof}
		Consider arbitrary nonzero functions $g_1,g_2,\ldots, g_k \in C_0^\infty(\Omega)$ such that, for every $i\ne j$, 
		$$ 
		\text{supp}(g_i)\cap  \text{supp}(g_j)=\emptyset,
		\quad 
		\|g_i\|_q= \|g_j\|_q,
		\quad 
		\|g_i\|_r= \|g_j\|_r.
		$$
		For example, such functions can be constructed by appropriate translates of a given function supported in a sufficiently small ball. 
		Denote $c_q = \|g_i\|_q$ and $c_r = \|g_i\|_r$, and consider a parametric family of functions $u_{\bm{\eta}} \in C_0^\infty(\Omega)$ defined as 
		$$
		u_{\bm{\eta}}(x) = \sum_{i=1}^k \eta_i \, g_i(x), \quad x \in \Omega,~ \bm{\eta} = (\eta_1,\eta_2,\ldots,\eta_k)\in \mathbb{R}^{k}.
		$$
		Clearly, we have 
		\begin{equation}\label{eq:sigma1}
			I_\alpha(u_{\bm{\eta}})
			=  
			c_q^{\alpha p} 
			\left(\sum_{i=1}^k|\eta_i|^q\right)^{\frac{\alpha p}{q}}
			c_r^{(1-\alpha)p}
			\left(\sum_{i=1}^k|\eta_i|^r\right)^{\frac{(1-\alpha)p}{r}}
			>0 \quad 
			\text{for}\ \bm{\eta}\not=0.
		\end{equation}
		Now we consider the mapping $\phi:\mathbb{S}^{k-1}\mapsto \mathcal{M}_\alpha$ defined as $\phi(\bm{\eta})
		= u_{\bm{\eta}}/I_\alpha(u_{\bm{\eta}})$, where $\mathbb{S}^{k-1}$ is the unit $(k-1)$-dimensional sphere in $\mathbb{R}^k$ centered at the origin. 
		Noting \eqref{eq:sigma1}, it is not hard to see that $\phi$ is an odd homeomorphism between $\mathbb{S}^{k-1}$ and $\phi(\mathbb{S}^{k-1})$. 
		Thus, $\phi(\mathbb{S}^{k-1})$ is a symmetric and compact subset of $\mathcal{M}_\alpha$, and $\gamma(\phi(\mathbb{S}^{n-1})) = k$ by \cite[Proposition~7.7]{Rab}. 
		Consequently, $\Sigma_k(\alpha) \ne \emptyset$. 
	\end{proof}

	Let us provide a result on the Palais-Smale condition. 
	For brevity, we denote
	$$
	J(u)=\|\nabla u\|_p^p \quad \text{for}~ u \in \W.
	$$
	\begin{lemma}\label{lem:PS}
		Let $\{\alpha_n\} \subset (\alpha_0,+\infty)$ converge to $\alpha>\alpha_0$, and let $\{\lambda_n\} \subset \mathbb{R}$. 
		Then any sequence $\{u_n\} \subset \W$ 
		satisfying 
		\begin{equation}\label{eq:lem:PS:assumptions}
			u_n \in  \mathcal M_{\alpha_n},
			\quad 
			\sup_{n \in \mathbb{N}}J(u_n)<+\infty,
			\quad
			\| J^\prime(u_n)-\lambda_n I_{\alpha_n}^\prime(u_n) \|_{(W_0^{1,p})^*} 
			\to 0 \quad {\rm as}\ n\to +\infty, 
		\end{equation}
		has a strongly convergent subsequence, and the corresponding subsequence of $\{\lambda_n\}$ also converges. 
		In particular, $J|_{\mathcal{M}_\alpha}$ satisfies the Palais-Smale condition for any $\alpha > \alpha_0$. 
	\end{lemma}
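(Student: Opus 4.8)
The plan is to combine the compactness supplied by Lemma~\ref{lem:wsc} with the $(S_+)$ property of the $p$-Laplacian. Since $\sup_{n} J(u_n) < +\infty$, the sequence $\{u_n\}$ is bounded in $\W$, so, passing to a subsequence, $u_n \rightharpoonup u$ weakly in $\W$ and, by the Rellich--Kondrachov theorem, $u_n \to u$ strongly in $L^q(\Omega)$ and $L^r(\Omega)$. Because $I_{\alpha_n}(u_n) = 1$ is separated from zero, Lemma~\ref{lem:wsc} gives $u \not\equiv 0$ and $I_{\alpha_n}(u_n) \to I_\alpha(u) = 1$, so $u \in \mathcal{M}_\alpha$; in particular $\|u_n\|_q \to \|u\|_q > 0$ and $\|u_n\|_r \to \|u\|_r > 0$.

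Next I would show that $\{\lambda_n\}$ is bounded. Testing the last relation in \eqref{eq:lem:PS:assumptions} with $u_n$ and using $\langle J'(u_n), u_n\rangle = p J(u_n)$ together with $\langle I_{\alpha_n}'(u_n), u_n\rangle = p I_{\alpha_n}(u_n) = p$, I get $p J(u_n) - p\lambda_n \to 0$, that is,
\[
\lambda_n = J(u_n) + o(1),
\]
so $\{\lambda_n\}$ is bounded thanks to $\sup_n J(u_n) < +\infty$.

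The core step is to prove $\langle J'(u_n), u_n - u\rangle \to 0$. Using $I_{\alpha_n}(u_n) = 1$, the derivative of the normalization functional factorizes conveniently as
\[
\langle I_{\alpha_n}'(u_n), u_n - u\rangle
=
\frac{\alpha_n p}{\|u_n\|_q^q} \intO |u_n|^{q-2} u_n (u_n - u)\,dx
+
\frac{(1-\alpha_n) p}{\|u_n\|_r^r} \intO |u_n|^{r-2} u_n (u_n - u)\,dx.
\]
Here the prefactors stay bounded since $\alpha_n \to \alpha$ while $\|u_n\|_q^q \to \|u\|_q^q > 0$ and $\|u_n\|_r^r \to \|u\|_r^r > 0$ (this is exactly where the separation from zero matters), and both integrals tend to $0$ because $|u_n|^{q-2}u_n$ and $|u_n|^{r-2}u_n$ are bounded in $L^{q'}(\Omega)$ and $L^{r'}(\Omega)$, respectively, whereas $u_n - u \to 0$ in $L^q(\Omega)$ and $L^r(\Omega)$. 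Hence $\langle I_{\alpha_n}'(u_n), u_n - u\rangle \to 0$, and, $\{\lambda_n\}$ being bounded, also $\lambda_n \langle I_{\alpha_n}'(u_n), u_n - u\rangle \to 0$. Combining this with
\[
\left| \langle J'(u_n) - \lambda_n I_{\alpha_n}'(u_n), u_n - u\rangle \right|
\leq
\|J'(u_n) - \lambda_n I_{\alpha_n}'(u_n)\|_{(W_0^{1,p})^*}\, \|u_n - u\| \to 0
\]
yields $\langle J'(u_n), u_n - u\rangle \to 0$.

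Finally, since $\tfrac1p J'$ coincides with the $p$-Laplacian $-\Delta_p \colon \W \to (W_0^{1,p})^*$, which enjoys the $(S_+)$ property, the weak convergence $u_n \rightharpoonup u$ together with $\langle J'(u_n), u_n - u\rangle \to 0$ forces $u_n \to u$ strongly in $\W$. Then $J(u_n) \to J(u)$, and the relation $\lambda_n = J(u_n) + o(1)$ gives $\lambda_n \to J(u)$, which proves both assertions of the lemma. The ``in particular'' claim for $J|_{\mathcal{M}_\alpha}$ follows by choosing $\alpha_n \equiv \alpha$. The only delicate point is keeping the coefficients of $I_{\alpha_n}'$, whose exponents may be negative, under control, which is precisely guaranteed by the nonvanishing of $u$ from Lemma~\ref{lem:wsc}.
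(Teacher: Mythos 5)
Your proof is correct and follows essentially the same route as the paper's: extract a weakly convergent subsequence, invoke Lemma~\ref{lem:wsc} to keep $u \not\equiv 0$ and the norms $\|u_n\|_q$, $\|u_n\|_r$ away from zero, deduce boundedness of $\{\lambda_n\}$ by testing with $u_n$, show $\lambda_n\langle I_{\alpha_n}'(u_n), u_n-u\rangle \to 0$, and conclude via the $(S_+)$ property of $J'$. The only cosmetic difference is that you simplify the coefficients of $I_{\alpha_n}'$ using $I_{\alpha_n}(u_n)=1$, which is exactly the form the paper's expression reduces to on $\mathcal{M}_{\alpha_n}$.
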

	\begin{proof}
		Thanks to the boundedness of $\{u_n\}$ in $\W$, there is a subsequence of $\{u_n\}$ converging weakly in $\W$ and strongly in $L^r(\Omega)$ to some $u$. 
		In view of Lemma~\ref{lem:wsc}, we have $u \not\equiv 0$ and  $I_\alpha(u) = 1$. 
		Since $\{u_n\}$ satisfies \eqref{eq:lem:PS:assumptions}, we also have
		\begin{equation}\label{eq:ps:1}
			o(1)
			\|\nabla u_n\|_p
			=
			\langle
			J'(u_n) - \lambda_n I_{\alpha_n}^\prime(u_n), u_n
			\rangle
			=
			p (\|\nabla u_n\|_p^p - \lambda_n),
		\end{equation}
		and hence $\{\lambda_n\}$ is bounded. 
		Passing to a subsequence, we can further assume that $\lambda_n \to \lambda \in \mathbb{R}$. 
		
		In view of the convergence of $\{\alpha_n\}$ and $\{\lambda_n\}$, and the strong convergence of $\{u_n\}$ in $L^r(\Omega)$, we get
		\begin{align}
			\lambda_n
			\langle
			I_{\alpha_n}^\prime(u_n), u_n - u
			\rangle
			&=
			p \lambda_n \alpha_n \|u_n\|_q^{\alpha_n p - q} \|u_n\|_r^{(1-\alpha_n) p} \int_\Omega |u_n|^{q-2}u_n (u_n-u) \,dx 
			\\
			&+ 
			p \lambda_n (1-\alpha_n) \|u_n\|_q^{\alpha_n p} \|u_n\|_r^{(1-\alpha_n)p-r} \intO |u_n|^{r-2}u_n (u_n-u) \,dx
			\to 0.
		\end{align}
		Therefore, we deduce from \eqref{eq:lem:PS:assumptions} that
		$$
		\limsup_{n \to +\infty} \, \langle J'(u_n), u_n-u\rangle 
		=
		\limsup_{n \to +\infty}
		\left(\lambda_n
		\langle
		I_{\alpha_n}^\prime(u_n), u_n - u
		\rangle
		+ o(1) \|\nabla (u_n - u)\|_p
		\right)
		=
		0.
		$$ 
		Since the functional $J$ satisfies the $(S_+)$-property (see, e.g., \cite[Lemma~5.9.14]{DM}), we conclude that $\{u_n\}$ converges strongly in $\W$.
	\end{proof}
	
	Now for each $k \in  \mathbb{N}$ we can define the Lusternik-Schnirelmann levels of $R_\alpha$ as follows: \begin{equation}\label{L-S}
		\lambda_k(\alpha) 
		= 
		\inf_{A\in \Sigma_k(\alpha)} \sup_{u\in A} \|\nabla u\|_p^p.
	\end{equation}
	By the homogeneity, we also have
	\begin{equation}\label{L-S:2}
		\lambda_k(\alpha)
		=
		\inf_{A\in \widetilde{\Sigma}_k} \sup_{u\in A} \frac{\|\nabla u\|_p^p}{I_\alpha(u)}
		=
		\inf_{A\in \widetilde{\Sigma}_k} \sup_{u\in A} R_\alpha (u). 
	\end{equation}
	Sometimes, we use the notation $\lambda_k(\alpha;\Omega)$ and $\widetilde{\Sigma}_{k,\Omega}$ to emphasize the dependence on $\Omega$. 
	
	Let us justify that for $\alpha>\alpha_0$ each $\lambda_k(\alpha)$ is a critical level of $R_\alpha$, so that it can be called a variational eigenvalue. 
	\begin{lemma}
		Let $\alpha > \alpha_0$ and $k \in \mathbb{N}$. 
		Then $\lambda_k(\alpha)$ is a critical level of $R_\alpha$. 
		Moreover, we have
		$\lambda_k(\alpha) \leq \lambda_{k+1}(\alpha)$,  and
		$\lambda_k(\alpha) \to +\infty$ as $k \to +\infty$. 
	\end{lemma}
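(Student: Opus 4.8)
The plan is to read this as a standard Lusternik--Schnirelmann minimax and to rely on the ingredients already assembled: $\mathcal{M}_\alpha$ is a symmetric $C^1$-manifold (Lemma~\ref{lem:M}), $J|_{\mathcal{M}_\alpha}$ is even, bounded below by $\lambda_1(\alpha)>0$ since $J(u)=R_\alpha(u)\ge\lambda_1(\alpha)$ on $\mathcal{M}_\alpha$ (Proposition~\ref{prop:alpha<0}), and satisfies the Palais--Smale condition (Lemma~\ref{lem:PS}), while $\Sigma_k(\alpha)\neq\emptyset$ for every $k$ (Lemma~\ref{lem:genus}), so each $\lambda_k(\alpha)$ in \eqref{L-S} is a well-defined finite positive number. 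The \emph{monotonicity} $\lambda_k(\alpha)\le\lambda_{k+1}(\alpha)$ is immediate, since $\Sigma_{k+1}(\alpha)\subset\Sigma_k(\alpha)$ forces the infimum defining $\lambda_{k+1}(\alpha)$ to run over a smaller family. I write $J^c=\{u\in\mathcal{M}_\alpha:J(u)\le c\}$ for the sublevel sets.

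Both the \emph{criticality} of each $\lambda_k(\alpha)$ and the \emph{divergence} $\lambda_k(\alpha)\to+\infty$ hinge on an equivariant deformation lemma for $J$ on $\mathcal{M}_\alpha$. I would first build, from a locally Lipschitz pseudo-gradient field for $J$ tangent to $\mathcal{M}_\alpha$, the associated negative-gradient flow; since $J$ and $I_\alpha$ are even and $\mathcal{M}_\alpha$ is symmetric, the field can be chosen odd, so the flow $\eta(\cdot,t)$ is odd in its spatial argument. The Palais--Smale condition supplies, at any level $c$ admitting no critical points in a band $J^{-1}([c-\varepsilon_0,c+\varepsilon_0])$, a uniform lower bound on the constrained gradient there; running the flow for a controlled time then produces $\varepsilon\in(0,\varepsilon_0)$ and an odd continuous map $\eta:\mathcal{M}_\alpha\to\mathcal{M}_\alpha$ with $\eta(J^{c+\varepsilon})\subset J^{c-\varepsilon}$. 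Criticality is then a contradiction: were $\lambda_k(\alpha)=c$ a regular value, pick $A\in\Sigma_k(\alpha)$ with $\sup_A J<c+\varepsilon$, so $A\subset J^{c+\varepsilon}$; then $\eta(A)$ is compact, symmetric, satisfies $\gamma(\eta(A))\ge\gamma(A)\ge k$ because $\eta$ is odd and continuous, and $\sup_{\eta(A)}J\le c-\varepsilon$, contradicting $\lambda_k(\alpha)=\inf_{\Sigma_k(\alpha)}\sup J=c$.

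For the divergence I argue by contradiction, assuming $c^*:=\lim_k\lambda_k(\alpha)<+\infty$ (the limit exists by monotonicity). The Palais--Smale condition makes the critical set $K_{c^*}=\{u\in\mathcal{M}_\alpha:J(u)=c^*,\ \nabla_{\mathcal{M}_\alpha}J(u)=0\}$ compact, and since $0\notin\mathcal{M}_\alpha$ it has finite genus $\bar k:=\gamma(K_{c^*})$; choose a symmetric open neighbourhood $U\supset K_{c^*}$ with $\gamma(\overline U)=\bar k$. A version of the deformation lemma that excises $U$ yields $\varepsilon>0$ and an odd continuous $\eta$ with $\eta(J^{c^*+\varepsilon}\setminus U)\subset J^{c^*-\varepsilon}$. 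Now fix $k$ with $\lambda_k(\alpha)>c^*-\varepsilon$ and take $A\in\Sigma_{k+\bar k}(\alpha)$ with $A\subset J^{c^*+\varepsilon}$ (possible since $\lambda_{k+\bar k}(\alpha)\le c^*$). As $U$ is open, $A\setminus U$ is compact and symmetric, and by subadditivity of the genus $\gamma(A\setminus U)\ge\gamma(A)-\gamma(\overline U)\ge k$; hence $\eta(A\setminus U)\in\Sigma_k(\alpha)$ lies in $J^{c^*-\varepsilon}$, giving $\lambda_k(\alpha)\le c^*-\varepsilon$, a contradiction.

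I expect the main obstacle to be the careful construction of the odd deformation directly on the constrained manifold: one must produce a pseudo-gradient field that is simultaneously tangent to $\mathcal{M}_\alpha$, locally Lipschitz, and odd, then verify that the induced flow remains in $\mathcal{M}_\alpha$ and neither reaches $0$ nor escapes to infinity on the relevant sublevel. For the latter one uses that $\mathcal{M}_\alpha$ is complete as a metric space --- which follows since $I_\alpha$ is $p$-homogeneous, so no sequence in $\mathcal{M}_\alpha$ can converge to $0$ --- together with the Palais--Smale bound, and one must convert the abstract Palais--Smale statement of Lemma~\ref{lem:PS} into the quantitative gradient estimate that drives the flow. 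Once this equivariant deformation machinery is in place, all three assertions follow from the routine genus bookkeeping above; alternatively, one may simply invoke the standard Lusternik--Schnirelmann theory on symmetric $C^1$-Finsler manifolds after checking these hypotheses.
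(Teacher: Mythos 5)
Your proposal is correct and is essentially the same approach as the paper: the paper's proof simply notes that Lemmas~\ref{lem:M} and \ref{lem:PS} supply the manifold structure and the Palais--Smale condition, and then invokes the standard Lusternik--Schnirelmann arguments (citing \cite[Theorem~5.2]{FL}), which are precisely the equivariant deformation and genus bookkeeping you spell out in detail.
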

	\begin{proof}
		Since Lemma~\ref{lem:PS} ensures that the functional $J(u)=\|\nabla u\|_p^p$ over $\mathcal{M}_\alpha$ satisfies the Palais-Smale condition, and in view of Lemma~\ref{lem:M}, the standard arguments provide our assertion,  
		see, e.g., \cite[Theorem~5.2]{FL} in the case $\alpha=0, 1$.  
	\end{proof}
	
	We close this section by stating the following scaling property. 
	\begin{lemma}\label{lem:scaling}
		Let $k \in \mathbb{N}$ and $t>0$. 
		Then  
		$$
		\lambda_k(\alpha; t\Omega) 
		= 
		t^{\frac{Np(r-q)}{qr}(\alpha_0-\alpha)}
		\lambda_k(\alpha; \Omega).
		$$
	\end{lemma}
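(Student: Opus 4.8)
The plan is to realize the dilation as an odd linear homeomorphism between the admissible classes appearing in the homogeneous minimax characterization \eqref{L-S:2}, and then to track the constant factor by which $R_\alpha$ is multiplied. First I would introduce the dilation operator $D_t \colon \W \to W_0^{1,p}(t\Omega)$ defined by $(D_t u)(x) = u(t^{-1}x)$; if $u$ is supported in $\Omega$ then $D_t u$ is supported in $t\Omega$, so this is well defined. A change of variables gives the norm identities
\[
\|\nabla (D_t u)\|_{L^p(t\Omega)}^p = t^{N-p}\,\|\nabla u\|_{L^p(\Omega)}^p,
\qquad
\|D_t u\|_{L^\sigma(t\Omega)} = t^{N/\sigma}\,\|u\|_{L^\sigma(\Omega)},
\]
valid for each exponent $\sigma$, so that $D_t$ is a bounded linear bijection with bounded inverse $D_{t^{-1}}$; in particular it is an odd homeomorphism carrying $\W \setminus \{0\}$ onto $W_0^{1,p}(t\Omega) \setminus \{0\}$. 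Substituting these identities into \eqref{eq:rayleigh} reproduces exactly the scaling relation \eqref{eq:scaling:R}, i.e.
\[
R_\alpha(D_t u; t\Omega) = t^{\frac{Np(r-q)}{qr}(\alpha_0-\alpha)}\, R_\alpha(u; \Omega),
\]
so under $D_t$ the quotient $R_\alpha$ is merely multiplied by the constant factor $c(t) = t^{\frac{Np(r-q)}{qr}(\alpha_0-\alpha)}$, with $\alpha_0$ as in \eqref{eq:alpha-0}.

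Next I would transport the admissible classes. Since $D_t$ is an odd homeomorphism, it maps any symmetric compact set $A \subset \W \setminus \{0\}$ to a symmetric compact set $D_t(A) \subset W_0^{1,p}(t\Omega) \setminus \{0\}$, and the Krasnoselskii genus is invariant under odd homeomorphisms (a standard property, see \cite[Section~7]{Rab}), whence $\gamma(D_t(A)) = \gamma(A)$. Therefore $A \mapsto D_t(A)$ is a bijection of $\widetilde{\Sigma}_{k,\Omega}$ onto $\widetilde{\Sigma}_{k,t\Omega}$, with inverse induced by $D_{t^{-1}}$. Using the homogeneous characterization \eqref{L-S:2} together with the displayed scaling of $R_\alpha$, for every $A \in \widetilde{\Sigma}_{k,\Omega}$ I obtain
\[
\sup_{v \in D_t(A)} R_\alpha(v; t\Omega) = c(t)\,\sup_{u \in A} R_\alpha(u; \Omega),
\]
and taking the infimum over the bijectively identified classes yields $\lambda_k(\alpha; t\Omega) = c(t)\,\lambda_k(\alpha; \Omega)$, which is the claim. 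Equivalently, the two opposite inequalities follow by running the estimate first with $D_t$ and then with $D_{t^{-1}}$, noting that $c(t^{-1}) = c(t)^{-1}$, which together force equality.

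There is no serious obstacle in this argument: everything reduces to the fact that $D_t$ is a genuine isomorphism of the two Sobolev spaces. The only points deserving care are confirming that $D_t$ and $D_{t^{-1}}$ are mutually inverse bounded operators (so that compactness, symmetry, and the genus are all preserved under the correspondence $A \leftrightarrow D_t(A)$) and the bookkeeping of the exponent of $t$, which collapses to $\frac{Np(r-q)}{qr}(\alpha_0-\alpha)$ exactly as already recorded in \eqref{eq:scaling:R}.
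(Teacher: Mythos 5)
Your proposal is correct and follows essentially the same route as the paper: realize the dilation as an odd homeomorphism, invoke invariance of the Krasnoselskii genus to identify the admissible classes $\widetilde{\Sigma}_{k,\Omega}$ and $\widetilde{\Sigma}_{k,t\Omega}$, and transport the constant factor from the scaling relation \eqref{eq:scaling:R} through the inf-sup in \eqref{L-S:2}. The paper phrases this via minimizing sequences and the two opposite inequalities obtained from $D_t$ and $D_{t^{-1}}$, which is exactly the alternative you mention at the end.
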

	\begin{proof}
		It is not hard to see that the result follows from \eqref{eq:scaling:R}, and it does not require $\alpha>\alpha_0$. 
		We provide an argument for clarity. 
		Let $k \in \mathbb{N}$ and $t>0$ be fixed. 
		
		Let $\{A_n\} \subset \widetilde{\Sigma}_{k,\Omega}$ be a minimizing sequence for $\lambda_k(\alpha;\Omega)$. 
		For any $u \in A_n$, we define $v_t(\cdot) = u(t^{-1}\cdot)$, and denote the set of such functions $v_t$ as $B_n$.
		Clearly, $B_n \subset W_0^{1,p}(t\Omega)$. 
		Notice that the mapping $f: A_n \to B_n$ is an odd homeomorphism.  
		This implies that $\gamma(B_n) = \gamma(A_n) \geq k$, see, e.g., \cite[Proposition~7.5, $2^\circ$]{Rab} for a related assertion.
		Therefore, we have $\{B_n\} \subset \widetilde{\Sigma}_{k,t\Omega}$, and hence
		$$
		\lambda_k(\alpha;t\Omega) 
		\leq 
		\sup_{v\in B_n} R_\alpha (v;t\Omega)
		=
		t^{\frac{Np(r-q)}{qr}(\alpha_0-\alpha)} \sup_{u\in A_n} R_\alpha (u;\Omega)
		=
		t^{\frac{Np(r-q)}{qr}(\alpha_0-\alpha)} (\lambda_k(\alpha;\Omega) + o(1)),
		$$
		where $o(1) \to 0$ as $n \to +\infty$. 
		
		Performing the same analysis as above, but starting with a minimizing sequence $\{B_n^*\} \subset \widetilde{\Sigma}_{k,t\Omega}$ for $\lambda_k(\alpha;t\Omega)$ and making from it an admissible sequence $\{A_n^*\} \subset \widetilde{\Sigma}_{k,\Omega}$ for $\lambda_k(\alpha;\Omega)$, we obtain
		$$
		\lambda_k(\alpha;\Omega) 
		\leq 
		\sup_{u\in A_n^*} R_\alpha (u;\Omega)
		=
		t^{-\frac{Np(r-q)}{qr}(\alpha_0-\alpha)} \sup_{v\in B_n^*} R_\alpha (v;t\Omega)
		=
		t^{-\frac{Np(r-q)}{qr}(\alpha_0-\alpha)} (\lambda_k(\alpha;t\Omega) + o(1)).
		$$
		Combining the above two formulas, we finish the proof. 
	\end{proof}

	\section{Analytic properties with respect to \texorpdfstring{$\alpha$}{a}}\label{sec:analytic-properties}
	In this section, we study some analytic aspects of critical levels of the Rayleigh quotient $R_\alpha$  and the corresponding critical points, handling $\alpha$ as a parameter.

	\subsection{Estimates}
	\begin{lemma}\label{lem:bounds}
		Let $\alpha > \alpha_0$ and $k \in \mathbb{N}$.
		Then we have 
		\begin{equation}\label{eqlem:bounds}
			0<\lambda_k(\alpha_0) |\Omega|^{-(\frac{1}{q}-\frac{1}{r}) (\alpha-\alpha_0) p}
			\leq 
			\lambda_k(\alpha) 
			\leq 
			\begin{cases}
				\lambda_k(1) |\Omega|^{(\frac{1}{q}-\frac{1}{r}) (1-\alpha) p} &\text{for}~ \alpha \in (\alpha_0,1],\\
				\lambda_k(1) ^{\alpha}
				\lambda_1(0)^{(1-\alpha)}
				&\text{for}~ \alpha \geq 1.
			\end{cases}
		\end{equation}
		In particular, if $|\Omega|<1$, then $\lambda_k(\alpha) \to +\infty$ as $\alpha \to +\infty$. 
	\end{lemma}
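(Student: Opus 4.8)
The plan is to obtain both the lower and the upper bounds from pointwise comparisons between $R_\alpha$ and the reference quotients $R_{\alpha_0}$, $R_1$, and $R_0$. Each comparison produces a multiplicative factor that is \emph{independent of $u$}, so it can be pulled directly through the $\inf$–$\sup$ in the definition \eqref{L-S:2} of $\lambda_k(\alpha)$.

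First I would treat the lower bound. For any $u \in \W \setminus \{0\}$, directly from \eqref{eq:rayleigh} one has
$$
\frac{R_\alpha(u)}{R_{\alpha_0}(u)} = \left(\frac{\|u\|_r}{\|u\|_q}\right)^{(\alpha - \alpha_0) p}.
$$
Since $q < r$, the H\"older inequality \eqref{eq:holder} gives $\|u\|_r/\|u\|_q \geq |\Omega|^{-(\frac1q-\frac1r)}$, and because $\alpha > \alpha_0$ the exponent $(\alpha-\alpha_0)p$ is positive, whence
$$
R_\alpha(u) \geq |\Omega|^{-(\frac1q-\frac1r)(\alpha-\alpha_0)p}\, R_{\alpha_0}(u).
$$
As the factor does not depend on $u$, taking $\sup$ over a set $A \in \widetilde{\Sigma}_k$ and then $\inf$ over $A$ yields $\lambda_k(\alpha) \geq |\Omega|^{-(\frac1q-\frac1r)(\alpha-\alpha_0)p}\,\lambda_k(\alpha_0)$, where $\lambda_k(\alpha_0)$ is read off the Lusternik--Schnirelmann formula \eqref{L-S:2}. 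Its strict positivity follows from $\lambda_k(\alpha_0) \geq \lambda_1(\alpha_0) > 0$, the first inequality holding because $\widetilde{\Sigma}_k \subset \widetilde{\Sigma}_1$ for every $\alpha$, and the strict positivity being Proposition~\ref{prop:alpha<0}.

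For the upper bound when $\alpha \in (\alpha_0,1]$ I would argue symmetrically: now $R_\alpha(u)/R_1(u) = (\|u\|_q/\|u\|_r)^{(1-\alpha)p}$, and since $\|u\|_q/\|u\|_r \leq |\Omega|^{\frac1q-\frac1r}$ with nonnegative exponent $(1-\alpha)p$, this gives $R_\alpha(u) \leq |\Omega|^{(\frac1q-\frac1r)(1-\alpha)p} R_1(u)$; passing to $\inf$–$\sup$ produces the claimed estimate. The case $\alpha \geq 1$ is the one step requiring slightly more care, and I expect it to be the main point. Here I would use the multiplicative factorization $R_\alpha(u) = R_1(u)^\alpha R_0(u)^{1-\alpha}$, immediate from \eqref{eq:rayleigh}. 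Because $R_0(u) \geq \lambda_1(0) > 0$ while the exponent $1-\alpha$ is now \emph{nonpositive}, the inequality reverses and $R_0(u)^{1-\alpha} \leq \lambda_1(0)^{1-\alpha}$, so that $R_\alpha(u) \leq \lambda_1(0)^{1-\alpha}\, R_1(u)^\alpha$. Taking $\sup$ over $A$ and then $\inf$ over $A \in \widetilde{\Sigma}_k$, and noting that the increasing map $t \mapsto t^\alpha$ (with $\alpha>0$) commutes with both operations, gives $\lambda_k(\alpha) \leq \lambda_1(0)^{1-\alpha}\,\lambda_k(1)^\alpha$. I would verify consistency at the junction $\alpha=1$, where both branches reduce to $\lambda_k(1)$.

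Finally, the divergence statement is a direct consequence of the lower bound: when $|\Omega| < 1$ one has $|\Omega|^{-1} > 1$, and since $q < r$ the exponent $(\frac1q-\frac1r)(\alpha-\alpha_0)p$ tends to $+\infty$ as $\alpha \to +\infty$; combined with the fixed positive constant $\lambda_k(\alpha_0)$, this forces $\lambda_k(\alpha) \to +\infty$. The computations throughout are elementary; the only genuinely delicate point is the sign reversal caused by the negative exponent $1-\alpha$ in the $\alpha \geq 1$ case, together with checking that the monotone power may be freely interchanged with the $\inf$–$\sup$ defining $\lambda_k(1)$.
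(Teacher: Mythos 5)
Your proposal is correct and follows essentially the same route as the paper: the same pointwise identities $R_\alpha = R_{\alpha_0}\,(\|u\|_r/\|u\|_q)^{(\alpha-\alpha_0)p} = R_1\,(\|u\|_q/\|u\|_r)^{(1-\alpha)p}$ combined with the H\"older inequality \eqref{eq:holder} for the lower bound and the $(\alpha_0,1]$ upper bound, and the factorization $R_\alpha = R_1^{\alpha} R_0^{1-\alpha}$ with $R_0 \geq \lambda_1(0)$ for $\alpha \geq 1$, all pulled through the $\inf$--$\sup$ in \eqref{L-S:2}. The extra care you take over commuting the monotone power with $\inf$--$\sup$ and over the positivity of $\lambda_k(\alpha_0)$ is sound and only makes explicit what the paper leaves implicit.
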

	\begin{proof}
		Let us take any $u \in \W \setminus \{0\}$
		and observe that
		\begin{equation}\label{eq:lem:bounds:R1}
			R_\alpha(u) 
			= 
			R_{\alpha_0}(u) 
			\left(\frac{\|u\|_r}{\|u\|_q}\right)^{(\alpha-\alpha_0) p}
			=
			R_{1}(u) \left(\frac{\|u\|_q}{\|u\|_r}\right)^{(1-\alpha) p}.
		\end{equation}
		If $\alpha > \alpha_0$, from the first equality in \eqref{eq:lem:bounds:R1} and the H\"older inequality \eqref{eq:holder} we get
		$$
		R_\alpha(u) 
		\geq 
		R_{\alpha_0}(u)
		|\Omega|^{-(\frac{1}{q}-\frac{1}{r}) (\alpha -\alpha_0) p}.
		$$
		Since $u$ is arbitrary, we conclude from \eqref{L-S:2} that
		\begin{equation}\label{eq:lem:bounds:lambdak-1}
			\lambda_k(\alpha)
			\geq 
			\lambda_k(\alpha_0) |\Omega|^{-(\frac{1}{q}-\frac{1}{r}) (\alpha-\alpha_0) p}
			\quad \text{for any}~ k \in \mathbb{N}. 
		\end{equation}

		Let $\alpha \in (\alpha_0,1]$. 
		In much the same way as above, using the second equality in \eqref{eq:lem:bounds:R1} and the H\"older inequality \eqref{eq:holder}, we get from \eqref{L-S:2} that
		\begin{equation}\label{eq:lem:bounds:lambdak-2}
			\lambda_k(\alpha)
			\leq 
			\lambda_k(1) |\Omega|^{(\frac{1}{q}-\frac{1}{r}) (1-\alpha) p}.
		\end{equation}
		Let $\alpha \geq 1$. 
		We observe that 
		$$
		R_\alpha(u) 
		=
		R_{1}(u)^{\alpha}
		R_{0}(u)^{(1-\alpha)}
		\leq
		R_{1}(u)^{\alpha}
		\lambda_1(0)^{(1-\alpha)},
		$$
		which yields
		$$
		\lambda_k(\alpha) 
		\leq
		\lambda_k(1)^{\alpha}
		\lambda_1(0)^{(1-\alpha)} \quad \text{or, equivalently,}\quad 
		\frac{\lambda_k(\alpha)}{\lambda_1(0)}
		\le 
		\left(\frac{\lambda_k(1)}{\lambda_1(0)}\right)^{\alpha}.
		$$
		The proof is complete.
	\end{proof}
	
	\begin{corollary}\label{cor:bounds}
		Let $k \in \mathbb{N}$. 
		Then $\alpha \mapsto \lambda_k(\alpha)^{1/\alpha}$ 
		is bounded in $[1,+\infty)$ and satisfies 
		\begin{equation}\label{eq:bdd_alpha_large}
			0<|\Omega|^{-(\frac{1}{q}-\frac{1}{r})p} 
			\le \liminf_{\alpha\to +\infty} \lambda_k(\alpha)^{\frac{1}{\alpha}} 
			\le \limsup_{\alpha\to +\infty} \lambda_k(\alpha)^{\frac{1}{\alpha}}
			\le \lambda_k(1)
			\lambda_1(0)^{-1}. 
		\end{equation}
		In particular, if $\lambda_k(1) < \lambda_1(0)$, then $\lambda_k(\alpha) \to 0$ as $\alpha \to +\infty$. 
	\end{corollary}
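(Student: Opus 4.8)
The plan is to derive everything directly from the two-sided estimate of Lemma~\ref{lem:bounds} restricted to the range $\alpha \ge 1$, namely
\begin{equation}
	\lambda_k(\alpha_0)\, |\Omega|^{-(\frac{1}{q}-\frac{1}{r})(\alpha-\alpha_0)p}
	\le
	\lambda_k(\alpha)
	\le
	\lambda_k(1)^{\alpha}\,\lambda_1(0)^{1-\alpha},
\end{equation}
and to extract the asymptotics simply by raising each side to the power $1/\alpha$ and letting $\alpha \to +\infty$. First I would handle the $\limsup$: taking the $1/\alpha$-th root of the right-hand estimate gives $\lambda_k(\alpha)^{1/\alpha} \le \lambda_k(1)\,\lambda_1(0)^{(1-\alpha)/\alpha}$, and since $(1-\alpha)/\alpha \to -1$ while $\lambda_1(0)>0$ is a fixed positive number, the right-hand side tends to $\lambda_k(1)\lambda_1(0)^{-1}$. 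This yields $\limsup_{\alpha\to+\infty}\lambda_k(\alpha)^{1/\alpha}\le \lambda_k(1)\lambda_1(0)^{-1}$, the last inequality in \eqref{eq:bdd_alpha_large}.

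For the lower bound I would take the $1/\alpha$-th root of the left-hand estimate to obtain $\lambda_k(\alpha)^{1/\alpha} \ge \lambda_k(\alpha_0)^{1/\alpha}\, |\Omega|^{-(\frac{1}{q}-\frac{1}{r})(\alpha-\alpha_0)p/\alpha}$. Here $\lambda_k(\alpha_0) \ge \lambda_1(\alpha_0) > 0$ by the monotonicity $\lambda_1(\alpha_0)\le\lambda_k(\alpha_0)$ in $k$ together with Proposition~\ref{prop:alpha<0}, so the base of $\lambda_k(\alpha_0)^{1/\alpha}$ is a fixed positive constant and this factor tends to $1$; meanwhile the exponent $(\frac{1}{q}-\frac{1}{r})(\alpha-\alpha_0)p/\alpha$ converges to $(\frac{1}{q}-\frac{1}{r})p$. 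Passing to the limit gives $\liminf_{\alpha\to+\infty}\lambda_k(\alpha)^{1/\alpha} \ge |\Omega|^{-(\frac{1}{q}-\frac{1}{r})p}>0$, which closes the chain \eqref{eq:bdd_alpha_large}.

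Boundedness of $\alpha \mapsto \lambda_k(\alpha)^{1/\alpha}$ on $[1,+\infty)$ I would read off from the same explicit upper bound $\lambda_k(1)\lambda_1(0)^{(1-\alpha)/\alpha}$: this is a continuous function of $\alpha$ on $[1,+\infty)$ that converges at $+\infty$, hence is bounded there, and since $\lambda_k(\alpha)^{1/\alpha}\ge 0$ this already settles the claim (no continuity of $\lambda_k$ itself is needed). Finally, for the ``in particular'' statement I would avoid roots altogether and rewrite the right-hand estimate as $\lambda_k(\alpha)\le \lambda_1(0)\,\big(\lambda_k(1)/\lambda_1(0)\big)^{\alpha}$; under the hypothesis $\lambda_k(1)<\lambda_1(0)$ the geometric base is strictly less than $1$, so $\lambda_k(\alpha)\to 0$ as $\alpha\to+\infty$. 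There is no genuine obstacle: the corollary is a direct limit computation from Lemma~\ref{lem:bounds}, and the only point demanding a moment's care is the strict positivity $\lambda_k(\alpha_0)>0$, which is precisely what forces $\lambda_k(\alpha_0)^{1/\alpha}\to 1$ rather than degenerating to $0$.
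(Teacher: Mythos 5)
Your proposal is correct and follows the same (essentially unique) route the paper intends: the corollary is read off directly from the two-sided estimate of Lemma~\ref{lem:bounds} by taking $1/\alpha$-th roots and passing to the limit, with the positivity $\lambda_k(\alpha_0)\ge\lambda_1(\alpha_0)>0$ from Proposition~\ref{prop:alpha<0} ensuring $\lambda_k(\alpha_0)^{1/\alpha}\to 1$. The ``in particular'' part via rewriting the upper bound as $\lambda_1(0)\bigl(\lambda_k(1)/\lambda_1(0)\bigr)^{\alpha}$ is exactly the reformulation already displayed at the end of the proof of Lemma~\ref{lem:bounds}.
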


	\begin{remark}
		Let us discuss the assumption $\lambda_k(1) < \lambda_1(0)$ from Corollary~\ref{cor:bounds}. 
		For any $k \in \mathbb{N}$, we get from Lemma~\ref{lem:scaling} the following scaling formulas:
		$$
		\lambda_k(1; t\Omega) 
		= 
		t^{N-p-\frac{Np}{q}}
		\lambda_k(1; \Omega)
		\quad \text{and} \quad 
		\lambda_1(0; t\Omega) 
		= 
		t^{N-p-\frac{Np}{r}}
		\lambda_1(0; \Omega).
		$$
		We deduce that $\lambda_k(1; t\Omega) < \lambda_1(0; t\Omega)$ if and only if 
		\begin{equation}\label{eq:t>frac}
			t > \left(\frac{\lambda_k(1; \Omega)}{\lambda_1(0; \Omega)}\right)^{\frac{rq}{Np(r-q)}}.
		\end{equation}
		Consequently, in view of Corollary~\ref{cor:bounds}, if $t$ satisfies \eqref{eq:t>frac}, then $\lambda_k(\alpha; t\Omega) \to 0$ as $\alpha \to +\infty$.
	\end{remark}

	\subsection{Monotonicity}
	
	\begin{lemma}\label{lem:monot:1}
		The mapping $\alpha \mapsto |\Omega|^{\frac{\alpha p}{q}+\frac{(1-\alpha)p}{r}} \lambda_1(\alpha)$
		is increasing in $(\alpha_0,+\infty)$. 
	\end{lemma}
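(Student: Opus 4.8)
The plan is to exploit how $R_\alpha$ depends \emph{multiplicatively} on $\alpha$, and to combine this with the strict H\"older inequality \eqref{eq:holder} and the attainability of $\lambda_1$ furnished by Proposition~\ref{prop:alpha<0}. Write $\Phi(\alpha) = |\Omega|^{\frac{\alpha p}{q}+\frac{(1-\alpha)p}{r}} \lambda_1(\alpha)$ and fix $\alpha_0 < \alpha < \alpha'$. The starting point is the elementary identity
\begin{equation}\label{eq:plan:mult}
R_{\alpha'}(u) = R_\alpha(u) \left(\frac{\|u\|_r}{\|u\|_q}\right)^{(\alpha'-\alpha)p}, \qquad u \in \W \setminus \{0\},
\end{equation}
which is immediate from the definition \eqref{eq:rayleigh} (it is the same computation underlying \eqref{eq:lem:bounds:R1}). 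Since $q<r$, the H\"older inequality \eqref{eq:holder} reads $\|u\|_r/\|u\|_q \geq |\Omega|^{-(\frac{1}{q}-\frac{1}{r})}$, and, as recorded just after \eqref{eq:holder}, this bound is \emph{strict} for every $u \in \W \setminus \{0\}$, because equality would force $u$ to be a nonzero constant, which is impossible in $\W$.

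Next I would evaluate \eqref{eq:plan:mult} at a minimizer. Because $\alpha' > \alpha_0$, Proposition~\ref{prop:alpha<0} guarantees that $\lambda_1(\alpha')$ is attained by some $u^* \in \W \setminus \{0\}$. Applying \eqref{eq:plan:mult} to $u^*$, invoking the strict H\"older bound on the nonconstant function $u^*$, and noting $R_\alpha(u^*) \geq \lambda_1(\alpha)$, I obtain
\begin{equation}\label{eq:plan:strict}
\lambda_1(\alpha') = R_{\alpha'}(u^*) > |\Omega|^{-(\frac{1}{q}-\frac{1}{r})(\alpha'-\alpha)p}\, R_\alpha(u^*) \geq |\Omega|^{-(\frac{1}{q}-\frac{1}{r})(\alpha'-\alpha)p}\, \lambda_1(\alpha).
\end{equation}
Multiplying \eqref{eq:plan:strict} by the positive factor $|\Omega|^{\frac{\alpha' p}{q}+\frac{(1-\alpha')p}{r}}$ and using the exponent identity $\frac{\alpha' p}{q}+\frac{(1-\alpha')p}{r} - (\frac{1}{q}-\frac{1}{r})(\alpha'-\alpha)p = \frac{\alpha p}{q}+\frac{(1-\alpha)p}{r}$ then yields $\Phi(\alpha') > \Phi(\alpha)$, which is exactly the claimed strict monotonicity.

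The main obstacle is the \emph{strictness}, not the inequality itself. The nonstrict bound $\Phi(\alpha') \geq \Phi(\alpha)$ follows at once by taking the infimum over all $u$ in \eqref{eq:plan:mult} together with the (nonstrict) H\"older estimate, since the exponent bookkeeping is exact. However, this infimum argument cannot detect strictness, because the factor $(\|u\|_r/\|u\|_q)^{(\alpha'-\alpha)p}$ depends on $u$ and approaches the extremal H\"older constant only in a limit that need not be realized. The device resolving this is to test at a genuine minimizer $u^*$ of $R_{\alpha'}$, where the strict H\"older inequality applies to a single fixed nonconstant function. Consequently the attainability of $\lambda_1(\alpha')$ guaranteed by Proposition~\ref{prop:alpha<0} (valid precisely for $\alpha' > \alpha_0$) is indispensable, which also explains why the monotonicity is asserted only on $(\alpha_0,+\infty)$. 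Note that the argument is insensitive to whether $|\Omega|<1$ or $|\Omega|>1$, since the strict inequality originates solely in the H\"older step and is preserved upon multiplication by the positive power of $|\Omega|$.
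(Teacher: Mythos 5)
Your proof is correct and follows essentially the same route as the paper: both arguments test the multiplicative identity relating $R_{\alpha'}$ and $R_\alpha$ at a minimizer $u^*$ of $\lambda_1(\alpha')$ (whose existence for $\alpha'>\alpha_0$ comes from Proposition~\ref{prop:alpha<0}) and extract strictness from the strict H\"older inequality \eqref{eq:holder} applied to that single nonzero function. The paper phrases the key step as the inequality \eqref{eq:monot:x1} between the weighted normalization functionals $I_{\alpha_1}$ and $I_{\alpha_2}$, which is just the reciprocal form of your \eqref{eq:plan:strict}.
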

	\begin{proof}
		Let us take any $\alpha_1, \alpha_2 \in (\alpha_0,+\infty)$ such that $\alpha_1<\alpha_2$, and $u \in \W \setminus \{0\}$.
		We rise the H\"older inequality \eqref{eq:holder} to the power of $(\alpha_2-\alpha_1)p$ and get 
		$$
		|\Omega|^{-\frac{(\alpha_2-\alpha_1)p}{q}} \|u\|_q^{(\alpha_2-\alpha_1)p} 
		<
		|\Omega|^{-\frac{(\alpha_2-\alpha_1)p}{r}} \|u\|_r^{(\alpha_2-\alpha_1)p}.
		$$
		Multiplying both sides by 
		$|\Omega|^{-p/r} \|u\|_r^{p}$
		and rearranging, we arrive at
		\begin{equation}\label{eq:monot:x1}
			|\Omega|^{-\frac{\alpha_2 p}{q}-\frac{(1-\alpha_2)p}{r}} 
			I_{\alpha_2}(u)
			<
			|\Omega|^{-\frac{\alpha_1 p}{q}-\frac{(1-\alpha_1)p}{r}}
			I_{\alpha_1}(u).
		\end{equation}
		Let now $u$ be a minimizer of $\lambda_1(\alpha_2)$. 
		In view of \eqref{eq:monot:x1}, we obtain
		\begin{align}
			|\Omega|^{\frac{\alpha_2 p}{q}+\frac{(1-\alpha_2)p}{r}}
			\lambda_1(\alpha_2) 
			&=
			{|\Omega|^{\frac{\alpha_2 p}{q}+\frac{(1-\alpha_2)p}{r}}} \, \frac{\|\nabla u\|_p^p}{I_{\alpha_2}(u)}
			\\
			&>
			{|\Omega|^{\frac{\alpha_1 p}{q}+\frac{(1-\alpha_1)p}{r}}} \, \frac{\|\nabla u\|_p^p}{I_{\alpha_1}(u)}
			\geq
			|\Omega|^{\frac{\alpha_1 p}{q}+\frac{(1-\alpha_1)p}{r}}
			\lambda_1(\alpha_1), 
		\end{align}
		which shows that $\alpha \mapsto |\Omega|^{\frac{\alpha p}{q}+\frac{(1-\alpha)p}{r}} \lambda_1(\alpha)$ is increasing.
	\end{proof}
	
	\begin{lemma}\label{lem:monot:2}
		Let $k \geq 2$. 
		Then the mapping $\alpha \mapsto |\Omega|^{\frac{\alpha p}{q}+\frac{(1-\alpha)p}{r}} \lambda_k(\alpha)$ is nondecreasing in $(\alpha_0,+\infty)$. 
	\end{lemma}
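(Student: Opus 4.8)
The plan is to reduce the Lusternik--Schnirelmann min-max levels to a purely pointwise monotonicity statement about a rescaled Rayleigh quotient. Write $c(\alpha) = |\Omega|^{\frac{\alpha p}{q}+\frac{(1-\alpha)p}{r}}$, so that the claim becomes that $\alpha \mapsto c(\alpha)\lambda_k(\alpha)$ is nondecreasing on $(\alpha_0,+\infty)$. First I would record the pointwise fact that, for every fixed $u \in \W\setminus\{0\}$ and all $\alpha_1<\alpha_2$ in $(\alpha_0,+\infty)$,
\[
c(\alpha_2) R_{\alpha_2}(u) > c(\alpha_1) R_{\alpha_1}(u).
\]
This is merely a rearrangement of inequality \eqref{eq:monot:x1} obtained in the proof of Lemma~\ref{lem:monot:1}: since $R_\alpha(u) = \|\nabla u\|_p^p/I_\alpha(u)$, dividing the positive quantity $\|\nabla u\|_p^p$ by the two sides of \eqref{eq:monot:x1} (the smaller denominator producing the larger quotient) yields the displayed strict inequality, whose strictness ultimately stems from the strict H\"older inequality \eqref{eq:holder} for $u\not\equiv 0$.

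With this in hand, I would invoke the min-max characterization \eqref{L-S:2}, whose admissible family $\widetilde{\Sigma}_k$ is independent of $\alpha$. As $c(\alpha)>0$ is a constant not depending on $A$ or $u$, it may be pulled inside the inf-sup, giving
\[
c(\alpha)\lambda_k(\alpha) = \inf_{A\in\widetilde{\Sigma}_k}\ \sup_{u\in A} c(\alpha) R_\alpha(u).
\]
The pointwise inequality above now transfers through $\sup$ and then through $\inf$: for each fixed $A\in\widetilde{\Sigma}_k$ one has $\sup_{u\in A}c(\alpha_2)R_{\alpha_2}(u) \ge \sup_{u\in A}c(\alpha_1)R_{\alpha_1}(u)$, and taking the infimum over the \emph{common} class $\widetilde{\Sigma}_k$ preserves this ordering, so that $c(\alpha_2)\lambda_k(\alpha_2)\ge c(\alpha_1)\lambda_k(\alpha_1)$, which is exactly the assertion.

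I do not expect a genuine obstacle here; the delicate point is only conceptual, namely why the conclusion is merely nondecreasing rather than increasing as in the case $k=1$. The reason is that, for $k\ge 2$, neither the inner supremum nor the outer infimum in \eqref{L-S:2} is in general attained at a single explicit competitor (as the minimizer used in the proof of Lemma~\ref{lem:monot:1} was), so the strict pointwise inequality collapses to a non-strict one after the two successive passages to $\sup$ and $\inf$. Thus the heart of the argument is simply to isolate the monotone-in-$\alpha$ behavior of $c(\alpha)R_\alpha(u)$ and to exploit the $\alpha$-independence of $\widetilde{\Sigma}_k$, which lets the Lusternik--Schnirelmann levels be compared term by term.
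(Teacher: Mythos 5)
Your proof is correct and follows essentially the same route as the paper: the pointwise inequality $c(\alpha_2)R_{\alpha_2}(u)>c(\alpha_1)R_{\alpha_1}(u)$ from \eqref{eq:monot:x1} is passed through the min-max characterization \eqref{L-S:2} over the $\alpha$-independent class $\widetilde{\Sigma}_k$ (the paper does this concretely via almost-optimal sets $A_n$, you do it by comparing the inf-sup directly, which is the same argument). One cosmetic remark: since each $A\in\widetilde{\Sigma}_k$ is compact and $R_\alpha$ is continuous, the inner supremum \emph{is} attained and strictness survives it; it is only the outer infimum over $\widetilde{\Sigma}_k$ that degrades the inequality to a non-strict one.
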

	\begin{proof}
		The arguments are similar to those from Lemma~\ref{lem:monot:1}, but they need to be slightly amended. 
		Let us take any $\alpha_1, \alpha_2 \in (\alpha_0,+\infty)$ such that $\alpha_1<\alpha_2$.  
		As in the proof of Lemma~\ref{lem:monot:1}, for any $u \in \W \setminus \{0\}$ the H\"older inequality \eqref{eq:holder}  gives
		\begin{equation}\label{eq:lem:monot:n:1}
			|\Omega|^{-\frac{\alpha_2 p}{q}-\frac{(1-\alpha_2)p}{r}} 
			I_{\alpha_2}(u)
			<
			|\Omega|^{-\frac{\alpha_1 p}{q}-\frac{(1-\alpha_1)p}{r}}
			I_{\alpha_1}(u).
		\end{equation}
		From the definition \eqref{L-S:2}, for each $n\in \mathbb{N}$ we can find $A_n\in \widetilde{\Sigma}_k$ such that
		\begin{equation}
			\label{eq:lem:monot:n:2}
			\lambda_k(\alpha_2) 
			\leq 
			\max_{u\in A_n} \frac{\|\nabla u\|_p^p}{I_{\alpha_2}(u)} 
			\leq 
			\lambda_k(\alpha_2)+\frac{1}{n}.
		\end{equation}
		Therefore, we deduce form \eqref{eq:lem:monot:n:1} and \eqref{eq:lem:monot:n:2} that 
		\begin{align}
			|\Omega|^{\frac{\alpha_2 p}{q}+\frac{(1-\alpha_2)p}{r}} \left(\lambda_k(\alpha_2) + \frac{1}{n}\right) 
			&\geq
			{|\Omega|^{\frac{\alpha_2 p}{q}+\frac{(1-\alpha_2)p}{r}}} \, \max_{u\in A_n}
			\frac{\|\nabla u\|_p^p}{I_{\alpha_2}(u)}
			\\
			\label{eq:lem:cont1}
			&>
			{|\Omega|^{\frac{\alpha_1 p}{q}+\frac{(1-\alpha_1)p}{r}}} \, 
			\max_{u\in A_n}
			\frac{\|\nabla u\|_p^p}{I_{\alpha_1}(u)}
			\geq
			|\Omega|^{\frac{\alpha_1 p}{q}+\frac{(1-\alpha_1)p}{r}}
			\lambda_k(\alpha_1).
		\end{align}
		Letting $n \to +\infty$, we deduce that the mapping $\alpha \mapsto |\Omega|^{\frac{\alpha p}{q}+\frac{(1-\alpha)p}{r}} \lambda_k(\alpha)$ is nondecreasing.
	\end{proof}

	\begin{corollary}
		Let $k\in\mathbb{N}$. 
		If $|\Omega|\le 1$ (resp.\ $|\Omega|<1$), then $\alpha \mapsto \lambda_k(\alpha)$ is nondecreasing (resp.\ increasing) in $(\alpha_0,+\infty)$. 
	\end{corollary}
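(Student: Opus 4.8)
The plan is to leverage the two monotonicity results already established—Lemma~\ref{lem:monot:1} and Lemma~\ref{lem:monot:2}—which together assert that the weighted quantity
$$
h_k(\alpha) := |\Omega|^{\frac{\alpha p}{q}+\frac{(1-\alpha)p}{r}} \lambda_k(\alpha)
$$
is nondecreasing on $(\alpha_0,+\infty)$ for every $k \in \mathbb{N}$ (and, for $k=1$, strictly increasing). The whole corollary amounts to stripping off the weight: writing
$$
\lambda_k(\alpha) = |\Omega|^{-\left(\frac{\alpha p}{q}+\frac{(1-\alpha)p}{r}\right)} \, h_k(\alpha),
$$
the result will follow once I control the monotonicity of the prefactor $g(\alpha) := |\Omega|^{-\left(\frac{\alpha p}{q}+\frac{(1-\alpha)p}{r}\right)}$ and combine it with that of $h_k$.

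First I would rewrite the exponent appearing in the weight as
$$
\frac{\alpha p}{q}+\frac{(1-\alpha)p}{r} = \frac{p}{r} + \alpha p\left(\frac{1}{q}-\frac{1}{r}\right),
$$
which is strictly increasing in $\alpha$ because the default assumption $q<r$ forces $\frac{1}{q}-\frac{1}{r}>0$. Consequently the sign of $\log|\Omega|$ dictates the behavior of $g$: if $|\Omega|=1$ then $g\equiv 1$, whereas if $|\Omega|<1$ then $\log|\Omega|<0$ and $g$ is strictly increasing and positive.

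It then remains to combine the two positive factors. Since $\lambda_k(\alpha)>0$ for $\alpha>\alpha_0$ by Lemma~\ref{lem:bounds}, both $g$ and $h_k$ are strictly positive throughout $(\alpha_0,+\infty)$, so I can invoke the elementary fact that a product of positive monotone functions inherits monotonicity. In the case $|\Omega|\le 1$ the factor $g$ is nondecreasing (constant when $|\Omega|=1$) and $h_k$ is nondecreasing, hence $\lambda_k=g\,h_k$ is nondecreasing. In the case $|\Omega|<1$, $g$ is strictly increasing while $h_k$ is nondecreasing and positive, so for $\alpha_1<\alpha_2$ one has $g(\alpha_1)h_k(\alpha_1)<g(\alpha_2)h_k(\alpha_1)\le g(\alpha_2)h_k(\alpha_2)$, which yields strict monotonicity.

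I do not anticipate any genuine obstacle: the corollary is a direct bookkeeping consequence of the two preceding lemmas. The only point demanding a little care is the strictness in the product of one strictly increasing and one merely nondecreasing factor, for which the positivity of $h_k$—guaranteed by Lemma~\ref{lem:bounds}—is essential; without it, the middle inequality in the chain above could collapse.
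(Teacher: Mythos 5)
Your proposal is correct and follows essentially the same route as the paper: both factor $\lambda_k(\alpha)$ as the weight $|\Omega|^{-(\frac{\alpha p}{q}+\frac{(1-\alpha)p}{r})}$ times the quantity controlled by Lemmas~\ref{lem:monot:1} and \ref{lem:monot:2}, and observe that the weight is nondecreasing (strictly increasing) when $|\Omega|\le 1$ (resp.\ $|\Omega|<1$) because $q<r$. Your explicit chain $g(\alpha_1)h_k(\alpha_1)<g(\alpha_2)h_k(\alpha_1)\le g(\alpha_2)h_k(\alpha_2)$, using positivity of $h_k$, correctly fills in the strictness step that the paper dispatches with ``can be proved similarly.''
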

	\begin{proof} 
		First, we note that 
		$|\Omega|^{-(\frac{\alpha p}{q}+\frac{(1-\alpha)p}{r})}$ is nondecreasing in $(\alpha_0,+\infty)$ provided $|\Omega| \leq 1$, since 
		$$
		\dfrac{d}{d\alpha}\,|\Omega|^{-(\frac{\alpha p}{q}+\frac{(1-\alpha)p}{r})}
		=
		p\left(\frac{1}{r}-\frac{1}{q}\right)\,|\Omega|^{-(\frac{\alpha p}{q}+\frac{(1-\alpha)p}{r})}\log |\Omega|
		\ge 0. 
		$$
		Hence, according to Lemmas~\ref{lem:monot:1} and \ref{lem:monot:2}, 
		$$
		\lambda_k(\alpha)=|\Omega|^{-(\frac{\alpha p}{q}+\frac{(1-\alpha)p}{r})}
		\cdot |\Omega|^{\frac{\alpha p}{q}+\frac{(1-\alpha)p}{r}}\lambda_k(\alpha) 
		$$
		is nondecreasing in $(\alpha_0,+\infty)$.
		The strict monotonicity can be proved similarly.  
	\end{proof}

	\begin{corollary}
		Let $k \in \mathbb{N}$. Then the mapping $\alpha \mapsto \lambda_k(\alpha)$ is differentiable a.e.\ in $(\alpha_0,+\infty)$.
	\end{corollary}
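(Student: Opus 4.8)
The plan is to reduce the claim to Lebesgue's theorem on the almost-everywhere differentiability of monotone functions. The one conceptual point is that $\lambda_k(\alpha)$ itself need not be monotone when $|\Omega|>1$: the preceding corollary only guarantees monotonicity of $\alpha \mapsto \lambda_k(\alpha)$ in the regime $|\Omega|\le 1$. So rather than applying Lebesgue's theorem to $\lambda_k$ directly, I would factor out the explicit $|\Omega|$-dependent term and isolate the part already known to be monotone.

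Concretely, set
\[
g(\alpha) = |\Omega|^{\frac{\alpha p}{q}+\frac{(1-\alpha)p}{r}} \lambda_k(\alpha),
\qquad
h(\alpha) = |\Omega|^{-\left(\frac{\alpha p}{q}+\frac{(1-\alpha)p}{r}\right)},
\]
so that $\lambda_k(\alpha) = h(\alpha)\, g(\alpha)$ on $(\alpha_0,+\infty)$. By Lemmas~\ref{lem:monot:1} and \ref{lem:monot:2}, the function $g$ is monotone on $(\alpha_0,+\infty)$, being increasing for $k=1$ and nondecreasing for $k\ge 2$. Lebesgue's monotone differentiation theorem then guarantees that $g$ is differentiable at almost every point of $(\alpha_0,+\infty)$.

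It remains to transfer this to $\lambda_k$ through the product representation. The factor $h$ is an exponential in $\alpha$, namely $h(\alpha) = \exp\!\big(-(\tfrac{\alpha p}{q}+\tfrac{(1-\alpha)p}{r})\log|\Omega|\big)$, and hence is $C^\infty$, so it is differentiable at every point of $(\alpha_0,+\infty)$. At any $\alpha$ where $g$ is differentiable, the product rule then shows that $\lambda_k = h\cdot g$ is differentiable as well; since the set of such $\alpha$ has full measure, $\lambda_k$ is differentiable almost everywhere in $(\alpha_0,+\infty)$. There is no genuine obstacle here beyond recognizing that the monotone surrogate $g$, rather than $\lambda_k$, is the object to which Lebesgue's theorem must be applied; everything else is routine.
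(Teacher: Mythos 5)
Your argument is correct and is exactly the one the paper intends: the corollary is stated immediately after Lemmas~\ref{lem:monot:1} and \ref{lem:monot:2} precisely so that Lebesgue's differentiation theorem can be applied to the monotone function $\alpha \mapsto |\Omega|^{\frac{\alpha p}{q}+\frac{(1-\alpha)p}{r}}\lambda_k(\alpha)$, with the smooth nonvanishing exponential prefactor then transferring a.e.\ differentiability to $\lambda_k$ itself. The paper leaves this routine step implicit, and your write-up fills it in faithfully.
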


	\subsection{Continuity}
	
	\begin{lemma}\label{lem:lambda-k:contin}
		Let $k \in \mathbb{N}$. Then the mapping $\alpha \mapsto \lambda_k(\alpha)$ is continuous in $(\alpha_0,+\infty)$.   
	\end{lemma}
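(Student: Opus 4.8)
The plan is to fix $\bar\alpha \in (\alpha_0,+\infty)$ and establish continuity there by proving upper and lower semicontinuity of $\alpha \mapsto \lambda_k(\alpha)$ separately. Throughout I would fix a small $\delta>0$ with $[\bar\alpha-\delta,\bar\alpha+\delta] \subset (\alpha_0,+\infty)$ and restrict to $\alpha$ in this interval, on which $\lambda_k(\alpha)$ is bounded by some constant $C>0$ (either from Lemma~\ref{lem:bounds} or from the monotonicity of $\alpha \mapsto |\Omega|^{\frac{\alpha p}{q}+\frac{(1-\alpha)p}{r}}\lambda_k(\alpha)$ in Lemmas~\ref{lem:monot:1} and \ref{lem:monot:2}, together with the continuity of $|\Omega|^{\frac{\alpha p}{q}+\frac{(1-\alpha)p}{r}}$). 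The mechanism underlying both directions is the pointwise identity
\begin{equation*}
	R_\alpha(u) = R_{\beta}(u) \left(\frac{\|u\|_r}{\|u\|_q}\right)^{(\alpha-\beta)p},
\end{equation*}
so that comparing the Rayleigh quotients at two nearby parameters amounts to controlling the ratio $\|u\|_r/\|u\|_q$ on the relevant families of functions. I emphasize that monotonicity alone yields only one-sided continuity, so a genuine argument is needed for each semicontinuity.

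For upper semicontinuity, I would fix $\varepsilon>0$ and choose $A \in \widetilde{\Sigma}_k$ with $\sup_{u\in A}R_{\bar\alpha}(u)\le \lambda_k(\bar\alpha)+\varepsilon$. Since $A$ is compact and avoids $0$, the norms $\|u\|_q$ and $\|u\|_r$ are bounded away from $0$ and $\infty$ on $A$, so the ratio $\|u\|_r/\|u\|_q$ is uniformly controlled and $R_\alpha(u)\to R_{\bar\alpha}(u)$ uniformly on $A$ as $\alpha\to\bar\alpha$. Using this fixed $A$ as a competitor for $\lambda_k(\alpha)$ in \eqref{L-S:2}, I obtain $\lambda_k(\alpha)\le \sup_{u\in A}R_\alpha(u)\to \sup_{u\in A}R_{\bar\alpha}(u)\le \lambda_k(\bar\alpha)+\varepsilon$, whence $\limsup_{\alpha\to\bar\alpha}\lambda_k(\alpha)\le \lambda_k(\bar\alpha)$ after letting $\varepsilon\to 0$.

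The harder direction is lower semicontinuity, and its crux is a uniform two-sided bound on $\|u\|_q$ and $\|u\|_r$. I would take $\alpha_n\to\bar\alpha$ and, for each $n$, a set $A_n\in \widetilde{\Sigma}_k$ with $\sup_{u\in A_n}R_{\alpha_n}(u)\le \lambda_k(\alpha_n)+1/n$. Using the $0$-homogeneity of $R_{\alpha_n}$, I replace $A_n$ by its image under the odd homeomorphism $u\mapsto u/I_{\alpha_n}(u)^{1/p}$, which preserves the genus and places $A_n\subset \mathcal{M}_{\alpha_n}$ without changing $R_{\alpha_n}$; then $\|\nabla u\|_p^p = R_{\alpha_n}(u)\le C+1$ for all $u\in A_n$ and all large $n$. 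On this family I claim $\|u\|_q$ and $\|u\|_r$ are bounded above and below uniformly in $n$: the upper bounds come from the Sobolev embedding, while the lower bounds follow by combining the constraint $I_{\alpha_n}(u)=1$ with the H\"older inequality \eqref{eq:holder} and, in the range $\alpha_n<0$, with the Gagliardo-Nirenberg inequality \eqref{eq:GN1:x} exactly as in Lemma~\ref{lem:wsc}. This gives $\|u\|_r/\|u\|_q\in[m,M]$ with $0<m\le M<\infty$ uniform, so that $(\|u\|_r/\|u\|_q)^{(\bar\alpha-\alpha_n)p}\le K_n$ with $K_n\to 1$. Hence $\lambda_k(\bar\alpha)\le \sup_{u\in A_n}R_{\bar\alpha}(u)\le K_n\sup_{u\in A_n}R_{\alpha_n}(u)\le K_n(\lambda_k(\alpha_n)+1/n)$, and passing to the limit yields $\lambda_k(\bar\alpha)\le \liminf_n\lambda_k(\alpha_n)$.

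I expect the main obstacle to be precisely the uniform lower bound on $\|u\|_q$ in the subhomogeneous range $\alpha_n<0$: here neither H\"older nor Sobolev suffices, and one must invoke the Gagliardo-Nirenberg inequality together with the condition $\alpha>\alpha_0$ (equivalently $(1-\theta)(1-\alpha)p+\alpha p>0$) to convert the normalization $I_{\alpha_n}(u)=1$ and the gradient bound into separation of $\|u\|_q$ from zero, mirroring the three-case analysis of Lemma~\ref{lem:wsc}. The remaining technical care is to check that all constants ($m$, $M$, the Sobolev and Gagliardo-Nirenberg constants, and the gradient bound) can be chosen uniformly for $\alpha_n$ in the compact interval $[\bar\alpha-\delta,\bar\alpha+\delta]$, so that $K_n\to 1$ is legitimate.
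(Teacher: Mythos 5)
Your proof is correct, and its overall skeleton (two-sided semicontinuity via nearly optimal admissible sets from \eqref{L-S:2}) matches the paper's. The execution of the key uniformity step differs, though. For upper semicontinuity the paper fixes $A_\varepsilon\in\Sigma_k(\alpha)$ and shows $\min_{u\in A_\varepsilon}I_{\alpha_n}(u)\ge 1-\varepsilon$ by a compactness--contradiction argument, yielding $\lambda_k(\alpha_n)\le(\lambda_k(\alpha)+\varepsilon)/(1-\varepsilon)$; your uniform convergence of $R_\alpha\to R_{\bar\alpha}$ on the fixed compact $A$ via the ratio identity is essentially equivalent. The real divergence is in the lower semicontinuity: the paper again argues by contradiction, extracting a weakly convergent subsequence from the sets $A_{n,\varepsilon}\subset\mathcal{M}_{\alpha_n}$ and invoking Lemma~\ref{lem:wsc} to force $I_\alpha(u)\ge 1-\varepsilon$ there, whereas you derive \emph{quantitative} uniform two-sided bounds on $\|u\|_q$ and $\|u\|_r$ from the constraint $I_{\alpha_n}(u)=1$, the gradient bound, H\"older and Gagliardo--Nirenberg (in effect re-proving the content of Lemma~\ref{lem:bound_Phi}, whose proof in the paper indeed rests on the same \eqref{eq:bounds-for-norms1}--\eqref{eq:bounds-for-norms2} computation), and then let the explicit factor $K_n\to 1$. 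Your route is more explicit and avoids weak compactness entirely, at the cost of the case analysis on the sign of $\alpha_n-1$ and of checking uniformity of all constants on the compact parameter interval; the paper's route is shorter because Lemma~\ref{lem:wsc} already packages the three-case analysis, but it is purely qualitative. You correctly identify the genuinely delicate point, namely the lower bound on $\|u\|_q$ for $\alpha_n\le 1$, where the condition $\alpha>\alpha_0$ (i.e.\ $(1-\theta)(1-\alpha)p+\alpha p>0$) is exactly what makes the exponent in the Gagliardo--Nirenberg step have the right sign.
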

	\begin{proof}
		Let $\{\alpha_n\} \subset (\alpha_0,+\infty)$ be any sequence converging to $\alpha > \alpha_0$. 
		By the definition \eqref{L-S:2} of $\lambda_k(\alpha)$, for any $\varepsilon>0$ there exists $A_\varepsilon \in {\Sigma}_k(\alpha)$ such that
		\begin{equation}\label{eq:cont-k:1}
			\lambda_k(\alpha) \leq \max_{u \in A_\varepsilon} 
			\|\nabla u\|_p^p
			\leq \lambda_k(\alpha) + \varepsilon.
		\end{equation}
		We claim that
		\begin{equation}\label{eq:cont-k:2}
			\min_{u \in A_\varepsilon} I_{\alpha_n}(u) \geq 1 - \varepsilon
			\quad \text{for any sufficiently large}~ n \in \mathbb{N}.
		\end{equation}
		Suppose, by contradiction, that there exists a sequence $\{u_n\} \subset A_{\varepsilon}$ such that $I_{\alpha_n}(u_n) < 1 - \varepsilon$ for each $n$. 
		Since $A_\varepsilon$ is compact, $\{u_n\}$ converges strongly in $\W$ to some $u \in A_{\varepsilon}$, up to a subsequence.  
		Since $A_\varepsilon$ does not contain $0$, we have $u \not\equiv 0$, and hence  
		$I_{\alpha_n}(u_n) \to I_\alpha(u) \leq 1-\varepsilon$, which gives a contradiction to the constraint $I_\alpha(u)=1$ satisfied by $u\in A_\varepsilon\in\Sigma_k(\alpha)$. 
		Therefore, using \eqref{eq:cont-k:2} and the second inequality in \eqref{eq:cont-k:1}, we arrive at
		$$
		\lambda_k(\alpha_n) 
		\leq 
		\max_{u \in A_\varepsilon} 
		\frac{\|\nabla u\|_p^p}{I_{\alpha_n}(u)}
		\leq 
		\frac{\lambda_k(\alpha) + \varepsilon}{1-\varepsilon}.
		$$
		Since $\varepsilon>0$ is arbitrary, we derive
		\begin{equation}\label{eq:cont-k:m1}
			\limsup_{n \to +\infty}
			\lambda_k(\alpha_n) 
			\leq
			\lambda_k(\alpha).
		\end{equation}
		
		The proof of the complementary inequality is established similarly. 
		Namely, for any $n \in \mathbb{N}$ and $\varepsilon\in (0,1)$ we can find  $A_{n,\varepsilon} \in \Sigma_k(\alpha_n)$ satisfying
		\begin{equation}\label{eq:cont-k:3}
			\lambda_k(\alpha_n) \leq \max_{u \in A_{n,\varepsilon}}
			\|\nabla u\|_p^p
			\leq \lambda_k(\alpha_n) + \varepsilon.
		\end{equation}
		Let us show that there exists $n_0 = n_0(\varepsilon) \in \mathbb{N}$ such that
		\begin{equation}\label{eq:cont-k:4}
			I_{\alpha}(u) \geq 1 - \varepsilon 
			\quad \text{for any}~ n>n_0  ~\text{and}~ u\in A_{n,\varepsilon}. 
		\end{equation}
		Suppose, by contradiction, that there exists a subsequence $\{u_{n_i}\}$ such that each $u_{n_i} \in A_{n_i,\varepsilon}$ and $I_\alpha(u_{n_i})<1-\varepsilon$. 
		We see from \eqref{eq:cont-k:3} and \eqref{eq:cont-k:m1} that $\{u_{n_i}\}$ is bounded in $\W$. 
		Therefore, $\{u_{n_i}\}$ converges to some $u$ weakly in $\W$ and 
		strongly in $L^r(\Omega)$, up to a subsequence. 
		Since $I_{\alpha_{n_i}}(u_{n_i})=1$, Lemma~\ref{lem:wsc} gives $u\not\equiv 0$ and 
		$I_{\alpha_{n_i}}(u_{n_i}) \to I_\alpha(u)=1$. 
		On the other hand, since $u\not\equiv 0$, passing to the limit in 
		$I_\alpha(u_{n_i})<1-\varepsilon$,  
		we get $I_\alpha(u)\le 1-\varepsilon$, which is a contradiction. 
		
		Since $A_{n,\varepsilon} \in \widetilde{\Sigma}_k$,	we use \eqref{eq:cont-k:4} and the second inequality in \eqref{eq:cont-k:3} to get 
		$$
		\lambda_k(\alpha) 
		\leq 
		\max_{u \in A_{n,\varepsilon}} 
		\frac{\|\nabla u\|_p^p}{I_{\alpha}(u)}
		\leq 
		\frac{\lambda_k(\alpha_n) + \varepsilon}{1-\varepsilon} 
		\quad \text{for any}~ n>n_0,
		$$
		and hence
		$$
		\lambda_k(\alpha) (1-\varepsilon) - \varepsilon
		\leq
		\liminf_{n \to +\infty} \lambda_k(\alpha_n).
		$$
		Since $\varepsilon>0$ is arbitrary, we obtain 
		\begin{equation}\label{eq:cont-k:m2}
			\lambda_k(\alpha)
			\leq
			\liminf_{n \to +\infty} \lambda_k(\alpha_n).
		\end{equation}
		Recalling that the choice of $\{\alpha_n\}$ is also arbitrary, the combination of \eqref{eq:cont-k:m1} and \eqref{eq:cont-k:m2} gives the desired continuity of $\alpha \mapsto \lambda_k(\alpha)$ in $(\alpha_0,+\infty)$.   
	\end{proof}
	
	\begin{corollary}
		As $\alpha$ varies from $0$ to $1$, $\lambda_1(\alpha)$ continuously changes between 
		$$
		\lambda_1(0) = \inf_{u \in \W \setminus \{0\}}\frac{\|\nabla u\|_p^p}{\|u\|_r^p}
		\quad \text{and} \quad 
		\lambda_1(1) = \inf_{u \in \W \setminus \{0\}}\frac{\|\nabla u\|_p^p}{\|u\|_q^p},
		$$
		which correspond to the best constants of the embedding of $\W$ to $L^r(\Omega)$ and $L^q(\Omega)$, respectively. 
	\end{corollary}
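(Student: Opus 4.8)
The plan is to reduce the statement to the continuity already established in Lemma~\ref{lem:lambda-k:contin} combined with a direct evaluation of the defining formula \eqref{eq:rayleigh} at $\alpha = 0$ and $\alpha = 1$. The one preliminary point to settle is that the closed interval $[0,1]$ lies inside the domain $(\alpha_0,+\infty)$ on which that lemma applies. From \eqref{eq:alpha-0} we have $\alpha_0 = 1 - 1/\theta$ with $\theta \in (0,1)$, so $1/\theta > 1$ and hence $\alpha_0 < 0 < 1$; in particular $[0,1] \subset (\alpha_0,+\infty)$.

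With this in hand, the continuity of $\alpha \mapsto \lambda_1(\alpha)$ throughout $[0,1]$ is precisely Lemma~\ref{lem:lambda-k:contin} taken with $k = 1$, so the ``continuous change'' assertion needs no further argument. To identify the endpoints, I would substitute into \eqref{eq:rayleigh} and \eqref{eq:lambda}: at $\alpha = 0$ the factor $\|u\|_q^{\alpha p}$ reduces to $1$, leaving $R_0(u) = \|\nabla u\|_p^p / \|u\|_r^p$ and hence the stated formula for $\lambda_1(0)$ after taking the infimum; symmetrically, at $\alpha = 1$ the factor $\|u\|_r^{(1-\alpha)p}$ reduces to $1$, giving $R_1(u) = \|\nabla u\|_p^p/\|u\|_q^p$ and the stated formula for $\lambda_1(1)$.

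The embedding interpretation is then immediate: the best (smallest) constant $C_\sigma$ in the inequality $\|u\|_\sigma \le C_\sigma \|\nabla u\|_p$ over $\W$ is characterized by $C_\sigma^{-p} = \inf_{u \in \W \setminus \{0\}} \|\nabla u\|_p^p / \|u\|_\sigma^p$, so comparing with the two displayed infima identifies $\lambda_1(0)$ and $\lambda_1(1)$ with the best constants of the embeddings $\W \hookrightarrow L^r(\Omega)$ and $\W \hookrightarrow L^q(\Omega)$, respectively. Since the proof thus decomposes into a domain check, an invocation of an already-proved lemma, and two elementary substitutions, there is no substantive obstacle; the only step requiring any attention is confirming $\alpha_0 < 0$, which guarantees that the endpoints $0$ and $1$ fall within the validity range of Lemma~\ref{lem:lambda-k:contin}.
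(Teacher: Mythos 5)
Your argument is correct and matches the paper's (implicit) reasoning exactly: the corollary is stated without proof as an immediate consequence of Lemma~\ref{lem:lambda-k:contin}, and your verification that $\alpha_0<0$ (hence $[0,1]\subset(\alpha_0,+\infty)$) together with the evaluation of $R_\alpha$ at $\alpha=0,1$ is precisely what makes that deduction rigorous. Nothing is missing.
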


	\begin{remark}\label{rem:contin}
		Although $\alpha \mapsto \lambda_1(\alpha)$ is continuous, we are not able to guarantee the same for the corresponding minimizers, in general. 
		This is due to the fact that $\lambda_1(\alpha)$ might not be simple. 
		However, the simplicity takes place, e.g., in the subhomogeneous case $q<r \leq p$ and $\alpha \in [0,1]$, see Section~\ref{sec:subhom}.
	\end{remark}

	\begin{corollary}\label{conti_eigenfunction}
		Assume that $\lambda_1(\alpha)$ is simple on a subset $L \subset (\alpha_0,+\infty)$. 
		Let $u_\alpha \in \mathcal{M}_\alpha$ be the nonnegative minimizer of $\lambda_1(\alpha)$.
		Then the mapping $\alpha \mapsto u_\alpha$ is continuous in $L$ in the $\W$-topology.
	\end{corollary}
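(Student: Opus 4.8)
The plan is to argue by sequential continuity: fix $\alpha \in L$, take an arbitrary sequence $\{\alpha_n\} \subset L$ with $\alpha_n \to \alpha$, and show that $u_{\alpha_n} \to u_\alpha$ strongly in $\W$. First I would record that each $u_{\alpha_n}$ is a minimizer of $J$ over the $C^1$-manifold $\mathcal{M}_{\alpha_n}$ (Lemma~\ref{lem:M}), so by the Lagrange multiplier rule there is $\lambda_n \in \mathbb{R}$ with $J'(u_{\alpha_n}) = \lambda_n I_{\alpha_n}'(u_{\alpha_n})$; testing this identity with $u_{\alpha_n}$ and using $I_{\alpha_n}(u_{\alpha_n}) = 1$ forces $\lambda_n = \|\nabla u_{\alpha_n}\|_p^p = \lambda_1(\alpha_n)$. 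The continuity of $\alpha \mapsto \lambda_1(\alpha)$ from Lemma~\ref{lem:lambda-k:contin} then shows $\sup_n J(u_{\alpha_n}) = \sup_n \lambda_1(\alpha_n) < +\infty$.

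At this point the sequence $\{u_{\alpha_n}\}$ satisfies exactly the hypotheses of Lemma~\ref{lem:PS} (with $\lambda_n = \lambda_1(\alpha_n)$ and the residual in the third condition being identically zero). Hence, up to a subsequence, $u_{\alpha_n}$ converges strongly in $\W$ to some $u$. Strong convergence yields $\|\nabla u\|_p^p = \lim_n \lambda_1(\alpha_n) = \lambda_1(\alpha)$ and, via Lemma~\ref{lem:wsc} (or directly from the strong convergence), $I_\alpha(u) = \lim_n I_{\alpha_n}(u_{\alpha_n}) = 1$, so that $u \in \mathcal{M}_\alpha$ is a minimizer of $\lambda_1(\alpha)$; moreover $u \geq 0$ a.e., since a subsequence of the nonnegative functions $u_{\alpha_n}$ converges a.e.

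Finally I would invoke the simplicity of $\lambda_1(\alpha)$ on $L$: the nonnegative minimizer of $\lambda_1(\alpha)$ lying on $\mathcal{M}_\alpha$ is unique (the one-dimensional set of minimizers meets $\mathcal{M}_\alpha$ in exactly the two antipodal points $\pm u_\alpha$), whence $u = u_\alpha$. Since the limit $u_\alpha$ is independent of the chosen subsequence, the standard subsequence principle---every subsequence admits a further subsequence converging to the same limit---upgrades this to convergence of the whole sequence, giving $u_{\alpha_n} \to u_\alpha$ in $\W$. I do not expect a genuine obstacle here, as all the analytic work is packaged in Lemmas~\ref{lem:PS} and \ref{lem:lambda-k:contin}; the only point requiring care is the identification of the limit, where nonnegativity (so that $u$ lies in the class on which $u_\alpha$ is defined) together with simplicity is precisely what pins $u$ down to $u_\alpha$ rather than to some other minimizer.
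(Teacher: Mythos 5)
Your proof is correct and follows essentially the same route as the paper: sequential continuity, compactness of the minimizing sequence, identification of the limit as the unique nonnegative minimizer via simplicity, and the subsequence principle. The only cosmetic difference is that you obtain strong convergence by feeding the exact critical points (zero residual) into Lemma~\ref{lem:PS}, whereas the paper extracts a weak limit and upgrades to strong convergence directly from the equality $\lim_n \|\nabla u_n\|_p^p = \|\nabla u\|_p^p$; both mechanisms are valid.
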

	\begin{proof}   
		The continuity and simplicity of $\lambda_1(\alpha)$ ensure the assertion.  
		We provide arguments for readers' convenience. 
		Let $\{\alpha_n\}\subset L$ converge to $\alpha \in L$. 
		Thanks to the assumed simplicity of $\lambda_1(\alpha_n)$, 
		we denote $u_n = u_{\alpha_n}$.
		Since $\|\nabla u_n\|_p^p=\lambda_1(\alpha_n)\to \lambda_1(\alpha)$ 
		by the continuity of $\lambda_1(\alpha)$, 
		we may assume that $u_n$ converges to some $u$ weakly in $\W$ and strongly in $L^r(\Omega)$, up to a subsequence.  
		Lemma~\ref{lem:wsc} yields $u \not\equiv 0$ and $I_\alpha(u)=1$, which ensures that 
		\begin{equation}\label{eq:lem:l1conv}
			\lambda_1(\alpha)\le \|\nabla u\|_p^p
			\le \liminf_{n\to +\infty}\|\nabla u_n\|_p^p
			=\liminf_{n\to +\infty} \lambda_1(\alpha_n)
			=\lambda_1(\alpha). 
		\end{equation}
		That is, $u = u_\alpha \in \mathcal{M}_\alpha$ is the unique nonnegative minimizer of $\lambda_1(\alpha)$.
		Consequently, \eqref{eq:lem:l1conv} implies that any subsequence of $\{u_n\}$ 
		has a strongly convergent subsequence 
		to the same function $u$.  
		Since the choice of $\{\alpha_n\}$ is arbitrary, 
		the desired continuity of $\alpha \mapsto u_\alpha$ in $L$ follows. 
	\end{proof}

		\subsection{Linear independence}\label{sec:LI}
		\begin{lemma}\label{lem:LI}
			Let $\alpha_1 \neq \alpha_2$. 
			Then critical points of 
			$R_{\alpha_1}$ and $R_{\alpha_2}$ 
			are linearly independent. 
		\end{lemma}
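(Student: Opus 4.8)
The plan is to argue by contradiction, reducing the assertion to the claim that a single nonzero function cannot be a critical point of both $R_{\alpha_1}$ and $R_{\alpha_2}$ when $\alpha_1 \neq \alpha_2$. Suppose $u_1$ is a critical point of $R_{\alpha_1}$ and $u_2$ is a critical point of $R_{\alpha_2}$, and that they are linearly dependent. Since critical points of $R_\alpha$ belong to $\W \setminus \{0\}$ by definition, we may write $u_2 = t u_1$ for some $t \neq 0$, and the $0$-homogeneity of $R_{\alpha_1}$ (see \eqref{eq:rayleigh}) guarantees that $u_2$ is again a critical point of $R_{\alpha_1}$. Thus $u := u_2$ is a common nonzero critical point of $R_{\alpha_1}$ and $R_{\alpha_2}$, and it suffices to deduce $\alpha_1 = \alpha_2$.

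The key step is to exploit the explicit Euler--Lagrange equation \eqref{eq:Px}. Writing the weak formulations for $u$ with the two parameters $\alpha_1$ and $\alpha_2$ and subtracting them, the $p$-Laplacian term cancels, leaving
\begin{equation*}
0 = (\alpha_1 - \alpha_2)\, \|\nabla u\|_p^p
\left[
\frac{1}{\|u\|_q^q} \int_\Omega |u|^{q-2}u\, \varphi \,dx
-
\frac{1}{\|u\|_r^r} \int_\Omega |u|^{r-2}u\, \varphi \,dx
\right]
\end{equation*}
for every $\varphi \in \W$. Since $\alpha_1 \neq \alpha_2$ and $\|\nabla u\|_p > 0$ (because $u \not\equiv 0$, by the Friedrichs inequality), I can divide out these factors, and the arbitrariness of $\varphi$ then yields the pointwise identity
\begin{equation*}
\frac{|u|^{q-2}u}{\|u\|_q^q}
=
\frac{|u|^{r-2}u}{\|u\|_r^r}
\qquad \text{a.e.\ in } \Omega.
\end{equation*}
At a.e.\ point where $u \neq 0$ I divide by $u$, and since $q \neq r$ this forces $|u| = (\|u\|_q^q / \|u\|_r^r)^{1/(q-r)} =: c_0$, a single positive constant; equivalently $u \in \{c_0, -c_0\}$ a.e.\ on $\{u \neq 0\}$.

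It then remains to show that such a function must vanish. The sets $\{u = c_0\}$, $\{u = -c_0\}$, and $\{u = 0\}$ cover $\Omega$ up to a null set, and on each of them $\nabla u = 0$ a.e.\ by the standard property that the weak gradient of a Sobolev function vanishes a.e.\ on every one of its level sets. Hence $\nabla u = 0$ a.e.\ in $\Omega$, and the Friedrichs inequality gives $u \equiv 0$, contradicting $u \not\equiv 0$. Therefore $\alpha_1 = \alpha_2$, which establishes the linear independence. The only delicate point is the passage from the integral identity to the conclusion $\nabla u = 0$ a.e.: this is precisely where the level-set property of Sobolev functions does the essential work, ruling out the a priori possibility of a nonconstant function assuming only the values $0, \pm c_0$. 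Alternatively, one may invoke the $C^1_{\mathrm{loc}}(\Omega)$-regularity recorded in Remark~\ref{rem:reg} to argue directly that $u$ is locally constant on $\{u \neq 0\}$ and hence, being an element of $\W$, identically zero.
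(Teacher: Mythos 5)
Your proof is correct, and it reaches the key identity
\begin{equation*}
\frac{|u|^{q-2}u}{\|u\|_q^q}=\frac{|u|^{r-2}u}{\|u\|_r^r}\quad\text{a.e.\ in }\Omega
\end{equation*}
exactly as the paper does (the paper's equation \eqref{eq:LI:0}), via subtraction of the two Euler--Lagrange equations \eqref{eq:Px}; your preliminary reduction of linear dependence to a common critical point via $0$-homogeneity is also the same step the paper leaves implicit. Where you genuinely diverge is in extracting the contradiction. The paper argues locally near the zero set: by absolute continuity on lines there are points where $u$ is nonzero but arbitrarily small, and since $q<r$ the term $|u|^{q-1}$ dominates $|u|^{r-1}$ there, so the two sides cannot balance. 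You instead observe globally that the identity forces $|u|$ to equal a single positive constant $c_0$ a.e.\ on $\{u\neq 0\}$, so that $u$ takes only the three values $0,\pm c_0$; the Stampacchia level-set property then gives $\nabla u=0$ a.e., and Friedrichs kills $u$. Your route is arguably cleaner: it avoids the slightly delicate existence claim for the sequence $\{x_n\}$ with $u(x_n)\neq 0$, $u(x_n)\to 0$ (which the paper justifies only in one line), and it rests on a completely standard measure-theoretic fact about Sobolev functions. The paper's route, on the other hand, isolates exactly where the hypothesis $q<r$ enters (the asymptotic comparison as $|u|\to 0$), whereas in your argument $q\neq r$ is used only to solve for $|u|$ and the finish is insensitive to which exponent is larger. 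Both are complete; no gap.
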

		\begin{proof}
			Suppose, by contradiction, that there are some $\alpha_1 \neq \alpha_2$ and a function $u \in \W\setminus \{0\}$ which is a critical point of both $R_{\alpha_1}$ and $R_{\alpha_2}$. 
			Noting that $u$ satisfies \eqref{eq:Px} with $\alpha=\alpha_1$ and $\alpha=\alpha_2$, we get the following equality a.e.\ in $\Omega$:
			\begin{equation}\label{eq:LI:0}
				(\alpha_2-\alpha_1) 
				\frac{\|\nabla u\|_p^p}{\|u\|_q^{q}} |u|^{q-2}u 
				=
				(\alpha_2 - \alpha_1)
				\frac{\|\nabla u\|_p^p}{\|u\|_r^{r}} |u|^{r-2}u.
			\end{equation}
			Since $u$ is a nonzero Sobolev function, it is absolutely continuous on almost all lines, implying the existence of points $\{x_n\}  \subset \Omega$ such that $u(x_n) \neq 0$ and $u(x_n) \to 0$ as $n \to +\infty$.
			Recalling that $q<p$, 
			we observe that at the points $\{x_n\}$ the left-hand side of \eqref{eq:LI:0} dominates the right-hand side, which leads to a contradiction.
		\end{proof}

		\section{Properties of the translation level \texorpdfstring{$\mu_\alpha$}{mu\_alpha}}\label{sec:level}
		
		In this section, we study general properties of the functional $\mu_\alpha$ defined by \eqref{eq:mua},  which maps properly normalized critical points of $R_\alpha$ 
		to the parameter of the problem \eqref{eq:Pconconx} (when $\alpha<1$) or \eqref{eq:Pconconx2} (when $\alpha>1$). 
		In Sections~\ref{sec:behavior:mu-up-un}, \ref{sec:behavior:mu:solutions}, and \ref{sec:energy}, we mostly assume that $\alpha \in \mathbb{R}$ and $r \neq p$. However, we make exact assumptions on the parameters in each statement, for clarity. 
		In Section~\ref{sec:sub:r=p}, we discuss the complementary case $r=p$. 
		
		We define the critical set of $R_\alpha$ at a level $\lambda > 0$ as 
		\begin{equation}\label{eq:Kalpha}
			K_\alpha(\lambda) 
			=
			\left\{u\in \W \setminus\{0\}\,:\,
			R_\alpha(u) = \lambda,~
			R_\alpha^\prime(u)=0
			\right\}.
		\end{equation}
		Later, it will be convenient to work with critical points of $R_\alpha$ normalized either with respect to $\mathcal{M}_\alpha$, or as in \eqref{eq:constr1}. 
		We introduce the constraint set described by \eqref{eq:constr1}: 
		\begin{equation}\label{eq:constraint-C-omega}
			\mathcal{C}_\alpha
			=
			\left\{
			u \in \W \setminus \{0\}\,:\,
			|1-\alpha|\, \frac{\|\nabla u\|_p^p}{\|u\|_r^r}=1
			\right\} 
			\quad 
			\text{for}~ \alpha \neq 1.
		\end{equation}
		Now we consider the following two subsets of $K_\alpha(\lambda)$:
		\begin{equation}\label{eq:KalphaMC}
			K_\alpha^{\mathcal{M}}(\lambda) 
			=
			K_\alpha(\lambda) \cap \mathcal{M}_\alpha
			\quad \text{and} \quad 
			K_\alpha^{\mathcal{C}}(\lambda) 
			=
			K_\alpha(\lambda) \cap \mathcal{C}_\alpha.
		\end{equation}
		
		As in Section~\ref{sec:remarks}, we set
		\begin{equation}\label{eq:mu*}
			\alpha^* = \frac{q(r-p)}{p(r-q)}.
		\end{equation}
		Note that $\alpha^*$ solves the equation $\frac{\alpha p}{q}+\frac{(1-\alpha)p}{r}=1$, see the exponents in the denominator in \eqref{eq:rayleigh}. 
		The value of $\alpha^*$ can be positive, negative, or zero, depending on the relation between $r$ and $p$.
		Moreover, we always have $\alpha^* > \alpha_0$, where $\alpha_0$ is defined in \eqref{eq:alpha-0}. 
		
		For any $u \in K_\alpha(\lambda)$, we introduce the following detailed (and unified) notation:	
		\begin{equation}\label{eq:mu:homogen}
			\mu_\alpha^\lambda(u)
			=
			\alpha\,|1-\alpha|^{\frac{p-q}{r-p}}
			\frac{\left(\int_\Omega |\nabla u|^p \, dx \right)^\frac{r-q}{r-p}}{\int_\Omega |u|^q \, dx \left(\int_\Omega |u|^r \, dx \right)^\frac{p-q}{r-p}}
			\equiv
			\alpha\,|1-\alpha|^{\frac{p-q}{r-p}}
			\big(R_{\alpha^*}(u)\big)^{\frac{r-q}{r-p}}.
		\end{equation}
		If $\alpha=\alpha^*$, then \eqref{eq:mu:homogen} and coincides with $\frac{q}{r} \big(\frac{r}{p}\big)^\frac{r-q}{r-p} \Lambda^*$, where $\Lambda^*$ is given by \eqref{eq:il2}. 	
		
		Since the expression \eqref{eq:mu:homogen} is $0$-homogeneous with respect to $u$, we provide two equivalent forms of $\mu_\alpha^\lambda(u)$, when $u$ belongs to either $K_\alpha^{\mathcal{C}}(\lambda)$ or $K_\alpha^{\mathcal{M}}(\lambda)$. 
		\begin{lemma}\label{lem:rewrite_mu} 
			Let $r \neq p$ and $\alpha \neq 1$. 
			Let $\lambda$ be a critical level of $R_\alpha$.
			Then we have
			\begin{align}
				\label{def:mu_lambda}	
				\mu_\alpha^\lambda(u)
				&=\alpha\,\dfrac{\|\nabla u\|_p^p}{\|u\|_q^q} 
				\quad \text{for}~ u\in K_\alpha^{\mathcal{C}}(\lambda),\\
				\label{def:mu_lambda-M}
				\mu_\alpha^\lambda(u)
				&=
				\alpha\,|1-\alpha|^{\frac{p-q}{r-p}}
				\, \lambda^{\frac{r-q}{r-p}}\,\|u\|_q^{-q-\frac{\alpha r (p-q)}{(\alpha-1) (r-p)}}
				\quad \text{for}~ u \in K_\alpha^{\mathcal{M}}(\lambda).
			\end{align}
		\end{lemma}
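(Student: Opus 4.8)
The plan is to derive both identities by direct substitution of the respective normalization constraints into the $0$-homogeneous expression \eqref{eq:mu:homogen} that defines $\mu_\alpha^\lambda(u)$. Because $\mu_\alpha^\lambda$ is $0$-homogeneous in $u$, its value on a critical point depends only on the mutual ratios of $\|\nabla u\|_p^p$, $\|u\|_q^q$, and $\|u\|_r^r$; each of the constraints defining $\mathcal{C}_\alpha$ and $\mathcal{M}_\alpha$ pins down one of these three quantities in terms of the others, so the whole argument reduces to bookkeeping of exponents with no analytic content.

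For the first identity, I would take $u \in K_\alpha^{\mathcal{C}}(\lambda)$ and use the defining constraint of $\mathcal{C}_\alpha$ in \eqref{eq:constraint-C-omega}, rewritten as $\|u\|_r^r = |1-\alpha|\,\|\nabla u\|_p^p$. Substituting $(\|u\|_r^r)^{\frac{p-q}{r-p}} = |1-\alpha|^{\frac{p-q}{r-p}} (\|\nabla u\|_p^p)^{\frac{p-q}{r-p}}$ into \eqref{eq:mu:homogen}, the prefactor $|1-\alpha|^{\frac{p-q}{r-p}}$ cancels against the one generated by the substitution, and the net exponent of $\|\nabla u\|_p^p$ collapses through $\frac{r-q}{r-p} - \frac{p-q}{r-p} = 1$. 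What remains is precisely $\alpha\,\|\nabla u\|_p^p/\|u\|_q^q$, i.e.\ \eqref{def:mu_lambda}.

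For the second identity, I would take $u \in K_\alpha^{\mathcal{M}}(\lambda)$ and first observe that $I_\alpha(u) = 1$ together with $R_\alpha(u) = \lambda$ forces $\|\nabla u\|_p^p = \lambda$. The constraint $\|u\|_q^{\alpha p} \|u\|_r^{(1-\alpha)p} = 1$ defining $\mathcal{M}_\alpha$ then gives $\|u\|_r^r = \|u\|_q^{-\alpha r/(1-\alpha)}$. Inserting both relations into \eqref{eq:mu:homogen} produces the factor $\alpha\,|1-\alpha|^{\frac{p-q}{r-p}} \lambda^{\frac{r-q}{r-p}}$ multiplied by $\|u\|_q$ raised to the power $-q + \frac{\alpha r(p-q)}{(1-\alpha)(r-p)}$. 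Matching this exponent to the stated form is the only step requiring attention: rewriting $1-\alpha = -(\alpha-1)$ turns it into $-q - \frac{\alpha r(p-q)}{(\alpha-1)(r-p)}$, which yields exactly \eqref{def:mu_lambda-M}.

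I expect no genuine obstacle here; the delicate points are merely the final sign flip $1-\alpha = -(\alpha-1)$ in the exponent of $\|u\|_q$, and recording where the hypotheses are used: $r \neq p$ guarantees that the exponents $\frac{p-q}{r-p}$ and $\frac{r-q}{r-p}$ are well defined, while $\alpha \neq 1$ ensures $|1-\alpha| \neq 0$ so that the $\mathcal{C}_\alpha$-normalization and the substitution $\|u\|_r^r = \|u\|_q^{-\alpha r/(1-\alpha)}$ are legitimate.
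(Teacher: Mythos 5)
Your computation is correct and matches the paper's (implicit) argument: the paper states this lemma without a written proof, and Remark~\ref{rem:tu} makes clear that the intended justification is exactly the substitution of the $\mathcal{C}_\alpha$- and $\mathcal{M}_\alpha$-normalizations into the $0$-homogeneous expression \eqref{eq:mu:homogen}, which is what you do. Both exponent computations check out, including the final rewriting $1-\alpha=-(\alpha-1)$ needed to match \eqref{def:mu_lambda-M}.
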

		
		\begin{remark}\label{rem:reduction}
			The form \eqref{def:mu_lambda} shows that $\mu_\alpha^\lambda(u)$ coincides with the definition \eqref{eq:mua} of $\mu_\alpha$.
			That is, any $u \in K_\alpha^{\mathcal{C}}(\lambda)$ satisfies the problem \eqref{eq:Pconconx} (when $\alpha<1$) or \eqref{eq:Pconconx2} (when $\alpha>1$) with $\mu = \mu_\alpha^\lambda(u)$.
		\end{remark}
		
		\begin{remark}\label{rem:tu}
			One might wonder, where did the expression \eqref{eq:mu:homogen} come from?
			In fact, \eqref{eq:mu:homogen} was derived from \eqref{def:mu_lambda}, the latter naturally arising from the Euler-Lagrange equation \eqref{eq:Px} corresponding to $R_\alpha$, see Section~\ref{sec:intro}.
			More precisely, assuming $\alpha \neq 1$ and taking an arbitrary $u \in K_\alpha(\lambda)$, we find the unique $t_\alpha(u)>0$ such that $t_\alpha(u)u \in K_\alpha^{\mathcal{C}}(\lambda)$, that is, 
			\begin{equation}\label{eq:concon:constr:t1}
				t_\alpha(u) = 
				\left(
				|1-\alpha| \frac{\|\nabla u\|_p^{p}}{\|u\|_r^{r}}
				\right)^{\frac{1}{r-p}}.
			\end{equation}
			Substituting $t_\alpha(u)u$ to \eqref{def:mu_lambda}, we end up with \eqref{eq:mu:homogen}. 
			The advantage of \eqref{eq:mu:homogen} over \eqref{def:mu_lambda} is that it involves the $0$-homogeneous functional $R_{\alpha^*}$ and hence it is defined for any $u \in K_\alpha(\lambda)$ regardless the normalization. 
			Moreover, \eqref{eq:mu:homogen} admits the case $\alpha=1$.
		\end{remark}
		
		Since $\mu_\alpha^\lambda(u)$ depends on the choice of $u\in K_\alpha(\lambda)$, it is natural to define
		\begin{equation}\label{eq:mu-under-over}
			\underline{\mu}\smallspacing_\alpha^\lambda 
			=
			\inf \{\mu_\alpha^\lambda(u)\,:u\in K_\alpha(\lambda)\,\}
			\quad {\rm and} \quad  
			\overline{\mu}\smallspacing_\alpha^\lambda 
			=
			\sup \{\mu_\alpha^\lambda(u)\,:u\in K_\alpha(\lambda)\,\}.		 
		\end{equation} 
		
		\begin{lemma}\label{lem:attai_mu}
			Let $r \neq p$ and $\alpha > \alpha_0$.
			Let $\lambda$ be a critical level of $R_\alpha$. 
			Then 
			$\underline{\mu}\smallspacing_\alpha^\lambda$ and $\overline{\mu}\smallspacing_\alpha^\lambda$ are attained. 
			In particular, for $\alpha \geq 0$, 
			\begin{align}
				\underline{\mu}\smallspacing_\alpha^\lambda 
				&=
				\alpha\,|1-\alpha|^{\frac{p-q}{r-p}}
				\min\left\{
				\big(R_{\alpha^*}(u)\big)^{\frac{r-q}{r-p}}\,:\,u\in K_\alpha(\lambda)\right\},
				\\
				\overline{\mu}\smallspacing_\alpha^\lambda 
				&=
				\alpha\,|1-\alpha|^{\frac{p-q}{r-p}}
				\max\left\{
				\big(R_{\alpha^*}(u)\big)^{\frac{r-q}{r-p}}\,:\,u\in K_\alpha(\lambda)\right\}.
			\end{align}
			Moreover, $\underline{\mu}\smallspacing_0^\lambda=\overline{\mu}\smallspacing_0^\lambda=0$. 
		\end{lemma}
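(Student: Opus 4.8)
The plan is to exploit the $0$-homogeneity of the functional in \eqref{eq:mu:homogen}. Since $R_{\alpha^*}$ is a Rayleigh quotient it is $0$-homogeneous, and hence so is $u \mapsto \mu_\alpha^\lambda(u)$. Consequently, replacing each $u \in K_\alpha(\lambda)$ by its unique positive multiple lying on $\mathcal{M}_\alpha$ leaves the value of $\mu_\alpha^\lambda$ unchanged, so that
\[
\{\mu_\alpha^\lambda(u)\,:\,u \in K_\alpha(\lambda)\}
=
\{\mu_\alpha^\lambda(u)\,:\,u \in K_\alpha^{\mathcal{M}}(\lambda)\},
\]
and likewise for $R_{\alpha^*}$. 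In view of the definition \eqref{eq:mu-under-over}, it therefore suffices to show that $R_{\alpha^*}$ attains its infimum and supremum over the normalized critical set $K_\alpha^{\mathcal{M}}(\lambda)$ from \eqref{eq:KalphaMC}. I would obtain this from the compactness of $K_\alpha^{\mathcal{M}}(\lambda)$ in the strong topology of $\W$.

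To establish this compactness, I would first observe that every $u \in K_\alpha^{\mathcal{M}}(\lambda)$ is a constrained critical point of $J(u)=\|\nabla u\|_p^p$ on $\mathcal{M}_\alpha$, that is, $J'(u)=\Lambda\,I_\alpha'(u)$ for some Lagrange multiplier $\Lambda$. Testing this identity with $u$ and using $\langle J'(u),u\rangle = p J(u) = p\lambda$ together with $\langle I_\alpha'(u),u\rangle = p I_\alpha(u) = p$ (see Lemma~\ref{lem:M}), I would conclude that $\Lambda = \lambda$ for \emph{every} such $u$. Hence any sequence $\{u_n\}\subset K_\alpha^{\mathcal{M}}(\lambda)$ satisfies the hypotheses of Lemma~\ref{lem:PS} with the constant choices $\alpha_n\equiv\alpha$ and $\lambda_n\equiv\lambda$, the residual $J'(u_n)-\lambda I_\alpha'(u_n)$ being identically zero. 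Lemma~\ref{lem:PS} then yields a subsequence converging strongly in $\W$ to some $u$; passing to the limit in $J(u_n)=\lambda$, $I_\alpha(u_n)=1$, and $J'(u_n)-\lambda I_\alpha'(u_n)=0$ via the continuity of $J$, $I_\alpha$ and their derivatives shows $u\in K_\alpha^{\mathcal{M}}(\lambda)$. Thus $K_\alpha^{\mathcal{M}}(\lambda)$ is sequentially compact in $\W$, and it is nonempty because $\lambda$ is a critical level.

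Since $R_{\alpha^*}$ is continuous on $\W\setminus\{0\}$ and $K_\alpha^{\mathcal{M}}(\lambda)$ is a compact subset of $\W\setminus\{0\}$, the quotient $R_{\alpha^*}$ attains its minimum and maximum there, and so does the composition $(R_{\alpha^*})^{(r-q)/(r-p)}$: indeed $R_{\alpha^*}\ge \lambda_1(\alpha^*)>0$ (recall $\alpha^*>\alpha_0$ and apply Proposition~\ref{prop:alpha<0}), while $t\mapsto t^{(r-q)/(r-p)}$ is continuous and monotone on $(0,+\infty)$. For $\alpha\ge 0$ the prefactor $\alpha\,|1-\alpha|^{(p-q)/(r-p)}$ is nonnegative, so multiplying by it preserves the roles of minimum and maximum, which yields the displayed formulas for $\underline{\mu}_\alpha^\lambda$ and $\overline{\mu}_\alpha^\lambda$; the case $\alpha=0$ is immediate since then $\mu_0^\lambda\equiv 0$. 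The only genuine work is the compactness of $K_\alpha^{\mathcal{M}}(\lambda)$, and the crux there is the identification of the Lagrange multiplier as precisely $\lambda$, which is exactly what lets the Palais-Smale condition of Lemma~\ref{lem:PS} be applied uniformly along the entire critical set.
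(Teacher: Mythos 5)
Your proposal is correct and follows essentially the same route as the paper: the paper's proof simply invokes Lemma~\ref{lem:PS} to get compactness of $K_\alpha^{\mathcal{M}}(\lambda)$ and then cites ``standard arguments,'' which are exactly the steps you spell out (reduction to $\mathcal{M}_\alpha$ by $0$-homogeneity, identification of the Lagrange multiplier as $\lambda$ so that the Palais--Smale condition applies with zero residual, and continuity of $R_{\alpha^*}$ on the compact set, bounded below by $\lambda_1(\alpha^*)>0$). Your write-up just makes explicit what the paper leaves implicit, including the sign discussion for the prefactor when $\alpha\ge 0$ and the trivial case $\alpha=0$.
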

		\begin{proof}
			Thanks to Lemma~\ref{lem:PS}, the critical set $K_\alpha^{\mathcal{M}}(\lambda)$ is compact in $\W$ for $\alpha > \alpha_0$. 
			Then the standard arguments imply that $\underline{\mu}\smallspacing_\alpha^\lambda$ and $\overline{\mu}\smallspacing_\alpha^\lambda$ are attained.
			The equalities $\underline{\mu}\smallspacing_0^\lambda=\overline{\mu}\smallspacing_0^\lambda=0$ directly follows from \eqref{eq:mu:homogen}.
		\end{proof}

		In the particular case $\lambda=\lambda_k(\alpha)$ for $k\in\mathbb{N}$, we denote 
		\begin{equation}\label{eq:mu-under-over:k}
			\underline{\mu}\smallspacing_\alpha^{k} 
			=
			\underline{\mu}\smallspacing_\alpha^{\lambda_k(\alpha)}
			\quad {\rm and}\quad 
			\overline{\mu}\smallspacing_\alpha^k 
			=
			\overline{\mu}\smallspacing_\alpha^{\lambda_k(\alpha)}.
		\end{equation}
		
		\begin{lemma}\label{lem:bound_Phi}
			Let $k\in\mathbb{N}$ and 
			$L$ be a bounded subset of $(\alpha_0,+\infty)$ with $\inf L > \alpha_0$. 
			Then there exist $C_1,C_2>0$ such that
			\begin{equation}\label{eq:lem:bound_phi}
				\|u\|_q, \|u\|_r, \|\nabla u\|_p \in [C_1, C_2] 
				\quad \text{for any}~ 
				\alpha \in L
				~\text{and}~
				u\in K_\alpha^{\mathcal{M}}(\lambda_k(\alpha)).
			\end{equation}
			In particular, there exist $C_1^*,C_2^*>0$ such that
			\begin{equation}\label{eq:R*lower}
				C_1^* \leq R_{\alpha^*}(u) \leq C_2^*
				\quad \text{for any}~ 
				\alpha \in L
				~\text{and}~
				u\in K_\alpha(\lambda_k(\alpha)).
			\end{equation}
		\end{lemma}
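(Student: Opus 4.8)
The plan is to reduce the two-sided bound \eqref{eq:lem:bound_phi} to the already-established continuity and positivity of $\alpha \mapsto \lambda_k(\alpha)$, to obtain the lower bounds on $\|u\|_q, \|u\|_r$ by a compactness argument built on Lemma~\ref{lem:wsc}, and finally to deduce \eqref{eq:R*lower} from the $0$-homogeneity of $R_{\alpha^*}$.

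First I would record that for $u \in K_\alpha^{\mathcal{M}}(\lambda_k(\alpha))$ the membership $u \in \mathcal{M}_\alpha$ forces $I_\alpha(u) = 1$, so that $\|\nabla u\|_p^p = R_\alpha(u)\,I_\alpha(u) = \lambda_k(\alpha)$. Since $\inf L > \alpha_0$ and $\sup L < +\infty$, the closure $\overline{L}$ is a compact subset of $(\alpha_0,+\infty)$; as $\lambda_k$ is continuous there (Lemma~\ref{lem:lambda-k:contin}) and strictly positive (because $\lambda_k(\alpha) \ge \lambda_1(\alpha) > 0$ by Proposition~\ref{prop:alpha<0}), there exist $0 < m \le M < +\infty$ with $m \le \lambda_k(\alpha) \le M$ for all $\alpha \in L$. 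This gives the two-sided bound on $\|\nabla u\|_p$ at once, and the upper bounds on $\|u\|_q, \|u\|_r$ follow from the Sobolev embedding $\W \hookrightarrow L^r(\Omega)$ (valid since $r < p^*$) combined with the H\"older inequality \eqref{eq:holder}, via $\|u\|_q \le |\Omega|^{\frac{1}{q}-\frac{1}{r}} \|u\|_r \le C\,\|\nabla u\|_p$.

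The genuine obstacle is the lower bounds on $\|u\|_q$ and $\|u\|_r$, which I would prove by contradiction. If no uniform positive lower bound existed, there would be $\alpha_n \in L$ and $u_n \in K_{\alpha_n}^{\mathcal{M}}(\lambda_k(\alpha_n))$ with $\min\{\|u_n\|_q, \|u_n\|_r\} \to 0$; note that the gradient norms cannot degenerate, being $\ge m^{1/p}$. Passing to subsequences, $\alpha_n \to \alpha \in \overline{L} \subset (\alpha_0,+\infty)$, and since $\|\nabla u_n\|_p \le M^{1/p}$ is bounded, $u_n \rightharpoonup u$ weakly in $\W$. The constraint $I_{\alpha_n}(u_n) = 1$ is separated from zero, so Lemma~\ref{lem:wsc} yields $u \not\equiv 0$ and $I_{\alpha_n}(u_n) \to I_\alpha(u) = 1$. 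By Rellich--Kondrachov $u_n \to u$ strongly in $L^r(\Omega)$, hence in $L^q(\Omega)$, so that $\|u_n\|_r \to \|u\|_r > 0$ and $\|u_n\|_q \to \|u\|_q > 0$, contradicting the assumed degeneration. This establishes \eqref{eq:lem:bound_phi}.

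For \eqref{eq:R*lower} I would use that $R_{\alpha^*}$ is $0$-homogeneous and that the exponent $\alpha^* = \frac{q(r-p)}{p(r-q)}$ is a fixed constant independent of $\alpha$. Given $u \in K_\alpha(\lambda_k(\alpha))$, scaling by $t = I_\alpha(u)^{-1/p}$ produces $tu \in K_\alpha^{\mathcal{M}}(\lambda_k(\alpha))$, and $R_{\alpha^*}(u) = R_{\alpha^*}(tu)$ by homogeneity. Since $tu$ obeys the bounds \eqref{eq:lem:bound_phi}, both the numerator $\|\nabla(tu)\|_p^p$ and the denominator $\|tu\|_q^{\alpha^* p}\|tu\|_r^{(1-\alpha^*)p}$ of $R_{\alpha^*}(tu)$ lie between positive constants---irrespective of the signs of $\alpha^* p$ and $(1-\alpha^*)p$---which yields the desired $C_1^* \le R_{\alpha^*}(u) \le C_2^*$.
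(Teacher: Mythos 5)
Your proof is correct, and its overall skeleton matches the paper's (gradient bounds from bounds on $\lambda_k$, upper Lebesgue bounds from the embedding, then the lower Lebesgue bounds as the real work, and finally $0$-homogeneity of $R_{\alpha^*}$). The substantive difference is in how the two hard steps are handled. For the two-sided bound on $\|\nabla u\|_p^p=\lambda_k(\alpha)$ the paper invokes the explicit estimates of Lemma~\ref{lem:bounds}, whereas you use the continuity of $\alpha\mapsto\lambda_k(\alpha)$ (Lemma~\ref{lem:lambda-k:contin}) together with $\lambda_k(\alpha)\ge\lambda_1(\alpha)>0$ on the compact set $\overline{L}\subset(\alpha_0,+\infty)$; both are legitimate, yours being softer but equally valid since $\inf L>\alpha_0$ and $L$ is bounded. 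For the uniform lower bound on $\|u\|_q$ (and hence $\|u\|_r$), the paper argues directly and quantitatively: for $\alpha\ge 1$ it combines the constraint $I_\alpha(u)=1$ with the H\"older inequality \eqref{eq:holder}, and for $\alpha\in(\alpha_0,1]$ with the Gagliardo--Nirenberg inequality \eqref{eq:GN1:x}, exploiting that $(1-\theta)(1-\alpha)+\alpha=\theta(\alpha-\alpha_0)$ stays away from zero on $L$. You instead run a compactness--contradiction argument through Lemma~\ref{lem:wsc}, which is correct: the constraint $I_{\alpha_n}(u_n)=1$ is separated from zero, so the weak limit is nonzero and the Rellich--Kondrachov theorem forces $\|u_n\|_q,\|u_n\|_r$ to converge to positive limits. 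The trade-off is that the paper's route produces explicit constants $C_1,C_2$ depending only on $|\Omega|$, $\theta$, $C$, and the bounds on $\lambda_k$, while yours is shorter and reuses the already-proved convergence machinery but yields no explicit constants. Your reduction of \eqref{eq:R*lower} to \eqref{eq:lem:bound_phi} via the $p$-homogeneity of $I_\alpha$ and the $0$-homogeneity of $R_{\alpha^*}$ is exactly what the paper's ``in particular'' tacitly asserts.
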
 
		\begin{proof} 
			Since $\lambda_k(\alpha)=\|\nabla u\|_p^p$ for $u \in K_\alpha^{\mathcal{M}}(\lambda_k(\alpha))$, Lemma~\ref{lem:bounds} gives the existence of $C_3,C_4>0$ such that 
			\begin{equation}\label{eq:bounds-for-nablau1}
				C_3 \leq \|\nabla u\|_p \leq C_4
				\quad \text{for any}~ 
				\alpha \in L
				~\text{and}~
				u\in K_\alpha^{\mathcal{M}}(\lambda_k(\alpha)).
			\end{equation}
			Consequently, the existence of $C_2$ in \eqref{eq:lem:bound_phi} follows from the Friedrichs inequality. 
			
			Let $u \in K_\alpha^{\mathcal{M}}(\lambda_k(\alpha))$, so that $I_\alpha(u)=1$. 
			On one hand, if $\alpha \geq 1$, then the H\"older inequality \eqref{eq:holder} gives
			\begin{equation}\label{eq:bounds-for-norms1}
				\|u\|_r^{\alpha -1}
				=
				\|u\|_q^{\alpha} 
				\leq |\Omega|^{(\frac{1}{q}-\frac{1}{r})\alpha} 
				\|u\|_r^{\alpha}, 
				\quad {\rm and\ so} \quad
				\|u\|_q
				=
				\|u\|_r^{1-\frac{1}{\alpha}} \geq |\Omega|^{(\frac{1}{r}-\frac{1}{q})(\alpha-1)}.
			\end{equation}
			On the other hand, if $\alpha \leq 1$, then 
			the Gagliardo-Nirenberg inequality \eqref{eq:GN1:x} yields
			\begin{equation}\label{eq:GN1x1}
				\|u\|_q^{-\alpha} 
				=
				\|u\|_{r}^{1-\alpha} 
				\leq 
				C^{1-\alpha} \|\nabla u\|_p^{\theta (1-\alpha)} \|u\|_q^{(1-\theta) (1-\alpha)}.
			\end{equation}
			Since $(1-\theta) (1-\alpha) > -\alpha$ for $\alpha>\alpha_0$, 
			we conclude from \eqref{eq:GN1x1} and the upper bound in \eqref{eq:bounds-for-nablau1}
			that for any $\alpha \in L \cap (\alpha_0,1]$ it holds
			\begin{equation}\label{eq:bounds-for-norms2}			
				\|u\|_q
				\geq
				C^{\frac{\alpha-1}{(1-\theta) (1-\alpha) + \alpha}} \|\nabla u\|_p^{\frac{\theta (\alpha-1)}{(1-\theta) (1-\alpha) + \alpha}} 
				\geq 
				C^{\frac{\alpha-1}{(1-\theta) (1-\alpha) + \alpha}} C_4^{\frac{\theta (\alpha-1)}{(1-\theta) (1-\alpha) + \alpha}}.
			\end{equation}
			Combining \eqref{eq:bounds-for-norms1} and \eqref{eq:bounds-for-norms2}, we derive the desired uniform lower bound for $\|u\|_q$. 
			The corresponding bound for $\|u\|_r$ follows from the H\"older inequality \eqref{eq:holder}. 
		\end{proof}

		Let us explicitly state the following simple but useful fact, which is a consequence of Proposition~\ref{prop:alpha<0} and the inequality $\alpha^*>\alpha_0$, where $\alpha_0$ is defined in \eqref{eq:alpha-0}.
		\begin{lemma}\label{lem:bound_Phi:negative}
			We have
			\begin{equation}\label{eq:omombeh2x}
				R_{\alpha^*}(u) \geq \lambda_1(\alpha^*) > 0
				\quad \text{for any}~ u \in \W\setminus\{0\}.
			\end{equation}
		\end{lemma}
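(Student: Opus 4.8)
The plan is to read the inequality straight off the definition of the ground state level, since the statement is essentially a repackaging of two facts already established in the excerpt. First I would recall that by \eqref{eq:lambda} the quantity $\lambda_1(\alpha^*)$ is \emph{defined} as the infimum of $R_{\alpha^*}$ over $\W \setminus \{0\}$. Consequently the bound
\[
R_{\alpha^*}(u) \geq \inf_{v \in \W \setminus \{0\}} R_{\alpha^*}(v) = \lambda_1(\alpha^*)
\]
holds for every $u \in \W \setminus \{0\}$ by the very meaning of the infimum, requiring no computation whatsoever.

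The only point carrying any content is the strict positivity $\lambda_1(\alpha^*) > 0$. For this I would invoke Proposition~\ref{prop:alpha<0}, which asserts the equivalence $\lambda_1(\alpha) > 0 \iff \alpha \geq \alpha_0$. It therefore suffices to confirm that $\alpha^*$ lies in the admissible range, i.e.\ that $\alpha^* \geq \alpha_0$. This has already been recorded in the discussion preceding \eqref{eq:mu*}, where it is noted that $\alpha^* > \alpha_0$ always holds; if one wishes to see it explicitly, a direct subtraction of the closed forms \eqref{eq:mu*} and \eqref{eq:alpha-0} yields $\alpha^* - \alpha_0 = qr/\big(N(r-q)\big) > 0$, using $r > q$ and $N, q, r > 0$. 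With $\alpha^* > \alpha_0$ in hand, Proposition~\ref{prop:alpha<0} gives $\lambda_1(\alpha^*) > 0$, and combining the two displays completes the argument.

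I do not expect any genuine obstacle here: the lemma is a bookkeeping consequence of the definition of $\lambda_1$ together with the sign characterization of Proposition~\ref{prop:alpha<0}. The single thing to be careful about is that the hypothesis $\alpha \geq \alpha_0$ of that proposition is met by $\alpha = \alpha^*$, which is exactly the inequality $\alpha^* > \alpha_0$ flagged earlier; everything else is immediate.
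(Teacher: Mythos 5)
Your argument is exactly the paper's: the paper states the lemma as an immediate consequence of the definition \eqref{eq:lambda} of $\lambda_1(\alpha^*)$ as an infimum, Proposition~\ref{prop:alpha<0}, and the inequality $\alpha^*>\alpha_0$, and gives no further proof. Your explicit verification $\alpha^*-\alpha_0 = qr/\big(N(r-q)\big)>0$ is correct and is a welcome (if optional) addition.
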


		\begin{lemma}\label{lem:lower-upper-semicontinuity}
			Let $r \neq p$ and $k\in\mathbb{N}$. 
			Then the mapping
			$\alpha \mapsto \underline{\mu}\smallspacing_\alpha^{k}$ (resp.\ $\alpha \mapsto \overline{\mu}\smallspacing_\alpha^k$) 
			is lower (resp.\ upper) semicontinuous in 
			$(\alpha_0,+\infty)$. 
		\end{lemma}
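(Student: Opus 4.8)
The plan is to prove both statements by the same compactness scheme, relying on three facts established above: the Palais--Smale compactness of Lemma~\ref{lem:PS}, the continuity of $\alpha \mapsto \lambda_k(\alpha)$ from Lemma~\ref{lem:lambda-k:contin}, and the joint continuity of $(\alpha,u)\mapsto \mu_\alpha^{\lambda_k(\alpha)}(u)$ read off from the homogeneous representation \eqref{eq:mu:homogen}. Since $R_{\alpha^*}$ is $0$-homogeneous, one may always realize $\underline{\mu}\smallspacing_\alpha^{k}$ and $\overline{\mu}\smallspacing_\alpha^k$ by minimizers and maximizers normalized to lie in the compact set $K_\alpha^{\mathcal{M}}(\lambda_k(\alpha))$, whose existence is guaranteed by Lemma~\ref{lem:attai_mu}.

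For the lower semicontinuity of $\alpha\mapsto\underline{\mu}\smallspacing_\alpha^{k}$ at a fixed $\alpha>\alpha_0$, I would fix an arbitrary sequence $\{\alpha_n\}\subset(\alpha_0,+\infty)$ with $\alpha_n\to\alpha$ and pass to a subsequence along which $\underline{\mu}\smallspacing_{\alpha_n}^{k}$ converges to $\liminf_n \underline{\mu}\smallspacing_{\alpha_n}^{k}$; for each $n$ I choose $u_n\in K_{\alpha_n}^{\mathcal{M}}(\lambda_k(\alpha_n))$ attaining $\underline{\mu}\smallspacing_{\alpha_n}^{k}$. Each $u_n$ is a critical point of $R_{\alpha_n}$ on $\mathcal{M}_{\alpha_n}$, hence solves $J'(u_n)=\lambda_k(\alpha_n)\,I_{\alpha_n}'(u_n)$ with $\|\nabla u_n\|_p^p=\lambda_k(\alpha_n)$. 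The tail of $\{\alpha_n\}$ lies in a compact subinterval of $(\alpha_0,+\infty)$, so continuity of $\lambda_k$ makes $\{J(u_n)\}$ bounded, and the hypotheses of Lemma~\ref{lem:PS} hold with the residual being \emph{identically zero}. Thus, up to a subsequence, $u_n\to u$ strongly in $\W$ and $\lambda_k(\alpha_n)\to\lambda_k(\alpha)$. Passing to the limit in the equation, using the continuity of $J'$ and of $(\alpha,u)\mapsto I_\alpha'(u)$ together with $I_{\alpha_n}(u_n)\to I_\alpha(u)=1$ from Lemma~\ref{lem:wsc}, shows that $u\in K_\alpha^{\mathcal{M}}(\lambda_k(\alpha))$. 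Strong convergence and $u\not\equiv 0$ then give $R_{\alpha^*}(u_n)\to R_{\alpha^*}(u)$, whence $\underline{\mu}\smallspacing_{\alpha_n}^{k}=\mu_{\alpha_n}^{\lambda_k(\alpha_n)}(u_n)\to\mu_\alpha^{\lambda_k(\alpha)}(u)\geq\underline{\mu}\smallspacing_\alpha^{k}$, which is precisely $\liminf_n\underline{\mu}\smallspacing_{\alpha_n}^{k}\geq\underline{\mu}\smallspacing_\alpha^{k}$.

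The upper semicontinuity of $\alpha\mapsto\overline{\mu}\smallspacing_\alpha^k$ follows verbatim after replacing ``minimizer'' by ``maximizer'' and reversing the final inequality to $\mu_\alpha^{\lambda_k(\alpha)}(v)\leq\overline{\mu}\smallspacing_\alpha^k$. In both cases the convergence $R_{\alpha^*}(u_n)\to R_{\alpha^*}(u)$ is legitimate because the denominators in \eqref{eq:mu:homogen} stay bounded away from zero along the sequence: this is exactly the uniform control furnished by Lemma~\ref{lem:bound_Phi}, combined with the positivity $R_{\alpha^*}\geq\lambda_1(\alpha^*)>0$ of Lemma~\ref{lem:bound_Phi:negative}.

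I expect the only delicate point to be the continuity of the scalar coefficient $\alpha\mapsto \alpha\,|1-\alpha|^{(p-q)/(r-p)}$ in \eqref{eq:mu:homogen}, which is smooth on $(\alpha_0,+\infty)\setminus\{1\}$ but whose exponent $(p-q)/(r-p)$ changes sign with the position of $p$ relative to $q$ and $r$. When $q<p<r$ this exponent is positive, the coefficient extends continuously (with value $0$) across $\alpha=1$, and the argument above applies unchanged on the whole interval. In the remaining regimes the exponent is negative, the coefficient blows up as $\alpha\to 1$, and both $\underline{\mu}\smallspacing_\alpha^{k}$ and $\overline{\mu}\smallspacing_\alpha^k$ tend to $+\infty$; interpreting the values in $[-\infty,+\infty]$, the required semicontinuity inequalities at $\alpha=1$ hold trivially, while away from $\alpha=1$ the coefficient is continuous and the compactness argument goes through as before. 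Together with the limit identification $u\in K_\alpha^{\mathcal{M}}(\lambda_k(\alpha))$ via the interplay of Lemmas~\ref{lem:PS} and~\ref{lem:lambda-k:contin}, this exhausts the nontrivial content of the proof.
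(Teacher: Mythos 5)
Your proof is correct and follows essentially the same compactness scheme as the paper: attain the extremal values on $K_{\alpha_n}^{\mathcal{M}}(\lambda_k(\alpha_n))$ via Lemma~\ref{lem:attai_mu}, extract a strongly convergent subsequence via Lemmas~\ref{lem:bound_Phi}, \ref{lem:PS}, and \ref{lem:wsc}, identify the limit as an element of $K_\alpha^{\mathcal{M}}(\lambda_k(\alpha))$ using the continuity of $\lambda_k(\cdot)$, and pass to the limit in \eqref{eq:mu:homogen}. Your additional care about the coefficient $\alpha\,|1-\alpha|^{(p-q)/(r-p)}$ near $\alpha=1$ is a reasonable (and slightly more explicit) supplement to what the paper leaves implicit, but does not change the argument.
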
 
		\begin{proof}
			Let $\{\alpha_n\} \subset (\alpha_0,+\infty)$ converge to some $\alpha > \alpha_0$. 
			Thanks to Lemma~\ref{lem:attai_mu}, each $\overline{\mu}\smallspacing_{\alpha_n}^k$ has a maximizer $u_n \in K_{\alpha_n}(\lambda_k(\alpha_n))$, and we can further assume that $u_n \in \mathcal{M}_{\alpha_n}$. 
			Lemma~\ref{lem:bound_Phi} implies that $\{u_n\}$ is bounded in $\W$. 
			Therefore, applying Lemma~\ref{lem:PS} with $\lambda_n =\lambda_k(\alpha_n)$, we deduce that $\{u_n\}$ converges strongly in $\W$ to some $u$, up to a subsequence. 
			Moreover, Lemma~\ref{lem:wsc} gives $u \not\equiv 0$. 
			Consequently, we conclude that $u \in  K_{\alpha}^{\mathcal{M}}(\lambda_k(\alpha))$ 
			by the continuity of $\lambda_k(\cdot)$ (see Lemma~\ref{lem:lambda-k:contin}), and hence
			$$
			\limsup_{n \to +\infty} \overline{\mu}\smallspacing_{\alpha_n}^k 
			=
			\alpha\,|1-\alpha|^{\frac{p-q}{r-p}}\, 
			\big(R_{\alpha^*}(u)\big)^{\frac{r-q}{r-p}}
			\leq
			\overline{\mu}\smallspacing_{\alpha}^k,
			$$		
			which is the desired upper semicontinuity of $\alpha \mapsto \overline{\mu}\smallspacing_{\alpha}^k$ in $(\alpha_0,+\infty)$.
			The lower semicontinuity of $\alpha \mapsto \underline{\mu}_{\alpha}^k$ can be proved similarly. 
		\end{proof}

		\subsection{Behavior of \texorpdfstring{$\alpha \mapsto \underline{\mu}\smallspacing_\alpha^k$}{underline-mu(alpha)} and \texorpdfstring{$\alpha \mapsto \overline{\mu}\smallspacing_\alpha^k$}{overline-mu(alpha)}}\label{sec:behavior:mu-up-un}
		Let us study the behavior of the mappings $\alpha \mapsto \underline{\mu}\smallspacing_\alpha^k$ and 
		$\alpha \mapsto \overline{\mu}\smallspacing_\alpha^k$ in $[0,+\infty)$. 
		In most cases, it is determined only by the relation between $p,q,r$, see Figure~\ref{fig1}.
		The following three propositions can be directly deduced from Lemmas~\ref{lem:attai_mu} and  \ref{lem:bound_Phi}.
		
		\begin{proposition}[$\alpha \to 0$]\label{prop:omega-to-zero}
			Let $r \neq p$ and $k\in\mathbb{N}$. 
			Then $\underline{\mu}\smallspacing_\alpha^k \le \overline{\mu}\smallspacing_\alpha^k \to 0$ as $\alpha \to 0$.
		\end{proposition}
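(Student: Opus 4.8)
The plan is to read the result off almost directly from Lemmas~\ref{lem:attai_mu} and \ref{lem:bound_Phi}. The inequality $\underline{\mu}\smallspacing_\alpha^k \le \overline{\mu}\smallspacing_\alpha^k$ is immediate: since $\lambda_k(\alpha)$ is a critical level, the set $K_\alpha(\lambda_k(\alpha))$ is nonempty, and $\underline{\mu}\smallspacing_\alpha^k$, $\overline{\mu}\smallspacing_\alpha^k$ are respectively the infimum and the supremum of the same single-valued functional $\mu_\alpha^{\lambda_k(\alpha)}(\cdot)$ over this common set, whence $\inf \le \sup$.

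For the limit, I would first fix a bounded neighborhood of $0$ on which the uniform estimates of Lemma~\ref{lem:bound_Phi} apply. Since $\alpha_0 < 0$ by \eqref{eq:alpha-0}, one can choose $\varepsilon > 0$ small enough that $L := [-\varepsilon, \varepsilon] \subset (\alpha_0, +\infty)$ with $\inf L = -\varepsilon > \alpha_0$. Then the bound \eqref{eq:R*lower} from Lemma~\ref{lem:bound_Phi} provides constants $C_1^*, C_2^* > 0$ such that $C_1^* \le R_{\alpha^*}(u) \le C_2^*$ for every $\alpha \in L$ and every $u \in K_\alpha(\lambda_k(\alpha))$.

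Next, recalling the expression \eqref{eq:mu:homogen}, which is valid for all $\alpha$, I would estimate, for any such $u$,
$$
\big|\mu_\alpha^{\lambda_k(\alpha)}(u)\big| = |\alpha|\,|1-\alpha|^{\frac{p-q}{r-p}}\,\big(R_{\alpha^*}(u)\big)^{\frac{r-q}{r-p}}.
$$
On $L$ the factor $|1-\alpha|^{\frac{p-q}{r-p}}$ is bounded, being continuous and staying near $1$, and since \eqref{eq:R*lower} controls $R_{\alpha^*}(u)$ from above and below by positive constants, the factor $\big(R_{\alpha^*}(u)\big)^{\frac{r-q}{r-p}}$ is uniformly bounded irrespective of the sign of the fixed exponent $\frac{r-q}{r-p}$. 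Hence there is a constant $C > 0$, independent of $\alpha \in L$ and of $u \in K_\alpha(\lambda_k(\alpha))$, such that $\big|\mu_\alpha^{\lambda_k(\alpha)}(u)\big| \le C|\alpha|$.

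Taking the infimum and the supremum over $u \in K_\alpha(\lambda_k(\alpha))$ then gives $|\underline{\mu}\smallspacing_\alpha^k|, |\overline{\mu}\smallspacing_\alpha^k| \le C|\alpha|$ for all $\alpha \in L$, and letting $\alpha \to 0$ yields the claim. The only delicate point, and the closest thing to an obstacle, is ensuring that the uniform control of $R_{\alpha^*}$ persists on a full two-sided neighborhood of $0$; this is precisely what $\alpha_0 < 0$ secures, since it permits choosing $L$ with $\inf L > \alpha_0$. The possibly negative exponent $\frac{r-q}{r-p}$ (occurring when $r < p$) is harmless exactly because \eqref{eq:R*lower} bounds $R_{\alpha^*}$ away from zero as well as from above.
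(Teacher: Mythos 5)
Your proposal is correct and is exactly the deduction the paper intends: the authors state that Propositions \ref{prop:omega-to-zero}--\ref{prop:omega-to-infty} "can be directly deduced from Lemmas~\ref{lem:attai_mu} and \ref{lem:bound_Phi}," and your argument spells out precisely that deduction, using $\alpha_0<0$ to place a two-sided neighborhood $L$ of $0$ with $\inf L>\alpha_0$, invoking \eqref{eq:R*lower} for the uniform two-sided control of $R_{\alpha^*}$, and letting the prefactor $\alpha\,|1-\alpha|^{\frac{p-q}{r-p}}$ in \eqref{eq:mu:homogen} carry the limit.
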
 
		
		\begin{proposition}[$\alpha \to 1$]\label{prop:omega-to-one}
			Let $r \neq p$ and $k\in\mathbb{N}$. 
			Then the following assertions hold as $\alpha \to 1$:
			\begin{enumerate}[label={\rm(\roman*)}] 
				\item If $q<r<p$ or $p<q<r$, 
				then  
				$\overline{\mu}\smallspacing_\alpha^k \ge \underline{\mu}\smallspacing_\alpha^k \to +\infty$.
				\item\label{prop:omega-to-one:2} If $q=p<r$, then there exist $C_1,C_2>0$ such that $C_1 \leq \underline{\mu}\smallspacing_\alpha^k \leq \overline{\mu}\smallspacing_\alpha^k \leq C_2$.
				\item If $q<p<r$, then 
				$\underline{\mu}\smallspacing_\alpha^k \le \overline{\mu}\smallspacing_\alpha^k \to 0$. 
			\end{enumerate} 
		\end{proposition}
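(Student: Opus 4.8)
The plan is to read off all three asymptotics directly from the unified representation \eqref{eq:mu:homogen}, which expresses every value $\mu_\alpha^{\lambda_k(\alpha)}(u)$ as the product of the purely algebraic factor $\alpha\,|1-\alpha|^{\frac{p-q}{r-p}}$ with the quantity $\big(R_{\alpha^*}(u)\big)^{\frac{r-q}{r-p}}$. Combined with Lemma~\ref{lem:attai_mu}, this gives, for $\alpha$ near $1$ (so that $\alpha>0$),
\begin{align*}
\underline{\mu}\smallspacing_\alpha^k &= \alpha\,|1-\alpha|^{\frac{p-q}{r-p}} \min_{u \in K_\alpha(\lambda_k(\alpha))} \big(R_{\alpha^*}(u)\big)^{\frac{r-q}{r-p}}, \\
\overline{\mu}\smallspacing_\alpha^k &= \alpha\,|1-\alpha|^{\frac{p-q}{r-p}} \max_{u \in K_\alpha(\lambda_k(\alpha))} \big(R_{\alpha^*}(u)\big)^{\frac{r-q}{r-p}}.
\end{align*}
The strategy is to show that the second factor stays trapped between two fixed positive constants, so that the entire limiting behavior as $\alpha \to 1$ is governed by $\alpha\,|1-\alpha|^{\frac{p-q}{r-p}}$, whose exponent $\frac{p-q}{r-p}$ has a sign dictated precisely by the ordering of $p,q,r$ in each of the three cases.

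Next I would fix a small $\delta > 0$ and apply Lemma~\ref{lem:bound_Phi} with the bounded set $L = [1-\delta,\, 1+\delta]$. Since $\alpha_0 < 0 < 1$, this $L$ satisfies $\inf L > \alpha_0$, so \eqref{eq:R*lower} furnishes constants $C_1^*, C_2^* > 0$ with $C_1^* \le R_{\alpha^*}(u) \le C_2^*$ for every $\alpha \in L$ and every $u \in K_\alpha(\lambda_k(\alpha))$ (the lower bound could alternatively be read off from Lemma~\ref{lem:bound_Phi:negative}). Because $\frac{r-q}{r-p}$ is a fixed nonzero exponent, raising to it keeps $\big(R_{\alpha^*}(u)\big)^{\frac{r-q}{r-p}}$ between two positive constants, uniformly in $\alpha \in L$ and $u$; consequently both the $\min$ and the $\max$ appearing above lie in a fixed interval $[c_1, c_2] \subset (0,+\infty)$ for all $\alpha \in L$.

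Finally I would carry out the three-way case analysis on the sign of $\frac{p-q}{r-p}$. In case (i), either $q<r<p$ (so $p-q>0$, $r-p<0$) or $p<q<r$ (so $p-q<0$, $r-p>0$); in both subcases $\frac{p-q}{r-p} < 0$, whence $|1-\alpha|^{\frac{p-q}{r-p}} \to +\infty$ as $\alpha \to 1$, and multiplying by $\alpha \to 1$ and by the factor trapped in $[c_1,c_2]$ yields $\underline{\mu}\smallspacing_\alpha^k \to +\infty$, hence also $\overline{\mu}\smallspacing_\alpha^k \ge \underline{\mu}\smallspacing_\alpha^k \to +\infty$. In case (ii), $q=p<r$ forces $p-q=0$, so $|1-\alpha|^{\frac{p-q}{r-p}} \equiv 1$ and $\alpha\,|1-\alpha|^{\frac{p-q}{r-p}} = \alpha$ stays bounded between positive constants near $1$; combined with $[c_1,c_2]$ this gives the two-sided bound with suitable $C_1, C_2 > 0$. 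In case (iii), $q<p<r$ gives $p-q>0$ and $r-p>0$, so $\frac{p-q}{r-p} > 0$ and $|1-\alpha|^{\frac{p-q}{r-p}} \to 0$, forcing $\overline{\mu}\smallspacing_\alpha^k \to 0$ and, via $0 \le \underline{\mu}\smallspacing_\alpha^k \le \overline{\mu}\smallspacing_\alpha^k$, also $\underline{\mu}\smallspacing_\alpha^k \to 0$. The only point requiring genuine care is the uniformity established in the second paragraph: one must ensure that the bounds on $R_{\alpha^*}$ hold simultaneously for all $\alpha$ in a full neighborhood of $1$ rather than merely pointwise, which is exactly what Lemma~\ref{lem:bound_Phi} delivers; the remaining verifications are elementary.
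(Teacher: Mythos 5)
Your argument is correct and is essentially the paper's own proof: the paper states that this proposition "can be directly deduced from Lemmas~\ref{lem:attai_mu} and \ref{lem:bound_Phi}," which is exactly the combination you use (the representation of $\underline{\mu}\smallspacing_\alpha^k$, $\overline{\mu}\smallspacing_\alpha^k$ as $\alpha\,|1-\alpha|^{\frac{p-q}{r-p}}$ times a factor uniformly trapped in $[c_1,c_2]$ for $\alpha$ in a neighborhood of $1$, followed by the sign analysis of $\frac{p-q}{r-p}$). Nothing further is needed.
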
 
		
		The behavior as $\alpha \to +\infty$ is described using Lemmas~\ref{lem:attai_mu}, \ref{lem:bound_Phi:negative}, and the asymptotic
		\begin{equation}\label{eq:omombeh1}
			\alpha\,|1-\alpha|^{\frac{p-q}{r-p}} \approx \alpha^{\frac{r-q}{r-p}} 
			\quad \text{as}~ \alpha \to +\infty.
		\end{equation}
		\begin{proposition}[$\alpha \to +\infty$]\label{prop:omega-to-infty}
			Let $r \neq p$ and $k\in\mathbb{N}$. 
			Then the following assertions hold as $\alpha \to +\infty$:
			\begin{enumerate}[label={\rm(\roman*)}] 
				\item\label{prop:omega-infty:1} If $q\leq p<r$ or $p \leq q<r$, then $\overline{\mu}\smallspacing_\alpha^k\ge \underline{\mu}\smallspacing_\alpha^k\to +\infty$. 
				\item\label{prop:omega-infty:2}  If $q<r<p$, then $\underline{\mu}\smallspacing_\alpha^k 
				\le \overline{\mu}\smallspacing_\alpha^k \to 0$.
			\end{enumerate} 
		\end{proposition}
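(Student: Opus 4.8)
The plan is to reduce everything to the unified formula \eqref{eq:mu:homogen},
\[
	\mu_\alpha^{\lambda_k(\alpha)}(u)
	=
	\alpha\,|1-\alpha|^{\frac{p-q}{r-p}} \big(R_{\alpha^*}(u)\big)^{\frac{r-q}{r-p}},
	\qquad u \in K_\alpha(\lambda_k(\alpha)),
\]
and to combine the uniform lower bound on $R_{\alpha^*}$ from Lemma~\ref{lem:bound_Phi:negative} with the asymptotic \eqref{eq:omombeh1}. Since we consider $\alpha \to +\infty > \alpha_0$, the level $\lambda_k(\alpha)$ is critical and $K_\alpha(\lambda_k(\alpha))$ is nonempty, so $\underline{\mu}\smallspacing_\alpha^k$ and $\overline{\mu}\smallspacing_\alpha^k$ are well-defined (and attained by Lemma~\ref{lem:attai_mu}). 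The crucial observation is that, although the two-sided control of $R_{\alpha^*}(u)$ in Lemma~\ref{lem:bound_Phi} is available only on \emph{bounded} sets of $\alpha$, the one-sided estimate $R_{\alpha^*}(u) \geq \lambda_1(\alpha^*) > 0$ holds for \emph{every} $u \in \W \setminus \{0\}$ and hence persists as $\alpha \to +\infty$; because $\lambda_1(\alpha^*)$ is a fixed positive constant depending only on $p,q,r$, the limit is then governed entirely by the scalar prefactor $\alpha\,|1-\alpha|^{\frac{p-q}{r-p}} \approx \alpha^{\frac{r-q}{r-p}}$.

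For assertion \ref{prop:omega-infty:1} I would first record that in both subcases $p < r$, so that the exponent $\frac{r-q}{r-p}$ is positive. Consequently $\alpha\,|1-\alpha|^{\frac{p-q}{r-p}} \to +\infty$ by \eqref{eq:omombeh1}, and raising $R_{\alpha^*}(u) \geq \lambda_1(\alpha^*)$ to the positive power $\frac{r-q}{r-p}$ preserves the inequality. Taking the infimum over $u \in K_\alpha(\lambda_k(\alpha))$ then yields
\[
	\underline{\mu}\smallspacing_\alpha^k
	\geq
	\alpha\,|1-\alpha|^{\frac{p-q}{r-p}} \big(\lambda_1(\alpha^*)\big)^{\frac{r-q}{r-p}}
	\longrightarrow +\infty,
\]
and $\overline{\mu}\smallspacing_\alpha^k \geq \underline{\mu}\smallspacing_\alpha^k$ is immediate from the definitions.

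For assertion \ref{prop:omega-infty:2}, the condition $q < r < p$ forces $r - p < 0$, whence the exponent $\frac{r-q}{r-p}$ is negative. Now \eqref{eq:omombeh1} gives $\alpha\,|1-\alpha|^{\frac{p-q}{r-p}} \to 0$, while raising $R_{\alpha^*}(u) \geq \lambda_1(\alpha^*)$ to a negative power \emph{reverses} the inequality, so $\big(R_{\alpha^*}(u)\big)^{\frac{r-q}{r-p}} \leq \big(\lambda_1(\alpha^*)\big)^{\frac{r-q}{r-p}}$ uniformly in $u$. Taking the supremum over $u \in K_\alpha(\lambda_k(\alpha))$ and using $\alpha > 0$, I obtain
\[
	0 \leq \underline{\mu}\smallspacing_\alpha^k \leq \overline{\mu}\smallspacing_\alpha^k
	\leq
	\alpha\,|1-\alpha|^{\frac{p-q}{r-p}} \big(\lambda_1(\alpha^*)\big)^{\frac{r-q}{r-p}}
	\longrightarrow 0,
\]
and a squeeze finishes the proof.

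There is no serious analytic obstacle here: the attainability supplied by Lemma~\ref{lem:attai_mu} is convenient but not strictly necessary, since both conclusions follow from the pointwise-in-$u$ estimate $R_{\alpha^*}(u) \geq \lambda_1(\alpha^*)$ together with the monotonicity of $t \mapsto t^{(r-q)/(r-p)}$. The only point demanding care is the bookkeeping of the sign of $\frac{r-q}{r-p}$ across the parameter regimes, which simultaneously determines whether the scalar prefactor diverges or vanishes and whether the one-sided bound on $R_{\alpha^*}$ must be used from below or from above.
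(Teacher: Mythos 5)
Your proof is correct and follows essentially the same route as the paper: both rest on the unified formula \eqref{eq:mu:homogen}, the uniform lower bound $R_{\alpha^*}(u)\ge\lambda_1(\alpha^*)>0$ of Lemma~\ref{lem:bound_Phi:negative}, and the asymptotic \eqref{eq:omombeh1} for the scalar prefactor, with the sign of $\tfrac{r-q}{r-p}$ deciding which direction the bound is used. The paper writes out only case \ref{prop:omega-infty:2} explicitly, but the argument is identical to yours.
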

		\begin{proof}
			For clarity, let us justify the assertion~\ref{prop:omega-infty:2}. 
			Since $q<r<p$, we have
			\begin{align}
				\overline{\mu}\smallspacing_\alpha^k 
				&=
				\alpha\,|1-\alpha|^{\frac{p-q}{r-p}}
				\max\left\{\big(R_{\alpha^*}(u)\big)^{-\frac{r-q}{p-r}}\,:\,
				u\in K_\alpha(\lambda_k(\alpha))\right\}
				\\
				&=
				\alpha\,|1-\alpha|^{\frac{p-q}{r-p}}
				\left(\min\left\{R_{\alpha^*}(u)\,:\,
				u\in K_\alpha(\lambda_k(\alpha))\right\}\right)^{-\frac{r-q}{p-r}}
				\\
				&\leq
				\alpha\,|1-\alpha|^{\frac{p-q}{r-p}}
				\left(\lambda_1(\alpha^*)\right)^{-\frac{r-q}{p-r}},
			\end{align}
			where we used Lemma~\ref{lem:bound_Phi:negative}. 
			In view of \eqref{eq:omombeh1}, we get $\overline{\mu}\smallspacing_\alpha^k \to 0$ as $\alpha \to +\infty$.
		\end{proof}
		
		\begin{figure}[!ht]
			\centering
			\begin{subfigure}[b]{0.45\linewidth}
				\includegraphics[width=\linewidth]{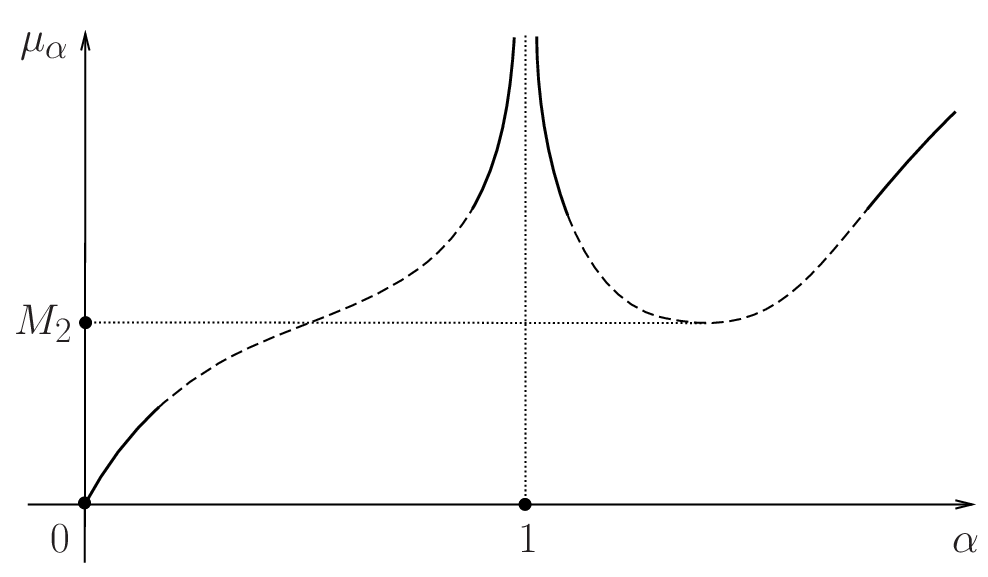}
				\caption{~$p<q<r$}
			\end{subfigure}
			\hfill
			\begin{subfigure}[b]{0.45\linewidth}
				\includegraphics[width=\linewidth]{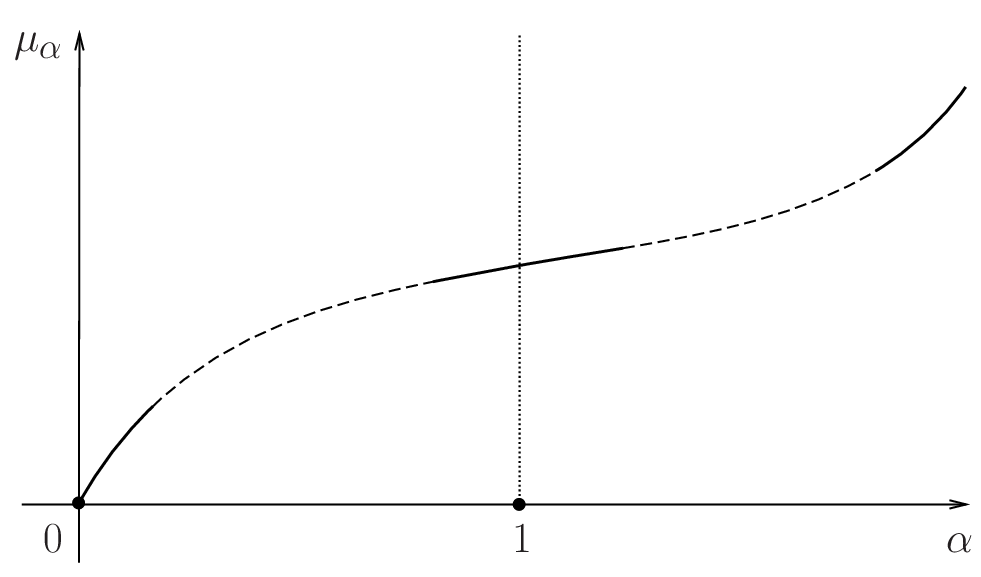}
				\caption{~$q=p<r$}
			\end{subfigure}\\
			\begin{subfigure}[b]{0.45\linewidth}
				\includegraphics[width=\linewidth]{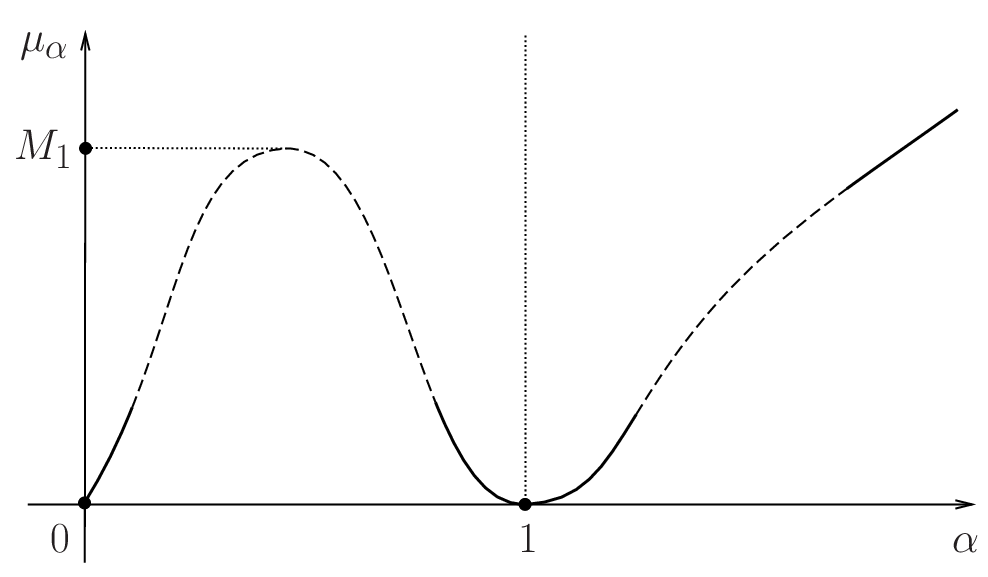}
				\caption{~$q<p<r$}
			\end{subfigure}
			\hfill
			\begin{subfigure}[b]{0.45\linewidth}
				\includegraphics[width=\linewidth]{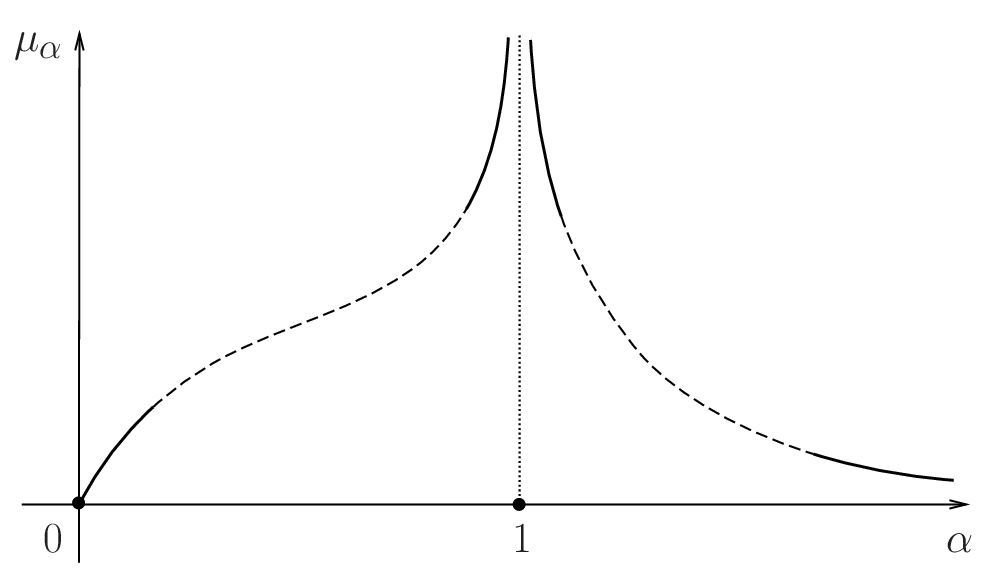}
				\caption{~$q<r<p$}
				\label{fig1:c}
			\end{subfigure}
			\caption{Schematically depicted results of Propositions~\ref{prop:omega-to-zero}--\ref{prop:boundedness:concon}, where $\mu_\alpha$ stands for $\underline{\mu}\smallspacing_\alpha^k$ or $\overline{\mu}\smallspacing_\alpha^k$ for some $k \in \mathbb{N}$.}
			\label{fig1}
		\end{figure}

		We note the following boundedness result, which is implied by
		Lemmas~\ref{lem:attai_mu}, \ref{lem:bound_Phi}, \ref{lem:bound_Phi:negative}, see also  \eqref{eq:omombeh1}.
		\begin{proposition}\label{prop:boundedness:concon}
			Let $k\in\mathbb{N}$. 
			Then the following assertions hold:
			\begin{enumerate}[label={\rm(\roman*)}] 
				\item If $q \leq p<r$, then 
				\begin{align*}
					M_1 := \sup_{\alpha \in  [0,1]}\,\overline{\mu}\smallspacing_\alpha^k 
					&\le 
					\max_{\alpha \in  [0,1]}
					\left\{\,  
					\alpha\,|1-\alpha|^{\frac{p-q}{r-p}}
					\right\}\\
					&\qquad\times \sup_{\alpha \in  [0,1]}\max\left\{\big(R_{\alpha^*}(u)\big)^{\frac{r-q}{r-p}}\,:\,
					{u}\in K_\alpha(\lambda_k(\alpha))\right\}
					<+\infty. 
				\end{align*}
				Moreover, if $q<p<r$, then $M_1$ is attained in $(0,1)$.
				\item If $p \leq q<r$, then 
				\begin{align*}
					M_2 := \inf_{\alpha \in  [1,+\infty)}\,\underline{\mu}\smallspacing_\alpha^k 
					&\ge 
					\min_{\alpha \in  [1,+\infty)}
					\left\{\,  
					\alpha\,|1-\alpha|^{\frac{p-q}{r-p}}
					\right\}\\
					&\qquad\times \inf_{\alpha \in  [1,+\infty)}\min\left\{\big(R_{\alpha^*}(u)\big)^{\frac{r-q}{r-p}}\,:\,
					{u}\in K_\alpha(\lambda_k(\alpha))\right\}>0. 
				\end{align*}
				Moreover, if $p<q<r$, then $M_2$ is attained in $(1,+\infty)$.
			\end{enumerate}
		\end{proposition}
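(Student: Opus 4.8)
The plan is to read off the factored structure of $\overline{\mu}\smallspacing_\alpha^k$ and $\underline{\mu}\smallspacing_\alpha^k$ from Lemma~\ref{lem:attai_mu} and then estimate the two factors separately. For $\alpha \ge 0$ the prefactor $a(\alpha) := \alpha\,|1-\alpha|^{\frac{p-q}{r-p}}$ is nonnegative, so that lemma gives $\overline{\mu}\smallspacing_\alpha^k = a(\alpha)\,b(\alpha)$ and $\underline{\mu}\smallspacing_\alpha^k = a(\alpha)\,c(\alpha)$, where $b(\alpha)=\max\{(R_{\alpha^*}(u))^{\frac{r-q}{r-p}}:u\in K_\alpha(\lambda_k(\alpha))\}$ and $c(\alpha)=\min\{(R_{\alpha^*}(u))^{\frac{r-q}{r-p}}:u\in K_\alpha(\lambda_k(\alpha))\}$ are nonnegative. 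The two displayed inequalities are then just the elementary facts $\sup(ab)\le(\sup a)(\sup b)$ and $\inf(ac)\ge(\inf a)(\inf c)$, valid for nonnegative functions since they hold pointwise and survive passage to the supremum/infimum.

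For assertion~(i) I would argue as follows. The prefactor $a$ is continuous on the compact interval $[0,1]$, so $\sup_{[0,1]}a = \max_{[0,1]}a$, which is the first factor in the stated bound. Since $q\le p<r$, the set $[0,1]$ is a bounded subset of $(\alpha_0,+\infty)$ with infimum $0>\alpha_0$, so Lemma~\ref{lem:bound_Phi} yields a constant $C_2^*$ with $R_{\alpha^*}(u)\le C_2^*$ for all $\alpha\in[0,1]$ and $u\in K_\alpha(\lambda_k(\alpha))$; hence $\sup_{[0,1]}b \le (C_2^*)^{\frac{r-q}{r-p}}<+\infty$. This is exactly the finiteness of the second factor and gives $M_1<+\infty$.

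For assertion~(ii) the exponent $\frac{p-q}{r-p}$ is nonpositive, so $a$ must be analyzed on $[1,+\infty)$ by hand. If $p<q$, then this exponent is strictly negative, whence $a(\alpha)\to+\infty$ as $\alpha\to1^+$, while $a(\alpha)\approx \alpha^{\frac{r-q}{r-p}}\to+\infty$ as $\alpha\to+\infty$ by \eqref{eq:omombeh1}; consequently $a$ attains a strictly positive minimum at an interior point. If $p=q$, then $a(\alpha)=\alpha$ and $\min_{[1,+\infty)}a=1$. In either case $\min_{[1,+\infty)}a>0$, giving the first factor. The second factor is controlled by Lemma~\ref{lem:bound_Phi:negative}, which furnishes the uniform bound $R_{\alpha^*}(u)\ge\lambda_1(\alpha^*)>0$; thus $c(\alpha)\ge(\lambda_1(\alpha^*))^{\frac{r-q}{r-p}}>0$, and the infimum over $\alpha$ stays positive. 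Multiplying the two strictly positive factors yields $M_2>0$.

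The two attainment (``moreover'') claims are where I expect the real work to lie, since the product estimate alone does not locate the extremum. For $q<p<r$ I would combine the vanishing of $\overline{\mu}\smallspacing_\alpha^k$ at both endpoints---as $\alpha\to0$ by Proposition~\ref{prop:omega-to-zero} and as $\alpha\to1$ by Proposition~\ref{prop:omega-to-one}(iii)---with the strict positivity $\overline{\mu}\smallspacing_\alpha^k>0$ for $\alpha\in(0,1)$ (because $\alpha>0$ and $R_{\alpha^*}>0$). Since $\alpha\mapsto\overline{\mu}\smallspacing_\alpha^k$ is upper semicontinuous on $[0,1]$ by Lemma~\ref{lem:lower-upper-semicontinuity}, it attains its maximum there; as the boundary values are $0$ and the interior values are positive, $M_1$ is realized at an interior point of $(0,1)$. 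For $p<q<r$ I would argue dually: $\underline{\mu}\smallspacing_\alpha^k\to+\infty$ both as $\alpha\to1$ (Proposition~\ref{prop:omega-to-one}(i)) and as $\alpha\to+\infty$ (Proposition~\ref{prop:omega-to-infty}(i)), so the infimum over $[1,+\infty)$ equals the infimum over a compact subinterval $[s,t]\subset(1,+\infty)$ on which $\underline{\mu}\smallspacing_\alpha^k$ remains bounded; lower semicontinuity (Lemma~\ref{lem:lower-upper-semicontinuity}) then forces the minimum to be attained on $[s,t]$, hence in $(1,+\infty)$. The delicate point throughout is to match the direction of semicontinuity to the extremum being sought and to verify that the endpoint asymptotics genuinely confine it to the open interval.
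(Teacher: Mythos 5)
Your proposal is correct and follows essentially the route the paper intends: the paper gives no written-out proof but attributes the proposition precisely to Lemma~\ref{lem:attai_mu} (the factorization $\mu = \alpha|1-\alpha|^{\frac{p-q}{r-p}}\cdot(R_{\alpha^*})^{\frac{r-q}{r-p}}$), Lemma~\ref{lem:bound_Phi} (uniform upper bound for the finiteness of $M_1$), Lemma~\ref{lem:bound_Phi:negative} (uniform lower bound for the positivity of $M_2$), and \eqref{eq:omombeh1}, which is exactly your decomposition. Your treatment of the two attainment claims via the endpoint asymptotics of Propositions~\ref{prop:omega-to-zero}--\ref{prop:omega-to-infty} combined with the semicontinuity of Lemma~\ref{lem:lower-upper-semicontinuity} (matching upper semicontinuity to the maximum and lower semicontinuity to the minimum) correctly supplies the details the paper leaves implicit.
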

		
		\begin{remark}\label{rem:threshold}
			Consider the set $\{v_\alpha\}$ of all \textit{nonnegative} critical points of $R_\alpha$ for $\alpha \in [0,1)$. 
			In the case $q<p<r$, Lemma~\ref{lem:reduction} and the formula \eqref{eq:mu:homogen} allow to characterize the maximal value of $\mu$ (i.e., a threshold) of the existence of nonnegative solutions of the convex-concave problem \eqref{eq:Pconconx} as
			$$
			\mu^* 
			= 
			\sup_{\alpha \in [0,1)} 
			\sup_{v_\alpha} 
			\left\{
			\alpha (1-\alpha)^\frac{p-q}{r-p}\, 
			\big(R_{\alpha^*}(v_\alpha)\big)^{\frac{r-q}{r-p}}
			\right\}.
			$$
			We have $\mu^* \geq M_1$.
			Moreover, it is known from \cite{ABC} (for $p=2$) and \cite[Remark~3.7]{AAP} (for $p > 1$) that $\mu^* < +\infty$.              
			In particular, in view of Lemma~\ref{lem:PS}, $\mu^*$ is attained in $(0,1)$. 
			
			In the case $p<q<r$, we can define the analogous value 
			$$
			\mu_* 
			=
			\inf_{\alpha \in [1,+\infty)} 
			\inf_{v_\alpha} 
			\left\{
			\alpha (\alpha-1)^\frac{p-q}{r-p}\, 
			\big(R_{\alpha^*}(v_\alpha)\big)^{\frac{r-q}{r-p}}
			\right\},
			$$
			and obtain similar properties. 
			Namely, $0 < \mu_* \leq M_2$ and $\mu_*$ is attained in $(1,+\infty)$. 
		\end{remark}

		\subsection{Behavior of solutions corresponding to \texorpdfstring{$\alpha \mapsto \underline{\mu}\smallspacing_\alpha^k$}{underline-mu(alpha)} and \texorpdfstring{$\alpha \mapsto \overline{\mu}\smallspacing_\alpha^k$}{overline-mu(alpha)}}\label{sec:behavior:mu:solutions}
		
		Let us study the behavior with respect to $\alpha$ of the solutions of the problems \eqref{eq:Pconconx} and \eqref{eq:Pconconx2} with $\mu =  \underline{\mu}\smallspacing_\alpha^k, \overline{\mu}\smallspacing_\alpha^k$   
		obtained as properly normalized optimizers of 	$\underline{\mu}\smallspacing_\alpha^k$ and $\overline{\mu}\smallspacing_\alpha^k$, see Figure~\ref{fig2}. 
		
		For $r \neq p$ and $\alpha \in [0,+\infty) \setminus \{1\}$, denote by $\underline{v}\smallspacing_\alpha^{k}$ and  $\overline{v}\smallspacing_\alpha^{k}$ any of the optimizers of  
		$\underline{\mu}\smallspacing_\alpha^k$ and $\overline{\mu}\smallspacing_\alpha^k$, respectively, which are normalized to $\mathcal{M}_\alpha$. 
		From Lemma~\ref{lem:attai_mu} we have 
		\begin{align}
			\underline{v}\smallspacing_\alpha^{k} 
			&=
			\text{argmin}\, \left\{\big(R_{\alpha^*}(u)\big)^{\frac{r-q}{r-p}}\,:\,u\in K_\alpha^\mathcal{M}(\lambda_k(\alpha))\right\},\\
			\overline{v}\smallspacing_\alpha^{k} 
			&=
			\text{argmax}\, \left\{\big(R_{\alpha^*}(u)\big)^{\frac{r-q}{r-p}}\,:\,u\in K_\alpha^\mathcal{M}(\lambda_k(\alpha))\right\}.
		\end{align}
		We also recall the notation $t_\alpha$ from Remark~\ref{rem:tu}:
		\begin{equation}\label{eq:concon:constr:t10}
			t_\alpha(v) = 
			\left(
			|1-\alpha| \frac{\|\nabla v\|_p^{p}}{\|v\|_r^{r}}
			\right)^{\frac{1}{r-p}}
			\quad \text{for}~ v \in \W \setminus \{0\},
		\end{equation}
		so that $t_\alpha(v)v \in \mathcal{C}_\alpha$. 
		
		As discussed in Remark~\ref{rem:reduction}, we see that
		$$
		\underline{u}\smallspacing_\alpha^{k}
		:=
		t_\alpha(\underline{v}\smallspacing_\alpha^{k})\underline{v}\smallspacing_\alpha^{k} 
		\in 
		K_\alpha^{\mathcal{C}}(\lambda_k(\alpha))
		\quad \text{and} \quad 
		\overline{u}\smallspacing_\alpha^{k}
		:=
		t_\alpha(\overline{v}\smallspacing_\alpha^{k})\overline{v}\smallspacing_\alpha^{k}
		\in 
		K_\alpha^{\mathcal{C}}(\lambda_k(\alpha))
		$$
		are solutions of 
		\eqref{eq:Pconconx} (when $\alpha<1$) or \eqref{eq:Pconconx2} (when $\alpha>1$) with
		$\mu = \underline{\mu}\smallspacing_\alpha^{k}$ and $\mu = \overline{\mu}\smallspacing_\alpha^{k}$, respectively. 
		
		We supplement Propositions~\ref{prop:omega-to-zero}, \ref{prop:omega-to-one}, \ref{prop:omega-to-infty}
		with the behavior of $\alpha \mapsto \underline{u}\smallspacing_\alpha^{k}$ and $\alpha \mapsto \underline{u}\smallspacing_\alpha^{k}$ in various norms. 
		The first two statements directly follow from Lemma~\ref{lem:bound_Phi} (which provides the boundedness of $\underline{v}\smallspacing_\alpha^{k}$, $\overline{v}\smallspacing_\alpha^{k}$ from above and below in all involved norms) and \eqref{eq:concon:constr:t10}.
		\begin{proposition}[$\alpha \to 0$]\label{prop:omega-to-zero:u}
			Let $r \neq p$ and $k\in\mathbb{N}$.
			Then there exist $C_1,C_2>0$ such that $\|\underline{u}\smallspacing_\alpha^k\|_q$, $\|\overline{u}\smallspacing_\alpha^k\|_q > C_1$ 
			and $\|\nabla \underline{u}\smallspacing_\alpha^k\|_p$, $\|\nabla \overline{u}\smallspacing_\alpha^k\|_p \leq C_2$   
			as $\alpha \to 0$.
		\end{proposition}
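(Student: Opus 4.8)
The plan is to deduce the statement directly from the uniform two-sided bounds furnished by Lemma~\ref{lem:bound_Phi}, combined with the explicit form \eqref{eq:concon:constr:t10} of the normalizing factor $t_\alpha$, exactly as announced in the paragraph preceding the proposition. The whole argument amounts to tracking how the scaling $v \mapsto t_\alpha(v)\,v$ affects the norms, since the optimizers $\underline{v}\smallspacing_\alpha^{k}$, $\overline{v}\smallspacing_\alpha^{k}$ already lie in $K_\alpha^{\mathcal{M}}(\lambda_k(\alpha))$.

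First I would fix a threshold $\delta \in (0,1)$ and work on the parameter interval $L = [0,\delta]$. Since $\alpha_0 < 0$, we have $\inf L = 0 > \alpha_0$, so $L$ is an admissible set in Lemma~\ref{lem:bound_Phi}. That lemma then supplies constants $0 < C_1' \le C_2'$, independent of $\alpha$, with
\[
\|u\|_q,\ \|u\|_r,\ \|\nabla u\|_p \in [C_1', C_2']
\quad \text{for all } \alpha \in L \text{ and } u \in K_\alpha^{\mathcal{M}}(\lambda_k(\alpha)).
\]
In particular, since $\underline{v}\smallspacing_\alpha^{k}, \overline{v}\smallspacing_\alpha^{k} \in K_\alpha^{\mathcal{M}}(\lambda_k(\alpha))$ by construction, all three norms of $\underline{v}\smallspacing_\alpha^{k}$ and of $\overline{v}\smallspacing_\alpha^{k}$ lie in this fixed compact interval for every $\alpha \in L$.

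Next I would control the normalizing factor. Shrinking $\delta$ if necessary so that $|1-\alpha| \in [\tfrac12, 1]$ for $\alpha \in L$, formula \eqref{eq:concon:constr:t10} shows that the base
\[
|1-\alpha|\,\frac{\|\nabla \underline{v}\smallspacing_\alpha^{k}\|_p^{p}}{\|\underline{v}\smallspacing_\alpha^{k}\|_r^{r}}
\]
is confined to a compact subinterval of $(0,+\infty)$ determined only by $C_1', C_2'$, and likewise with $\overline{v}\smallspacing_\alpha^{k}$ in place of $\underline{v}\smallspacing_\alpha^{k}$. Raising to the fixed power $1/(r-p)$ — whose sign is immaterial, since the base stays bounded away from both $0$ and $+\infty$ — yields constants $0 < t_1 \le t_2$ such that $t_\alpha(\underline{v}\smallspacing_\alpha^{k}),\, t_\alpha(\overline{v}\smallspacing_\alpha^{k}) \in [t_1, t_2]$ for all $\alpha \in L$.

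Finally, recalling $\underline{u}\smallspacing_\alpha^{k} = t_\alpha(\underline{v}\smallspacing_\alpha^{k})\,\underline{v}\smallspacing_\alpha^{k}$ and $\overline{u}\smallspacing_\alpha^{k} = t_\alpha(\overline{v}\smallspacing_\alpha^{k})\,\overline{v}\smallspacing_\alpha^{k}$, I would combine the two previous steps: the identity $\|\nabla \underline{u}\smallspacing_\alpha^{k}\|_p = t_\alpha(\underline{v}\smallspacing_\alpha^{k})\,\|\nabla \underline{v}\smallspacing_\alpha^{k}\|_p \le t_2 C_2'$ gives the required upper bound $C_2$, while $\|\underline{u}\smallspacing_\alpha^{k}\|_q = t_\alpha(\underline{v}\smallspacing_\alpha^{k})\,\|\underline{v}\smallspacing_\alpha^{k}\|_q \ge t_1 C_1' > 0$ gives the lower bound $C_1$; the estimates for $\overline{u}\smallspacing_\alpha^{k}$ are identical. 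Since these bounds are uniform on $L$, they hold in particular as $\alpha \to 0$. I do not expect a genuine obstacle here: the substantive work is entirely front-loaded into Lemma~\ref{lem:bound_Phi}, and the only minor points to watch are keeping $|1-\alpha|$ bounded away from zero (automatic near $\alpha=0$) and observing that the sign of the exponent $1/(r-p)$ does not affect the conclusion.
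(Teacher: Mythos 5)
Your argument is correct and is precisely the one the paper intends: the authors state that this proposition ``directly follows'' from Lemma~\ref{lem:bound_Phi} and the formula \eqref{eq:concon:constr:t10}, and your write-up is a faithful expansion of exactly that reasoning (uniform two-sided norm bounds for the $\mathcal{M}_\alpha$-normalized optimizers on a compact parameter set near $0$, plus the boundedness of $t_\alpha$ away from $0$ and $\infty$ there). No gaps.
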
 
		\begin{proposition}[$\alpha \to 1$]\label{prop:omega-to-one:u}
			Let $r \neq p$ and $k\in\mathbb{N}$. 
			Then the following assertions hold as $\alpha \to 1$:
			\begin{enumerate}[label={\rm(\roman*)}] 
				\item If $q \leq p < r$ or $p \leq q<r$, 
				then  
				$\|\nabla \underline{u}\smallspacing_\alpha^k\|_p$, $\|\nabla \overline{u}\smallspacing_\alpha^k\|_p \to 0$.
				\item If $q<r<p$, then 
				$\|\underline{u}\smallspacing_\alpha^k\|_q$, $\|\overline{u}\smallspacing_\alpha^k\|_q \to +\infty$.
			\end{enumerate} 
		\end{proposition}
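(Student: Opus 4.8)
The plan is to reduce both assertions to the behavior of the single scaling factor $t_\alpha$ from \eqref{eq:concon:constr:t10}, exploiting that the $\mathcal{M}_\alpha$-normalized optimizers $\underline{v}\smallspacing_\alpha^{k}$ and $\overline{v}\smallspacing_\alpha^{k}$ are pinned in a compact range of all relevant norms as $\alpha \to 1$. Indeed, any punctured neighborhood of $\alpha=1$ is a bounded subset of $(\alpha_0,+\infty)$ with $\inf L > \alpha_0$, so Lemma~\ref{lem:bound_Phi} supplies constants $0<C_1\le C_2$ with $\|\underline{v}\smallspacing_\alpha^{k}\|_q,\|\underline{v}\smallspacing_\alpha^{k}\|_r,\|\nabla \underline{v}\smallspacing_\alpha^{k}\|_p\in[C_1,C_2]$ (and likewise for $\overline{v}\smallspacing_\alpha^{k}$) for all $\alpha$ near $1$. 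In particular the ratio $\|\nabla v\|_p^p/\|v\|_r^r$ appearing in $t_\alpha(v)$ stays between two positive constants for $v=\underline{v}\smallspacing_\alpha^{k},\overline{v}\smallspacing_\alpha^{k}$.

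First I would write $\underline{u}\smallspacing_\alpha^{k}=t_\alpha(\underline{v}\smallspacing_\alpha^{k})\,\underline{v}\smallspacing_\alpha^{k}$ with $t_\alpha(v)=\bigl(|1-\alpha|\,\|\nabla v\|_p^p/\|v\|_r^r\bigr)^{1/(r-p)}$, and similarly for $\overline{u}\smallspacing_\alpha^{k}$. Since $|1-\alpha|\to 0$ while the bracketed ratio stays bounded above and below, the asymptotics of $t_\alpha$ are governed entirely by the sign of the exponent $1/(r-p)$. When $r>p$ (covering both $q\le p<r$ and $p\le q<r$) the exponent is positive, so $t_\alpha(\underline{v}\smallspacing_\alpha^{k})\to 0$; combined with the upper bound $\|\nabla \underline{v}\smallspacing_\alpha^{k}\|_p\le C_2$ this yields $\|\nabla \underline{u}\smallspacing_\alpha^{k}\|_p=t_\alpha(\underline{v}\smallspacing_\alpha^{k})\,\|\nabla \underline{v}\smallspacing_\alpha^{k}\|_p\to 0$, and the identical argument for $\overline{u}\smallspacing_\alpha^{k}$ gives assertion~(i).

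For assertion~(ii), when $q<r<p$ the exponent $1/(r-p)$ is negative, so $|1-\alpha|\to 0$ forces $t_\alpha(\underline{v}\smallspacing_\alpha^{k})\to +\infty$; using the lower bound $\|\underline{v}\smallspacing_\alpha^{k}\|_q\ge C_1>0$ we obtain $\|\underline{u}\smallspacing_\alpha^{k}\|_q=t_\alpha(\underline{v}\smallspacing_\alpha^{k})\,\|\underline{v}\smallspacing_\alpha^{k}\|_q\to +\infty$, and analogously for $\overline{u}\smallspacing_\alpha^{k}$. I do not expect a serious obstacle here: the only point demanding care is verifying that the hypotheses of Lemma~\ref{lem:bound_Phi} genuinely hold on a punctured neighborhood of $\alpha=1$, so that the two-sided norm bounds (and hence the control of the ratio inside $t_\alpha$) are uniform in $\alpha$. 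Once that is in place, both limits follow purely from the elementary asymptotics of $(|1-\alpha|)^{1/(r-p)}$ as $\alpha \to 1$.
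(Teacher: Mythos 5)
Your proposal is correct and follows essentially the same route as the paper, which states that these assertions ``directly follow'' from the two-sided norm bounds of Lemma~\ref{lem:bound_Phi} applied to the $\mathcal{M}_\alpha$-normalized optimizers together with the formula \eqref{eq:concon:constr:t10} for $t_\alpha$; you have simply written out the details the authors leave implicit. The one point you flag --- that a punctured neighborhood of $\alpha=1$ is an admissible set $L$ for Lemma~\ref{lem:bound_Phi} --- is immediate since $\alpha_0<0<1$, so the argument is complete.
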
 
		\begin{proposition}[$\alpha \to +\infty$]\label{prop:omega-to-infty:u}
			Let $r \neq p$ and $k\in\mathbb{N}$. 
			Then the following assertions hold as $\alpha \to +\infty$:
			\begin{enumerate}[label={\rm(\roman*)}] 
				\item\label{prop:omega-to-infty:u:1} 
				If $q \leq p < r$ or $p \leq q<r$, 
				then  $\|\underline{u}\smallspacing_\alpha^k\|_q$, $\|\overline{u}\smallspacing_\alpha^k\|_q \to +\infty$. 
				\item\label{prop:omega-to-infty:u:2} 
				If $q<r<p$, then $\|\nabla \underline{u}\smallspacing_\alpha^k\|_p$, $\|\nabla \overline{u}\smallspacing_\alpha^k\|_p \to 0$. 
			\end{enumerate} 
		\end{proposition}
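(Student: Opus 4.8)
The plan is to work directly with the three scalar identities satisfied by the two families of normalized solutions. Since $\alpha\to+\infty$, we have $\alpha>1$ eventually, so each of $\underline{u}\smallspacing_\alpha^k$ and $\overline{u}\smallspacing_\alpha^k$ lies in $K_\alpha^{\mathcal{C}}(\lambda_k(\alpha))$; writing $u$ for either of them and $\mu$ for the corresponding $\underline{\mu}\smallspacing_\alpha^k$ or $\overline{\mu}\smallspacing_\alpha^k$, the following hold:
\begin{gather*}
\mu\,\|u\|_q^q=\alpha\,\|\nabla u\|_p^p,
\qquad
\|u\|_r^r=(\alpha-1)\,\|\nabla u\|_p^p,\\
\|\nabla u\|_p^p=\lambda_k(\alpha)\,\|u\|_q^{\alpha p}\,\|u\|_r^{(1-\alpha)p}.
\end{gather*}
These come, respectively, from \eqref{def:mu_lambda}, from the constraint defining $\mathcal{C}_\alpha$ in \eqref{eq:constraint-C-omega}, and from the identity $R_\alpha(u)=\lambda_k(\alpha)$. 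I would derive both assertions from these relations; as they hold verbatim for $\underline{u}\smallspacing_\alpha^k$ and $\overline{u}\smallspacing_\alpha^k$, it suffices to argue for a generic such $u$.

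For assertion \ref{prop:omega-to-infty:u:2} ($q<r<p$) I would eliminate the $L^q$ and $L^r$ norms using the first two identities and the H\"older inequality \eqref{eq:holder}. Substituting $\|u\|_r=\big((\alpha-1)\|\nabla u\|_p^p\big)^{1/r}$ into $\|u\|_q\le|\Omega|^{\frac1q-\frac1r}\|u\|_r$ and comparing with $\|u\|_q^q=\alpha\|\nabla u\|_p^p/\mu$ yields
\[
\|\nabla u\|_p^{\frac{p(r-q)}{r}}
\le
|\Omega|^{(\frac1q-\frac1r)q}\,\frac{(\alpha-1)^{q/r}}{\alpha}\,\mu .
\]
By Proposition~\ref{prop:omega-to-infty}\ref{prop:omega-infty:2} we have $\underline{\mu}\smallspacing_\alpha^k,\overline{\mu}\smallspacing_\alpha^k\to 0$, while $(\alpha-1)^{q/r}/\alpha\to 0$ since $q<r$; as $(r-q)/r>0$, this forces $\|\nabla u\|_p\to 0$.

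For assertion \ref{prop:omega-to-infty:u:1} ($r>p$, i.e.\ $q\le p<r$ or $p\le q<r$) the argument uses the second and third identities only. First, combining $\|u\|_r^r=(\alpha-1)\|\nabla u\|_p^p$ with the Sobolev embedding $\W\hookrightarrow L^r(\Omega)$ (valid since $r<p^*$, with an $\alpha$-independent constant $C_S$) gives $\big((\alpha-1)\|\nabla u\|_p^p\big)^{1/r}\le C_S\|\nabla u\|_p$, whence $\|\nabla u\|_p^p\ge\big((\alpha-1)/C_S^{\,r}\big)^{p/(r-p)}\to+\infty$. Next I would solve the third identity for $\|u\|_q$ and insert $\|u\|_r=\big((\alpha-1)\|\nabla u\|_p^p\big)^{1/r}$, obtaining
\[
\|u\|_q
=
\Big(\tfrac{\|\nabla u\|_p^p}{\lambda_k(\alpha)}\Big)^{\frac{1}{\alpha p}}
\big((\alpha-1)\,\|\nabla u\|_p^p\big)^{\frac{\alpha-1}{\alpha r}} .
\]
The second factor tends to $+\infty$ because $\alpha\to+\infty$ and $\|\nabla u\|_p\to+\infty$. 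The first factor stays bounded away from $0$: since $\|\nabla u\|_p^p\ge 1$ for large $\alpha$ and, by Corollary~\ref{cor:bounds}, $\lambda_k(\alpha)^{1/\alpha}$ is bounded above on $[1,+\infty)$, the exponent $\frac{1}{\alpha p}\log\big(\|\nabla u\|_p^p/\lambda_k(\alpha)\big)$ is bounded below. Hence $\|u\|_q\to+\infty$.

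The two displayed manipulations are mechanical; the one step requiring genuine care is the first factor in the last display. A priori its base $\|\nabla u\|_p^p/\lambda_k(\alpha)$ is raised to the vanishing power $1/(\alpha p)$, so the main obstacle is to exclude that this base decays exponentially in $\alpha$ and thereby annihilates the growth of the second factor. This is precisely what the exponential-type control of $\lambda_k(\alpha)$ furnished by Lemma~\ref{lem:bounds} and Corollary~\ref{cor:bounds} rules out.
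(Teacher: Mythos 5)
Your proof is correct, and for assertion \ref{prop:omega-to-infty:u:1} it is essentially the paper's argument in different bookkeeping: both reduce to writing $\|u\|_q$ as a product of a factor that Corollary~\ref{cor:bounds} keeps bounded away from zero (you phrase it through $(\|\nabla u\|_p^p/\lambda_k(\alpha))^{1/(\alpha p)}$, the paper through $t_\alpha(v_\alpha)^{1/\alpha}$) times a divergent power of $\|u\|_r$, and both first extract the divergence of $\|\nabla u\|_p$ (equivalently $\|u\|_r$) from the constraint $\|u\|_r^r=(\alpha-1)\|\nabla u\|_p^p$ combined with the embedding $\W\hookrightarrow L^r(\Omega)$. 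The genuine difference is in assertion \ref{prop:omega-to-infty:u:2}: the paper obtains $\|\nabla u_\alpha\|_p\to 0$ in one line from the $\mathcal{C}_\alpha$-constraint and the embedding constant $\lambda_1(0)$, namely $1\geq \lambda_1(0)^{r/p}(\alpha-1)\|\nabla u_\alpha\|_p^{p-r}$ with $p>r$, whereas you route through the H\"older inequality \eqref{eq:holder} and the already-established decay $\overline{\mu}\smallspacing_\alpha^k\to 0$ of Proposition~\ref{prop:omega-to-infty}~\ref{prop:omega-infty:2}. Your version is logically sound (no circularity, since that proposition is proved independently via Lemma~\ref{lem:bound_Phi:negative}), but it imports an external asymptotic where a self-contained elementary estimate suffices; the paper's route also makes clear that the conclusion needs nothing about $\mu$ beyond the normalization itself.
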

		\begin{proof}
			Let $u_\alpha$, $v_\alpha$ be either $\underline{u}\smallspacing_\alpha^k$, $\underline{v}\smallspacing_\alpha^k$ or $\overline{u}\smallspacing_\alpha^k$, $\overline{v}\smallspacing_\alpha^k$, respectively.
			
			\ref{prop:omega-to-infty:u:1} 
			Since $u_\alpha \in \mathcal{C}_\alpha$, we use the definition \eqref{eq:lambda} of $\lambda_1(0)$ to get 
			$$
			1
			=
			|1-\alpha|\, \frac{\|\nabla u_\alpha\|_p^p}{\|u_\alpha\|_r^r} 
			\geq
			\lambda_1(0) \frac{|1-\alpha|}{\|u_\alpha\|_r^{r-p}}.
			$$
			In view of the assumption $p<r$, we get $\|u_\alpha\|_r \to +\infty$ as $\alpha \to +\infty$. 
			Let us investigate the behavior of $\|u_\alpha\|_q$. 
			Since $u_\alpha = t_\alpha(v_\alpha) v_\alpha$ and $v_\alpha \in \mathcal{M}_\alpha$, we obtain
			\begin{equation}\label{eq:behuq}
				\|u_\alpha\|_q 
				= 
				t_\alpha(v_\alpha) \|v_\alpha\|_q
				=
				t_\alpha(v_\alpha) \|v_\alpha\|_r^{1-\frac{1}{\alpha}}
				=
				t_\alpha(v_\alpha)^{\frac{1}{\alpha}} 
				\|u_\alpha\|_r^{1-\frac{1}{\alpha}}.
			\end{equation}
			We again use the definition \eqref{eq:lambda} of $\lambda_1(0)$ and derive the lower bound
			\begin{align*}
				t_\alpha(v_\alpha)^{\frac{1}{\alpha}} 
				=
				\left(
				|1-\alpha| \frac{\|\nabla v_\alpha\|_p^{p}}{\|v_\alpha\|_r^{r}}
				\right)^{\frac{1}{\alpha(r-p)}}
				&\geq 
				|1-\alpha|^{\frac{1}{\alpha(r-p)}} 
				\lambda_1(0)^\frac{r}{\alpha p (r-p)}
				\|\nabla v_\alpha\|_p^{-\frac{1}{\alpha}}
				\\
				&= 
				|1-\alpha|^{\frac{1}{\alpha(r-p)}}
				\lambda_1(0)^\frac{r}{\alpha p (r-p)} \lambda_k(\alpha)^{-\frac{1}{\alpha p}}.
			\end{align*}
			Applying Corollary~\ref{cor:bounds}, we arrive at
			$$
			t_\alpha(v_\alpha)^{\frac{1}{\alpha}}
			\geq
			|1-\alpha|^{\frac{1}{\alpha(r-p)}} 
			\lambda_k(1)^{-\frac{1}{p}}
			\lambda_1(0)^{\frac{r}{\alpha p (r-p)}+\frac{1}{p}}
			$$
			for any sufficiently large $\alpha$, which implies that $t_\alpha(v_\alpha)^{1/\alpha}$ is separated from zero as $\alpha \to +\infty$. 
			Thus, we conclude from \eqref{eq:behuq} and the behavior of $\|u_\alpha\|_r$ that $\|u_\alpha\|_q \to +\infty$ as $\alpha \to +\infty$.  
			
			\ref{prop:omega-to-infty:u:2} 
			In much the same way as above, we get
			$$
			1
			=
			|1-\alpha|\, \frac{\|\nabla u_\alpha\|_p^p}{\|u_\alpha\|_r^r} 
			\geq
			\lambda_1(0)^{\frac{r}{p}} |1-\alpha|\|\nabla u_\alpha\|_p^{p-r}.
			$$ 
			Since $r<p$, we conclude that $\|\nabla u_\alpha\|_p \to 0$ as $\alpha \to +\infty$.
		\end{proof}

		\begin{figure}[!ht]
			\centering
			\begin{subfigure}[b]{0.45\linewidth}
				\includegraphics[width=\linewidth]{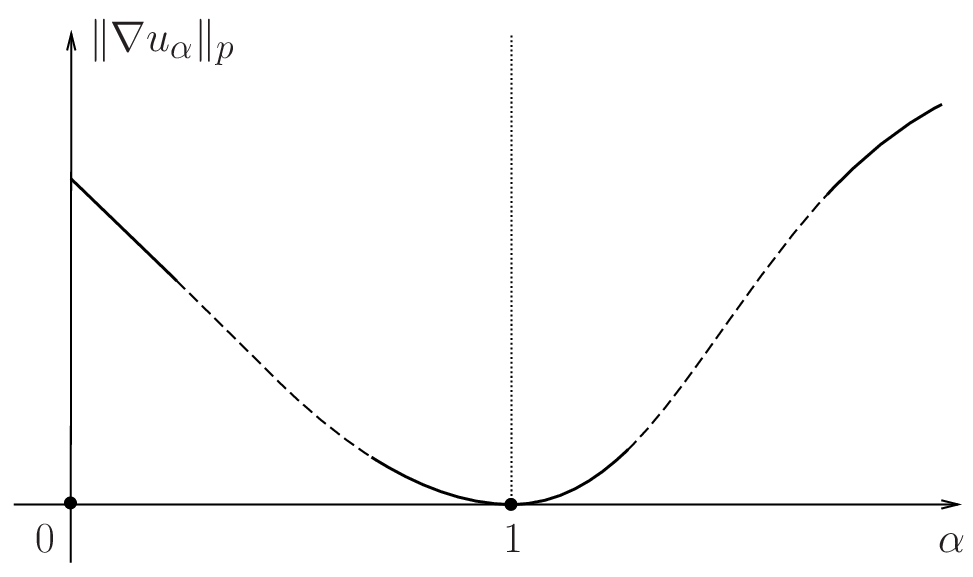}
				\caption{~$q \leq p < r$ or $p \leq q<r$}
			\end{subfigure}
			\hfill
			\begin{subfigure}[b]{0.45\linewidth}
				\includegraphics[width=\linewidth]{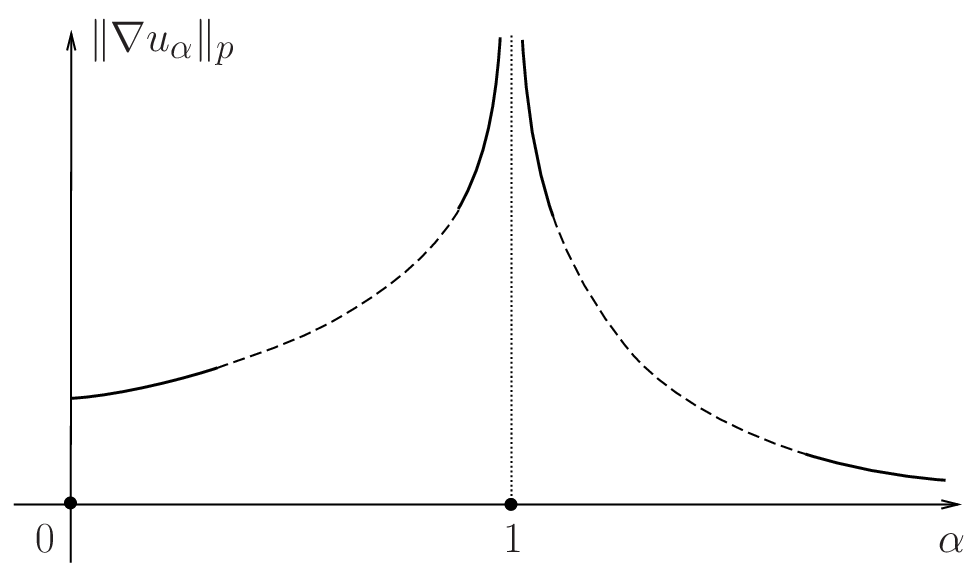}
				\caption{~$q<r<p$}
			\end{subfigure}
			\caption{Schematically depicted results of Propositions~\ref{prop:omega-to-zero:u}--\ref{prop:omega-to-infty:u}, where $u_\alpha$ stands for $\underline{u}_\alpha^k$ or $\overline{u}_\alpha^k$ for some $k \in \mathbb{N}$.}
			\label{fig2}
		\end{figure}

		\begin{remark}
			Proposition~\ref{prop:boundedness:concon}, together with other asymptotic results of Sections~\ref{sec:behavior:mu-up-un}, \ref{sec:behavior:mu:solutions}, indicates that critical points of $R_\alpha$ corresponding to $\lambda_k(\alpha)$ are parts of $\supset$-shaped and $\subset$-shaped branches of solutions of the problems \eqref{eq:Pconconx} (for $q<p<r$) and \eqref{eq:Pconconx2} (for $p<q<r$), respectively; see Figure~\ref{fig3}.
			This fact is particularly interesting since it means that (some) branches of solutions of \eqref{eq:Pconconx} and \eqref{eq:Pconconx2} can be classified by the index $k \in \mathbb{N}$ of the Lusternik-Schnirelmann critical level $\lambda_k(\alpha)$ of $R_\alpha$. 
			Moreover, the value $M_1=M_1(k)$ (resp.\ $M_2=M_2(k)$) describes the largest (resp.\ smallest) point on this branch.  
			
			\begin{figure}[!ht]
				\centering
				\begin{subfigure}[b]{0.46\linewidth}
					\includegraphics[width=\linewidth]{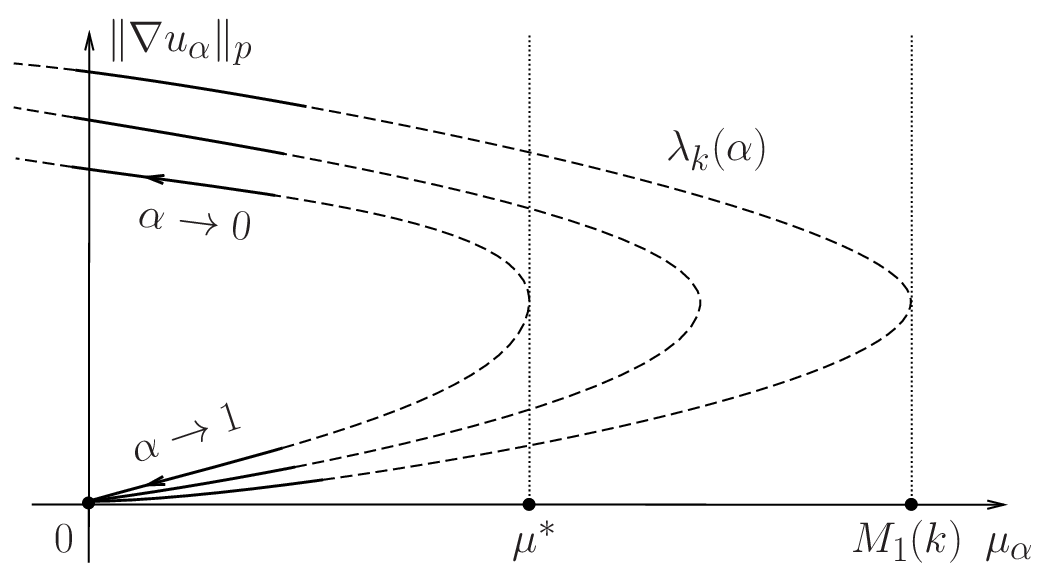}
					\caption{~$q<p<r$ and $\alpha \in (0,1)$}
					\label{fig3:a}
				\end{subfigure}
				\hfill
				\begin{subfigure}[b]{0.45\linewidth}
					\includegraphics[width=\linewidth]{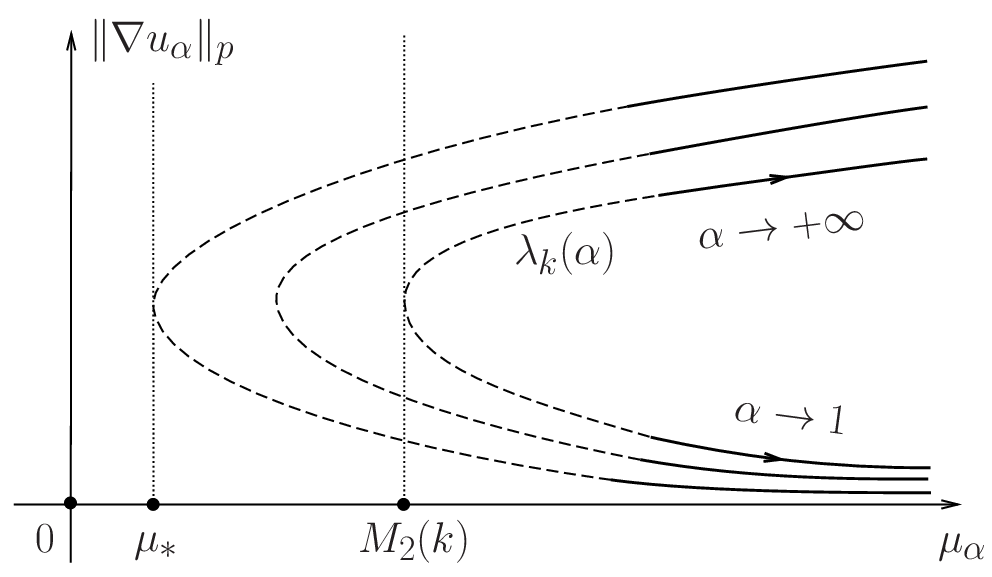}
					\caption{~$p < q < r$ and $\alpha > 1$}
					\label{fig3:b}
				\end{subfigure}
				\caption{Schematically depicted bifurcation diagrams for the problems~\eqref{eq:Pconconx}, \eqref{eq:Pconconx2}, where $\mu_\alpha$ and $u_\alpha$ stand for $\underline{\mu}\smallspacing_\alpha^k$ or $\overline{\mu}\smallspacing_\alpha^k$, and $\underline{u}_\alpha^k$ or $\overline{u}_\alpha^k$ for some $k \in \mathbb{N}$, respectively.}
				\label{fig3}
			\end{figure}
		\end{remark}

		\subsection{Behavior of energy functionals}\label{sec:energy}
		
		Let us study the relation between $\alpha \in \mathbb{R}$ and the energy functionals of the problems \eqref{eq:Pconconx}  and \eqref{eq:Pconconx2} with $\mu = \mu_\alpha$. 
		
		As in Section~\ref{sec:intro}, for $r \neq p$ and $\alpha \neq 1$ we take any critical point $u_\alpha$ of $R_\alpha$, assume that it belongs to $\mathcal{C}_\alpha$, that is,  
		\begin{equation}\label{eq:sign:norm:1}
			|1-\alpha| \frac{\|\nabla u_\alpha\|_p^p}{\|u_\alpha\|_r^{r}} = 1,
		\end{equation}
		and recall the notation
		\begin{equation}\label{eq:sign:mu:1}
			\mu_\alpha 
			=
			\alpha \frac{\|\nabla u_\alpha\|_p^p}{\|u_\alpha\|_q^{q}}.
		\end{equation}
		With this normalization-parameterization at hand, $u_\alpha$ becomes a solution of the problem \eqref{eq:Pconcon-intro}:
		\begin{equation}\label{eq:Pconcon0}
			\left\{
			\begin{aligned}
				-\Delta_p u &= \mu_\alpha |u|^{q-2}u + \text{sgn}(1-\alpha)\, |u|^{r-2}u  && \text{in } \Omega, \\
				u &= 0 && \text{on } \partial \Omega.
			\end{aligned}
			\right.
		\end{equation}
		The energy functional associated with \eqref{eq:Pconcon0} is given by 
		\begin{equation}\label{eq:sign:E:0}
			E_{\mu_\alpha}(u) = \frac{1}{p} \|\nabla u\|_p^p - \frac{\mu_\alpha}{q} \|u\|_q^{q} -
			\frac{\text{sgn}(1-\alpha)}{r} \|u\|_r^{r}, 
			\quad u \in \W. 
		\end{equation}
		Since $u_\alpha$ is a solution of \eqref{eq:Pconcon0}, it is a critical point of $E_{\mu_\alpha}$, and hence
		\begin{equation}\label{eq:sign:E:1}
			\langle E_{\mu_\alpha}'(u_\alpha),u_\alpha\rangle
			=
			\frac{d}{dt} E_{\mu_\alpha}(tu_\alpha)|_{t=1} = 
			\|\nabla u_\alpha\|_p^p - \mu_\alpha \|u_\alpha\|_q^{q} - \text{sgn}(1-\alpha)\|u_\alpha\|_r^{r} = 0.
		\end{equation}
		We also calculate the second directional derivative
		\begin{align}
			\langle E_{\mu_\alpha}''(u_\alpha),(u_\alpha,u_\alpha)\rangle
			&=
			\frac{d^2}{dt^2} E_{\mu_\alpha}(tu_\alpha)|_{t=1} 
			\\
			\label{eq:sign:E:2}
			&= 
			(p-1) \|\nabla u_\alpha\|_p^p - \mu_\alpha (q-1) \|u_\alpha\|_q^{q} - \text{sgn}(1-\alpha) (r-1) \|u_\alpha\|_r^{r}.~~~~
		\end{align}
		
		\medskip
		The so-called fiber map $t \mapsto E_{\mu_\alpha}(tu)$ has the following behavior in $(0,+\infty)$ for any nonzero function $u \in \W$. 
		Assume either of the following cases:
		\begin{enumerate}[label={\rm(\roman*)}] 
			\item\label{case:compl1} $p<q<r$ and $\alpha>1$;
			\item\label{case:compl2} $q<p<r$ and $\alpha \in (0,1)$;
			\item\label{case:compl3} $q<r<p$ and $\alpha<0$.
		\end{enumerate}
		Then $t \mapsto E_{\mu_\alpha}(tu)$ in $(0,+\infty)$ has either $2$, $1$, or $0$ critical points, depending on the admissible value of $\mu_\alpha$, see Figure~\ref{fig4}. 
		In all other cases of parameters, the fiber map has either $1$ or $0$ critical points for any admissible value of $\mu_\alpha$. 
		When $\mu_\alpha$ has such a value that for any $u \in \W \setminus\{0\}$ there are 2 critical points of the fiber map, the Nehari manifold corresponding to the problem \eqref{eq:Pconconx} or \eqref{eq:Pconconx2} can usually be decomposed into two submanifolds, which results in the existence of at least two nonnegative nonzero solutions of the problem, see, e.g., \cite[Section~5]{IlG} and Figure~\ref{fig3}.

		\begin{figure}[!ht]
			\centering
			\begin{subfigure}[b]{0.45\linewidth}
				\includegraphics[width=\linewidth]{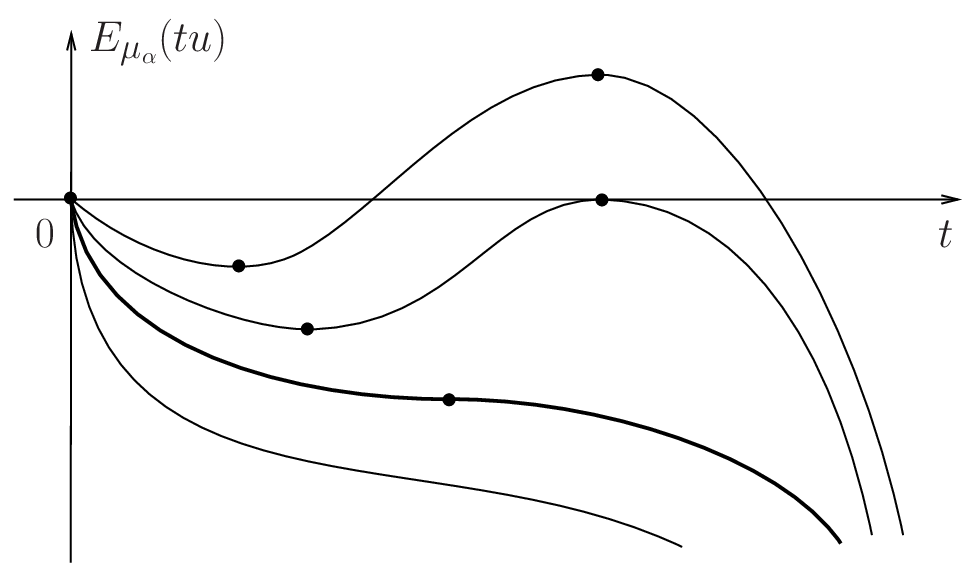}
				\caption{~The case \ref{case:compl2}}
			\end{subfigure}
			\hfill
			\begin{subfigure}[b]{0.45\linewidth}
				\includegraphics[width=\linewidth]{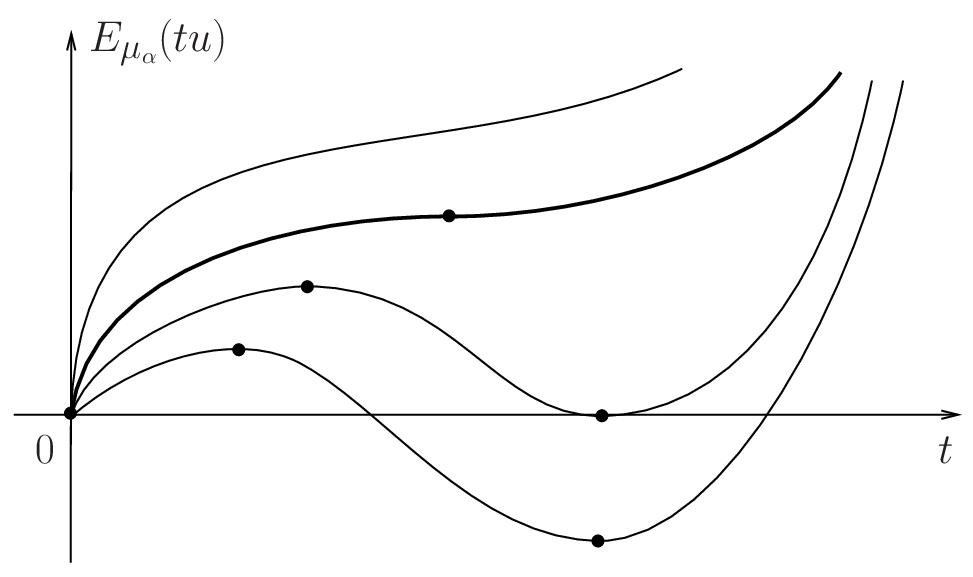}
				\caption{~The cases \ref{case:compl1} and \ref{case:compl3}}
			\end{subfigure}
			\caption{Schematically depicted behavior of $t \mapsto E_{\mu_\alpha}(tu)$ for some $u \in \W \setminus \{0\}$ and various values of $\mu_\alpha$. A curve with the critical point of inflection type is distinguished.}
			\label{fig4}
		\end{figure}

		\medskip
		In the following two lemmas, we show that the value of $\alpha$ characterizes the signs of 
		$E_{\mu_\alpha}(u_\alpha)$ and $\langle E_{\mu_\alpha}''(u_\alpha),(u_\alpha,u_\alpha)\rangle$. 
		\begin{lemma}\label{lem:convconv:sign:E}
			Let $r \neq p$ and $\alpha \neq 1$. 
			Let $u_\alpha$ be a critical point of $R_\alpha$ satisfying \eqref{eq:sign:norm:1}, 
			so that $u_\alpha$ is a nonzero solution of \eqref{eq:Pconcon0}.
			Then 
			\begin{equation}\label{eq:lem:convconv:sign:E}
				E_{\mu_\alpha}(u_\alpha) 
				= 
				\frac{r-q}{pr} \left(\frac{r-p}{r-q} - \frac{\alpha p}{q} \right) \|\nabla u_\alpha\|_p^p 
				=
				\frac{r-q}{pr}\, \frac{p}{q} \, \left(\alpha^* -\alpha\right) \|\nabla u_\alpha\|_p^p,
			\end{equation}
			where $\alpha^*$ is defined in \eqref{eq:mu*}. 
			Consequently, the following assertions hold:
			\begin{enumerate}[label={\rm(\roman*)}] 
				\item
				If $\alpha < \alpha^*$, then $E_{\mu_\alpha}(u_\alpha) > 0$.
				
				\item
				If $\alpha = \alpha^*$, then $E_{\mu_\alpha}(u_\alpha) = 0$. 
				
				\item
				If $\alpha > \alpha^*$, then $E_{\mu_\alpha}(u_\alpha) < 0$. 
			\end{enumerate}
		\end{lemma}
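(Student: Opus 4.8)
The plan is to evaluate $E_{\mu_\alpha}(u_\alpha)$ directly by eliminating the two lower-order norms in favour of the Dirichlet term $\|\nabla u_\alpha\|_p^p$. Starting from the normalization \eqref{eq:sign:norm:1}, which reads $\|u_\alpha\|_r^r = |1-\alpha|\,\|\nabla u_\alpha\|_p^p$, I would first note that $\mathrm{sgn}(1-\alpha)\,|1-\alpha| = 1-\alpha$, so that the last term of the energy satisfies $\mathrm{sgn}(1-\alpha)\,\|u_\alpha\|_r^r = (1-\alpha)\,\|\nabla u_\alpha\|_p^p$. Next, the definition \eqref{eq:sign:mu:1} of $\mu_\alpha$ gives immediately $\mu_\alpha\,\|u_\alpha\|_q^q = \alpha\,\|\nabla u_\alpha\|_p^p$.

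Substituting both identities into the energy functional \eqref{eq:sign:E:0}, every term becomes a multiple of $\|\nabla u_\alpha\|_p^p$, yielding
\[
E_{\mu_\alpha}(u_\alpha) = \left(\frac{1}{p} - \frac{\alpha}{q} - \frac{1-\alpha}{r}\right)\|\nabla u_\alpha\|_p^p.
\]
The rest is elementary algebra. I would regroup the coefficient as $\left(\frac{1}{p} - \frac{1}{r}\right) - \alpha\left(\frac{1}{q} - \frac{1}{r}\right) = \frac{r-p}{pr} - \alpha\,\frac{r-q}{qr}$ and then factor out $\frac{r-q}{pr}$ to recover the first claimed form $\frac{r-q}{pr}\left(\frac{r-p}{r-q} - \frac{\alpha p}{q}\right)$. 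Recalling from \eqref{eq:mu*} that $\frac{r-p}{r-q} = \frac{p}{q}\,\alpha^*$, pulling a further factor $\frac{p}{q}$ out of the parenthesis produces the second, more transparent form $\frac{r-q}{pr}\,\frac{p}{q}\,(\alpha^*-\alpha)\,\|\nabla u_\alpha\|_p^p$.

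For the sign trichotomy (i)--(iii) I would simply observe that the prefactor $\frac{r-q}{pr}\,\frac{p}{q} = \frac{r-q}{qr}$ is strictly positive, since $q<r$ and $p,q,r>1$, and that $\|\nabla u_\alpha\|_p^p>0$ because $u_\alpha\neq 0$. Hence the sign of $E_{\mu_\alpha}(u_\alpha)$ coincides with that of $\alpha^*-\alpha$, which settles the three cases at once. The computation presents no genuine obstacle; the only point demanding care is the correct bookkeeping of $\mathrm{sgn}(1-\alpha)$ in the term $-\frac{\mathrm{sgn}(1-\alpha)}{r}\|u_\alpha\|_r^r$, as it is precisely this factor that turns the $|1-\alpha|$ supplied by the normalization into the signed $1-\alpha$ needed for the clean cancellation.
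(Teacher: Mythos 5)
Your proposal is correct and follows essentially the same route as the paper: the authors likewise treat the normalization \eqref{eq:sign:norm:1}, the definition \eqref{eq:sign:mu:1} of $\mu_\alpha$, and the energy expression \eqref{eq:sign:E:0} as a linear system in $\|\nabla u_\alpha\|_p^p$, $\|u_\alpha\|_q^q$, $\|u_\alpha\|_r^r$, which amounts exactly to your substitution of $\mathrm{sgn}(1-\alpha)\|u_\alpha\|_r^r=(1-\alpha)\|\nabla u_\alpha\|_p^p$ and $\mu_\alpha\|u_\alpha\|_q^q=\alpha\|\nabla u_\alpha\|_p^p$ into the energy. Your algebra and the sign discussion (including the careful handling of $\mathrm{sgn}(1-\alpha)\,|1-\alpha|=1-\alpha$) check out.
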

		\begin{proof}
			Let us rewrite \eqref{eq:sign:norm:1} and \eqref{eq:sign:mu:1} as 
			\begin{align}\label{eq:sign:norm:2}
				&|1-\alpha| \, \|\nabla u_\alpha\|_p^p - \|u_\alpha\|_r^{r} = 0,\\
				\label{eq:sign:mu:2}
				&\alpha \|\nabla u_\alpha\|_p^p - \mu_\alpha \|u_\alpha\|_q^{q} = 0,
			\end{align}
			respectively. 
			Considering \eqref{eq:sign:E:0} (with $u=u_\alpha$), \eqref{eq:sign:norm:2}, \eqref{eq:sign:mu:2} as a system of three linear equations on $\|\nabla u_\alpha\|_p^p$, $\|u_\alpha\|_q^{q}$, $\|u_\alpha\|_r^{r}$ and solving it, we arrive at \eqref{eq:lem:convconv:sign:E}, 
			which gives the sign of $E_{\mu_\alpha}(u_\alpha)$ depending on the relation between $\alpha$ and $\alpha^*$.
		\end{proof}
		
		\begin{remark}\label{rem:concon:zeroenergy}
			We conclude from Lemmas~\ref{lem:reduction} and \ref{lem:convconv:sign:E} that $\mathcal{C}_\alpha$-normalized critical points of $R_{\alpha^*}$ are in one-to-one correspondence with zero-energy solutions of \eqref{eq:Pconcon0}. 
			Therefore, the results of Section~\ref{sec:variational-eigenvalues} provide us with infinitely many such solutions. 
			As we mentioned in Section~\ref{sec:remarks}, 
			similar existence result in the convex-concave case $q<r<p$ was proved in \cite{IlM,QSS}.  
		\end{remark}
		
		\begin{lemma}\label{lem:convconv:sign:E''}
			Let $r \neq p$ and $\alpha \neq 1$. 
			Let $u_\alpha$ be a critical point of $R_\alpha$ satisfying \eqref{eq:sign:norm:1}, 
			so that $u_\alpha$ is a nonzero solution of \eqref{eq:Pconcon0}.
			Then 
			\begin{equation}\label{eq:lem:convconv:sign:E''}
				\frac{d^2}{dt^2} E_{\mu_\alpha}(tu_\alpha)|_{t=1} 
				= 
				(r-q) \left(\alpha - \frac{r-p}{r-q}\right) \|\nabla u_\alpha\|_p^p
				=
				(r-q) \left(\alpha - \frac{\alpha^*p}{q}\right) \|\nabla u_\alpha\|_p^p,
			\end{equation}
			where $\alpha^*$ is defined in \eqref{eq:mu*}.  
			Consequently, the following assertions hold:
			\begin{enumerate}[label={\rm(\roman*)}] 
				\item
				If $\alpha < \frac{r-p}{r-q}$, then $\frac{d^2}{dt^2} E_{\mu_\alpha}(tu_\alpha)|_{t=1} < 0$.
				
				\item
				If $\alpha = \frac{r-p}{r-q}$, then $\frac{d^2}{dt^2} E_{\mu_\alpha}(tu_\alpha)|_{t=1} = 0$.
				
				\item
				If $\alpha > \frac{r-p}{r-q}$, then $\frac{d^2}{dt^2} E_{\mu_\alpha}(tu_\alpha)|_{t=1} > 0$.  
			\end{enumerate}
		\end{lemma}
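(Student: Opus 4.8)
The plan is to follow verbatim the strategy used in the proof of Lemma~\ref{lem:convconv:sign:E}: I will treat the second-derivative expression \eqref{eq:sign:E:2} together with the two scalar identities \eqref{eq:sign:norm:2} and \eqref{eq:sign:mu:2} as a linear system in the three quantities $\|\nabla u_\alpha\|_p^p$, $\|u_\alpha\|_q^q$, $\|u_\alpha\|_r^r$, and eliminate $\|u_\alpha\|_q^q$ and $\|u_\alpha\|_r^r$ in favor of $\|\nabla u_\alpha\|_p^p$. Since $u_\alpha$ is $\mathcal{C}_\alpha$-normalized, \eqref{eq:sign:norm:2} gives $\|u_\alpha\|_r^r = |1-\alpha|\,\|\nabla u_\alpha\|_p^p$, while the defining relation \eqref{eq:sign:mu:2} for the translation level gives $\mu_\alpha\|u_\alpha\|_q^q = \alpha\,\|\nabla u_\alpha\|_p^p$.

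The one point requiring a little care is the sign factor $\text{sgn}(1-\alpha)$ multiplying the $r$-term in \eqref{eq:sign:E:2}. First I would record the identity $\text{sgn}(1-\alpha)\,\|u_\alpha\|_r^r = \text{sgn}(1-\alpha)\,|1-\alpha|\,\|\nabla u_\alpha\|_p^p = (1-\alpha)\,\|\nabla u_\alpha\|_p^p$, which is precisely what makes the sign function disappear and leaves the final coefficient a genuine polynomial in $\alpha$. Substituting this, together with $\mu_\alpha(q-1)\|u_\alpha\|_q^q = (q-1)\alpha\,\|\nabla u_\alpha\|_p^p$, into \eqref{eq:sign:E:2} would yield $\frac{d^2}{dt^2}E_{\mu_\alpha}(tu_\alpha)|_{t=1} = \big[(p-1) - (q-1)\alpha - (r-1)(1-\alpha)\big]\|\nabla u_\alpha\|_p^p$.

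It then remains to collect the $\alpha$-terms in the bracket: the $\alpha$-free part is $(p-1)-(r-1) = p-r$ and the coefficient of $\alpha$ is $-(q-1)+(r-1) = r-q$, so the bracket equals $(r-q)\alpha - (r-p) = (r-q)\big(\alpha - \tfrac{r-p}{r-q}\big)$, which is the first claimed equality. The second equality $\tfrac{r-p}{r-q} = \tfrac{\alpha^* p}{q}$ is immediate from the definition \eqref{eq:mu*} of $\alpha^*$. Finally, since the standing assumption $q<r$ forces $r-q>0$, the sign of the whole expression coincides with the sign of $\alpha - \tfrac{r-p}{r-q}$, which delivers the trichotomy (i)--(iii).

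There is essentially no analytic obstacle here: the statement is a purely algebraic consequence of the $\mathcal{C}_\alpha$-normalization and of $u_\alpha$ being a critical point, and it parallels the computation of Lemma~\ref{lem:convconv:sign:E} almost line for line. The only place where an error could plausibly creep in is the bookkeeping of $\text{sgn}(1-\alpha)$, but this is dispatched by the single identity above, exactly as in the treatment of the sign of the energy in the preceding lemma.
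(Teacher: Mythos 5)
Your proposal is correct and follows essentially the same route as the paper, which likewise treats \eqref{eq:sign:E:2}, \eqref{eq:sign:norm:2}, \eqref{eq:sign:mu:2} as a linear system and solves for the second derivative; your explicit handling of $\mathrm{sgn}(1-\alpha)\,|1-\alpha| = 1-\alpha$ is exactly the step that makes the computation go through, and the algebra checks out.
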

		\begin{proof}
			The proof is similar to that of Lemma~\ref{lem:convconv:sign:E}. 
			Namely, solving the system of linear equations \eqref{eq:sign:E:2}, \eqref{eq:sign:norm:2}, \eqref{eq:sign:mu:2} with respect to $\|\nabla u_\alpha\|_p^p$, $\|u_\alpha\|_q^{q}$, $\|u_\alpha\|_r^{r}$, we arrive at \eqref{eq:lem:convconv:sign:E''}. 
			Thus, the sign of the second derivative follows. 
		\end{proof}
		
		\begin{remark}\label{rem:concon:inflection}
			We deduce from Lemmas~\ref{lem:reduction} and \ref{lem:convconv:sign:E''} that $\mathcal{C}_\alpha$-normalized critical points of 
			$$
			R_{\frac{r-p}{r-q}}(u) 
			= 
			\frac{\|\nabla u\|_p^p}{\|u\|_q^{p\frac{r-p}{r-q}} \|u\|_r^{p\frac{p-q}{r-q}}} 
			\equiv
			\frac{\intO |\nabla u|^p \,dx}{\left(\intO |u|^q \,dx\right)^{\frac{p (r-p)}{q (r-q)}} \left(\intO |u|^r \,dx\right)^{\frac{p (p-q)}{r (r-q)}}}
			$$
			are in bijection with solutions of the problem \eqref{eq:Pconcon0} satisfying $\langle  E_{\mu_\alpha}''(u),(u,u)\rangle = 0$. That is, such solutions correspond to the inflection point of the fiber map $t \mapsto E_{\mu_\alpha}(tu)$, see the bold lines on Figure~\ref{fig4}. 
			We call such solutions ``degenerate''. 
			Thus, according to the results of Section~\ref{sec:variational-eigenvalues}, there are infinitely many degenerate solutions of \eqref{eq:Pconcon0}. 
			This observation is particularly interesting, since it might be difficult to find degenerate solutions directly, as the constraint $\langle  E_{\mu_\alpha}''(u_\alpha),(u_\alpha,u_\alpha)\rangle = 0$ produces an additional Lagrange multiplier. 
			Actually, we are not aware of any other result in the literature explicitly addressing the existence of such degenerate solutions of \eqref{eq:Pconconx} and \eqref{eq:Pconconx2}. 
			We also refer to Section~\ref{sec:final-remarks}~\ref{final:rem:1} for further discussion.
		\end{remark}
		
		\begin{remark}
			Using the expression \eqref{eq:lem:convconv:sign:E} and the results of Section~\ref{sec:behavior:mu:solutions}, one can infer the behavior of the mappings $\alpha \mapsto E_{\underline{\mu}\smallspacing_\alpha^k}(\underline{u}\smallspacing_\alpha^k)$ and $\alpha \mapsto E_{\overline{\mu}\smallspacing_\alpha^k}(\overline{u}\smallspacing_\alpha^k)$  for $k \in \mathbb{N}$ as $\alpha \to 0, 1, +\infty$. 
			The same is true for the corresponding mappings of the second directional derivatives. 
		\end{remark}

		\subsection{Case \texorpdfstring{$r=p$}{r=p}}\label{sec:sub:r=p}
		
		In the previous subsections, we mostly assumed $r \neq p$. 
		Let us discuss the complementary case $r=p$ and collect counterparts of the properties from the previous subsections. 
		
		Recall from Section~\ref{sec:intro} that in the case $r = p$ and $\alpha \neq 0$ we should convert the normalization-parameterization order in \eqref{eq:Px}.
		Introduce the constraint set
		\begin{equation}\label{eq:constraint-C-omega:1}
			\mathcal{C}_\alpha'
			=
			\left\{
			u \in \W \setminus \{0\}\,:\,
			|\alpha|\, \frac{\|\nabla u\|_p^p}{\|u\|_q^q}=1
			\right\} 
			\quad 
			\text{for}~ \alpha \neq 0.
		\end{equation}
		Taking any $\alpha \neq 0$ and any critical point $u \in \W \setminus \{0\}$ of $R_\alpha$, we normalize $u$ so that $u \in \mathcal{C}_\alpha'$.
		Denoting now
		\begin{equation}\label{eq:muaxx}
			\nu_\alpha
			= 
			(1-\alpha) \frac{\|\nabla u\|_p^p}{\|u\|_p^{p}}
			=
			(1-\alpha) R_0(u),
		\end{equation}
		we see that $u$ becomes a solution of the problem \eqref{eq:Pconcon:r=p}:
		\begin{equation}\label{eq:fredh1x}
			\left\{
			\begin{aligned}
				-\Delta_p u &= \nu_\alpha |u|^{p-2}u + \text{sgn}(\alpha)\,|u|^{q-2}u  && \text{in } \Omega, \\
				u &= 0 && \text{on } \partial \Omega.
			\end{aligned}
			\right.
		\end{equation}
		
		Since $R_0$ is $0$-homogeneous, the value of $\nu_\alpha$ does not depend on the normalization of $u$. 
		Thus, for convenience, we introduce the following detailed notation analogous to \eqref{eq:mu:homogen}:
		$$
		\nu_\alpha^\lambda(u)
		=
		(1-\alpha) R_0(u)
		\quad \text{for}~ u \in K_\alpha(\lambda),
		$$
		where $K_\alpha(\lambda)$ is the critical set of $R_\alpha$ on a level $\lambda$ defined in \eqref{eq:Kalpha}.

		In much the same way as in \eqref{eq:mu-under-over}, we also define
		\begin{align}
			&\underline{\nu}_\alpha^\lambda 
			= 
			\inf \{\nu_\alpha^\lambda(u) \,:\, u\in K_\alpha(\lambda)\,\}
			\equiv
			\inf \{(1-\alpha) R_0(u)\,:\, u\in K_\alpha(\lambda)\,\},\\
			&\overline{\nu}_\alpha^\lambda 
			=
			\sup \{\nu_\alpha^\lambda(u) \,:\, u\in K_\alpha(\lambda)\,\}
			\equiv
			\sup \{(1-\alpha) R_0(u)\,:\, u\in K_\alpha(\lambda)\,\}.
		\end{align}
		The following result is an analog of Lemma~\ref{lem:attai_mu} and can be proved in the same way.
		\begin{lemma}\label{lem:attai_mu:2}
			Let $\alpha > \alpha_0$ and $\lambda$ be a critical level of $R_\alpha$. 
			Then 
			$\underline{\nu}_\alpha^\lambda$ and $\overline{\nu}_\alpha^\lambda$ are attained, and  $\underline{\nu}_1^\lambda = \overline{\nu}_1^\lambda=0$.
		\end{lemma}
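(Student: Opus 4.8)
The plan is to mimic the proof of Lemma~\ref{lem:attai_mu} verbatim, with the $0$-homogeneous functional $R_0$ now playing the role that $R_{\alpha^*}$ played there. First I would record that, since $R_0$ is $0$-homogeneous, so is the functional $u \mapsto \nu_\alpha^\lambda(u) = (1-\alpha) R_0(u)$. Consequently, its range over the full critical set $K_\alpha(\lambda)$ coincides with its range over the normalized subset $K_\alpha^{\mathcal{M}}(\lambda) = K_\alpha(\lambda) \cap \mathcal{M}_\alpha$: every $u \in K_\alpha(\lambda)$ admits a unique positive multiple lying in $\mathcal{M}_\alpha$ (because $I_\alpha$ is $p$-homogeneous), and rescaling does not change the value of $\nu_\alpha^\lambda$. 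Hence it suffices to show that $\nu_\alpha^\lambda$ attains its minimum and maximum over $K_\alpha^{\mathcal{M}}(\lambda)$.

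The key ingredient is the compactness of $K_\alpha^{\mathcal{M}}(\lambda)$ in $\W$, which follows from Lemma~\ref{lem:PS}. Indeed, any sequence $\{u_n\} \subset K_\alpha^{\mathcal{M}}(\lambda)$ satisfies $u_n \in \mathcal{M}_\alpha$ and $\|\nabla u_n\|_p^p = \lambda$, so it is bounded and is a Palais--Smale sequence in the sense of \eqref{eq:lem:PS:assumptions} (with $\alpha_n \equiv \alpha$ and $\lambda_n \equiv \lambda$). Lemma~\ref{lem:PS} then yields a subsequence converging strongly in $\W$ to some $u$, and the limit again belongs to $K_\alpha^{\mathcal{M}}(\lambda)$ by continuity of $R_\alpha$ and $R_\alpha'$; thus $K_\alpha^{\mathcal{M}}(\lambda)$ is compact.

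Since $R_0$ is continuous on $\W \setminus \{0\}$ and $K_\alpha^{\mathcal{M}}(\lambda)$ is a nonempty compact subset of $\W \setminus \{0\}$, the continuous functional $\nu_\alpha^\lambda$ attains both its infimum $\underline{\nu}_\alpha^\lambda$ and its supremum $\overline{\nu}_\alpha^\lambda$ there, which gives the attainment claim. Finally, the equalities $\underline{\nu}_1^\lambda = \overline{\nu}_1^\lambda = 0$ are immediate from the definition of $\nu_\alpha^\lambda$: at $\alpha = 1$ the prefactor $1-\alpha$ vanishes, so $\nu_1^\lambda(u) \equiv 0$ for every $u \in K_1(\lambda)$. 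I do not anticipate any genuine obstacle here, as the argument is entirely parallel to Lemma~\ref{lem:attai_mu}; the only point deserving care is the verification that elements of $K_\alpha^{\mathcal{M}}(\lambda)$ constitute admissible Palais--Smale sequences so that Lemma~\ref{lem:PS} applies.
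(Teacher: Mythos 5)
Your proposal is correct and follows exactly the route the paper intends: the paper states that Lemma~\ref{lem:attai_mu:2} ``can be proved in the same way'' as Lemma~\ref{lem:attai_mu}, whose proof likewise rests on the compactness of $K_\alpha^{\mathcal{M}}(\lambda)$ via Lemma~\ref{lem:PS}, the $0$-homogeneity and continuity of the relevant quotient, and the vanishing of the prefactor for the degenerate parameter value. No issues.
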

		
		Mimicking the notation \eqref{eq:mu-under-over:k}, for $\alpha > \alpha_0$ and $k \in \mathbb{N}$ we define
		\begin{equation}\label{eq:muaxx:k}
			\underline{\nu}\smallspacing_\alpha^{k} 
			=
			\underline{\nu}\smallspacing_\alpha^{\lambda_k(\alpha)}
			\quad {\rm and}\quad 
			\overline{\nu}_\alpha^k 
			=
			\overline{\nu}_\alpha^{\lambda_k(\alpha)}.
		\end{equation}
		Observe that Lemmas~\ref{lem:bound_Phi} and \ref{lem:bound_Phi:negative} do not require the assumption $r \neq p$. 
		Therefore, we obtain the following counterpart of Lemma~\ref{lem:lower-upper-semicontinuity}.
		\begin{lemma}\label{lem:lower-upper-semicontinuity:2}
			Let $k\in\mathbb{N}$. 
			Then the mapping
			$\alpha \mapsto \underline{\nu}\smallspacing_\alpha^{k}$ (resp.\  $\alpha \mapsto \overline{\nu}_\alpha^k$) 
			is lower (resp.\ upper) semicontinuous in $(\alpha_0,+\infty)$. 
		\end{lemma}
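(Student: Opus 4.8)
The plan is to mirror the proof of Lemma~\ref{lem:lower-upper-semicontinuity}, replacing the $0$-homogeneous functional $\mu_\alpha^\lambda(\cdot)$ by $\nu_\alpha^\lambda(\cdot) = (1-\alpha) R_0(\cdot)$ and exploiting that the compactness ingredients used there do not rely on the assumption $r \neq p$. I would establish only the upper semicontinuity of $\alpha \mapsto \overline{\nu}_\alpha^k$, since the lower semicontinuity of $\alpha \mapsto \underline{\nu}\smallspacing_\alpha^{k}$ is entirely analogous (replacing $\sup$ by $\inf$ and $\limsup$ by $\liminf$).

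First I would fix a sequence $\{\alpha_n\} \subset (\alpha_0,+\infty)$ converging to some $\alpha > \alpha_0$ and, passing to a subsequence, assume that $\overline{\nu}_{\alpha_n}^k$ converges to $\limsup_{n} \overline{\nu}_{\alpha_n}^k$. By Lemma~\ref{lem:attai_mu:2}, each $\overline{\nu}_{\alpha_n}^k$ is attained at some $u_n \in K_{\alpha_n}(\lambda_k(\alpha_n))$, and by the $0$-homogeneity of $R_0$ I may normalize $u_n$ so that $u_n \in K_{\alpha_n}^{\mathcal{M}}(\lambda_k(\alpha_n))$, whence $\overline{\nu}_{\alpha_n}^k = (1-\alpha_n) R_0(u_n)$.

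Next I would extract a strongly convergent subsequence. Since Lemma~\ref{lem:bound_Phi} holds without the assumption $r \neq p$, the sequence $\{u_n\}$ is bounded in $\W$ (with all relevant norms bounded away from zero); applying Lemma~\ref{lem:PS} with $\lambda_n = \lambda_k(\alpha_n)$ yields a subsequence converging strongly in $\W$ to some $u$, while Lemma~\ref{lem:wsc} gives $u \not\equiv 0$ and $u \in \mathcal{M}_\alpha$. The continuity of $\lambda_k(\cdot)$ from Lemma~\ref{lem:lambda-k:contin} then guarantees that $u \in K_\alpha^{\mathcal{M}}(\lambda_k(\alpha))$, so $u$ is admissible for the supremum defining $\overline{\nu}_\alpha^k$. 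Strong convergence together with $u \not\equiv 0$ makes $R_0$ continuous along the sequence, so $(1-\alpha_n) R_0(u_n) \to (1-\alpha) R_0(u)$, and therefore
\[
\limsup_{n \to +\infty} \overline{\nu}_{\alpha_n}^k = (1-\alpha) R_0(u) \leq \overline{\nu}_\alpha^k,
\]
which is the claimed upper semicontinuity.

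I do not expect a genuine obstacle here, since the argument is a verbatim transcription of Lemma~\ref{lem:lower-upper-semicontinuity} once one checks that the cited compactness results (Lemmas~\ref{lem:wsc}, \ref{lem:PS}, \ref{lem:bound_Phi}, and \ref{lem:lambda-k:contin}) are all stated for general $\alpha > \alpha_0$ and never invoke $r \neq p$. The only point requiring mild care is that the factor $1-\alpha$ changes sign at $\alpha = 1$, so that $\overline{\nu}_\alpha^k$ corresponds to maximizing $R_0$ for $\alpha<1$ but to minimizing it for $\alpha>1$; this is harmless, because the limit is taken in the product $(1-\alpha_n) R_0(u_n)$ directly rather than in $R_0(u_n)$ alone, and the continuity of $\alpha \mapsto 1-\alpha$ absorbs the sign change.
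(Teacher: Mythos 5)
Your proposal is correct and is essentially the paper's own argument: the paper proves this lemma simply by observing that Lemmas~\ref{lem:bound_Phi} and \ref{lem:bound_Phi:negative} do not require $r\neq p$ and then transplanting the proof of Lemma~\ref{lem:lower-upper-semicontinuity} verbatim, which is exactly the chain of Lemmas~\ref{lem:attai_mu:2}, \ref{lem:bound_Phi}, \ref{lem:PS}, \ref{lem:wsc}, and \ref{lem:lambda-k:contin} that you invoke. Your explicit passage to a subsequence realizing the $\limsup$ and the remark about the sign change of $1-\alpha$ are harmless refinements of the same route.
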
 
		
		The following statement on the behavior of $\alpha \mapsto \underline{\nu}\smallspacing_\alpha^{k}$ and $\alpha \mapsto \overline{\nu}_\alpha^k$ unifies the counterparts of Propositions~\ref{prop:omega-to-zero}, \ref{prop:omega-to-one}, \ref{prop:omega-to-infty}, see Figure~\ref{fig6a}.
		It can be established using Lemma~\ref{lem:attai_mu:2} and Lemmas~\ref{lem:bound_Phi}, \ref{lem:bound_Phi:negative}. 
		\begin{proposition}\label{prop:omega:2}
			Let $k\in\mathbb{N}$. 
			Then the following assertions hold:
			\begin{enumerate}[label={\rm(\roman*)}] 
				\item There exist $C_1,C_2>0$ such that $C_1 \leq \underline{\nu}_\alpha^k \leq \overline{\nu}\smallspacing_\alpha^k \leq C_2$ as $\alpha \to 0$.
				\item $\underline{\nu}\smallspacing_\alpha^k \le \overline{\nu}\smallspacing_\alpha^k \to 0$ as $\alpha \to 1$.
				\item $\underline{\nu}\smallspacing_\alpha^k \leq \overline{\nu}\smallspacing_\alpha^k \to -\infty$ as $\alpha \to +\infty$.
			\end{enumerate} 
		\end{proposition}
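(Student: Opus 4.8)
The plan is to reduce everything to the single $0$-homogeneous quotient $R_0$. Since $r=p$ forces $\alpha^* = \frac{q(r-p)}{p(r-q)} = 0$, we have $R_{\alpha^*} = R_0$, and the defining quantity becomes $\nu_\alpha^\lambda(u) = (1-\alpha)R_0(u)$ with $R_0(u) = \|\nabla u\|_p^p/\|u\|_p^p$. Thus it suffices to control $R_0$ on the critical set $K_\alpha(\lambda_k(\alpha))$ and then multiply by the scalar $(1-\alpha)$, keeping track of the sign change of this factor at $\alpha=1$. By Lemma~\ref{lem:attai_mu:2} the infimum and supremum defining $\underline{\nu}_\alpha^k$ and $\overline{\nu}_\alpha^k$ are attained, and in all cases $\underline{\nu}_\alpha^k \le \overline{\nu}_\alpha^k$.

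For (i) and (ii), I would fix a bounded set $L \subset (\alpha_0,+\infty)$ with $\inf L > \alpha_0$ containing a neighbourhood of the relevant endpoint. Since $\alpha^*=0$, estimate \eqref{eq:R*lower} of Lemma~\ref{lem:bound_Phi} supplies constants $0 < C_1^* \le R_0(u) \le C_2^*$, uniformly over $\alpha \in L$ and $u \in K_\alpha(\lambda_k(\alpha))$. For $\alpha \to 0$ one has $\alpha<1$, so $1-\alpha>0$ stays in a compact subinterval of $(0,+\infty)$; multiplying the two-sided bound on $R_0$ by $1-\alpha$ gives $0 < C_1 \le \underline{\nu}_\alpha^k \le \overline{\nu}_\alpha^k \le C_2$, which is (i). For $\alpha \to 1$ the factor $|1-\alpha| \to 0$ while $R_0$ stays in $[C_1^*,C_2^*]$, so $|\nu_\alpha^\lambda(u)| = |1-\alpha|\,R_0(u) \le |1-\alpha|\,C_2^* \to 0$ uniformly in $u$, forcing $\underline{\nu}_\alpha^k, \overline{\nu}_\alpha^k \to 0$, which is (ii).

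For (iii), Lemma~\ref{lem:bound_Phi} is no longer available because $\alpha \to +\infty$ is unbounded; instead I would invoke the global lower bound of Lemma~\ref{lem:bound_Phi:negative}, which for $\alpha^*=0$ reads $R_0(u) \ge \lambda_1(0) > 0$ for every $u \in \W \setminus \{0\}$. For $\alpha>1$ the factor $1-\alpha$ is negative, so the supremum defining $\overline{\nu}_\alpha^k$ is attained at the \emph{minimal} value of $R_0$:
$$
\overline{\nu}_\alpha^k = (1-\alpha)\,\min\bigl\{R_0(u) : u \in K_\alpha(\lambda_k(\alpha))\bigr\} \le (1-\alpha)\,\lambda_1(0).
$$
As $\alpha \to +\infty$ the right-hand side tends to $-\infty$, and since $\underline{\nu}_\alpha^k \le \overline{\nu}_\alpha^k$, both quantities diverge to $-\infty$, giving (iii).

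The only genuinely delicate point is the bookkeeping of the sign of $1-\alpha$: crossing $\alpha=1$ interchanges the roles of the infimum and supremum of $R_0$ in the definitions of $\underline{\nu}_\alpha^k$ and $\overline{\nu}_\alpha^k$. This matters most in (iii), where only the one-sided bound $R_0 \ge \lambda_1(0)$ is at hand and the correct direction of the estimate must be applied to the minimiser of $R_0$. Everything else is a direct combination of the quoted lemmas.
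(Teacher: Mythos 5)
Your proof is correct and follows exactly the route the paper indicates: Lemma~\ref{lem:attai_mu:2} for attainment, the uniform two-sided bound \eqref{eq:R*lower} of Lemma~\ref{lem:bound_Phi} (with $\alpha^*=0$, so $R_{\alpha^*}=R_0$) for parts (i) and (ii), and the global bound $R_0 \ge \lambda_1(0)$ of Lemma~\ref{lem:bound_Phi:negative} combined with the sign of $1-\alpha$ for part (iii). The paper gives no further details beyond citing these three lemmas, so your write-up is essentially the intended argument, correctly fleshed out.
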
 
		
		Now we discuss the behavior with respect to $\alpha \geq 0$ of the solutions of the problem \eqref{eq:fredh1x} with $\nu_\alpha =  \underline{\nu}\smallspacing_\alpha^k, \overline{\nu}_\alpha^k$ 
		obtained as properly normalized optimizers of 	$\underline{\nu}\smallspacing_\alpha^k$ and $\overline{\nu}_\alpha^k$.
		First, as in Section~\ref{sec:behavior:mu:solutions}, we denote by $\underline{v}\smallspacing_\alpha^{k}$ and $\overline{v}\smallspacing_\alpha^{k}$ any of the optimizers of  
		$\underline{\nu}\smallspacing_\alpha^k$ and $\overline{\nu}_\alpha^k$, respectively, which are normalized to $\mathcal{M}_\alpha$. 
		Denote 
		\begin{equation}\label{eq:concon:constr:s}
			s_\alpha(v) = 
			\left(
			|\alpha| \frac{\|\nabla v\|_p^{p}}{\|v\|_q^{q}}
			\right)^{-\frac{1}{p-q}}
			\quad \text{for}~ v \in \W \setminus \{0\},
		\end{equation}
		so that $s_\alpha(v)v \in \mathcal{C}_\alpha'$. 
		Setting
		$$
		\underline{u}\smallspacing_\alpha^{k}
		=
		s_\alpha(\underline{v}\smallspacing_\alpha^{k})\underline{v}\smallspacing_\alpha^{k} 
		\in 
		K_\alpha(\lambda_k(\alpha)) \cap \mathcal{C}_\alpha'
		\quad \text{and} \quad 
		\overline{u}\smallspacing_\alpha^{k}
		=
		s_\alpha(\overline{v}\smallspacing_\alpha^{k})\overline{v}\smallspacing_\alpha^{k}
		\in 
		K_\alpha(\lambda_k(\alpha)) \cap \mathcal{C}_\alpha',
		$$
		we see that $\underline{u}\smallspacing_\alpha^{k}$ and $\overline{u}\smallspacing_\alpha^{k}$
		are solutions of the problem \eqref{eq:fredh1x} with $\nu_\alpha = \underline{\nu}\smallspacing_\alpha^{k}$ and $\nu_\alpha = \overline{\nu}\smallspacing_\alpha^{k}$, respectively.
		
		The following statement on the behavior of $\underline{u}\smallspacing_\alpha^{k}$ and $\overline{u}\smallspacing_\alpha^{k}$ unifies the counterparts of Propositions~\ref{prop:omega-to-zero:u}, \ref{prop:omega-to-one:u}, \ref{prop:omega-to-infty:u}, see Figure~\ref{fig6b}.
		Recall that since $r=p$, we always have $q<p$.
		\begin{proposition}\label{prop:omega:u2}
			Let $k\in\mathbb{N}$. 
			Then the following assertions hold:
			\begin{enumerate}[label={\rm(\roman*)}] 
				\item\label{prop:omega:u2:1} $\|\underline{u}\smallspacing_\alpha^k\|_q$, $\|\overline{u}\smallspacing_\alpha^k\|_q \to +\infty$ 
				as $\alpha \to 0$.
				\item\label{prop:omega:u2:3} There exist $C_1,C_2>0$ such that $\|\underline{u}\smallspacing_\alpha^k\|_q$, $\|\overline{u}\smallspacing_\alpha^k\|_q \geq C_1$
				and
				$\|\nabla \underline{u}\smallspacing_\alpha^k\|_p$, $\|\nabla \overline{u}\smallspacing_\alpha^k\|_p \leq C_2$  as $\alpha \to 1$.
				\item\label{prop:omega:u2:4} 
				$\|\nabla \underline{u}\smallspacing_\alpha^k\|_p$, $\|\nabla \overline{u}\smallspacing_\alpha^k\|_p \to 0$  as $\alpha \to +\infty$.
			\end{enumerate} 
		\end{proposition}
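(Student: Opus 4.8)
The plan is to argue in a unified way for $u_\alpha$ and its $\mathcal{M}_\alpha$-normalized companion $v_\alpha$, where the pair stands for either $\underline{u}\smallspacing_\alpha^k, \underline{v}\smallspacing_\alpha^k$ or $\overline{u}\smallspacing_\alpha^k, \overline{v}\smallspacing_\alpha^k$, exactly as in the proof of Proposition~\ref{prop:omega-to-infty:u}. The two defining normalizations will drive everything. Since $v_\alpha \in K_\alpha^{\mathcal{M}}(\lambda_k(\alpha))$, I have $\|\nabla v_\alpha\|_p^p = \lambda_k(\alpha)$, and since $u_\alpha = s_\alpha(v_\alpha)\,v_\alpha$, the homogeneity of the norms gives $\|u_\alpha\|_q = s_\alpha(v_\alpha)\|v_\alpha\|_q$ and $\|\nabla u_\alpha\|_p = s_\alpha(v_\alpha)\,\lambda_k(\alpha)^{1/p}$, where by \eqref{eq:concon:constr:s}
\[
s_\alpha(v_\alpha) = \left(\frac{\|v_\alpha\|_q^q}{|\alpha|\,\lambda_k(\alpha)}\right)^{\frac{1}{p-q}}.
\]
Thus the behavior of $u_\alpha$ is entirely governed by that of the scalar $s_\alpha(v_\alpha)$ together with uniform bounds on $\|v_\alpha\|_q$ and $\lambda_k(\alpha)$.

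For assertions \ref{prop:omega:u2:1} and \ref{prop:omega:u2:3} I would invoke Lemma~\ref{lem:bound_Phi}, which, as noted, does not require $r\neq p$. Applying it on a bounded set $L$ with $\inf L > \alpha_0$ (for instance $L=(0,1/2]$ near $\alpha=0$, and $L=[1/2,3/2]$ near $\alpha=1$) yields positive constants bounding $\|v_\alpha\|_q$ and $\lambda_k(\alpha)$ above and below, uniformly in $\alpha\in L$. For \ref{prop:omega:u2:1}, as $\alpha\to 0^+$ the base $\|v_\alpha\|_q^q/(|\alpha|\lambda_k(\alpha))$ diverges to $+\infty$ while the exponent $\tfrac{1}{p-q}$ is positive (recall $q<p$); hence $s_\alpha(v_\alpha)\to+\infty$ and therefore $\|u_\alpha\|_q = s_\alpha(v_\alpha)\|v_\alpha\|_q \to +\infty$. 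For \ref{prop:omega:u2:3}, as $\alpha\to 1$ this same base stays in a compact subset of $(0,+\infty)$, so $s_\alpha(v_\alpha)$ is bounded above and below by positive constants, and the claimed bounds $\|u_\alpha\|_q\geq C_1$ and $\|\nabla u_\alpha\|_p\leq C_2$ follow at once.

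The main obstacle is \ref{prop:omega:u2:4}, because Lemma~\ref{lem:bound_Phi} is unavailable for unbounded $\alpha$ and one has no a priori control of $\|v_\alpha\|_q$ or $\lambda_k(\alpha)$ as $\alpha\to+\infty$. Here I would bypass $s_\alpha$ entirely and argue directly from the constraint $u_\alpha\in\mathcal{C}_\alpha'$ of \eqref{eq:constraint-C-omega:1}, which reads $\|\nabla u_\alpha\|_p^p = \|u_\alpha\|_q^q/|\alpha|$, combined with the universal lower bound $R_0(u_\alpha)\geq\lambda_1(0)$ from \eqref{eq:lambda} and the H\"older inequality \eqref{eq:holder} (with $r=p$), which gives $\|u_\alpha\|_p \geq |\Omega|^{\frac1p-\frac1q}\|u_\alpha\|_q$. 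Chaining these produces
\[
\frac{\|u_\alpha\|_q^q}{|\alpha|} = \|\nabla u_\alpha\|_p^p \geq \lambda_1(0)\,\|u_\alpha\|_p^p \geq \lambda_1(0)\,|\Omega|^{\left(\frac1p-\frac1q\right)p}\,\|u_\alpha\|_q^p,
\]
so dividing by $\|u_\alpha\|_q^q>0$ yields $\|u_\alpha\|_q^{p-q} \leq |\Omega|^{\left(\frac1q-\frac1p\right)p}/(|\alpha|\,\lambda_1(0)) \to 0$ as $\alpha\to+\infty$. Consequently $\|u_\alpha\|_q\to 0$, and feeding this back into $\|\nabla u_\alpha\|_p^p = \|u_\alpha\|_q^q/|\alpha|$ forces $\|\nabla u_\alpha\|_p\to 0$. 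Since every step treats $\underline{u}\smallspacing_\alpha^k$ and $\overline{u}\smallspacing_\alpha^k$ on equal footing, both families obey the stated asymptotics, which completes the argument.
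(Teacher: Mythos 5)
Your proposal is correct and follows essentially the same route as the paper: parts \ref{prop:omega:u2:1} and \ref{prop:omega:u2:3} via the explicit scaling factor $s_\alpha(v_\alpha)$ combined with the uniform bounds of Lemma~\ref{lem:bound_Phi}, and part \ref{prop:omega:u2:4} directly from the constraint $u_\alpha\in\mathcal{C}_\alpha'$ together with a Poincar\'e-type bound. The only (immaterial) difference is that in \ref{prop:omega:u2:4} you chain $\lambda_1(0)$ with the H\"older inequality \eqref{eq:holder}, whereas the paper invokes $\lambda_1(1)$ directly to get $|\alpha|\,\|\nabla u_\alpha\|_p^p\le \lambda_1(1)^{-q/p}\|\nabla u_\alpha\|_p^q$; both give the same conclusion since $q<p$.
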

		\begin{proof}
			The proposition can be established using Lemma~\ref{lem:bound_Phi}, the normalization $\mathcal{C}_\alpha'$, and \eqref{eq:concon:constr:s}.
			We provide brief arguments, for clarity. 
			Let $u_\alpha$, $v_\alpha$ be either $\underline{u}\smallspacing_\alpha^k$, $\underline{v}\smallspacing_\alpha^k$ or $\overline{u}\smallspacing_\alpha^k$, $\overline{v}\smallspacing_\alpha^k$, respectively.					
			Observe that
			\begin{align*}
				&\|u_\alpha\|_q
				=
				\|s_\alpha(v_\alpha)v_\alpha \|_q
				=
				|\alpha|^{-\frac{1}{p-q}} 
				\left(
				\frac{\|\nabla v_\alpha\|_p}{\|v_\alpha\|_q}
				\right)^{-\frac{p}{p-q}},\\
				&\|\nabla u_\alpha\|_p
				=
				\|s_\alpha(v_\alpha)\nabla v_\alpha \|_p
				=
				|\alpha|^{-\frac{1}{p-q}} 
				\left(
				\frac{\|\nabla v_\alpha\|_p}{\|v_\alpha\|_q}
				\right)^{-\frac{q}{p-q}}.
			\end{align*}
			Recalling that $v_\alpha \in \mathcal{M}_\alpha$, we deduce from Lemma~\ref{lem:bound_Phi} that $\alpha \mapsto \|\nabla v_\alpha\|_p/\|v_\alpha\|_q$ is separated from zero and bounded in any compact subset of $(\alpha_0,+\infty)$. 
			Hence, we get $\|u_\alpha\|_q \to +\infty$ as $\alpha \to 0$, and $\|u_\alpha\|_q$, $\|\nabla u_\alpha\|_p$ are separated from zero and bounded $\alpha \to 1$.
			This establishes \ref{prop:omega:u2:1} and \ref{prop:omega:u2:3}.
			
			Since $u_\alpha \in \mathcal{C}_\alpha'$, by the definition \eqref{eq:lambda} of $\lambda_1(1)$ we get
			$$
			|\alpha| \|\nabla u_\alpha\|_p^p = \|u_\alpha\|_q^q 
			\leq 
			\lambda_1(1)^{-\frac{q}{p}} \|\nabla u_\alpha\|_p^q.
			$$
			Consequently, $\|\nabla u_\alpha\|_p^{p-q} \to 0$ as $\alpha \to +\infty$, which justifies \ref{prop:omega:u2:4} as $q<p$.
		\end{proof}

		\begin{figure}[!ht]
			\centering
			\begin{subfigure}[b]{0.45\linewidth}
				\includegraphics[width=\linewidth]{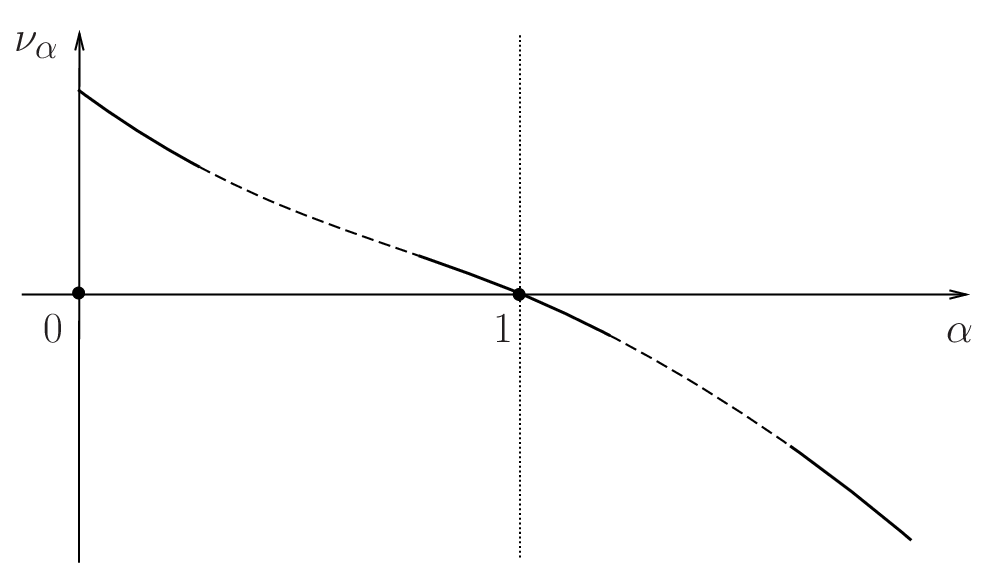}
				\caption{}
				\label{fig6a}
			\end{subfigure}
			\hfill
			\begin{subfigure}[b]{0.45\linewidth}
				\includegraphics[width=\linewidth]{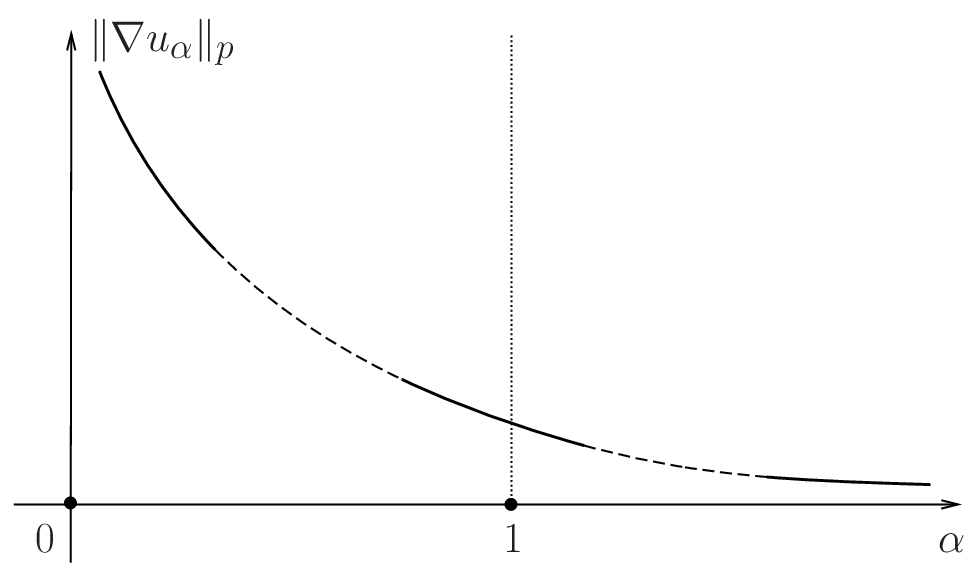}
				\caption{}
				\label{fig6b}
			\end{subfigure}
			\caption{Schematically depicted results of Propositions~\ref{prop:omega:2}, \ref{prop:omega:u2}, where $\nu_\alpha$, $u_\alpha$ stand for $\underline{\nu}_\alpha^k$, $\underline{u}_\alpha^k$ or $\overline{\nu}_\alpha^k$, $\overline{u}_\alpha^k$ for some $k \in \mathbb{N}$.}
			\label{fig6}
		\end{figure}

		Finally, we discuss counterparts of the results from Section~\ref{sec:energy}, although they are considerably simpler.
		The energy functional associated with the problem \eqref{eq:fredh1x} is given by 
		\begin{equation}\label{eq:sign:E:0:r=p}
			E_{\nu_\alpha}(u) = \frac{1}{p} \|\nabla u\|_p^p - \frac{\nu_\alpha}{p} \|u\|_p^{p} -
			\frac{\text{sgn}(\alpha)}{q} \|u\|_q^{q},
			\quad u \in \W,
		\end{equation}
		and it has a simple fiber structure. 
		Namely,	the fiber map $t \mapsto E_{\nu_\alpha}(tu)$ in $(0,+\infty)$ has either $1$ or $0$ critical points, depending on the sign of $\alpha$ and $1-\nu_\alpha$. 
		
		Taking any critical point $u_\alpha \in \mathcal{C}_\alpha'$ of $R_\alpha$ (with $\alpha \neq 0$) and recalling that it is a solution of \eqref{eq:fredh1x}, 
		we see that $u_\alpha$ satisfies 
		\begin{equation}\label{eq:sign:E:1:r=p}
			\langle E_{\nu_\alpha}'(u_\alpha),u_\alpha\rangle
			=
			\frac{d}{dt} E_{\nu_\alpha}(tu_\alpha)|_{t=1} = 
			\|\nabla u_\alpha\|_p^p - \nu_\alpha \|u_\alpha\|_p^{p} - \text{sgn}(\alpha)\|u_\alpha\|_q^{q} = 0.
		\end{equation}
		Consequently, we get
		\begin{equation}\label{eq:sign:E:r=p}
			E_{\nu_\alpha}(u) = 
			\frac{q-p}{qp} \, \text{sgn}(\alpha)\|u_\alpha\|_q^{q}.
		\end{equation}
		We also calculate the second directional derivative
		\begin{align}
			\langle E_{\nu_\alpha}''(u_\alpha),(u_\alpha,u_\alpha)\rangle
			&=
			\frac{d^2}{dt^2} E_{\nu_\alpha}(tu_\alpha)|_{t=1} 
			\\
			&= 
			(p-1) \|\nabla u_\alpha\|_p^p - \nu_\alpha (p-1) \|u_\alpha\|_p^{p} - \text{sgn}(\alpha) (q-1) \|u_\alpha\|_q^{q}
			\\
			\label{eq:sign:E:2:r=p}
			&=
			(p-q) \,\text{sgn}(\alpha) \|u_\alpha\|_q^{q}.
		\end{align}		
		
		Recalling that $q<p$, we provide the following statement, which is a counterpart of  Lemmas~\ref{lem:convconv:sign:E} and \ref{lem:convconv:sign:E''}, and whose proof is evident from \eqref{eq:sign:E:r=p} and \eqref{eq:sign:E:2:r=p}.
		\begin{lemma}\label{lem:convconv:sign:E:r=p}
			Let $\alpha \neq 0$ and let $u_\alpha \in \mathcal{C}_\alpha'$ be a critical point of $R_\alpha$, so that $u_\alpha$ is a nonzero solution of \eqref{eq:fredh1x}. 
			Then the following assertions hold:
			\begin{enumerate}[label={\rm(\roman*)}] 
				\item
				If $\alpha > 0$, then $E_{\nu_\alpha}(u_\alpha) < 0$ and 	$\frac{d^2}{dt^2} E_{\mu_\alpha}(tu_\alpha)|_{t=1} > 0$. 
				\item
				If $\alpha < 0$, then $E_{\nu_\alpha}(u_\alpha) > 0$ and $\frac{d^2}{dt^2} E_{\mu_\alpha}(tu_\alpha)|_{t=1} < 0$.
			\end{enumerate}
		\end{lemma}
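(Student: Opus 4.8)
The plan is to obtain both conclusions by reading off signs directly from the two closed-form identities \eqref{eq:sign:E:r=p} and \eqref{eq:sign:E:2:r=p} already recorded above, so that no further manipulation of the Sobolev norms is required. The first thing I would note is that the case $r=p$ forces $q<p$ through the standing assumption $q<r$, whence $\frac{q-p}{qp}<0$ while $p-q>0$. Since $u_\alpha \in \mathcal{C}_\alpha' \subset \W \setminus \{0\}$, we also have $\|u_\alpha\|_q^q>0$. With these two facts in hand, each of the quantities $E_{\nu_\alpha}(u_\alpha)$ and $\frac{d^2}{dt^2} E_{\nu_\alpha}(tu_\alpha)|_{t=1}$ is the product of a nonzero constant of definite sign, the positive number $\|u_\alpha\|_q^q$, and the factor $\text{sgn}(\alpha)$; consequently its sign is governed entirely by $\text{sgn}(\alpha)$.

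Next I would split into the two stated cases. If $\alpha>0$, then $\text{sgn}(\alpha)=1$, so \eqref{eq:sign:E:r=p} yields $E_{\nu_\alpha}(u_\alpha)=\frac{q-p}{qp}\|u_\alpha\|_q^q<0$, and \eqref{eq:sign:E:2:r=p} yields $\frac{d^2}{dt^2}E_{\nu_\alpha}(tu_\alpha)|_{t=1}=(p-q)\|u_\alpha\|_q^q>0$. If $\alpha<0$, then $\text{sgn}(\alpha)=-1$, and the very same identities flip both signs, giving $E_{\nu_\alpha}(u_\alpha)>0$ and $\frac{d^2}{dt^2}E_{\nu_\alpha}(tu_\alpha)|_{t=1}<0$. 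This exhausts the two assertions.

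The only point that genuinely deserves care is the provenance of \eqref{eq:sign:E:r=p} and \eqref{eq:sign:E:2:r=p} themselves, which I would spell out for completeness. The normalization $u_\alpha \in \mathcal{C}_\alpha'$ together with the definition \eqref{eq:muaxx} of $\nu_\alpha$ is exactly what makes $u_\alpha$ a genuine solution of \eqref{eq:fredh1x}, so that the relation \eqref{eq:sign:E:1:r=p} holds, i.e.\ $\|\nabla u_\alpha\|_p^p-\nu_\alpha\|u_\alpha\|_p^p=\text{sgn}(\alpha)\|u_\alpha\|_q^q$. Substituting this combination into the definition \eqref{eq:sign:E:0:r=p} of $E_{\nu_\alpha}(u_\alpha)$ cancels the gradient and $p$-norm terms and leaves the coefficient $\frac{1}{p}-\frac{1}{q}=\frac{q-p}{qp}$, giving \eqref{eq:sign:E:r=p}; the identical substitution into the expanded second derivative leaves $(p-1)-(q-1)=p-q$, giving \eqref{eq:sign:E:2:r=p}. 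Since both identities are already available in the excerpt, the proof reduces entirely to the sign bookkeeping described above, and I anticipate no substantial obstacle.
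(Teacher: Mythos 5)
Your proof is correct and follows exactly the route the paper intends: the paper declares the lemma ``evident from \eqref{eq:sign:E:r=p} and \eqref{eq:sign:E:2:r=p}'', and your sign bookkeeping (using $q<r=p$ so that $\frac{q-p}{qp}<0$ and $p-q>0$, together with $\|u_\alpha\|_q^q>0$) is precisely that argument. Your additional derivation of the two identities by substituting \eqref{eq:sign:E:1:r=p} into \eqref{eq:sign:E:0:r=p} and into the expanded second derivative matches the computation the paper carries out in the text immediately preceding the lemma.
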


		\section{Subhomogeneous case}\label{sec:subhom}
		In this section, we are interested in the subhomogeneous case $q<r \leq p$ and provide some finer properties of $\lambda_1(\alpha)$ and the corresponding translation levels $\mu_\alpha$ and $\nu_\alpha$
		for $\alpha \in [0,1]$. 
		In particular, we discuss the simplicity and isolation of $\lambda_1(\alpha)$, as well as sign properties of higher critical points of $R_\alpha$. 
		
		\begin{lemma}\label{lem:subhom:simple}
			Let $\Omega$ be a bounded domain.
			Let $q<r \leq p$ and $\alpha \in [0,1]$. 
			Then $\lambda_1(\alpha)$ is simple.
		\end{lemma}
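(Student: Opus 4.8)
The plan is to exploit the classical ``hidden convexity'' of the $p$-Dirichlet energy together with the concavity of the constraint functional $I_\alpha$ along a suitable interpolation of two minimizers. First I would reduce to \emph{positive} minimizers. Since $R_\alpha$ is $0$-homogeneous and $\|\nabla |u|\|_p = \|\nabla u\|_p$, the modulus $|u|$ of any minimizer is again a minimizer. As $q < r \leq p$ forces $q<p$ and $\alpha \in [0,1]$ places us in case~\ref{lem:smp:2} of Lemma~\ref{lem:smp} (whose hypothesis $\alpha \geq 0 > \alpha_0$ guarantees, via Proposition~\ref{prop:alpha<0}, that $\lambda_1(\alpha)$ is attained), the strong maximum principle yields $|u|>0$ throughout the connected set $\Omega$. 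Hence every minimizer is sign-constant, and it suffices to show that any two positive minimizers $u_0, u_1 \in \mathcal{M}_\alpha$ are proportional.

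For $t \in [0,1]$ set $\sigma_t = \left((1-t)u_0^p + t\,u_1^p\right)^{1/p} \in \W$. The argument rests on two ingredients. The first is the pointwise hidden-convexity inequality $|\nabla \sigma_t|^p \leq (1-t)|\nabla u_0|^p + t\,|\nabla u_1|^p$ a.e., which upon integration gives $\|\nabla \sigma_t\|_p^p \leq (1-t)\lambda_1(\alpha) + t\,\lambda_1(\alpha) = \lambda_1(\alpha)$, with equality only if $u_0 \equiv c\,u_1$ for some constant $c>0$. The second is that, for each $s \in \{q,r\}$, the condition $s \leq p$ makes $z \mapsto z^{s/p}$ concave, so that $\sigma_t^s \geq (1-t)u_0^s + t\,u_1^s$ pointwise and therefore $\|\sigma_t\|_s^s \geq (1-t)\|u_0\|_s^s + t\,\|u_1\|_s^s$.

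Writing $\beta = \alpha p/q \geq 0$ and $\gamma = (1-\alpha)p/r \geq 0$ and using $I_\alpha(u_0) = I_\alpha(u_1) = 1$, I would then invoke the concavity of $(x,y) \mapsto \beta \log x + \gamma \log y$ along the segment joining $(\|u_0\|_q^q, \|u_0\|_r^r)$ and $(\|u_1\|_q^q, \|u_1\|_r^r)$. Combined with the two lower bounds of the previous step, this upgrades to $I_\alpha(\sigma_t) = \left(\|\sigma_t\|_q^q\right)^{\beta}\left(\|\sigma_t\|_r^r\right)^{\gamma} \geq 1$. Putting everything together yields $\lambda_1(\alpha) \leq R_\alpha(\sigma_t) = \|\nabla \sigma_t\|_p^p / I_\alpha(\sigma_t) \leq \lambda_1(\alpha)$, so equality holds throughout; in particular equality holds in the hidden-convexity bound, whence $u_0$ and $u_1$ are proportional and $\lambda_1(\alpha)$ is simple.

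The main obstacle is the rigidity in the hidden-convexity inequality: one must establish both the pointwise estimate and the fact that its equality case pins down $u_0$ and $u_1$ up to a multiplicative constant, which relies on the strict positivity of the minimizers (already secured above) and the strict convexity of $\xi \mapsto |\xi|^p$. The boundary exponents $\alpha \in \{0,1\}$ are harmless, since then one of $\beta,\gamma$ vanishes and the corresponding factor drops out of $I_\alpha$; the limiting subcase $r=p$ only renders the $s=r$ bound an exact equality, which leaves the chain of inequalities intact.
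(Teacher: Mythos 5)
Your proof is correct and follows the same core strategy as the paper: both interpolate two normalized positive minimizers via $\sigma_t=((1-t)u_0^p+t\,u_1^p)^{1/p}$ and combine the hidden-convexity inequality for the Dirichlet energy with the concavity of $z\mapsto z^{q/p}$ and $z\mapsto z^{r/p}$. The difference is in the endgame. The paper never invokes the equality case of hidden convexity: since $q<p$ strictly, $z\mapsto z^{q/p}$ is \emph{strictly} concave, so two non-identical normalized minimizers yield the strict inequality $\int_\Omega \sigma_t^q\,dx>(1-t)\int_\Omega u^q\,dx+t\int_\Omega v^q\,dx$; eliminating the $L^r$-norms through the normalization $I_\alpha(u)=I_\alpha(v)=1$ and applying AM--GM at $t=1/2$, the paper shows that the resulting strict bound $I_\alpha(\sigma_t)>1$, together with $\|\nabla\sigma_t\|_p^p\le\lambda_1(\alpha)$, already contradicts the definition of $\lambda_1(\alpha)$ --- no rigidity statement is needed. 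You instead obtain the non-strict bound $I_\alpha(\sigma_t)\ge 1$ from the weighted log-concavity of $(x,y)\mapsto x^\beta y^\gamma$ (arguably a cleaner route to that inequality than the paper's AM--GM computation, and one that handles $\alpha\in\{0,1\}$ and $r=p$ uniformly), and then you must close the argument by the equality case of the pointwise inequality $|\nabla\sigma_t|^p\le(1-t)|\nabla u_0|^p+t|\nabla u_1|^p$, which forces $\nabla\log u_0=\nabla\log u_1$ a.e.\ and hence $u_0=c\,u_1$ on the connected $\Omega$. That rigidity is standard for positive functions (strict convexity of $\xi\mapsto|\xi|^p$ plus connectedness), but it is an extra ingredient beyond the inequality of \cite{diazsaa} that the paper cites; so your version trades the paper's elementary strict-concavity/AM--GM contradiction for a dependence on the equality characterization in hidden convexity. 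Both arguments are sound.
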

		\begin{proof}
			The result is well-known in the case $\alpha=0,1$, see, e.g., \cite{IO,KL}. 
			Thus, we assume $\alpha \in (0,1)$. 
			Since $\Omega$ is connected, Lemma~\ref{lem:smp} implies that any nonnegative minimizer of $\lambda_1(\alpha)$ is positive. 
			Suppose that there are two different positive minimizers $u,v$ of $\lambda_1(\alpha)$ and they are normalized to $\mathcal{M}_\alpha$, that is, 
			\begin{equation}\label{eq:simpl:normal1}
				\|u\|_q^{\alpha p} \|u\|_r^{(1-\alpha) p} = 1,
				\quad
				\|v\|_q^{\alpha p} \|v\|_r^{(1-\alpha) p} = 1.
			\end{equation}
			Our arguments are based on the method of hidden convexity.
			Consider the function
			$$
			\sigma_t = ((1-t) u^p + t v^p)^{1/p}, \quad t \in (0,1).
			$$
			It is not hard to see that $\sigma_t \in \W$, 
			and therefore $\sigma_t$ is an admissible function for the definition \eqref{eq:lambda} of $\lambda_1(\alpha)$, implying
			\begin{equation}\label{eq:simpl:1}
				\lambda_1(\alpha) \|\sigma_t\|_q^{\alpha p} \|\sigma_t\|_r^{(1-\alpha)p} \leq \|\nabla \sigma_t\|_p^p.
			\end{equation}
			The concavity of the mappings $s \mapsto s^{q/p}$ and $s \mapsto s^{r/p}$ and the assumption $u \not\equiv v$ give
			\begin{align*}
				\intO \sigma_t^q \,dx > (1-t) \intO u^q \,dx + t \intO v^q \,dx,\\
				\intO \sigma_t^r \,dx \geq (1-t) \intO u^r \,dx + t \intO v^r \,dx. 
			\end{align*}
			On the other hand, the hidden convexity inequality (see, e.g., \cite[Lemma~1]{diazsaa}) reads as
			\begin{equation}\label{eq:simpl:hidden}
				\intO |\nabla \sigma_t|^p \,dx \leq (1-t) \intO |\nabla u|^p \,dx + t \intO |\nabla v|^p \,dx = \lambda_1(\alpha).
			\end{equation}
			Therefore, for any $t \in (0,1)$ we get from \eqref{eq:simpl:1} that
			\begin{equation}\label{eq:simpl:2}
				\left((1-t) \intO u^q \,dx + t \intO v^q \,dx\right)^{\frac{\alpha p}{q}} 
				\left((1-t) \intO u^r \,dx + t \intO v^r \,dx\right)^{\frac{(1-\alpha)p}{r}} < 1.
			\end{equation}
			Let us show that this inequality is impossible.
			For convenience, denote $A=\intO u^q \,dx$ and $B = \intO v^q \,dx$. 
			In view of the normalization assumption \eqref{eq:simpl:normal1}, we get
			$$
			\intO u^r \,dx = A^{-\frac{\alpha r}{(1-\alpha) q}}, 
			\quad 
			\intO v^r \,dx = B^{-\frac{\alpha r}{(1-\alpha) q}}. 
			$$
			Substituting these expressions into \eqref{eq:simpl:2}, we obtain
			$$
			((1-t) A +t B)^{\frac{\alpha p}{q}} 
			\Big(
			(1-t) A^{-\frac{\alpha r}{(1-\alpha) q}} + t B^{-\frac{\alpha r}{(1-\alpha) q}}
			\Big)^{\frac{(1-\alpha)p}{r}} < 1. 
			$$
			Taking now $t=1/2$, raising to the power of $q/(\alpha p)$, and rearranging, we end up with
			\begin{equation}\label{eq:simpl:3}
				\left(\frac{A+B}{2}\right) \left(\frac{A^{\frac{\alpha r}{(1-\alpha) q}} + B^{\frac{\alpha r}{(1-\alpha) q}}}{2}\right)^{\frac{(1-\alpha)q}{\alpha r}} < AB.
			\end{equation}
			However, by the AM-GM inequality, we have
			$$
			\frac{A+B}{2} \geq (AB)^{1/2},
			\quad
			\frac{A^{\frac{\alpha r}{(1-\alpha) q}} + B^{\frac{\alpha r}{(1-\alpha) q}}}{2} \geq (AB)^{\frac{\alpha r}{2(1-\alpha)q}}.
			$$
			Therefore, we get a contradiction to \eqref{eq:simpl:3}.
		\end{proof}
		
		Let $q<r \leq p$. 
		Thanks to Lemma~\ref{lem:subhom:simple}, 
		for every $\alpha \in [0,1]$ there exists a unique (modulo scaling) positive minimizer $\varphi_\alpha$ of $R_\alpha$. 
		In the case $q<r<p$ and $\alpha \in [0,1)$, we can assume that $\varphi_\alpha$ is normalized as 
		\begin{equation}\label{def:1st_mu}
			\left(1-\alpha\right)\,
			\dfrac{\|\nabla\varphi_\alpha \|_p^p}{\|\varphi_\alpha\|_r^r}=1.
		\end{equation}
		Using the notation \eqref{eq:mu-under-over:k}, we denote
		$$
		\mu_\alpha^1 
		=
		\alpha\,
		\frac{\|\nabla\varphi_\alpha \|_p^p}{\|\varphi_\alpha\|_q^q}
		= 
		\underline{\mu}\smallspacing_\alpha^1
		=
		\overline{\mu}\smallspacing_\alpha^1.
		$$
		As stated in Section~\ref{sec:intro}, $\varphi_\alpha$ is a positive solution of 
		\begin{equation}\label{eq:rem:eq:unieq}
			\left\{
			\begin{aligned}
				-\Delta_p u &= \mu |u|^{q-2}u + |u|^{r-2}u  && \text{in } \Omega, \\
				u &= 0 && \text{on } \partial \Omega,
			\end{aligned}
			\right.
		\end{equation} 	
		with $\mu = \mu_\alpha^1$. 
		In the case $q<r=p$ and $\alpha \in (0,1]$, 
		we can assume that $\varphi_\alpha$ is normalized as 
		\begin{equation}\label{def:1st_mu:r=p}
			\alpha\,
			\dfrac{\|\nabla\varphi_\alpha \|_p^p}{\|\varphi_\alpha\|_q^q}=1.
		\end{equation}
		Based on the notation \eqref{eq:muaxx:k}, we define
		$$
		\nu_\alpha^1 
		=
		(1-\alpha)\,
		\frac{\|\nabla\varphi_\alpha \|_p^p}{\|\varphi_\alpha\|_p^p}
		= 
		\underline{\nu}_\alpha^1
		=
		\overline{\nu}_\alpha^1.
		$$
		As in Section~\ref{sec:intro} (see also Section~\ref{sec:sub:r=p}), $\varphi_\alpha$ is a positive solution of 
		\begin{equation}\label{eq:rem:eq:unieq:r=p}
			\left\{
			\begin{aligned}
				-\Delta_p u &= \nu |u|^{p-2}u+|u|^{q-2}u  && \text{in } \Omega, \\
				u &= 0 && \text{on } \partial \Omega,
			\end{aligned}
			\right.
		\end{equation}
		with $\nu = \nu_\alpha^1$. 
		Notice that \eqref{eq:rem:eq:unieq} 
		(with $q<r<p$ and $\mu \geq 0$)
		and \eqref{eq:rem:eq:unieq:r=p} 
		(with $q<p$ and $\nu \geq 0$)
		have at most one nonnegative solution, which follows from \cite[Lemma~2]{diazsaa}. 
		Moreover, \eqref{eq:rem:eq:unieq:r=p} has no nonnegative solution for $\nu \geq \lambda_1(0)$. Indeed, if such a solution $u$ exists, then, denoting $f(x) = u^{q-1}(x)$, $x \in \Omega$, we get a contradiction to \cite[Proposition~2.17]{BT-ant}.

		We provide some properties of the mapping $\mu \mapsto \mu_\alpha^1$ that refine the information from Section~\ref{sec:behavior:mu-up-un}, cf.\ Figure~\ref{fig1:c}.
		\begin{lemma}\label{lem:cont_1st_mu}  
			Let $\Omega$ be a bounded domain. Let $q < r <  p$ and $\alpha \in [0,1)$. 
			Then $\alpha \mapsto \mu_\alpha^1$ is continuous and increasing in $[0,1)$, 
			\begin{equation}\label{1st_mu}
				\mu_0^1=0
				\quad 
				\text{and} 
				\quad 
				\lim_{\alpha \to 1-}\mu_{\alpha}^1=+\infty.
			\end{equation}
		\end{lemma}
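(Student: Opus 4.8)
The plan is to assemble the statement from results already in hand, treating the four claims separately in order of increasing difficulty. The identity $\mu_0^1=0$ is immediate: with $\alpha=0$ the defining expression $\mu_\alpha^1=\alpha\,\|\nabla\varphi_\alpha\|_p^p/\|\varphi_\alpha\|_q^q$ vanishes (equivalently, one may quote $\underline{\mu}_0^\lambda=\overline{\mu}_0^\lambda=0$ from Lemma~\ref{lem:attai_mu}). The limit $\lim_{\alpha\to1-}\mu_\alpha^1=+\infty$ is exactly Proposition~\ref{prop:omega-to-one}(i) specialized to $q<r<p$ and $k=1$, once we record that $\mu_\alpha^1=\underline{\mu}_\alpha^1$ for $\alpha\in[0,1)$.

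For continuity I would exploit the simplicity of $\lambda_1(\alpha)$ on $[0,1]$ from Lemma~\ref{lem:subhom:simple}, which forces $K_\alpha(\lambda_1(\alpha))$ to consist of scalar multiples of the single positive minimizer $\varphi_\alpha$, so that $\underline{\mu}_\alpha^1=\overline{\mu}_\alpha^1=\mu_\alpha^1$ for every $\alpha\in[0,1)$. By Lemma~\ref{lem:lower-upper-semicontinuity}, as functions on $(\alpha_0,+\infty)\supset[0,1)$ the lower envelope $\underline{\mu}^1$ is lower semicontinuous and the upper envelope $\overline{\mu}^1$ is upper semicontinuous. Restricting to sequences lying in $[0,1)$, where both coincide with $\mu^1$, shows that $\mu^1$ is simultaneously lower and upper semicontinuous, hence continuous on $[0,1)$.

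The monotonicity is the real content, and I would obtain it softly rather than by a direct comparison of solutions. First I establish injectivity of $\alpha\mapsto\mu_\alpha^1$: if $\mu_{\alpha_1}^1=\mu_{\alpha_2}^1=:\mu$, then $\mu\ge 0$ (since $\mu_\alpha^1\ge 0$ on $[0,1)$), and because \eqref{eq:rem:eq:unieq} has at most one nonnegative solution for $\mu\ge 0$ by \cite[Lemma~2]{diazsaa}, the two normalized positive solutions $\varphi_{\alpha_1}$ and $\varphi_{\alpha_2}$ coincide; reading off $\alpha_i=\mu\,\|\varphi_{\alpha_i}\|_q^q/\|\nabla\varphi_{\alpha_i}\|_p^p$ through Lemma~\ref{lem:reduction} then yields $\alpha_1=\alpha_2$. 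A continuous injective real map on the interval $[0,1)$ is strictly monotone; and since $\mu_0^1=0$ while $\mu_\alpha^1=\alpha\,\|\nabla\varphi_\alpha\|_p^p/\|\varphi_\alpha\|_q^q>0$ for $\alpha\in(0,1)$, the map cannot be decreasing, so it is strictly increasing.

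The main obstacle is not a new estimate but making the continuity-plus-injectivity argument airtight: one must check that the restriction to $[0,1)$, where simplicity gives $\underline{\mu}^1=\overline{\mu}^1=\mu^1$, is compatible with the semicontinuity statements phrased on the larger interval $(\alpha_0,+\infty)$, and that $\mu_\alpha^1\ge 0$ so that the uniqueness result of \cite{diazsaa} genuinely applies. Both points are routine once the identifications are in place, which is why the bulk of the work has effectively been done in the preceding sections.
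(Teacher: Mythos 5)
Your proof is correct and follows essentially the same route as the paper: continuity via the coincidence $\underline{\mu}\smallspacing_\alpha^1=\overline{\mu}\smallspacing_\alpha^1=\mu_\alpha^1$ (from simplicity) combined with Lemma~\ref{lem:lower-upper-semicontinuity}, the limits from Propositions~\ref{prop:omega-to-zero} and \ref{prop:omega-to-one}, and monotonicity from injectivity, which in turn rests on the uniqueness of nonnegative solutions of \eqref{eq:rem:eq:unieq} for $\mu\ge 0$. The only cosmetic differences are that you deduce $\alpha_1=\alpha_2$ by reading $\alpha$ off the normalized solution instead of invoking the linear-independence Lemma~\ref{lem:LI}, and you fix the direction of monotonicity from $\mu_0^1=0<\mu_\alpha^1$ rather than from the blow-up as $\alpha\to 1-$; both variants are valid.
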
 
		\begin{proof}
			The continuity of 
			$\alpha \mapsto \mu_\alpha^1$ in $[0,1)$ follows from Lemma~\ref{lem:lower-upper-semicontinuity} because 
			$\mu_\alpha^1=\underline{\mu}\smallspacing_\alpha^1=\overline{\mu}\smallspacing_\alpha^1$.
			Propositions~\ref{prop:omega-to-zero}, \ref{prop:omega-to-one} give \eqref{1st_mu}.	
			
			It remains to show that $\alpha \mapsto \mu_\alpha^1$ is increasing.
			Suppose, by contradiction, that there exist $0 \leq \alpha_1 < \alpha_2 <1$ such that $\mu_{\alpha_1}^1 \geq \mu_{\alpha_2}^1$. 
			If $\mu_{\alpha_1}^1 = \mu_{\alpha_2}^1$, then the corresponding nonnegative minimizers $\varphi_{\alpha_1}$ and $\varphi_{\alpha_2}$ satisfy the same problem \eqref{eq:rem:eq:unieq}. 
			Since \eqref{eq:rem:eq:unieq} has a unique nonnegative solution, we have $\varphi_{\alpha_1}=\varphi_{\alpha_2}$, which contradicts Lemma~\ref{lem:LI}. 
			If $\mu_{\alpha_1}^1 > \mu_{\alpha_2}^1$, then, by the continuity of $\alpha \mapsto \mu_\alpha^1$ and the asymptotic $\lim_{\alpha \to 1-}\mu_{\alpha}^1=+\infty$, there exists $\alpha_3 > \alpha_2$ such that $\mu_{\alpha_1}^1 = \mu_{\alpha_3}^1$. The above argument again gives a contradiction. 
		\end{proof}
		
		Analogously, we discuss the properties of the mapping $\mu \mapsto \nu_\alpha^1$ that refine the information from Section~\ref{sec:sub:r=p}. 
		\begin{lemma}\label{lem:cont_1st_mu:r=p}  
			Let $\Omega$ be a bounded domain. Let $q<r=p$ and $\alpha \in (0,1]$. 
			Then $\alpha \mapsto \nu_\alpha^1$ is continuous and decreasing in $(0,1]$,  
			\begin{equation}\label{1st_mu:r=p}
				\lim_{\alpha \to 0+}\nu_{\alpha}^1= \lambda_1(0)
				\quad \text{and} \quad 
				\nu_1^1=0.
			\end{equation}
		\end{lemma}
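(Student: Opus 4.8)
The plan is to follow the structure of the proof of Lemma~\ref{lem:cont_1st_mu}, adapting each ingredient to the case $r=p$. The continuity of $\alpha \mapsto \nu_\alpha^1$ on $(0,1]$ is immediate: since $\lambda_1(\alpha)$ is simple for $\alpha \in [0,1]$ by Lemma~\ref{lem:subhom:simple}, and since by Lemma~\ref{lem:smp} any minimizer at the level $\lambda_1(\alpha)$ is sign-constant in the connected $\Omega$, the set $K_\alpha(\lambda_1(\alpha))$ consists only of scalar multiples of $\varphi_\alpha$. As $R_0$ is even and $0$-homogeneous, this yields $\nu_\alpha^1 = \underline{\nu}_\alpha^1 = \overline{\nu}_\alpha^1$, and Lemma~\ref{lem:lower-upper-semicontinuity:2} then forces this common quantity to be simultaneously lower and upper semicontinuous, hence continuous.

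For the boundary values, the equality $\nu_1^1 = 0$ is read off directly from $\nu_\alpha^1 = (1-\alpha)R_0(\varphi_\alpha)$ at $\alpha = 1$. For the limit as $\alpha \to 0+$ I would avoid tracking the minimizer and instead squeeze $\nu_\alpha^1$ between two bounds. Since $R_0(\varphi_\alpha) \geq \inf_{\W\setminus\{0\}} R_0 = \lambda_1(0)$, we have $\nu_\alpha^1 = (1-\alpha)R_0(\varphi_\alpha) \geq (1-\alpha)\lambda_1(0)$, giving $\liminf_{\alpha\to 0+}\nu_\alpha^1 \geq \lambda_1(0)$. On the other hand, $\varphi_\alpha$ is a positive solution of \eqref{eq:rem:eq:unieq:r=p} with $\nu = \nu_\alpha^1 \geq 0$, and since that problem admits no nonnegative solution for $\nu \geq \lambda_1(0)$ (the nonexistence fact recalled before the lemma, obtained from \cite[Proposition~2.17]{BT-ant}), we must have $\nu_\alpha^1 < \lambda_1(0)$, whence $\limsup_{\alpha\to 0+}\nu_\alpha^1 \leq \lambda_1(0)$. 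Combining the two estimates gives $\lim_{\alpha\to 0+}\nu_\alpha^1 = \lambda_1(0)$.

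The monotonicity is then argued by contradiction exactly as in Lemma~\ref{lem:cont_1st_mu}. Suppose $0 < \alpha_1 < \alpha_2 \leq 1$ with $\nu_{\alpha_1}^1 \leq \nu_{\alpha_2}^1$. If $\nu_{\alpha_1}^1 = \nu_{\alpha_2}^1$, then $\varphi_{\alpha_1}$ and $\varphi_{\alpha_2}$ are two positive solutions of the same problem \eqref{eq:rem:eq:unieq:r=p} with the same nonnegative $\nu$; uniqueness of the nonnegative solution via \cite[Lemma~2]{diazsaa} forces $\varphi_{\alpha_1} = \varphi_{\alpha_2}$, contradicting the linear independence of critical points of $R_{\alpha_1}$ and $R_{\alpha_2}$ granted by Lemma~\ref{lem:LI}. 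If instead $\nu_{\alpha_1}^1 < \nu_{\alpha_2}^1$, I invoke the facts already established above, namely $\nu_{\alpha_2}^1 < \lambda_1(0)$ and $\nu_\alpha^1 \to \lambda_1(0)$ as $\alpha \to 0+$: choosing $\alpha' \in (0,\alpha_1)$ small enough that $\nu_{\alpha'}^1 > \nu_{\alpha_2}^1$, continuity on $[\alpha',\alpha_1]$ together with the intermediate value theorem produces some $\alpha_3 \in (\alpha',\alpha_1)$ with $\nu_{\alpha_3}^1 = \nu_{\alpha_2}^1$ and $\alpha_3 \neq \alpha_2$, which reduces matters to the previous equality case and yields a contradiction.

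The only genuinely new point compared with Lemma~\ref{lem:cont_1st_mu} is the identification of the boundary limit as exactly $\lambda_1(0)$, rather than merely a finite bound as in Proposition~\ref{prop:omega:2}. This is also where the only mild obstacle lies: one must have simultaneously the trivial lower bound $R_0 \geq \lambda_1(0)$ and the nonexistence threshold at $\nu = \lambda_1(0)$ at one's disposal, after which the squeeze closes the gap with no appeal to compactness or to continuity of the minimizers; everything else transfers verbatim from the subhomogeneous analysis.
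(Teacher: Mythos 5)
Your proposal is correct and follows essentially the same route as the paper: continuity via the semicontinuity of $\underline{\nu}\smallspacing_\alpha^1=\overline{\nu}\smallspacing_\alpha^1$ from Lemma~\ref{lem:lower-upper-semicontinuity:2}, the squeeze $(1-\alpha)\lambda_1(0)\le\nu_\alpha^1<\lambda_1(0)$ (lower bound from the definition of $\lambda_1(0)$, upper bound from the nonexistence of nonnegative solutions of \eqref{eq:rem:eq:unieq:r=p} for $\nu\ge\lambda_1(0)$) to identify the limit as $\alpha\to 0+$, and monotonicity by the same uniqueness-plus-Lemma~\ref{lem:LI} contradiction as in Lemma~\ref{lem:cont_1st_mu}, anchored at the $\alpha\to 0+$ end. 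Your write-up merely makes explicit two points the paper leaves implicit (that simplicity forces $\underline{\nu}\smallspacing_\alpha^1=\overline{\nu}\smallspacing_\alpha^1$, and the intermediate value step in the strict-inequality case).
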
 
		\begin{proof}
			The continuity of $\alpha \mapsto \nu_\alpha^1$ in $(0,1]$ follows from Lemma~\ref{lem:lower-upper-semicontinuity:2}. 
			The equality $\nu_1^1=0$ is also clear. 
			Recall that \eqref{eq:rem:eq:unieq:r=p} has no nonnegative solution for $\nu \geq \lambda_1(0)$. Therefore, using the definitions of $\lambda_1(0)$ and $\nu_\alpha^1$, we get
			$$
			(1-\alpha) \lambda_1(0)
			\leq
			\nu_\alpha^1 < \lambda_1(0).
			$$
			Consequently, $\nu_{\alpha}^1 \to \lambda_1(0)$ as $\alpha\to 0+$. 
			The monotonicity of $\alpha \mapsto \nu_\alpha^1$ in $(0,1]$ can be proved as in Lemma~\ref{lem:cont_1st_mu} using \eqref{1st_mu:r=p}. 
		\end{proof}
		
		\begin{remark}\label{rem:lem:cont}
			Let $q< r < p$. 
			Recalling that \eqref{eq:rem:eq:unieq} (with $\mu \geq 0$) has at most one nonnegative solution, we deduce from Lemma~\ref{lem:cont_1st_mu} that nonnegative minimizers of $R_\alpha$, $\alpha \in [0,1)$, describe the whole set of nonnegative solutions of \eqref{eq:rem:eq:unieq} with respect to $\mu \geq 0$, which is a continuous branch in the $\W$-topology. 
			Moreover, the same is true for $q<r=p$ in view of Lemma~\ref{lem:cont_1st_mu:r=p}. 
		\end{remark}
		
		The following result is a corollary of Lemmas~\ref{lem:cont_1st_mu}, \ref{lem:cont_1st_mu:r=p} and Remark~\ref{rem:lem:cont}.
		\begin{lemma}\label{sign-changing}  
			Let $\Omega$ be a bounded domain. 
			Let $q<r \leq p$ and $\alpha \in [0,1]$. 
			Then any critical point $u$ of $R_\alpha$ with $R_\alpha(u) > \lambda_1(\alpha)$ is sign-changing. 
		\end{lemma}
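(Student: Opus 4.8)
The plan is to argue by contradiction: assuming $u$ is a sign-constant critical point of $R_\alpha$ with $R_\alpha(u)=\lambda>\lambda_1(\alpha)$, I would show that $u$ is forced to coincide (up to the relevant normalization) with the positive minimizer, so that $\lambda=\lambda_1(\alpha)$, which is the desired contradiction. Since $R_\alpha$ is even and $-u$ is a critical point at the same level, I may assume $u\ge 0$, $u\not\equiv 0$. The whole strategy rests on the characterization recorded in Remark~\ref{rem:lem:cont}: for $q<r\le p$ the positive minimizers $\{\varphi_\beta\}$ exhaust the \emph{entire} set of nonnegative solutions of \eqref{eq:rem:eq:unieq} (resp.\ \eqref{eq:rem:eq:unieq:r=p}) as the translation level ranges over its admissible interval. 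Thus, if a nonnegative critical point $u$ can be matched with one of the $\varphi_\beta$, then Lemma~\ref{lem:LI} will pin down $\beta=\alpha$ and force $u=\varphi_\alpha$.

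Consider first the main case $q<r<p$ and $\alpha\in[0,1)$. Normalizing $u$ to lie in $\mathcal{C}_\alpha$, Lemma~\ref{lem:reduction} and Remark~\ref{rem:reduction} show that $u$ is a nonnegative solution of \eqref{eq:rem:eq:unieq} with $\mu=\mu_\alpha^\lambda(u)=\alpha\,\|\nabla u\|_p^p/\|u\|_q^q\ge 0$. By Lemma~\ref{lem:cont_1st_mu} the map $\beta\mapsto\mu_\beta^1$ is a continuous increasing bijection of $[0,1)$ onto $[0,+\infty)$, so there is a unique $\beta\in[0,1)$ with $\mu_\beta^1=\mu$, and the positive minimizer $\varphi_\beta$ solves \eqref{eq:rem:eq:unieq} with this same $\mu$. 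Since \eqref{eq:rem:eq:unieq} with fixed $\mu\ge 0$ admits at most one nonnegative solution (by \cite[Lemma~2]{diazsaa}, as recalled in the text), we get $u=\varphi_\beta$. But then the single nonzero function $u$ is simultaneously a critical point of $R_\alpha$ and of $R_\beta$, which by Lemma~\ref{lem:LI} forces $\beta=\alpha$. Hence $u=\varphi_\alpha$ and $\lambda=R_\alpha(u)=\lambda_1(\alpha)$, contradicting $\lambda>\lambda_1(\alpha)$.

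The case $q<r=p$ and $\alpha\in(0,1]$ is handled identically, replacing $\mu_\alpha^\lambda$, \eqref{eq:rem:eq:unieq}, and Lemma~\ref{lem:cont_1st_mu} by $\nu_\alpha^\lambda$, \eqref{eq:rem:eq:unieq:r=p}, and Lemma~\ref{lem:cont_1st_mu:r=p}; here one uses that a nonnegative solution automatically obeys $\nu<\lambda_1(0)$ (since \eqref{eq:rem:eq:unieq:r=p} has no nonnegative solution for $\nu\ge\lambda_1(0)$), so that $\nu=(1-\alpha)R_0(u)$ lands in the range $[0,\lambda_1(0))$ of the decreasing bijection $\beta\mapsto\nu_\beta^1$. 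The two remaining boundary values $\alpha=1$ (with $r<p$) and $(\alpha,r)=(0,p)$ fall outside Lemmas~\ref{lem:cont_1st_mu}, \ref{lem:cont_1st_mu:r=p}; in both cases $R_\alpha$ degenerates to a single-power Lane--Emden quotient $\|\nabla u\|_p^p/\|u\|_\sigma^p$ with $\sigma\le p$ (namely $\sigma=q$ for $\alpha=1$, and $\sigma=p$ for $\alpha=0$, $r=p$), whose constant-sign critical points are classically known to be first eigenfunctions only (by subhomogeneous uniqueness via \cite[Lemma~2]{diazsaa} when $\sigma<p$, and by the classical $p$-eigenvalue theory together with the simplicity from Lemma~\ref{lem:subhom:simple} when $\sigma=p$; cf.\ \cite{IO,KL}). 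In each case $R_\alpha(u)=\lambda_1(\alpha)$, again contradicting $\lambda>\lambda_1(\alpha)$.

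The main obstacle, and the crux of the argument, is the assembly carried out in the second paragraph: one must certify that the translation level $\mu$ (resp.\ $\nu$) produced by the given critical point genuinely lies in the interval covered by the branch $\{\varphi_\beta\}$, and then combine the uniqueness of nonnegative solutions with the linear independence of Lemma~\ref{lem:LI} to identify $u$ with a minimizer. The sign-constancy hypothesis is used precisely twice: to guarantee nonnegativity of the translation level (so the branch characterization applies) and to invoke the uniqueness result; without it, $u$ need not be comparable to any $\varphi_\beta$, and indeed the conclusion is false for sign-changing critical points, as the very existence of the levels $\lambda_k(\alpha)>\lambda_1(\alpha)$ from Section~\ref{sec:variational-eigenvalues} shows.
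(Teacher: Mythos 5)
Your proposal is correct and follows essentially the same route as the paper: reduce the nonnegative critical point to a nonnegative solution of \eqref{eq:rem:eq:unieq} (resp.\ \eqref{eq:rem:eq:unieq:r=p}), use the continuity/monotonicity/range of $\beta\mapsto\mu_\beta^1$ from Lemma~\ref{lem:cont_1st_mu} (resp.\ Lemma~\ref{lem:cont_1st_mu:r=p}) together with the D\'iaz--Sa\'a uniqueness to identify $u$ with some $\varphi_\beta$, and then invoke the linear independence of Lemma~\ref{lem:LI} to force $\beta=\alpha$ and reach the contradiction. The only cosmetic difference is that you treat the boundary cases $\alpha=0,1$ by hand via the Lane--Emden degeneration, whereas the paper simply cites the known results for those values.
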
 
		\begin{proof}
			The result is well-known in the case $\alpha=0,1$, see, e.g., \cite[Theorem 4.1]{FL} and \cite[Proposition 1.5]{Tanaka-BVP}. 
			Hence, we assume that $\alpha \in (0,1)$. 
			Suppose, by contradiction, that there exists a nonnegative critical point $u$ of $R_\alpha$ such that $R_\alpha(u) > \lambda_1(\alpha)$. 
			Let $q<r<p$. 
			Assuming that $u$ is normalized by \eqref{eq:constr1}, we see that $u$ is a nonnegative solution of \eqref{eq:rem:eq:unieq} with $\mu = \alpha \|\nabla u \|_p^p/\|u\|_q^q$. 
			On the other hand, by Lemma~\ref{lem:cont_1st_mu} there exists $\alpha_1 \in [0,1)$ such that $\mu = \mu_{\alpha_1}^1$, and $\varphi_{\alpha_1}$ is also a nonnegative solution of \eqref{eq:rem:eq:unieq}. 
			Since the nonnegative solution of \eqref{eq:rem:eq:unieq} is unique, we conclude that $u = \varphi_{\alpha_1}$. However, since $R_\alpha(u) > \lambda_1(\alpha)$ and $R_{\alpha_1}(\varphi_{\alpha_1}) = \lambda_1(\alpha_1)$, we get $\alpha \neq \alpha_1$. Thus, Lemma~\ref{lem:LI} gives a contradiction to $u = \varphi_{\alpha_1}$. 
			The case $q<r=p$ can be proved in much the same way. 
		\end{proof}
		
		\begin{remark}
			If $\Omega$ is disconnected, then there are several \textit{nonnegative} critical points of $R_\alpha$, each of which vanishes in a connected component of $\Omega$.  
		\end{remark}
		
		Using Lemma~\ref{sign-changing}, we show that $\lambda_1(\alpha)$ is isolated. 		
		\begin{lemma}\label{lem:1st_isolated}
			Let $\Omega$ be a bounded domain of class of $C^{1,\theta}$ for some $\theta\in(0,1)$.
			Let $q<r \leq p$ and $\alpha \in [0,1]$. 
			Then $\lambda_1(\alpha)$ is an isolated critical level of $R_\alpha$. 
		\end{lemma}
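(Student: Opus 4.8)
The plan is to argue by contradiction using the $C^1$-topology. Suppose $\lambda_1(\alpha)$ is not an isolated critical level. Since $\lambda_1(\alpha)$ is the minimum of $R_\alpha$, there must then exist a sequence of critical points $\{u_n\}\subset\mathcal{M}_\alpha$ of $R_\alpha$ with $R_\alpha(u_n)=\|\nabla u_n\|_p^p=:\lambda_n$ satisfying $\lambda_n\to\lambda_1(\alpha)$ and $\lambda_n>\lambda_1(\alpha)$. By Lemma~\ref{sign-changing}, each $u_n$ is sign-changing. Because $u_n$ is a critical point on $\mathcal{M}_\alpha$, the pair $(u_n,\lambda_n)$ trivially satisfies the hypotheses of Lemma~\ref{lem:PS} with $\alpha_n\equiv\alpha$, so, up to a subsequence, $u_n\to u$ strongly in $\W$ with $u\in\mathcal{M}_\alpha$ and $R_\alpha(u)=\lambda_1(\alpha)$; thus $u$ is a minimizer of $\lambda_1(\alpha)$. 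Since $\alpha\in[0,1]$, Lemma~\ref{lem:smp} shows $u$ is sign-constant, and the simplicity of $\lambda_1(\alpha)$ (Lemma~\ref{lem:subhom:simple}, available because $\Omega$ is connected) gives that the minimizers in $\mathcal{M}_\alpha$ are exactly $\pm\varphi_\alpha$. Replacing $u_n$ by $-u_n$ if necessary (still a sign-changing critical point on the same level), I may assume $u_n\to\varphi_\alpha$ in $\W$, where $\varphi_\alpha>0$ in $\Omega$.

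Next I would upgrade this convergence to the $C^1$-topology. Each $u_n$ solves \eqref{eq:P} with $\lambda=\lambda_n$. As $\{\lambda_n\}$ is bounded and, on $\mathcal{M}_\alpha$, the quantities $\|u_n\|_q,\|u_n\|_r$ are bounded and bounded away from zero, the right-hand side of \eqref{eq:P} grows in $u_n$ with order at most $p-1$ (here I use $q<r\le p$) and has uniformly bounded coefficients. Hence the standard $L^\infty$-estimates (Moser/De Giorgi iteration) yield $\sup_n\|u_n\|_{L^\infty(\Omega)}<+\infty$, and, since $\Omega$ is of class $C^{1,\theta}$, the global gradient estimates of Lieberman \cite{Lieberman} (recalled in Remark~\ref{rem:reg}) give a uniform bound $\sup_n\|u_n\|_{C^{1,\vartheta}(\overline\Omega)}<+\infty$ for some $\vartheta\in(0,1)$. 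By the Arzel\`a--Ascoli theorem together with the already established $\W$-convergence (which identifies the limit), $u_n\to\varphi_\alpha$ in $C^1(\overline\Omega)$, up to a further subsequence.

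Finally I would extract the contradiction from the positivity of $\varphi_\alpha$. Being a positive solution of \eqref{eq:P} with $\alpha\in[0,1]$ and $q\le p$, the function $\varphi_\alpha$ obeys the strong maximum principle and the boundary point lemma (see \cite{Vaz} and Lemma~\ref{lem:smp}), so $\varphi_\alpha$ lies in the interior of the positive cone of $\C$, that is, $\varphi_\alpha>0$ in $\Omega$ and $\partial\varphi_\alpha/\partial n<0$ on $\partial\Omega$. Consequently there is $\varepsilon>0$ such that any $v\in\C$ with $\|v-\varphi_\alpha\|_{C^1(\overline\Omega)}<\varepsilon$ is strictly positive in $\Omega$. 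The $C^1$-convergence $u_n\to\varphi_\alpha$ then forces $u_n>0$ in $\Omega$ for all sufficiently large $n$, contradicting the fact that each $u_n$ is sign-changing; this proves that $\lambda_1(\alpha)$ is isolated. I expect the main obstacle to be the two regularity inputs, namely the \emph{uniform} $C^{1,\vartheta}(\overline\Omega)$ bound for $\{u_n\}$ and the membership of $\varphi_\alpha$ in the interior of the positive cone of $\C$; both rely on the boundary regularity theory recalled in Remark~\ref{rem:reg} and on the boundary point lemma under the $C^{1,\theta}$ assumption on $\Omega$, and care is needed because subhomogeneity rules out the Faber--Krahn blow-up argument available for genuine eigenvalue problems.
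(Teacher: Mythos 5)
Your proposal is correct and follows essentially the same route as the paper: contradiction via a sequence of higher critical levels converging to $\lambda_1(\alpha)$, strong $\W$-convergence to a minimizer through the Palais--Smale property (Lemma~\ref{lem:PS}), upgrade to $C^1(\overline{\Omega})$-convergence via uniform $C^{1,\vartheta}$ bounds and Arzel\`a--Ascoli, and finally Hopf's boundary point lemma to force the $u_n$ to be sign-constant, contradicting Lemma~\ref{sign-changing}. The only cosmetic difference is that you additionally invoke the simplicity of $\lambda_1(\alpha)$ to pin the limit down to $\pm\varphi_\alpha$, which the paper does not need since the Hopf argument applies to any minimizer.
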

		\begin{proof} 
			Suppose, by contradiction, that there exists a sequence of critical levels $\{\lambda_n\}$ of $R_\alpha$ such that 
			$\lambda_n \to \lambda_1(\alpha)+$.
			Let $u_n \in \mathcal{M}_\alpha$ be a critical point of $R_\alpha$ corresponding to $\lambda_n$. 
			Since $\{u_n\}$ is a Palais-Smale sequence for the functional 
			$J(u)=\|\nabla u\|_p^p$ over $\mathcal{M}_\alpha$, Lemma~\ref{lem:PS} guarantees that 	
			$\{u_n\}$ converges strongly in $\W$ to a minimizer $u \in \mathcal{M}_\alpha$ of $\lambda_1(\alpha)$, up to a subsequence. 
			By the elliptic regularity theory \cite{Lieberman} (see also Remark~\ref{rem:reg}), $\{u_n\}$ is uniformly bounded in $C^{1,\vartheta}(\overline{\Omega})$ for some $\vartheta \in (0,1)$. 
			Then the Arzel\`a-Ascoli theorem implies that $u_n \to u$ in $C^{1}(\overline{\Omega})$, up to a subsequence.
			By Hopf's lemma \cite{melkshah} applied to \eqref{eq:Px}, we get either $u>0$ in $\Omega$ and $\partial u/\partial \nu < 0$ on $\partial \Omega$, or $u<0$ in $\Omega$ and $\partial u/\partial \nu > 0$ on $\partial \Omega$.	
			Hence, $u_n$ has to be either positive or negative in $\Omega$ for any sufficiently large $n$.
			However, Lemma~\ref{sign-changing} claims that each $u_n$ is sign-changing. 
			A contradiction.
		\end{proof} 
		\begin{remark}
			For $\alpha=0,1$, the result of Lemma~\ref{lem:1st_isolated} is partially known in the literature:  \cite[Theorem~B]{BDF} covers the case $q<p=2$ under weaker assumptions on $\Omega$ (namely, $\Omega$ is a $C^1$-regular open bounded set with finitely many connected components, see also \cite[Remark~1.2]{BDF}), while \cite[Theorem~1.6]{Tanaka-BVP} covers the case $q \leq p$ assuming the $C^{1,1}$-regularity of $\Omega$. 
		\end{remark}

		\section{Superhomogeneous case}\label{sec:superhom}
		In this section, we study in more detail the superhomogeneous case $p<r$. 
		\begin{proposition}\label{prop:isol:super}
			Let $\Omega$ be a bounded domain. 
			Assume either of the two cases:
			\begin{enumerate}[label={\rm(\roman*)}] 
				\item\label{prop:isol:super:1} $p \leq q ~(< r)$ and $\alpha \in (\alpha_0, 0]$.
				\item\label{prop:isol:super:2} $p<q ~(<r)$ and $\alpha > 0$.
			\end{enumerate}
			Then there exists $\delta>0$ such that any critical point $u$ of $R_\alpha$ with $R_\alpha(u) \in (\lambda_1(\alpha),\lambda_1(\alpha)+\delta)$ is sign-constant. 
		\end{proposition}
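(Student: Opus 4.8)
The plan is to argue by contradiction, combining the compactness furnished by the Palais--Smale condition (Lemma~\ref{lem:PS}) with a self-improving energy estimate on the vanishing nodal part that is available precisely because the problem is superhomogeneous in the surviving term. The decisive structural input is $p<r$ (and, in case~\ref{prop:isol:super:2}, also $p<q$), which makes the relevant nonlinearities dominate the $p$-Dirichlet energy only at large scales, so a small nodal part is forced to disappear.

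First I would suppose that no such $\delta$ exists. Then there is a sequence $\{u_n\}$ of sign-changing critical points of $R_\alpha$, normalized to lie in $\mathcal{M}_\alpha$, with $R_\alpha(u_n)=\lambda_n\in(\lambda_1(\alpha),\lambda_1(\alpha)+1/n)$, so that $\|\nabla u_n\|_p^p=\lambda_n\to\lambda_1(\alpha)+$. Each $u_n$ is a constrained critical point of $J(u)=\|\nabla u\|_p^p$ on $\mathcal{M}_\alpha$ with Lagrange multiplier $\lambda_n$, hence the sequence satisfies the hypotheses of Lemma~\ref{lem:PS} with the constant choice $\alpha_n\equiv\alpha$; up to a subsequence $u_n\to u$ strongly in $\W$ and $\lambda_n\to\lambda_1(\alpha)$. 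Consequently $u\in\mathcal{M}_\alpha$ with $\|\nabla u\|_p^p=\lambda_1(\alpha)$, i.e.\ $u$ is a minimizer of $\lambda_1(\alpha)$. The assumptions in both cases are exactly those of Lemma~\ref{lem:smp} (the case $\alpha<0$, $p\le q$ when $\alpha\in(\alpha_0,0)$, and the case $\alpha\ge 0$ otherwise), and since $\Omega$ is connected this forces $u$ to be sign-constant. Replacing $u_n$ by $-u_n$ if necessary, I may assume $u>0$ in $\Omega$, whence $u^-=0$ and therefore $u_n^-\to 0$ strongly in $\W$; in particular $\|\nabla u_n^-\|_p\to 0$.

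The core step is to test the Euler--Lagrange equation \eqref{eq:P} for $u_n$ (at level $\lambda_n$) against the nonnegative function $u_n^-$, which gives
\begin{equation*}
\|\nabla u_n^-\|_p^p
=
\lambda_n\,\alpha\, a_n \|u_n^-\|_q^q
+
\lambda_n\,(1-\alpha)\, b_n \|u_n^-\|_r^r,
\end{equation*}
where $a_n=\|u_n\|_q^{\alpha p-q}\|u_n\|_r^{(1-\alpha)p}$ and $b_n=\|u_n\|_q^{\alpha p}\|u_n\|_r^{(1-\alpha)p-r}$ are uniformly bounded because the strong convergence $u_n\to u\not\equiv 0$ pins $\|u_n\|_q,\|u_n\|_r$ between positive constants. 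In case~\ref{prop:isol:super:1} one has $\alpha\le 0$, so the $q$-term is nonpositive and may be discarded, leaving $\|\nabla u_n^-\|_p^p\le C\|u_n^-\|_r^r$; in case~\ref{prop:isol:super:2} the $q$-term carries the positive factor $\alpha$, while the $r$-term is either retained (if $\alpha\le 1$) or discarded (if $\alpha>1$, since then $1-\alpha<0$), yielding $\|\nabla u_n^-\|_p^p\le C(\|u_n^-\|_q^q+\|u_n^-\|_r^r)$. Applying the Sobolev--Poincar\'e inequality $\|u_n^-\|_s\le C\|\nabla u_n^-\|_p$ for $s\in\{q,r\}$ and using $p<r$ (and $p<q$ in case~\ref{prop:isol:super:2}), I obtain in every situation
\begin{equation*}
\|\nabla u_n^-\|_p^p\le C\big(\|\nabla u_n^-\|_p^{q}+\|\nabla u_n^-\|_p^{r}\big),
\end{equation*}
with both exponents strictly larger than $p$. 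Since $u_n$ is sign-changing, $u_n^-\not\equiv 0$, so dividing by $\|\nabla u_n^-\|_p^p>0$ produces $1\le C(\|\nabla u_n^-\|_p^{q-p}+\|\nabla u_n^-\|_p^{r-p})$, whose right-hand side tends to $0$ by the vanishing of $\|\nabla u_n^-\|_p$. This contradiction finishes the proof; by symmetry the same argument applied to $u_n^+$ covers the case $u<0$.

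The step I expect to be the main conceptual obstacle is recognizing that, unlike the subhomogeneous isolation result Lemma~\ref{lem:1st_isolated}, no boundary regularity of $\Omega$ and no Hopf-type argument is needed here: superhomogeneity makes the testing estimate self-improving, so a nodal part with small $\W$-norm cannot persist. The only points demanding care are the uniform two-sided bounds on $a_n,b_n$ (delivered by strong convergence to a nonzero minimizer, in the spirit of Lemma~\ref{lem:bound_Phi}) and the correct bookkeeping of the signs of $\alpha$ and $1-\alpha$ so that the discarded term is genuinely nonpositive in each subcase.
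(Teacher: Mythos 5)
Your proof is correct and follows essentially the same route as the paper: contradiction, Palais--Smale compactness via Lemma~\ref{lem:PS}, sign-constancy of the limit minimizer via Lemma~\ref{lem:smp}, and then testing the equation with the vanishing nodal part combined with the Sobolev embedding and the superhomogeneity $q,r>p$ (resp.\ $r>p$ with the $q$-term discarded) to force a positive lower bound on its gradient norm. The only cosmetic difference is that you test the $\mathcal{M}_\alpha$-normalized equation \eqref{eq:P} directly, whereas the paper first rescales to the $\mathcal{C}_\alpha$-normalized problem and bounds the resulting $\mu_n$; both handle the bookkeeping of the signs of $\alpha$ and $1-\alpha$ correctly.
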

		\begin{proof}
			The result in known in the case $\alpha=0,1$, see \cite[Theorem~1.8]{Tanaka-BVP}. 
			Hence, assume that $\alpha \neq 0,1$.
			Suppose, by contradiction, that there exists a sequence of sign-changing critical points $v_n \in \mathcal{M}_\alpha$ of $R_\alpha$ such that 
			$$
			R_\alpha(v_n) = \|\nabla v_n\|_p^p \to \lambda_1(\alpha) 
			\quad \text{as}\ n\to +\infty.
			$$
			In view of Lemma~\ref{lem:PS}, $\{v_n\}$ converges strongly in $\W$ to some minimizer of $\lambda_1(\alpha)$, up to a subsequence.    
			Thanks to the connectedness of $\Omega$, Lemma~\ref{lem:smp} implies that each minimizer of $\lambda_1(\alpha)$ is sign-constant. Hence, we conclude that 
			\begin{equation}\label{eq:eitheror}
				\text{either}~~ \|\nabla v_n^+\|_p^p \to 0 ~~\text{or}~~ \|\nabla v_n^-\|_p^p \to 0~~ \text{as}\ n\to +\infty.
			\end{equation}		
			On the other hand, using the notation \eqref{eq:concon:constr:t10}, we see that $u_n := t_\alpha(v_n) v_n$ is a solution of the problem  
			\begin{equation}\label{eq:fredh1:1}
				\left\{
				\begin{aligned}
					-\Delta_p u &= \mu_n |u|^{q-2}u + \mathrm{sgn}(1-\alpha)|u|^{r-2}u && \text{in } \Omega, \\
					u &= 0 && \text{on } \partial \Omega,
				\end{aligned}
				\right.	
			\end{equation}
			with $\mu_n = \alpha \|\nabla u_n\|_p^p/\|u_n\|_q^{q}$. 
			In view of the strong convergence of $\{v_n\}$ in $\W$ to a nonzero function, we obtain that the sequence $\{t_\alpha(v_n)\}$ is separated from zero.
			This implies that the sequence $\{\mu_n\}$ is bounded.  
			Consequently, we deduce from \eqref{eq:eitheror} that either $\|\nabla u_n^+\|_p^p \to 0$ or $\|\nabla u_n^-\|_p^p \to 0$ as $n\to +\infty$. 
			However, taking $u_n^\pm$ as a test function in the weak formulation of \eqref{eq:fredh1:1}, we get
			\begin{align}
				\|\nabla u_n^\pm\|_p^p 
				= 
				\mu_n \|u_n^\pm\|_q^q 
				+
				\mathrm{sgn}(1-\alpha)
				\|u_n^\pm\|_r^r
				\leq
				C 
				\max\{\mu_n,0\}
				\|\nabla u_n^\pm\|_p^q 
				+
				C
				\|\nabla u_n^\pm\|_p^r,
			\end{align}
			where the constant $C>0$ is given by 
			$$
			C = \max\Big\{\lambda_1(0)^{-\frac{q}{p}}, \lambda_1(1)^{-\frac{r}{p}}\Big\}.
			$$
			Recalling the assumptions \ref{prop:isol:super:1}, \ref{prop:isol:super:2}, we derive the existence of $C_1>0$ such that $\|\nabla u_n^\pm\|_p > C_1$ for all $n \in \mathbb{N}$, which is a contradiction. 
		\end{proof}

		\begin{corollary} 
			Let the assumptions of Proposition~\ref{prop:isol:super} be satisfied. 
			If, for some $\alpha > \alpha_0$, $\lambda_1(\alpha)$ is not isolated, then there is a sequence of nonnegative critical points of $R_\alpha$ converging to $\lambda_1(\alpha)$. 
		\end{corollary}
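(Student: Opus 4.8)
The plan is to derive the corollary almost immediately from Proposition~\ref{prop:isol:super}, exploiting only that $\lambda_1(\alpha)$ is the ground state (hence smallest) critical level and that $R_\alpha$ is an even functional. Thus there is essentially no genuine analysis to perform; the work is in correctly interpreting the non-isolation hypothesis and combining it with the sign-constancy window supplied by the proposition.

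First I would unpack the non-isolation hypothesis: by definition it means there is a sequence of critical levels $\{\lambda_n\}$ of $R_\alpha$ with $\lambda_n \neq \lambda_1(\alpha)$ and $\lambda_n \to \lambda_1(\alpha)$. Since $\lambda_1(\alpha) = \inf_{u \in \W \setminus \{0\}} R_\alpha(u)$ by \eqref{eq:lambda}, every critical level is at least $\lambda_1(\alpha)$; therefore the convergence is necessarily from above, i.e.\ $\lambda_n > \lambda_1(\alpha)$ and $\lambda_n \to \lambda_1(\alpha)+$. For each $n$ I would then fix a critical point $u_n \in \W \setminus \{0\}$ of $R_\alpha$ realizing the level $\lambda_n$, so that $R_\alpha(u_n) = \lambda_n$ and $R_\alpha'(u_n) = 0$.

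Next, because $\lambda_n \to \lambda_1(\alpha)+$, for all sufficiently large $n$ the level $\lambda_n$ lies in the window $(\lambda_1(\alpha), \lambda_1(\alpha) + \delta)$, where $\delta > 0$ is the threshold provided by Proposition~\ref{prop:isol:super}. Discarding finitely many terms, that proposition forces each $u_n$ to be sign-constant in $\Omega$. To pass to nonnegative critical points without altering the levels, I would invoke the evenness of $R_\alpha$: since $\|\nabla |u_n|\|_p = \|\nabla u_n\|_p$ and $\||u_n|\|_\sigma = \|u_n\|_\sigma$, we have $R_\alpha(|u_n|) = R_\alpha(u_n) = \lambda_n$, while the oddness of the Euler--Lagrange equation \eqref{eq:Px} ensures that $-u_n$ is again a critical point whenever $u_n$ is. Hence $|u_n|$ (equal to $u_n$ or $-u_n$ according to its sign) is a nonnegative critical point of $R_\alpha$ at level $\lambda_n$, and $\{|u_n|\}$ is the desired sequence whose levels converge to $\lambda_1(\alpha)$.

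The only point deserving a little care — and the nearest thing to an obstacle — is the first step: one must notice that non-isolation forces the accumulating critical levels to sit strictly above $\lambda_1(\alpha)$, so that they eventually land inside the interval $(\lambda_1(\alpha),\lambda_1(\alpha)+\delta)$ on which Proposition~\ref{prop:isol:super} is stated. Once this placement is secured, the remainder is pure bookkeeping with the symmetry of $R_\alpha$.
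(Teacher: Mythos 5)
Your proposal is correct and is evidently the intended argument: the paper states this corollary without proof as an immediate consequence of Proposition~\ref{prop:isol:super}, and your route (critical levels accumulate at $\lambda_1(\alpha)$ necessarily from above since $\lambda_1(\alpha)=\inf R_\alpha$, the proposition forces the corresponding critical points to be sign-constant once the levels enter $(\lambda_1(\alpha),\lambda_1(\alpha)+\delta)$, and evenness of $R_\alpha$ lets you replace each by its absolute value) is exactly the bookkeeping one would write down. No gaps.
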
 
		
		The following two lemmas refine several results from Section~\ref{sec:behavior:mu-up-un}. 	\begin{lemma}
			Let $q=p<r$ and $\alpha > \alpha_0$. 
			Let $\varphi_\alpha$ be a minimizer of $R_\alpha$. 
			Then $\mu_\alpha^{\lambda_1(\alpha)}(\varphi_\alpha) 
			= 
			\alpha \frac{\|\nabla \varphi_\alpha\|_p^p}{\|\varphi_\alpha\|_p^{p}}$ satisfies
			\begin{equation}\label{eq:lem:superhom}
				\mu_\alpha^{\lambda_1(\alpha)}(\varphi_\alpha) 
				< \lambda_1(1)
				~~\text{for}~ \alpha < 1,
				\quad \text{and} \quad 
				\alpha \lambda_1(1) \leq	\mu_\alpha^{\lambda_1(\alpha)}(\varphi_\alpha) 
				~~\text{for}~ \alpha \geq 0,
			\end{equation}
			where the second inequality is strict for $\alpha \neq 1$. 
			Moreover, we have  $\mu_\alpha^{\lambda_1(\alpha)}(\varphi_\alpha) \to \lambda_1(1)$ as $\alpha \to 1$. 
		\end{lemma}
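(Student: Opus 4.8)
The plan is to exploit the collapse of the exponents in \eqref{eq:mu:homogen} that occurs when $q=p$. In this case $\alpha^*=\frac{q(r-p)}{p(r-q)}=1$, while $\frac{p-q}{r-p}=0$ and $\frac{r-q}{r-p}=1$, so that for \emph{every} $u\in\W\setminus\{0\}$ the homogeneous translation level reduces to
\begin{equation}
\mu_\alpha^{\lambda_1(\alpha)}(u)=\alpha\,R_1(u),\qquad R_1(u)=\frac{\|\nabla u\|_p^p}{\|u\|_p^p}.
\end{equation}
Thus $m_\alpha:=\mu_\alpha^{\lambda_1(\alpha)}(\varphi_\alpha)=\alpha R_1(\varphi_\alpha)$ is a $0$-homogeneous quantity, independent of the normalization of $\varphi_\alpha$, and $\lambda_1(1)=\inf_{\W\setminus\{0\}}R_1$ is the first Dirichlet eigenvalue of $-\Delta_p$; I denote by $\varphi_1>0$ its positive minimizer, so that $\|\nabla\varphi_1\|_p^p=\lambda_1(1)\|\varphi_1\|_p^p$ and $-\Delta_p\varphi_1=\lambda_1(1)\varphi_1^{p-1}$. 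With this reduction, the three assertions become statements about $\alpha R_1(\varphi_\alpha)$.

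For the lower bound I would simply use that $R_1(\varphi_\alpha)\ge\lambda_1(1)$ by the definition of $\lambda_1(1)$, whence $m_\alpha=\alpha R_1(\varphi_\alpha)\ge\alpha\lambda_1(1)$ for $\alpha\ge0$. If equality held for some $\alpha\in(0,+\infty)\setminus\{1\}$, then $R_1(\varphi_\alpha)=\lambda_1(1)$, i.e.\ $\varphi_\alpha$ would be a minimizer, hence a critical point, of $R_1$; being also a critical point of $R_\alpha$ with $\alpha\neq1$, this contradicts the linear independence of Lemma~\ref{lem:LI}. So the inequality is strict for $0<\alpha\neq1$ (at $\alpha=0$ both sides vanish).

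For the upper bound in the range $\alpha<1$ the key tool is Picone's inequality. By Lemma~\ref{lem:smp}, whose hypothesis $p\le q$ for $\alpha<0$ is satisfied here because $q=p$, every minimizer is sign-constant, so I may take $\varphi_\alpha>0$ and normalize it to $\mathcal{C}_\alpha$, which makes it a solution of \eqref{eq:Pconcon0}; for $\alpha<1$ this reads $-\Delta_p\varphi_\alpha=\mu_\alpha\varphi_\alpha^{p-1}+\varphi_\alpha^{r-1}$ with $\mu_\alpha=m_\alpha$. Applying Picone's inequality to the pair $(\varphi_\alpha,\varphi_1)$ and using $\|\nabla\varphi_1\|_p^p=\lambda_1(1)\|\varphi_1\|_p^p$ gives
\begin{equation}
\lambda_1(1)\,\|\varphi_1\|_p^p\ge\int_\Omega|\nabla\varphi_\alpha|^{p-2}\nabla\varphi_\alpha\cdot\nabla\Big(\frac{\varphi_1^p}{\varphi_\alpha^{p-1}}\Big)\,dx=\mu_\alpha\|\varphi_1\|_p^p+\int_\Omega\varphi_\alpha^{r-p}\varphi_1^p\,dx.
\end{equation}
Since $r>p$ and $\varphi_\alpha,\varphi_1>0$, the last integral is strictly positive, so dividing by $\|\varphi_1\|_p^p>0$ yields $m_\alpha=\mu_\alpha<\lambda_1(1)$; for $\alpha\le0$ the bound is anyway immediate, as then $m_\alpha\le0<\lambda_1(1)$. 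I expect the one genuinely technical point to be the admissibility of the test function $\varphi_1^p/\varphi_\alpha^{p-1}\in\W$ when no boundary regularity of $\Omega$ is presumed; this is handled in the standard way, by using $\varphi_1^p/(\varphi_\alpha+\varepsilon)^{p-1}$ and letting $\varepsilon\to0+$.

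Finally, for the limit I would squeeze $m_\alpha$ between $\underline{\mu}_\alpha^1$ and $\overline{\mu}_\alpha^1$: since $\varphi_\alpha\in K_\alpha(\lambda_1(\alpha))$ we have $\underline{\mu}_\alpha^1\le m_\alpha\le\overline{\mu}_\alpha^1$. At $\alpha=1$ the reduction above shows $\mu_1^{\lambda_1(1)}(u)=R_1(u)=\lambda_1(1)$ for every $u\in K_1(\lambda_1(1))$, hence $\underline{\mu}_1^1=\overline{\mu}_1^1=\lambda_1(1)$. Invoking the lower/upper semicontinuity of $\alpha\mapsto\underline{\mu}_\alpha^1$ and $\alpha\mapsto\overline{\mu}_\alpha^1$ from Lemma~\ref{lem:lower-upper-semicontinuity} (applicable since $r\neq p$) at the point $\alpha=1$, I obtain
\begin{equation}
\lambda_1(1)=\underline{\mu}_1^1\le\liminf_{\alpha\to1}\underline{\mu}_\alpha^1\le\liminf_{\alpha\to1}m_\alpha\le\limsup_{\alpha\to1}m_\alpha\le\limsup_{\alpha\to1}\overline{\mu}_\alpha^1\le\overline{\mu}_1^1=\lambda_1(1),
\end{equation}
which forces $m_\alpha\to\lambda_1(1)$ as $\alpha\to1$. (For the one-sided limit $\alpha\to1-$ the two inequalities already established give the squeeze $\alpha\lambda_1(1)<m_\alpha<\lambda_1(1)$ directly, so only the $\alpha\to1+$ side really needs the semicontinuity input.)
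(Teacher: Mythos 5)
Your proof is correct, and it diverges from the paper's in two of its three parts. The lower bound $\alpha\lambda_1(1)\le m_\alpha$ with strictness via Lemma~\ref{lem:LI} is exactly the paper's argument. For the upper bound on $(0,1)$, the paper simply invokes \cite[Proposition~2.17]{BT-ant} (a nonexistence result for $-\Delta_p u=\mu u^{p-1}+f$ with $f\ge 0$, $f\not\equiv 0$ and $\mu\ge\lambda_1(1)$), whereas you reprove the needed fact from scratch with Picone's inequality; this makes the argument self-contained at the cost of having to justify the test function $\varphi_1^p/(\varphi_\alpha+\varepsilon)^{p-1}$ and, implicitly, the positivity of $\varphi_\alpha$ on the support of $\varphi_1$ (which for $0<\alpha<1$ follows from Lemma~\ref{lem:smp} and the strong maximum principle on a connected $\Omega$ — the same implicit hypothesis the paper uses). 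Your observation that the case $\alpha\le 0$ is trivial because $m_\alpha=\alpha R_1(\varphi_\alpha)\le 0$ is a small simplification the paper does not spell out. For the limit $\alpha\to 1$, the paper runs the compactness argument directly (Lemmas~\ref{lem:bound_Phi} and \ref{lem:PS} applied to a sequence $\varphi_{\alpha_n}\in\mathcal{M}_{\alpha_n}$, plus continuity of $\lambda_1(\cdot)$ from Lemma~\ref{lem:lambda-k:contin}), while you instead sandwich $m_\alpha$ between $\underline{\mu}\smallspacing_\alpha^1$ and $\overline{\mu}\smallspacing_\alpha^1$ and invoke the semicontinuity of Lemma~\ref{lem:lower-upper-semicontinuity} together with $\underline{\mu}\smallspacing_1^1=\overline{\mu}\smallspacing_1^1=\lambda_1(1)$ (which is immediate since $\mu_1^\lambda(u)=R_1(u)=\lambda_1(1)$ on $K_1(\lambda_1(1))$). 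That is a clean repackaging: the compactness work is already hidden inside Lemma~\ref{lem:lower-upper-semicontinuity}, so nothing is lost, and the one-sided limit $\alpha\to 1-$ indeed already follows from the two inequalities in \eqref{eq:lem:superhom} by squeezing. The initial reduction $\mu_\alpha^\lambda(u)=\alpha R_1(u)$ from the collapse of exponents in \eqref{eq:mu:homogen} when $q=p$ (so $\alpha^*=1$) is correct and is exactly what makes the whole statement a statement about $\alpha R_1(\varphi_\alpha)$.
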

		\begin{proof}
			Assume that $\varphi_\alpha \in \mathcal{C}_\alpha$ and $\varphi_\alpha$ is nonnegative, so that $\varphi_\alpha$ becomes a nonzero solution of the problem 
			\begin{equation}\label{eq:fredh1}
				\left\{
				\begin{aligned}
					-\Delta_p u &= \mu_\alpha u^{p-1} + \text{sgn}(1-\alpha) u^{r-1}  && \text{in } \Omega, \\
					u &= 0 && \text{on } \partial \Omega,
				\end{aligned}
				\right.
			\end{equation}	
			with $\mu_\alpha = \mu_\alpha^{\lambda_1(\alpha)}(\varphi_\alpha)$, see \eqref{eq:Pconcon-intro}. 
			Here, we set $\text{sgn}(1-\alpha)=0$ if $\alpha=1$. 
			Notice that $\mu_1 = \lambda_1(1)$ is the first eigenvalue of the $p$-Laplacian in $\Omega$, that is,
			\begin{equation}\label{eq:lambda1-p-laplacian}
				\lambda_1(1) = \inf_{u \in \W \setminus\{0\}} \frac{\|\nabla u\|_p^p}{\|u\|_p^p}. 
			\end{equation}
			Let $\alpha \in (\alpha_0,1)$. If we suppose that $\mu_\alpha \geq \lambda_1(1)$, then, denoting $f(x) = \varphi_\alpha^{r-1}(x)$, $x \in \Omega$, we get a contradiction to \cite[Proposition~2.17]{BT-ant}. Therefore, $\mu_\alpha < \lambda_1(1)$ for $\alpha \in (\alpha_0,1)$. 
			In the case $\alpha \geq 0$, we use $\varphi_\alpha$ as a test function for \eqref{eq:lambda1-p-laplacian} and obtain $\alpha \lambda_1(1)
			\leq \alpha R_1(\varphi_\alpha) = \mu_\alpha$. 
			Thanks to Lemma~\ref{lem:LI},  this inequality is strict provided $\alpha \neq 1$. 
			It is not hard to see that the normalization $\varphi_\alpha \in \mathcal{C}_\alpha$ is not required since the functional $\mu_\alpha$ is $0$-homogeneous. 
			
			Let $\{\alpha_n\}$ be any sequence converging to $1$. Assume that each $\varphi_{\alpha_n}$ is nonnegative and belongs to $\mathcal{M}_{\alpha_n}$. 
			Lemma~\ref{lem:bound_Phi} gives the boundedness of $\{\varphi_{\alpha_n}\}$ in $\W$. 
			Hence, applying Lemma~\ref{lem:PS}, we deduce that $\{\varphi_{\alpha_n}\}$ converges strongly in $\W$ to a nonnegative nonzero function $\varphi \in \mathcal{M}_1$, up to a subsequence. 
			Along this subsequence, we have
			$$
			\lambda_1(1) \leq R_1(\varphi) 
			=
			\lim_{n \to +\infty} R_{1}(\varphi_{\alpha_n})
			=
			\lim_{n \to +\infty} R_{\alpha_n}(\varphi_{\alpha_n})
			=
			\lim_{n \to +\infty} \lambda_1(\alpha_n)
			=
			\lambda_1(1),
			$$
			where the last inequality follows from Lemma~\ref{lem:lambda-k:contin}. 
			This yields $\mu_{\alpha_n} \to \lambda_1(1)$. 
			Since any sequence $\{\alpha_n\}$ possesses a subsequence with this property, we conclude that $\mu_\alpha \to \lambda_1(1)$ as $\alpha \to 1$. 		
		\end{proof}

		\begin{lemma}\label{lem:superhom:prop}
			Let $\Omega$ be a bounded domain. 
			Let $q \leq p < r$ and $\alpha > 1$. 
			Assume that $\lambda_1(\alpha)$ is simple for any $\alpha > 1$. 
			Let $\varphi_\alpha$ be a minimizer of $R_\alpha$. 
			Then the following assertions hold:
			\begin{enumerate}[label={\rm(\roman*)}] 
				\item\label{lem:superhom:prop:1} $\alpha \mapsto \mu_\alpha^{\lambda_1(\alpha)}(\varphi_\alpha) $ is continuous and increasing in $(1,+\infty)$, and $\mu_\alpha^{\lambda_1(\alpha)}(\varphi_\alpha) \to +\infty$ as $\alpha \to +\infty$. 
				\item\label{lem:superhom:prop:2}
				Any critical point $u$ of $R_\alpha$ with $R_\alpha(u) > \lambda_1(\alpha)$ is sign-changing.
			\end{enumerate}
		\end{lemma}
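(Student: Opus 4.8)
The plan is to treat both assertions in parallel with the subhomogeneous analysis of Section~\ref{sec:subhom}, the essential new ingredient being uniqueness of positive solutions of the problem \eqref{eq:Pconconx2}. Since $\lambda_1(\alpha)$ is assumed simple, the (modulo sign) unique positive minimizer $\varphi_\alpha$ produces a single value $\mu_\alpha^{\lambda_1(\alpha)}(\varphi_\alpha) = \underline{\mu}\smallspacing_\alpha^1 = \overline{\mu}\smallspacing_\alpha^1$. Continuity of $\alpha \mapsto \mu_\alpha^{\lambda_1(\alpha)}(\varphi_\alpha)$ in $(1,+\infty)$ then follows immediately from Lemma~\ref{lem:lower-upper-semicontinuity}: the lower semicontinuity of $\alpha \mapsto \underline{\mu}\smallspacing_\alpha^1$ and the upper semicontinuity of $\alpha \mapsto \overline{\mu}\smallspacing_\alpha^1$, combined with their coincidence, squeeze both $\liminf$ and $\limsup$ onto the common value. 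The divergence $\mu_\alpha^{\lambda_1(\alpha)}(\varphi_\alpha) \to +\infty$ as $\alpha \to +\infty$ is Proposition~\ref{prop:omega-to-infty}\ref{prop:omega-infty:1} applied with $k=1$.

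For the monotonicity in assertion~(i) I would first note that, normalizing $\varphi_\alpha$ to $\mathcal{C}_\alpha$, it is a \emph{positive} solution of \eqref{eq:Pconconx2} with $\mu = \mu_\alpha^{\lambda_1(\alpha)}(\varphi_\alpha)$, the positivity coming from Lemma~\ref{lem:smp} (here $\alpha > 1 > 0$ and $\Omega$ is connected). The reaction term $g(s) = \mu s^{q-1} - s^{r-1}$ obeys $g(s)/s^{p-1} = \mu s^{q-p} - s^{r-p}$, which is strictly decreasing on $(0,+\infty)$ since $q \leq p < r$ and $\mu > 0$; hence \eqref{eq:Pconconx2} has at most one positive solution by the Diaz-Saa monotonicity principle \cite[Lemma~2]{diazsaa}. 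With this uniqueness, the monotonicity proof copies that of Lemma~\ref{lem:cont_1st_mu}: if some $1 < \alpha_1 < \alpha_2$ violated it, then either the two translation levels coincide, forcing $\varphi_{\alpha_1} = \varphi_{\alpha_2}$ by uniqueness and contradicting Lemma~\ref{lem:LI}, or the strict inequality combines with continuity and the blow-up at $+\infty$ to yield, via the intermediate value theorem, some $\alpha_3 > \alpha_2$ with the same level, reducing to the first case.

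The proof of assertion~(ii) follows the scheme of Lemma~\ref{sign-changing}. Arguing by contradiction, let $u$ be a nonnegative critical point of $R_\alpha$ with $R_\alpha(u) > \lambda_1(\alpha)$; normalized to $\mathcal{C}_\alpha$ it solves \eqref{eq:Pconconx2} with $\mu := \alpha \|\nabla u\|_p^p / \|u\|_q^q > 0$. I would then locate $\alpha_1 \in (1,+\infty)$ with $\mu = \mu_{\alpha_1}^{\lambda_1(\alpha_1)}(\varphi_{\alpha_1})$ by the intermediate value theorem applied to the continuous increasing map of assertion~(i), whose range is $(0,+\infty)$ when $q < p$ (Proposition~\ref{prop:omega-to-one}, case $q<p<r$) and $(\lambda_1(1),+\infty)$ when $q = p$; in the latter case testing \eqref{eq:Pconconx2} with $u$ gives $\mu = (\|\nabla u\|_p^p + \|u\|_r^r)/\|u\|_p^p > \lambda_1(1)$, so $\mu$ lies in the range in either case. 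Uniqueness then forces $u = \varphi_{\alpha_1}$; since $R_\alpha(u) > \lambda_1(\alpha)$ while $R_{\alpha_1}(\varphi_{\alpha_1}) = \lambda_1(\alpha_1)$ we get $\alpha \neq \alpha_1$, and Lemma~\ref{lem:LI} then contradicts $u = \varphi_{\alpha_1}$.

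The main obstacle is precisely this uniqueness step: both parts hinge on \eqref{eq:Pconconx2} having at most one positive solution, so the monotonicity condition that $g(s)/s^{p-1}$ be strictly decreasing must be verified with care, and the borderline case $q = p$ (where the first nonlinearity becomes $0$-homogeneous and the attainable range of $\mu$ contracts to $(\lambda_1(1),+\infty)$) requires the separate testing bound above. The remaining steps are faithful transcriptions of the subhomogeneous arguments.
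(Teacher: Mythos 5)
Your proposal is correct and follows essentially the same route as the paper: continuity from Lemma~\ref{lem:lower-upper-semicontinuity} plus simplicity, divergence from Proposition~\ref{prop:omega-to-infty}, monotonicity by transplanting the argument of Lemma~\ref{lem:cont_1st_mu} using the D\'iaz--Sa\'a uniqueness for \eqref{eq:rem:eq:unieq2} with $\mu>0$, and assertion~(ii) by the scheme of Lemma~\ref{sign-changing} combined with Lemma~\ref{lem:LI}. The paper states these reductions without detail, whereas you supply the verification of the D\'iaz--Sa\'a monotonicity condition and the range analysis (including the borderline case $q=p$); these additions are consistent with the paper's earlier results.
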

		\begin{proof}
			\ref{lem:superhom:prop:1} 
			The continuity of 
			$\alpha \mapsto \mu_\alpha^{\lambda_1(\alpha)}(\varphi_\alpha)$ is a consequence of the simplicity of $\lambda_1(\alpha)$ and  Lemma~\ref{lem:lower-upper-semicontinuity}.  
			Proposition~\ref{prop:omega-to-infty}~\ref{prop:omega-infty:1} gives the asymptotic as $\alpha \to +\infty$. 
			The strict monotonicity can be proved in much the same way as in Lemma~\ref{lem:cont_1st_mu}, by noting that the problem
			\begin{equation}\label{eq:rem:eq:unieq2}
				\left\{
				\begin{aligned}
					-\Delta_p u &= \mu |u|^{q-2}u - |u|^{r-2}u  && \text{in } \Omega, \\
					u &= 0 && \text{on } \partial \Omega,
				\end{aligned}
				\right.
			\end{equation} 	
			with $q \leq p < r$ and $\mu > 0$ has at most one nonnegative solution, see \cite{diazsaa}.
			
			\ref{lem:superhom:prop:2} 
			The claim can be established using the same arguments as in Lemma~\ref{sign-changing}, in view of the reduction of normalized critical points of $R_\alpha$ to solutions of \eqref{eq:rem:eq:unieq2}.
			We omit further details. 
		\end{proof}

		\section{Final remarks}\label{sec:final-remarks}
		
		We close the work by discussing a few difficult (but interesting, in our opinion) aspects of the bijection between normalized critical point of the Rayleigh quotient $R_\alpha$ and solutions of the problems \eqref{eq:Pconconx}, \eqref{eq:Pconconx2}:
		\begin{enumerate}[label={\rm(\roman*)}] 
			\item It is not transparent whether there is any general relation between normalized minimizers of $R_\alpha$ and ground state solutions of \eqref{eq:Pconconx}, \eqref{eq:Pconconx2}, i.e., solutions obtained by the minimization of the energy functional with or without constraints. 
			In some particular cases, the relation is one-to-one, see Section~\ref{sec:subhom}. 
			
			\item Recall from Remark~\ref{rem:contin} that although the mapping $\alpha \mapsto \lambda_1(\alpha)$ is continuous in $(\alpha_0,+\infty)$, 
			the continuity of the corresponding mapping $\alpha \mapsto \mu_\alpha$ is, in general, a difficult issue. 
			Hypothetically, a discontinuity might occur for those $\alpha$ at which $\lambda_1(\alpha)$ is not simple. 
			This possibility does not allow us to state that minimizers of $R_\alpha$ (or, more generally, critical points of $\lambda_k(\alpha)$, $k \in \mathbb{N}$) describe a \textit{continuous} branch of solutions of the problems \eqref{eq:Pconconx}, \eqref{eq:Pconconx2}.
			
			\item On the other hand, it is known that there \textit{exist} continuous branches of, say, nonnegative solutions of the problem \eqref{eq:Pconconx}. A possible discontinuity of the mapping $\alpha \mapsto \mu_\alpha$ might indicate the existence of bifurcation points, where several branches of solutions meet. 
			
			\item\label{final:rem:1}
			Let $q<p<r$. 
			In view of Remark~\ref{rem:concon:inflection}, using the notation \eqref{eq:mu:homogen}, we define
			$$
			\hat{\mu} 
			= 
			\inf\left\{
			\frac{r-p}{r-q} \left(\frac{p-q}{r-q}\right)^{\frac{p-q}{r-p}}
			\big(R_{\alpha^*}(u)\big)^{\frac{r-q}{r-p}}\,:\,
			u ~\text{is a critical point of}~ 
			R_{\frac{r-p}{r-q}}
			\right\}.
			$$
			It is not hard to see that $\Lambda^* \leq \hat{\mu}$, where $\Lambda^*$ is given in \eqref{eq:il2}. 
			Moreover, since $R_{\frac{r-p}{r-q}}$ possesses a nonnegative minimizer, we always have $\hat{\mu} \leq \mu^*$, where the threshold value $\mu^*$ is defined in Remark~\ref{rem:threshold}. 
			Furthermore, using Lemma~\ref{lem:PS}, one can show that $\hat{\mu}$ is attained. 
			It is natural to wonder whether $\hat{\mu}$ is attained at a minimizer of $R_{\frac{r-p}{r-q}}$ and the equality $\hat{\mu} = \mu^*$ holds. 
			If it is the case, then the nonnegative solution of the convex-concave problem \eqref{eq:Pconconx} with the largest $\mu$ is degenerate in the sense of Remark~\ref{rem:concon:inflection}. 
			
			\item We do not study in detail the behavior of the translation level $\mu_\alpha$ and corresponding solutions as $\alpha \to \alpha_0$. 
			This requires to use the exact form of minimizers of the Gagliadro-Nirenberg inequality \eqref{eq:GN1:x}, which are known only in some particular cases, see, e.g., \cite{DPD,LW,Wein}.		
		\end{enumerate}

		\bigskip
		\noindent
		\textbf{Acknowledgments.} 
		This project was initiated during a visit of V.B. and T.V.~Anoop (IIT Madras) to Tokyo University of Sciences. 
		The authors are grateful to T.V.~Anoop for inspiring and motivating discussions during early stages of the project. 
		This visit and M.~Tanaka were supported by JSPS KAKENHI Grant Number JP~23K03170. 
		The hosting institution is kindly acknowledged.

		\addcontentsline{toc}{section}{\refname}
		\small

	\end{document}